\newtheorem{thm}{Theorem}[section]
\newtheorem{lemma}[thm]{Lemma}
\newtheorem{cor}[thm]{Corollary}
\newtheorem{prop}[thm]{Proposition}
\newtheorem{defn}[thm]{Definition}
\newtheorem{example}[thm]{Example}
\newtheorem{remark}[thm]{Remark}
\newtheorem{conjecture}[thm]{Conjecture}
\newtheorem{assumption}[thm]{Assumption}
\numberwithin{equation}{section}
\newcommand{\der}{\mathrm{d}}
\newcommand{\bb}{\boldsymbol{b}}
\newcommand{\AI}{A_\infty}
\newcommand{\Hom}{{\rm Hom}}
\newcommand{\OL}[1]{\overline{#1}}
\newcommand{\CG}{\mathcal{C}_\Gamma}
\newcommand{\cF}{\mathcal{F}}
\newcommand{\Z}{\mathbb{Z}}
\newcommand{\R}{\mathbb{R}}
\newcommand{\C}{\mathbb{C}}
\newcommand{\MF}{\mathrm{MF}}
\newcommand{\WF}{\mathcal{WF}}
\newcommand{\Mil}{M}
\newcommand{\Spec}{\mathrm{Spec}}
\newcommand{\bL}{\mathbb{L}}
\newcommand{\Cone}{\mathrm{Cone}}
\begin{document}

\title{Fukaya category for Landau-Ginzburg orbifolds}

\author[Cho]{Cheol-Hyun Cho}
\address{Department of Mathematical Sciences, Research Institute in Mathematics\\ Seoul National University\\ Seoul \\ South Korea}
\email{chocheol@snu.ac.kr}
\author[Choa]{Dongwook Choa}
\address{Korea Institute of Advanced Studies \\ Dongdaemoon-gu\\Seoul\\South Korea}
\email{dwchoa@kias.re.kr}
\author[Jeong]{Wonbo Jeong}
\address{Department of Mathematical Sciences, Research Institute in Mathematics\\ Seoul National University\\ Gwanak-gu\\Seoul \\ South Korea}
\email{wonbo.jeong@gmail.com}

 \begin{abstract}
For a weighted homogeneous polynomial and a choice of a diagonal symmetry group, we define a new Fukaya category for a Landau-Ginzburg orbifold (of Fano or Calabi-Yau type).
The construction is based on the wrapped Fukaya category of its Milnor fiber together with the monodromy of the singularity, and it is analogous to the variation operator in singularity theory. The new $\AI$-structure is constructed using popsicle maps with interior insertions of the monodromy orbit. This requires  new compactifications of popsicle moduli spaces  where conformal structures of some of the spheres and discs are aligned due to the popsicle structures. In particular,  codimension one popsicle sphere bubbles might exist and become obstructions to define the $\AI$-structure. For log Fano and Calabi-Yau  cases, we  show that the sphere bubbles do not arise from action and degree estimates, together with the computation of indices of twisted Reeb orbits for Milnor fiber quotients. 

	\end{abstract}

\maketitle
 \tableofcontents


\section{Introduction}
Singularity theory is a fascinating branch of mathematics with a long history and has deep relations to many branches of
mathematics, such as algebraic, complex and symplectic geometry, Lie groups and algebras, commutative algebra and 
mathematical physics.  Classifications as well as the topology and geometry of singularities have been well-established (see \cite{AGV1},\cite{AGV2}).
In commutative algebras, Cohen-Macaulay rings as coordinate rings of singularities  have been investigated in 80's and
its indecomposable maximal Cohen-Macaulay modules has been classified for ADE singularities (see \cite{KN87},\cite{Yo}). Eisenbud
has shown that maximal Cohen-Macaulay modules are equivalent to matrix factorizations of a singularity \cite{E80}.
In mathematical physics, singularities are often called Landau-Ginzburg (LG for short) models, and together with a finite group $G$ preserving the singularity, they are called LG orbifolds.

There has been much attention to the mirror symmetry of singularities, which revealed quite unexpected connections to
different branches of mathematics. Symplectic study of Picard-Lefschetz theory by Seidel is mirror to the corresponding algebraic geometry of coherent sheaves \cite{S08}, and quantum singularity theory developed by Fan, Jarvis and Ruan on Witten equation is mirror to  the corresponding integrable hierarchies \cite{FJR13} just to name a few.

%
%

 Recently, there has been a lot of research on mirror symmetry between LG orbifolds, called   Berglund-H\"ubsch mirror symmetry
 \cite{BH93}. 
 A polynomial 
   \[W(x_1, \ldots, x_n)=\sum_{i=1}^n \prod_{j=1}^n x_j^{a_{ij}}\] 
 is called \textit{invertible} if the matrix of exponents $A=(a_{ij})$ is an $n\times n$ invertible matrix. Its \textit{Berglund-H\"ubsch dual} $W^T$
 is an invertible polynomial whose exponent matrix is the transpose $A^T$. The group of diagonal symmetries
   \[G_W = \left\{(\lambda_1,\ldots,\lambda_n)\in(\C^*)^n \mid W(\lambda_1z_1,\ldots , \lambda_n z_n) = W(z_1,\ldots,z_n) \right\}.\] 
   also plays an important role. For a subgroup $G<G_W$, define its \textit{dual group} $G^T$  following \cite{BH95} (see also  \cite{Kra09}) as
   \[G^T:=\Hom(G_W/G, \C^*) \subset G_{W^T}.\]  
Berglund-H\"ubsch mirror symmetry is a duality between two Landau-Ginzburg orbifolds 
$$(W, G) \stackrel{\textrm{mirror}}{\Longleftrightarrow}  (W^T, G^T)$$ For closed string mirror symmetry, Fan-Jarvis-Ruan defined quantum singularity theory (FJRW invariants) of $(W,G)$.
It should be mirror to Saito-Givental theory of $(W^T, G^T)$ (see \cite{FJR13}, \cite{Kra09} for example).

Kontsevich's homological mirror symmetry conjecture \cite{kontsevich94} for Berglund-H\"ubsch pairs predicts a derived equivalence between the following two categories;
$$ \textrm{Fukaya category of } (W, G)    \stackrel{\textrm{mirror}}{\Longleftrightarrow}   \textrm{Matrix factorization category of } (W^T,G^T) $$
The left hand side for a nontrivial subgroup $G$ of $G_W$ has {\bf{not}} been defined.
For the case of trivial subgroup $G=\{0\}$, the left hand side is defined as a \textit{Fukaya-Seidel category} of $W$, $\overrightarrow{FS}(W)$, which is a directed $\Z$-graded $\AI$-category of vanishing cycles \cite{S08}.
The right hand side is the dg-category of $G^T$-equivariant matrix factorizations $\mathcal{MF}(W^T, G^T)$.

This BH HMS conjecture has been studied extensively for the case of the trivial subgroup $G=\{0\}$ of $G_W$. In this case $G^T = G_{W^T}$ and  the RHS (together with $\Z$-grading) is known as the category of maximally graded matrix factorization of $W^T$.
For this case of trivial $G$, HMS conjecture of this form was proposed by Takahashi \cite{T10}, and there have been many interesting works in this direction. See Seidel \cite{Sei01}, Kajiura-Saito-Takahashi \cite{KST07}, Auroux-Katzarkov-Orlov \cite{AKO08}, Futaki-Ueda \cite{FU11}, \cite{FU13}, Lekili-Ueda \cite{LU18}, \cite{LU20}, Harbermann-Smith \cite{HS19} and the references therein. We also refer readers to  a nice survey by Ebeling \cite{Ebeling}.

The candidate for the Fukaya category of the pair $(W,G)$ for a nontrivial subgroup $G$ would be an orbifold version of Fukaya-Seidel category, but the latter is currently out of reach (see \cite{FU09}, Problem 3). The main difficulty is that one needs to perturb $W$ to Morse function $W_\epsilon$ to obtain a legitimate collection of Lagrangian vanishing cycles but this procedure destroys the original symmetry $G_W$. 

The purpose of this paper is to define the Fukaya category of the pair $(W,G)$ when $W$ is a weighted homogeneous polynomial of log Fano and Calabi-Yau type and $G$ is any subgroup of the diagonal symmetry group $G_W$.

For this purpose, we propose a new approach which does not require Morsification of $W$. Namely, our approach uses wrapped Fukaya category of Milnor fiber, maximal symmetry group $G_W$ and monodromy of the singularity.
Our approach is orthogonal to that of Fukaya-Seidel category in the sense that 
we will mainly work with non-compact Lagrangians ($K$ in Figure \ref{fig:Lef}) whereas
Fukaya-Seidel category uses vanishing cycles ($L$ in Figure \ref{fig:Lef}). 

\begin{figure}[h]
\includegraphics[scale=0.5]{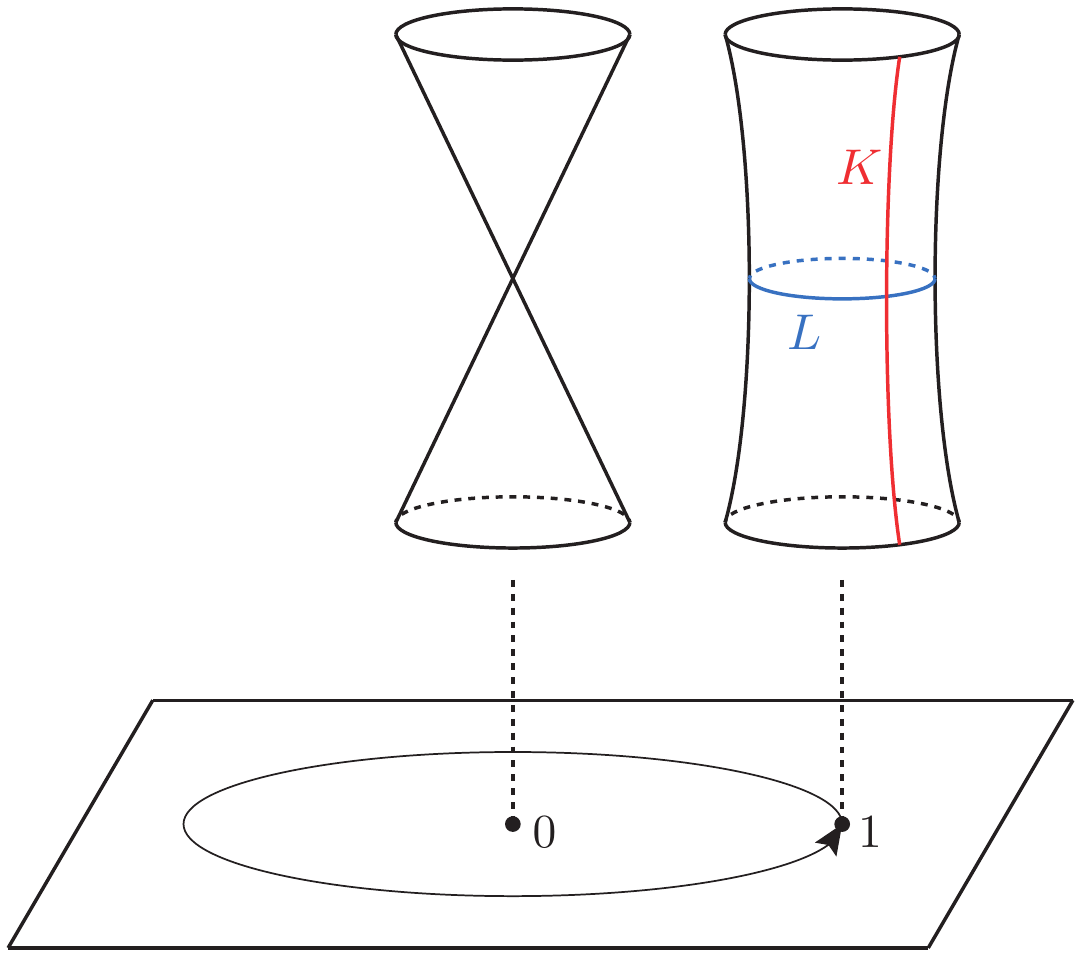}
\centering
\caption{Milnor fiber, vanishing cycle $L$ and noncompact Lagrangian $K$}
\label{fig:Lef}
\end{figure}

A topological precursor of our construction is a \textit{variation operator} 
\begin{equation}\label{eq:introv}
\mathrm{var}:H_{n-1}\left(\OL{M}_W,\partial \OL{M}_W\right) \to H_{n-1}\left(\OL{M}_W\right),
\end{equation}
where $M_W=W^{-1}(1)$ is a Milnor fiber of $W$. It is defined as a difference of a cycle itself and its monodromy image (fixing the boundary), and it provides an alternative way to describe vanishing cycles. 

Recall that a polynomial $W$ is called weighted homogeneous if 
$$W(\lambda^{w_1}z_1,\ldots, \lambda^{w_{n}}z_{n}) = \lambda^{h}W(z_1,\ldots,z_{n})$$
for $w_1,\ldots,w_{n},h \in \mathbb N$ with $\gcd(w_1, \ldots, w_n, h)=1$. 
We say $W$ has weight $(w_1,\ldots, w_{n}; h)$. 
$W$ is assumed to be a weighted homogeneous polynomial for the rest of the paper.

Monodromy homeomorphism (not  fixing the boundary) for $W$ is known to
be an action given by its weights (Milnor \cite{Milnor},  see \eqref{eq:gm}). 

We define what we call, the {\bf monodromy orbit} $\Gamma_W$ as follows.  We first observe that monodromy homeomorphism is an action of an element $J \in G_W$, hence becomes trivial for the quotient orbifold $[M_W/G_W]$. But on the link of the singularity, monodromy (not fixing the boundary) may be taken as time one Reeb flow of this contact manifold.
Therefore on the quotient orbifold  $[M_W/G_W]$, they form  principal orbits on its contact boundary.  We   define a distinguished Hamiltonian orbit $\Gamma_W$  of $[M_W/G_W]$ representing fundamental class of these principal orbits. 

\begin{figure}[h]
\begin{subfigure}{0.43\textwidth}
\includegraphics[scale=0.6]{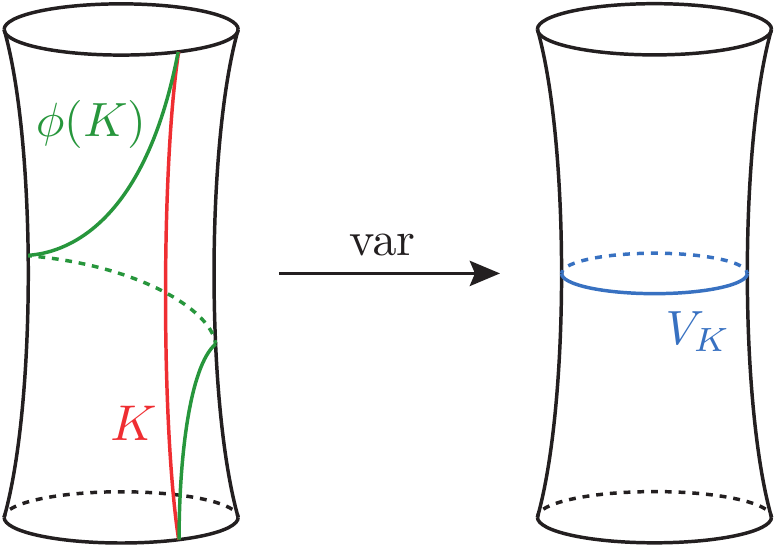}
\centering
\caption{Variation}
\end{subfigure}
\begin{subfigure}{0.43\textwidth}
\includegraphics[scale=0.6]{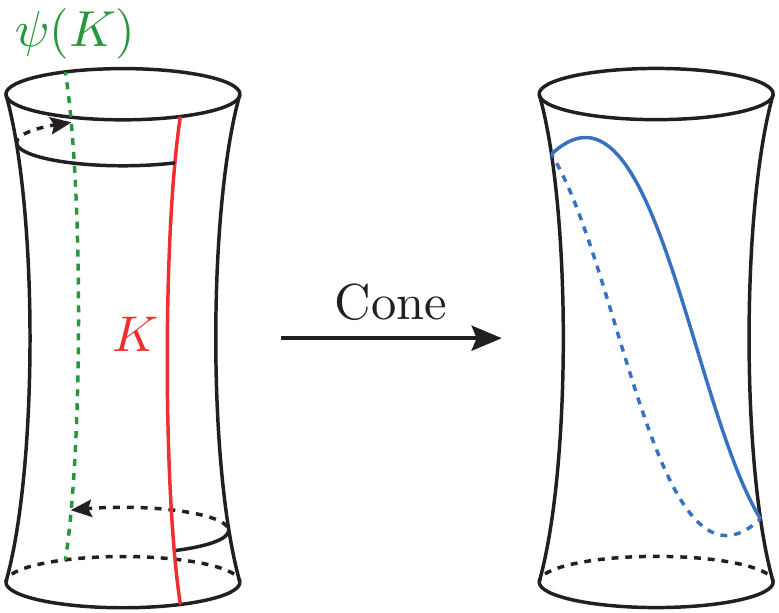}
\centering
\caption{Monodromy flow and its cone}
\end{subfigure}
\centering
\caption{Two ways of representing vanishing cycle}
\label{fig:var}
\end{figure}

Let us explain this in the case of $W= x^2+y^2$.
Monodromy $\phi$ of $W$ which fixes the boundary of the Milnor fiber, is a right-handed Dehn twist
as illustrated in  Figure \ref{fig:var} (A). Monodromy $\psi$ given by the action of weights is
nothing but $(x,y) \to (-x,-y)$, and at the boundary $p$ of the Milnor fiber, there is a Reeb flow from $p$ to $\psi(p)$ as illustrated in Figure \ref{fig:var} (B).
The quotient of the Milnor fiber by the $G_W$-action is an orbifold sphere $\mathbb{P}^1_{2,2,\infty}$ with a puncture and two $\Z/2$-orbifold points. The boundary Reeb flow from monodromy becomes a Reeb orbit winding around the puncture once. The monodromy orbit $\Gamma_W$ is the fundamental class of these principal Reeb orbits.

We remark that monodromy (fixing the boundary) on symplectic Lefschetz fibration has played important roles also in the study of Fukaya-Seidel category as well. For example, Seidel has shown that monodromy provides a natural transformation from identity to monodromy functor \cite{S08}. From this point of view, triviality of the monodromy for $[M_W/G_W]$ should provide a natural transformation from identity functor to itself which is an element of Hochschild cohomology of the wrapped Fukaya category, or  a symplectic cohomology class \cite{Se06}. This should be (conjecturally) the geometric orbit $\Gamma_W$ that we described above.

Now, let us explain our construction of the new Fukaya category using the orbit $\Gamma_W$.
First, classical variation operator may be viewed as the coequalizer sequence for monodromy $\phi$,
\[\begin{tikzcd} H_{n-1}\left(\OL{M}_W,\partial \OL{M}_W\right)  \arrow[r, shift left=1.2, "\phi"] \arrow[r, shift right=1.2, swap, "id"] & H_{n-1}\left(\OL{M}_W,\partial \OL{M}_W\right)  \arrow[r, "var"] &H_{n-1} \left(\OL{M}_W \right)  \arrow[r] &0 \end{tikzcd}\] 
To construct a symplectic analogue, we consider the quantum cap action of $\Gamma_W$ on the wrapped Fukaya category of $[M_W/G_W]$ given by a version of closed-open map. It gives an $\AI$-bimodule map $\cap \Gamma_W: \mathcal{WF}([M_W/G_W]) \to  \mathcal{WF}([M_W/G_W])$. As an analogue of coequalizer, we consider the cone of $\cap \Gamma_W$, which is again an $\AI$-bimodule over $\mathcal{WF}([M_W/G_W])$. 
We show that it also carries an $\AI$-category structure by constructing higher $\AI$-operations using $J$-holomorphic maps from popsicles with $\Gamma_W$-insertions  (see Section \ref{sec:LGFukaya}). 
\begin{thm}[Theorem \ref{thm:fulg}]   
Let $W$ be a non-degenerate  weighted homogeneous polynomial $W(z_1,\cdots,z_n)$ and $G_W$ be its group of maximal diagonal symmetry group. Suppose that the weight $(w_1,\cdots,w_n;h)$ of $W$ satisfies
\begin{equation}\label{eq:FC}
(\sum_{i=1}^n w_i)  -h \geq 0.
\end{equation}

 Then, there is an $A_\infty$-category $\cF(W, G_W)$ which fits to a distinguished diagram of bimodules;
\[\begin{tikzcd}\mathcal{WF}([M_W/G_W]) \arrow[r, "\cap \Gamma_W"] & \mathcal{WF}([M_W/G_W]) \arrow[r] & \cF(W, G_W) \arrow[r] & \phantom{a} \end{tikzcd}\]
(For a precise definition of $\Gamma_W$ and its action $\cap \Gamma_W$, see Section \ref{sec:LGFukaya}).
\end{thm}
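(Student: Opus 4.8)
The underlying graded complexes of $\cF(W,G_W)$ are dictated by the statement: its objects are those of $\WF([M_W/G_W])$, and for two such Lagrangians $L_0,L_1$ one sets $\hom_{\cF}(L_0,L_1)=\Cone\bigl(\cap\Gamma_W\colon\hom_{\WF}(L_0,L_1)\to\hom_{\WF}(L_0,L_1)\bigr)$, a two-step complex whose differential is built from $\mu^1_{\WF}$ and the cap action. The first step is therefore to make $\cap\Gamma_W$ precise: it is the $\AI$-endomorphism of the diagonal bimodule of $\WF([M_W/G_W])$ induced by $\Gamma_W$ through a closed--open type map, counting rigid $J$-holomorphic discs with boundary on a pair of Lagrangians, a distinguished boundary input and output, and one interior puncture asymptotic to $\Gamma_W$ (with auxiliary boundary inputs giving its higher-length components). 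Here I would fix a Liouville model of $[M_W/G_W]$ with cylindrical end the link of the singularity, realize $\Gamma_W$ as the fundamental cycle of the Morse--Bott family of time-one orbits of the contact flow coming from the monodromy, compute its Conley--Zehnder index from the weights $(w_1,\dots,w_n;h)$ so that the induced bimodule map carries the degree that makes $\Cone$ $\Z$- (or $\Z/2$-) graded, and establish the two a priori estimates needed for compactness: a maximum principle confining these curves to a compact subset, and an action bound on the orbit asymptotics.

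Next I would define the operations $\mu^k_{\cF}$ by counting rigid $J$-holomorphic popsicle maps with $\Gamma_W$-insertions, as in Section~\ref{sec:LGFukaya}. A domain is a disc with $k$ ordered boundary inputs and one output, decorated with interior sprinkles pinned to the lines prescribed by the popsicle structure, each sprinkle carrying an insertion of $\Gamma_W$. The number $r$ of sprinkles is the bookkeeping device for the $\Cone$-grading: a popsicle with $r$ sprinkles contributes a component that changes by $r$ the number of ``shifted'' summands among the inputs and output, and since $\hom_{\cF}$ is only a two-term cone the relevant contributions have $r\le1$ --- the $r=0$ part reproduces $\mu^k_{\WF}$, while the $r=1$ part together with $\cap\Gamma_W$ at $k=1$ assembles the cone $\AI$-structure. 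The key analytic point here is the compactification $\overline{\mathcal{R}}^{k,r}$ of these moduli spaces: as the abstract indicates, the sphere and disc components produced in limits inherit conformal parameters \emph{aligned} with the disc because of the popsicle constraint, so one must build a Deligne--Mumford-type compactification that respects this alignment.

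The $\AI$-relations for $\mu^\bullet_{\cF}$ are then extracted from the codimension-one boundary of $\overline{\mathcal{R}}^{k,r}$. Three families of strata are benign and reproduce the expected identities: breaking of a strip or disc at a boundary puncture (the quadratic $\AI$-terms), collision of two sprinkles on a common line, and a sprinkle migrating to the boundary and being forgotten; the last two produce exactly the terms making $\cap\Gamma_W$ a cocycle bimodule map and gluing the two cone summands consistently. The dangerous family consists of the \emph{popsicle sphere bubbles}: limits in which an interior sphere component carrying one or more sprinkles splits off. Because such a bubble's modulus is tied to the disc by the popsicle alignment, it occurs in real codimension one rather than two, and if present it would leave an uncancelled term obstructing the $\AI$-equations; this is the one place the construction could fail.

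The main obstacle, and the only place hypothesis \eqref{eq:FC} is used, is to show that under $(\sum_{i=1}^n w_i)-h\ge0$ no popsicle sphere bubble occurs in a moduli space of the expected dimension. I would run a virtual dimension count combining two ingredients: an energy/action estimate showing a sphere component absorbing $m$ sprinkles must carry a fixed amount of symplectic area per sprinkle, which the weighted-homogeneous Liouville structure forbids for small $m$; and a degree computation using the Conley--Zehnder indices of the iterated twisted Reeb orbits on the contact boundary of $[M_W/G_W]$, expressed through the weights. The log Fano/Calabi--Yau inequality \eqref{eq:FC} is precisely the numerical condition under which these indices force the expected dimension of any stratum containing a sphere bubble to drop by at least two, so that a generic choice of auxiliary data makes such strata empty. (In the general-type regime $\sum w_i-h<0$ the bubbles genuinely contribute, consistently with the scope of the theorem.) Granting this, $\overline{\mathcal{R}}^{k,r}$ has only the benign boundary, the $\mu^k_{\cF}$ satisfy the $\AI$-relations, and by construction the $r=0,1$ splitting is exactly the distinguished triangle $\WF([M_W/G_W])\xrightarrow{\cap\Gamma_W}\WF([M_W/G_W])\to\cF(W,G_W)$. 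Finally, independence of $\cF(W,G_W)$ up to $\AI$-quasi-equivalence from the choices of almost complex structure, wrapping Hamiltonian and representative of $\Gamma_W$ follows from the usual continuation argument applied to one-parameter families of popsicle data, where the same index estimate again rules out sphere bubbling. I expect setting up the aligned-conformal-structure compactification and carrying out the twisted-orbit index computation to be the two heaviest components of the proof.
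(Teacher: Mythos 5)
The central gap is your claim that ``since $\hom_{\cF}$ is only a two-term cone the relevant contributions have $r\le 1$.'' This is not how the $\AI$-structure works, and with that restriction the construction collapses to the bimodule cone only. In the actual definition (Definition \ref{defn:newa}), every $\epsilon$-input consumes one sprinkle: for inputs $\epsilon b_{i_1},\ldots,\epsilon b_{i_k}$ the operation $M_n$ is built from $\bold{P}^\Gamma_{n,F}$ with $F=\{i_1,\ldots,i_k\}$, i.e.\ $k$ sprinkles for the $a$-component and $k-1$ for the $\epsilon$-component (e.g.\ $M_2(\epsilon b_1,\epsilon b_2)$ already uses $\bold{P}^\Gamma_{2,\{1,2\}}$ with two $\Gamma_W$-insertions). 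Capping $r$ at $1$ gives operations that are not closed under composition of morphisms containing $\epsilon$ --- precisely the failure the paper points out for the naive twisted-complex model (``$\AI$-compositions do not preserve the above space of morphisms'') --- so your $\mu^\bullet_{\cF}$ would not satisfy the $\AI$-relations. The whole point of popsicles with arbitrarily many lines, and of the cancellation result for non-injective flavors (Proposition \ref{two interior markings implies vanishing}), is to define $M_n$ on inputs with arbitrarily many $\epsilon$'s.

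Two further points would not survive as stated. First, collision of sprinkles is not a ``benign'' codimension-one stratum reproducing expected identities: in this compactification colliding sprinkles produce exactly the dangerous aligned sphere bubbles, and multiple sprinkles on one line are handled by a symmetry/orientation-reversal cancellation, not by a boundary identity. Second, the exclusion of sphere bubbles is not obtained by showing the expected dimension drops by two and appealing to genericity; these strata are honestly codimension one in the domain compactification, and what kills them is an arithmetic incompatibility between an upper bound on the degree of the sphere's output orbit (Corollary \ref{degree inequality 2}) and lower bounds coming from the action filtration together with the Robbin--Salamon indices of twisted Reeb orbits (Lemma \ref{lem:filgw}, Proposition \ref{orbifold degree inequality 1}); the log Fano/Calabi--Yau hypothesis \eqref{eq:FC} enters through the index inequality, and the conclusion is that no admissible output orbit exists at all (Propositions \ref{prop:vanishing}, \ref{prop:vanishing2}). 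Finally, in the orbifold setting you still need to construct $\Gamma_W$ as a cocycle: this requires the equivariant Morse--Smale function on the link, the local Floer model for the Morse--Bott family, and the argument that no rigid cylinder escapes the neighborhood (Lemma \ref{gamma is closed}), none of which can be taken for granted since a full orbifold symplectic cochain complex is not defined here.
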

Cohomology groups of the $\AI$-category $\cF(W, G_W)$ can be computed as a cone of $\cap \Gamma_W$-action,
but the rest of the $\AI$-structure are not constructed algebraically but rather geometrically from  $J$-holomorphic maps from popsicles.
This requires an important step of popsicle compactifications and we will explain more about it below. Also \eqref{eq:FC} is equivalent to  $\Gamma_W$ having a non-negative  Robbin-Salamon index  as we show in Corollary \ref{cor:gin} that Robbin-Salamon index of the monodromy orbit is given by the weights
$$ \mu_{RS}(\Gamma_W) =  \frac{2 \big((\sum_i w_i) - h\big)}{h}.$$  More explanation will be given in the Proposition \ref{prop:ii} below.

 The constructed $\AI$-category $\cF(W, G_W)$ is the new Fukaya category for the maximal symmetry group $G_W$. For any subgroup $G < G_W$, we can define the associated Fukaya category using semi-direct product (following Seidel \cite{Se2})
$\cF(W, G):=\cF(W, G_W) \rtimes G^T$.
This gives the desired Fukaya category for any pair $(W,G)$.

Note that for the trivial subgroup case,  the constructed Fukaya category $\cF(W, \{0\})$ is $\cF(W, G_W) \rtimes G_{W^T}$,
and it is natural to conjecture the following.
\begin{conjecture}
The Fukaya category $\cF(W, G_W) \rtimes G_{W^T}$ is derived equivalent to the Fukaya-Seidel category of $W$.
\end{conjecture}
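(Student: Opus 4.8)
The plan is to pass through the Milnor fiber $M_W$ itself, recognizing the cone of $\cap\Gamma_W$ --- after crossing with $G_{W^T}$ --- as the symplectic incarnation of the variation operator \eqref{eq:introv}, and then identifying the cone of the variation operator with the category of Lefschetz thimbles. \textbf{Step 1 (undoing the quotient).} Since $G_W$ is abelian and its character group is $G_{W^T}$ (Berglund--H\"ubsch duality, \cite{Kra09}), one expects a crossed-product identification
\[
\WF\big([M_W/G_W]\big)\rtimes G_{W^T}\;\simeq\;\WF(M_W),
\]
the orbifold-wrapped analogue of $D^b([X/G])\rtimes\widehat G\simeq D^b(X)$; the work is to set this up for the orbifold ends and orbifold points of $[M_W/G_W]$, which is exactly the Floer package of Section~\ref{sec:LGFukaya}. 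Because the semidirect product by a finite group is exact --- it commutes with mapping cones of bimodule maps --- Theorem~\ref{thm:fulg} then yields
\[
\cF(W,G_W)\rtimes G_{W^T}\;\simeq\;\Cone\big(\cap\Gamma_W\rtimes\mathrm{id}\ \text{ on }\ \WF(M_W)\big).
\]

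\textbf{Step 2 (the cap action is the variation operator).} On $[M_W/G_W]$ the monodromy is the identity --- it is the action of $J\in G_W$ --- and $\Gamma_W$ is the fundamental Hamiltonian orbit of the principal Reeb orbits on the contact boundary; pulled back to $M_W$ it is the boundary Reeb flow realizing the weight-action monodromy $\psi$. By the mechanism that makes the monodromy of a symplectic Lefschetz fibration a natural transformation from the identity to the monodromy functor (Seidel \cite{S08}), the pulled-back cap action $\cap\Gamma_W\rtimes\mathrm{id}$ on $\WF(M_W)$ is chain-homotopic to the symplectic variation operator $\mathrm{id}-\mathrm{mon}_*$ (classical shadow \eqref{eq:introv}). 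Hence
\[
\cF(W,G_W)\rtimes G_{W^T}\;\simeq\;\Cone\big(\mathrm{id}-\mathrm{mon}_*\ \text{ on }\ \WF(M_W)\big),
\]
whose cohomology is the coequalizer of the introduction, i.e.\ the image of $\mathrm{var}$, spanned by vanishing cycles.

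\textbf{Step 3 (the cone of the variation is the Fukaya--Seidel category).} It remains to identify $\Cone(\mathrm{id}-\mathrm{mon}_*)$ on $\WF(M_W)$ with $\overrightarrow{FS}(W)$, using Seidel's description \cite{S08}: the thimbles $\Delta_1,\dots,\Delta_\mu$ generate, each $\Delta_i$ restricts to a vanishing cycle $L_i\subset M_W$, and the directed $\Hom$-complexes are governed by Floer theory of the $L_i$ twisted by the monodromy in precisely the shape of a mapping cone of $\mathrm{id}-\mathrm{mon}_*$ --- the thimble being the geometric cone of a relative cycle over its variation image. Concretely I would (i) build a functor $\Cone(\mathrm{id}-\mathrm{mon}_*)\to\overrightarrow{FS}(W)$ sending a noncompact test Lagrangian $K\subset M_W$ to the thimble it cones off ($[K]\mapsto\mathrm{var}[K]=\partial\Delta$), (ii) check it is a cohomology isomorphism on a generating set via the Picard--Lefschetz formula, and (iii) match the full $\AI$-structures by comparing the popsicle operations defining the left-hand cone with the perturbed Lefschetz-fibration data defining $\overrightarrow{FS}(W)$. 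An alternative route, available whenever Takahashi's conjecture is known for $W$, is to instead establish the Berglund--H\"ubsch equivalence $\cF(W,G_W)\rtimes G_{W^T}\simeq\MF(W^T,G_{W^T})$ for the new category and invoke $\MF(W^T,G_{W^T})\simeq\overrightarrow{FS}(W)$.

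\textbf{Main obstacle.} The crux is Step~3: controlling the $\AI$-structure and not merely the cohomology. The left-hand products come from $J$-holomorphic popsicle maps with $\Gamma_W$-insertions, the right-hand ones from disc counts in a Morsified fibration, and reconciling these two moduli problems is the genuinely hard part --- including matching gradings, where the hypothesis $(\sum_i w_i)-h\ge 0$ and the computation $\mu_{RS}(\Gamma_W)=2\big((\sum_i w_i)-h\big)/h$ of Corollary~\ref{cor:gin} enter, the grading of the cone being a priori only $\Z/k$-periodic and lifted to $\Z$ exactly by the crossed product with $G_{W^T}$ (mirroring how maximal equivariance lifts the $\Z/2$-grading of $\MF(W^T)$ to a $\Z$-grading). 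Step~1 also demands input beyond the literature: a quotient/crossed-product theorem for orbifold wrapped Fukaya categories with the noncompact ends occurring here.
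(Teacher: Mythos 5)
The statement you are addressing is not proved in the paper at all: it appears there as a Conjecture, with the authors explicitly deferring any discussion of a proof to future work. So there is no paper argument to compare against, and what you have written is, by your own labelling, a strategy rather than a proof; the honest conclusion is that the conjecture remains open after your sketch. The heuristics you invoke (variation operator as coequalizer, Figure \ref{fig:var}, Seidel's natural transformation from the identity to the monodromy functor) are exactly the motivations the paper itself offers, so your outline is faithful to the intended picture, but Steps 2 and 3 are precisely the open content: the paper only says that the natural transformation coming from triviality of the monodromy on $[M_W/G_W]$ ``should be (conjecturally)'' the class $\Gamma_W$, and it nowhere identifies $\cap\Gamma_W$ with $\mathrm{id}-\mathrm{mon}_*$, nor $\Cone(\mathrm{id}-\mathrm{mon}_*)$ with $\overrightarrow{FS}(W)$ at the $\AI$ level. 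Asserting these as steps to be ``checked'' does not close the gap you yourself flag as the main obstacle.

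There is also a concrete technical problem in Step 1 that your sketch glosses over. The class $\Gamma_W$ exists only downstairs: on $M_W$ the monodromy flow gives Reeb \emph{chords} from $p$ to $\psi(p)$, not closed orbits, so ``$\cap\Gamma_W\rtimes\mathrm{id}$ on $\WF(M_W)$'' is not a cap action by a symplectic cohomology class of $M_W$ and is not defined by your appeal to exactness of the crossed product. Indeed, the paper points out (Section \ref{sec:ggw}) that popsicle discs with $\Gamma_W$-insertions do \emph{not} lift to $M_W$; the lifts have $g_W$-jumps along the $z_0z_1$ arc, which is why the formal variable $\epsilon$ carries a nontrivial $G_W^*$-weight $\chi(g_W^{-1})$ in the definition of $\cF(W,G)=\mathcal{C}_{\Gamma_W}\rtimes G^T$. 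Consequently the semidirect product does not simply commute with the cone of a bimodule map here: the geometric popsicle $\AI$-structure is not the algebraic cone structure (the paper stresses this), and what $\cF(W,G_W)\rtimes G_{W^T}$ produces upstairs is a twisted object whose comparison with any operator of the form $\mathrm{id}-\mathrm{mon}_*$ on $\WF(M_W)$ would itself require a new construction (essentially the open-string avatar of the variation operator), not a formal exactness argument. Until that identification and the $\AI$-level comparison with the Morsified Fukaya--Seidel data are actually carried out, the proposal is a plausible roadmap but not a proof.
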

This conjecture is a categorical version of the classical theorem that variation operator \eqref{eq:introv} is an isomorphism and
Figure \ref{fig:var} (B) illustrates how  $\cap \Gamma_W$-action recovers vanishing cycles. 

It may be surprising that Hom spaces of wrapped Fukaya category is  infinite dimensional whereas we expect
that  $\cF(W, G_W)$ have finite dimensional Hom spaces (We will show this in the case of curve singularities in the sequel).
This is because most of (but not all) wrapped generators are killed by the quantum cap action of $\Gamma_W$.

The conjectural relationship would be quite interesting. Recall that  for $\overrightarrow{FS}(W)$, one chooses a Morsification of $W$ and define the vanishing cycles of this Morsification as objects. Other vanishing cycles from  a different choice of vanishing paths or a Morsification are included only as twisted complexes of initial vanishing cycles.  On the other hand, $\cF(W,G)$ includes all (non-compact) Lagrangians as objects, and under the categorical variation operator, some of them would correspond to the vanishing cycles.
Furthermore, directness of Fukaya-Seidel category is imposed as a definition, but should be an intrinsic property of $\cF(W,G)$.
We will discuss further on the conjecture elsewhere in the future.

Now let us explain about the main ingredients for the proof of the main theorem.

A crucial step in the construction of $\AI$-category  $\cF(W, G_W)$ is to define a new compactification of  popsicles with interior marked points.  Recall that popsicle structure of a disc was introduced and developed by Abouzaid-Seidel \cite{AS} and Seidel \cite{Se18}
in a different geometric context.
Roughly speaking, given a disc $D^2$ with boundary marked points $\{z_0,\cdots,z_n\}$, one considers (hyperbolic) geodesic lines
connecting one of $z_i$ and $z_0$ in $D^2$, and  interior marked points of a popsicle disc  are only allowed to live on these lines.
In the construction of Abouzaid and Seidel, interior marked points are allowed to overlap as they were used as a place to put
the support of certain one forms. 

Unlike \cite{AS} or \cite{Se18}, we will use interior marked points to place the monodromy Hamiltonian orbit $\Gamma_W$
to define a new $\AI$-category  $\cF(W, G_W)$. Therefore, interior marked points should not overlap with each other and
 when interior marked points collide in a sequence, they will create sphere bubbles in accordance with the conventional Floer theory.
Thus, it is necessary to compactify popsicles differently from \cite{AS} and \cite{Se18}.

In fact, the convergence becomes much more delicate  compared to the cases of Abouzaid and Seidel (we thank Paul Seidel for this observation). Let us explain it in more detail.

When the interior marked points collide and create sphere bubbles, the popsicle geodesic lines also collide
and induces a popsicle structure on the sphere bubble as well.
But depending on the relative positions of marked points, the resulting popsicle structure on the sphere bubble may or may not be trivial.
Namely, geodesic lines may remain distinct  in the limit bubble or may become a single line in the limit bubble.
If popsicle lines remain distinct in the bubbled sphere, the conformal structures of this sphere bubble and a certain disc component (with the same set of non-trivial popsicle lines) should be related.  Thus the standard compactification using stable maps is not enough to
describe the compactifications.

To overcome these difficulties, we define the concept of {\em alignment data} which captures such additional relations between
conformal structures of discs and spheres.
We define a new compactification of popsicle discs using stable popsicles with alignment data.
In this new compactification, dimension of a stratum also becomes subtle because alignment might give restrictions to the possible conformal structures.
We find that stratum with   sphere bubbles can be of codimension one
when  sphere bubbles are aligned to a popsicle disc.
For example, the stratum in Figure \ref{fig:twobubble} is codimension one!

\begin{figure}[h]
\includegraphics[scale=0.5]{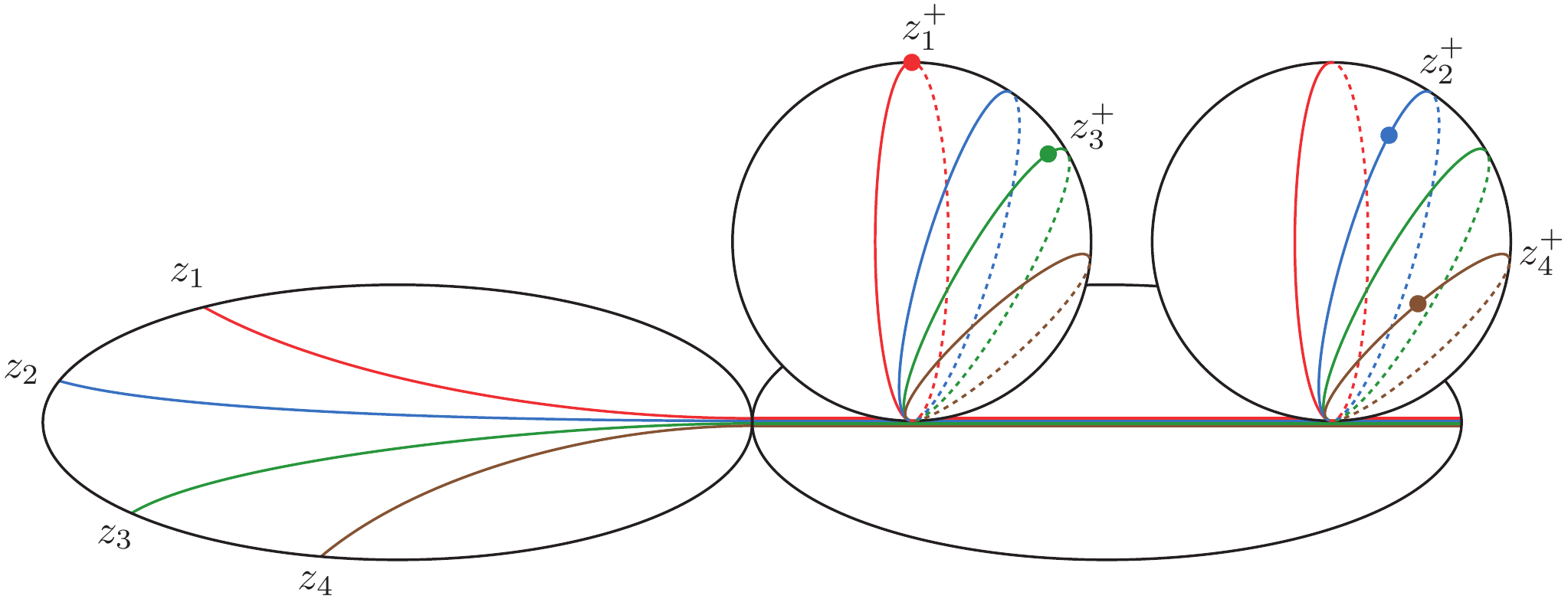}
\centering
\caption{A possible codimension one stratum in the popsicle compactification}
\label{fig:twobubble}
\end{figure}

It is interesting to note that similar phenomena were already observed in the beginning of 90's in the study of
modular operads. Fox-Neuwirth \cite{FN} introduced a cellular decomposition of $S^{2n} = \mathbb{R}^{2n} \cup \{\infty\}$,
from the configuration of $n$-points on $\mathbb{R}^{2n}$. Getzler-Jones in  \cite{GJ} developed the relationship between operads and homotopy algebras, and studied modular operad to prove a version of Deligne's conjecture. 
The above phenomenon of sphere bubbles of codimension one also appears in this context and provide obstructions to
the cellular structure of the relevant moduli spaces (see Voronov \cite{V} where Getzler-Jone's approach was completed to prove the conjecture). 

We can analogously define the compactification of the moduli space of popsicle spheres, and
we expect to obtain new homotopy algebraic structures on symplectic cohomology, such as (shifted) $L_\infty$- 
structure on symplectic cohomology. We will explore this elsewhere.

Coming back to our construction of the new Fukaya category, the desired $\AI$-equation holds if these sphere bubbles of
codimension one do not contribute. Let us first  work on the case of a symplectic cohomology class $\Gamma$ in a Liouville manifold $M$
(satisfying Assumption \ref{as:main}).
If we consider popsicle discs with $\Gamma$ insertions, the following proposition turns out to be crucial.
\begin{prop}[Prop. \ref{prop:v1}, Prop. \ref{prop:vanishing2}]\label{prop:ii}
If Robbin-Salamon index of a principle Reeb orbit component is non-negative, then  sphere bubbles of codimension one (with $\Gamma$-insertions) do not arise.
\end{prop}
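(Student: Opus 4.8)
The plan is to rule out the offending strata by a virtual–dimension (index) count, supplemented by action estimates, in the spirit that a codimension-one boundary stratum can obstruct the $\AI$-relations only if it is populated by genuine $J$-holomorphic configurations of the expected dimension; the hypothesis on the Robbin-Salamon index is exactly what prevents this.

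First I would isolate, from the description of the new popsicle compactification in the preceding sections, which codimension-one strata actually involve sphere bubbles carrying $\Gamma$-insertions. By construction these are the aligned-bubble configurations of the type in Figure~\ref{fig:twobubble}: those in which the colliding interior marked points produce a sphere bubble whose surviving popsicle geodesics remain distinct in the limit, so that its conformal modulus is forced to be aligned with that of an adjacent popsicle disc component. The crucial structural input is that, through the closed--open description of $\cap\Gamma$ (resp.\ $\cap\Gamma_W$), each $\Gamma$-insertion is a puncture carrying a cylindrical end asymptotic to the (principal) monodromy Reeb orbit; hence the sphere bubble is a finite-energy punctured $J$-holomorphic sphere in the symplectization of the contact boundary --- of $M$ under Assumption~\ref{as:main}, of $[M_W/G_W]$ in the orbifold case --- whose asymptotic ends contribute their Robbin-Salamon indices to the linearized index.

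Next I would run the dimension count for such a stratum. Writing its virtual dimension as (virtual dimension of the top popsicle stratum) minus the gluing/matching codimension at the node, plus the parameters recovered from the sphere-bubble moduli, and then subtracting the alignment constraint that ties the bubble's conformal modulus to the adjacent disc, one sees that it is precisely the alignment that can bring the count down to codimension one --- the phenomenon flagged in the introduction. The content of the proposition is that this lost dimension is accounted for exactly when the principal Reeb orbit component contributes non-negatively: by Corollary~\ref{cor:gin}, $\mu_{RS}(\Gamma_W) = 2((\sum_i w_i)-h)/h$, which is $\ge 0$ precisely in the range~\eqref{eq:FC}, and in this range the asymptotic contribution together with the positivity of $c_1$ forces the virtual dimension of the sphere-bubble factor (with its node end, after the alignment reduction) to be $\le -1$; an action estimate, bounding the energy of a non-constant bubble below by a positive multiple of the action of $\Gamma_W$, then disposes of the degenerate low-energy cases and shows the stratum is empty --- equivalently, it has actual codimension $\ge 2$ and does not enter the boundary of the one-dimensional moduli spaces governing the $\AI$-relations. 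I would organize this as the two cases of Propositions~\ref{prop:v1} and~\ref{prop:vanishing2}: the abstract Liouville manifold with a general $\Gamma$ satisfying Assumption~\ref{as:main}, and the orbifold $[M_W/G_W]$ with $\Gamma=\Gamma_W$, where the indices of the twisted Reeb orbits of the Milnor fiber quotient are the quantities fed into the formula.

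The main obstacle is the dimension bookkeeping itself. Because alignment is a nonstandard incidence relation --- two conformal structures, one on a sphere and one on a disc, are forced to agree along the surviving popsicle lines --- the naive Gromov codimension-two count for bubbling is invalid, and one must track, combinatorial type by combinatorial type, how many conformal parameters the alignment eliminates against how many gluing parameters the node introduces; in particular the subcase in which several popsicle lines collapse to a single line in the bubble changes the alignment pattern and has to be treated separately. A secondary difficulty is regularity: the sphere bubbles are pinned to the distinguished orbit $\Gamma_W$ and may be multiply covered or fail to be regular, so the index inequality must be made effective either by an explicit analysis of such curves in the symplectization or by a virtual perturbation scheme compatible with the alignment stratification. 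Once these are in place, non-negativity of the Robbin-Salamon index of the principal Reeb orbit component closes the estimate.
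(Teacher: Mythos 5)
Your overall plan---rule out the aligned codimension-one sphere-bubble strata by an index count supplemented by action estimates---is in the right spirit, but the pivotal step as you state it does not close, and the gap is exactly where the paper has to work hardest. You claim that non-negativity of $\mu_{RS}$ of the principal component ``together with the positivity of $c_1$ forces the virtual dimension of the sphere-bubble factor \ldots to be $\le -1$.'' First, there is no positivity of $c_1$ anywhere: Assumption \ref{as:main} imposes $c_1(M)=0$, and in the orbifold case one only has fractional gradings. More seriously, the virtual dimension of a sphere-bubble component is not determined by the $\Gamma$-inputs alone: its output is an \emph{arbitrary} Hamiltonian orbit (a twisted Reeb orbit of arbitrary, possibly fractional, period in the $[M_W/G_W]$ case), and its degree enters the index with the opposite sign. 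Without an a priori constraint on that output degree, no amount of alignment bookkeeping makes the stratum negative-dimensional. The paper supplies this constraint in two halves, neither of which appears in your proposal: (i) an \emph{upper} bound on the output degree, Proposition \ref{degree inequality 1} and Corollary \ref{degree inequality 2}, obtained not from a generic-transversality count but from the linearized operator of the whole rigid, regular aligned configuration---the kernel of each sphere's operator must inject into $T\mathcal M_{0,n+1}$ because the total kernel vanishes, which is where the alignment is actually used; and (ii) a \emph{lower} bound on the degree of any orbit at a given action level, coming from the action-filtration spectral sequence of Lemma \ref{energy spectral sequence} (respectively Lemma \ref{lem:filgw} together with the index computations of Proposition \ref{prop:RS} and the inequality of Proposition \ref{orbifold degree inequality 1} for the twisted orbits), which is precisely where non-negativity of $\mu_{RS}$ enters. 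The action estimate is then used to bound the filtration level $l$ of the output \emph{above} in terms of the number $N$ of $\Gamma$-insertions ($l^2\lesssim N^2$), not to ``dispose of degenerate low-energy cases''; the contradiction between $l\ge N(1+\tfrac1{2\mu})$ and $l^2\le N^2(1-\epsilon)$ (or the sign contradiction when $\mu=0$) is what kills the bubbles in Propositions \ref{prop:vanishing} and \ref{prop:vanishing2}.

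A secondary inaccuracy: emptiness of these strata is not ``equivalently, actual codimension $\ge 2$,'' and the mechanism is not a virtual-perturbation or multiple-cover analysis. Since the Floer data are domain-dependent and are not required to match across aligned components, transversality for the broken configurations is achieved generically in the standard way, so the regularity worry you raise does not arise in this scheme; what you would need instead is the degree/action sandwich described above.
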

We will prove the proposition using  estimates of degrees and action values.
This proposition  enables us to exclude codimension one
sphere bubbles and obtain the $\AI$-relations as in	 \cite{AS} and \cite{Se18}.
This defines an $\AI$-category $\mathcal{C}_\Gamma(M)$ (see Section \ref{sec:qc1}).

We run into more  challenges with the construction
  in the case of the Milnor fiber quotient orbifold $[M_W/G_W]$ (to define $\cF(W,G_W)$).

The first one is to define the monodromy orbit $\Gamma_W$ representing principal Reeb orbits at the boundary.
This requires a choice of a Morse-Smale function on the contact boundary. As $G_W$ acts diagonally, orbifold strata are
somewhat special in the sense that they are always given by setting some of the coordinates to be zero. This can be
used to define a desired Morse-Smale function (Proposition \ref{equivariant Morse function}) whereas it is not always possible for a general orbifold.

The next difficulty is that a general definition of orbifold symplectic cohomology is not known. Our case is somewhat simpler
in the sense that we only use $\Gamma_W$ as insertions, but the output of a possible sphere bubble could be an arbitrary orbifold symplectic cochain.  Even though we will not define the orbifold symplectic cochain complex of $[M_W/G_W]$,
we will identify their generators and compute their Robbin-Salamon indices.

Namely, we will consider the space of $G_W$-twisted Reeb orbits, i.e., $\gamma:[0,l] \to M_W$ satisfying $g\cdot \gamma(1) = \gamma(0)$
for some $g \in G_W$, and $\gamma$ being a Reeb flow. The problem can be quite complicated as the period $l$ could be a fractional number.  Their Robbin-Salamon indices are explicitly computed in Proposition \ref{prop:RS}.
We obtain an inequality relating the weights of $W$ and the Robbin-Salamon indices of twisted Reeb orbits in Proposition \ref{orbifold degree inequality 1}. 
Hence the log Fano or Calai-Yau assumption \eqref{eq:FC} combined with this inequality implies that the associated Robbin-Salamon index is non-negative.
 This enables us to prove vanishing of sphere bubbles with $\Gamma_W$ insertions (see Proposition \ref{prop:vanishing2}). This finishes the construction of the new $\AI$-category $\cF(W,G)$.
 
Our construction heavily depends on the fact that $W$ is weighted homogeneous, especially in the definition of
the monodromy orbit $\Gamma_W$. But as variational operator exists for general $W$, it is natural to expect that 
its categorification exists in general as well. 
In the case that $W$ is a polynomial of two variables,  which are not necessarily weighted homogeneous, we
can indeed define such a category (in a joint work in progress with Hanwool Bae).

In the sequel, we will apply the construction in this paper  to find a geometric understanding of  Berglund-H\"ubsch Mirror symmetry.
Given an invertible polynomial $W$ of two variables, we can find the transpose polynomial $W^T$ entirely in a geometric way.
The Milnor fiber quotient $[M_W/G_W]$  has a Landau-Ginzburg mirror $W_\bL$ given as a Lagrangian Floer potential function of a Seidel Lagrangian $\bL$.
Then, the closed open map of the monodromy orbit $\Gamma_W$ can be used to define a polynomial $g$ via Kodaira-Spencer map (\cite{FOOO_MS}, \cite{ACHL}).
We can show that quantum cap action $\cap \Gamma_W$ corresponds to the restriction of the potential $W_\bL$ to the hypersurface $g=0$
(see also Section \ref{counterpart}). Then $W_\bL$ restricted to the hypersurface $g=0$ turns out to be the transpose polynomial $W^T$!

To exclude the sphere bubbles in our main construction, we need to require  log Fano or Calabi-Yau condition.
Unfortunately, only $x^2+y^2$ satisfies this condition and all the other invertible curve singularities are of log general type.
But we can still use low dimensionality to exclude certain sphere bubbles, and more detailed discussion will be given in the sequel.

 Let us finish the introduction by working out the example of $x^2+y^2$.
  \subsection{Example: $\boldsymbol{x^2+y^2}$}
To illustrate our construction, let us explain the case of $W=x^2+y^2$. Milnor fiber $M_W =\{x^2+y^2=1\}$ is $T^*S^1$ or a cylinder. Its zero section is a vanishing cycle of $W$ and its cotangent fiber $K$ generates its wrapped Fukaya category (see Figure \ref{fig:Lef}).
The maximal diagonal symmetry group $G_{W}$ is  $\Z/2 \times \Z/2$ and
the quotient $[M_W/G_W]$ is an orbifold sphere $\mathbb{P}^1_{2,2,\infty}$ with two $\Z/2$-orbifold points, say $A,B$ and a single puncture
$C$ as in Figure \ref{fig:A2quot}.

\begin{figure}[h]
\includegraphics[scale=0.6]{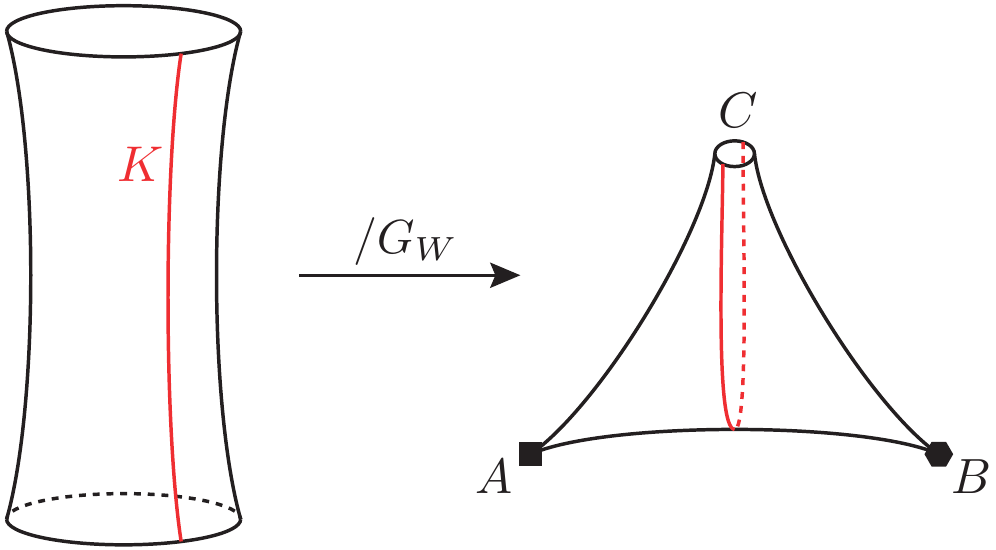}
\centering
\caption{Quotient of Milnor fiber}
\label{fig:A2quot}
\end{figure}

We claim that 
\[H^\bullet \left(\Hom_{\cF(W, G_W)}(K, K) \right) \simeq \mathrm{Cliff}_2(\mathbb C).\]
A category $\cF(W, G_W)$ is essentially the wrapped Fukaya category of $[M_W/G_W]$ $= \mathbb{P}^1_{2,2,\infty}$ with additional relation "$\Gamma_W=0$".  We realize it by considering quantum cap action of $\Gamma_W$ on wrapped Fukaya category of $\mathbb{P}^1_{2,2,\infty}$. As a complex, we have
\[\left( \Hom_{\cF(W, G_W)}(K, K) , d \right) \simeq \left(\begin{tikzcd}\left( CW^\bullet(K, K), m_1 \right)\arrow[r, "\cap \Gamma_W"] & \left( CW^\bullet(K, K), m_1 \right) \end{tikzcd}\right)\]
We view $K$ as an object of  the $\AI$-category $\mathcal{WF}([M_W/G_W])$ by considering $G_W$-orbit of $K$ consisting of $K, \psi(K)$ and their disjoint $\Z/2$-images.
Let us take two minimal wrapped generators, name them as $\alpha,\beta \in CW(K,K)$. The computation shows 
\[\left( CW^\bullet (K, K), m_1 \right) \simeq \left( \C\langle \alpha, \beta \rangle / \{\alpha^2 = \beta^2 = 1\}, m_1=0 \right) \] 
The element $\Gamma_W$ is an $S^1$ family of orbits (quotient of a Reeb flow drawn in Figure \ref{fig:var} (B)). The same flow determines an element $\alpha \beta$ and $\beta \alpha$ in $CW^\bullet(K, K)$. The map $\cap \Gamma_W$ turns out to be the multiplication by $\alpha\beta+\beta\alpha$. Therefore,
\[H^\bullet \left(\Hom_{\cF(W, G_W)}(K, K) \right) \simeq  \C\langle \alpha, \beta \rangle / \{\alpha^2 = \beta^2 = 1, \alpha\beta=-\beta\alpha\}, \]
as is claimed. 

Intuitively cone of a morphism corresponds to the Lagrangian surgery as illustrated in Figure \ref{fig:var} (B).
But we should note that the new category $\mathcal{C}_{\Gamma_W}([M_W/G_W]) =\cF(W, G_W)$ is different from the category of cones (where cones are considered as twisted complexes), and hence this surgery interpretation should be taken
only as an intuition.  The actual $\AI$-category is defined by using popsicle maps with $\Gamma_W$-insertions.

We would like to notice that this computation matches to the expectation from  Berglund-H\"ubsch version of homological mirror symmetry.
A mirror for $\mathbb{P}^1_{2,2,\infty}$ is a polynomial $x^2+y^2+xyz$. 
To see this, take Seidel's Lagrangian $\mathbb{L}$ (\cite{Se}), which is an immersed Lagrangian with three odd immersed generators $X,Y,Z$.
It is weakly unobstructed with bounding cochain $\bb = xX+yY+zZ$ and its Lagrangian
potential function is exactly $W_\bL = x^2+y^2+xyz$ (see \cite{CHL}).
This can be  seen by taking lifts of $\mathbb{L}$ to the cylinder, which gives four circles as in Figure \ref{fig:A2K}.
Pick a generic point, and count all rigid polygons passing through it. The reader can find two bigons with corners labeled by $X,X$ and $Y,Y$ together with
a minimal triangle $XYZ$.

\begin{figure}[h]
\includegraphics[scale=0.6]{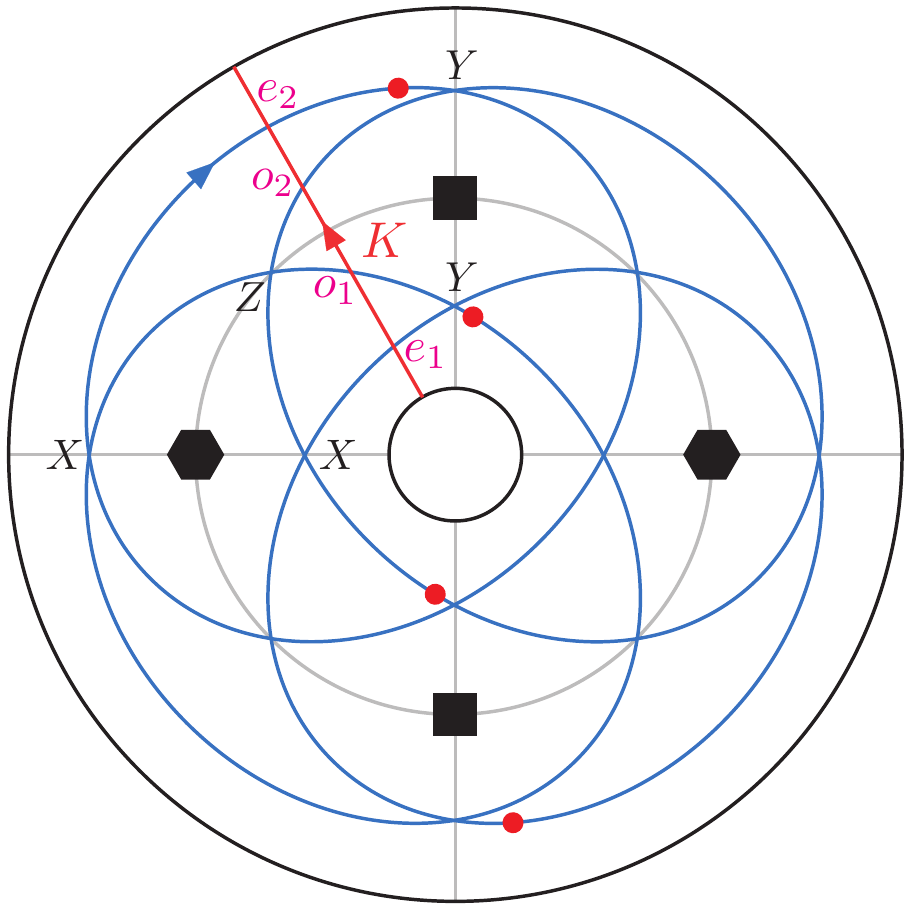}
\centering
\caption{Lifts of Seidel Lagrangian and $K$}
\label{fig:A2K}
\end{figure}

If we set $z=0$, $W_\bL$ becomes a desired dual polynomial $W^T=x^2+y^2$. 
This is related to the monodromy of $W$ as follows. Monodromy  of $W$ (fixing the boundary) is given by
a Dehn twist $\phi$ as in Figure \ref{fig:var} (A). We will consider a version of monodromy $\psi$ which does not fix the boundary
as in Figure \ref{fig:var} (B). It turns out that the monodromy $\psi$ on the Milnor fiber $M_W$ is
is the same as the action by the element $(-1,-1)$ of a maximal symmetry group $G_W$. Also, consider the Reeb flows
on the boundary $\partial \OL{M}_W$ describing the monodromy $\psi$, which give a Reeb orbit $\Gamma_W$ in the quotient $\mathbb{P}^1_{2,2,\infty}$. In this case, $\Gamma_W$ is the orbit that winds the puncture of the quotient once. Closed-open map takes this orbit to $z \cdot 1_\bL$.



We have $\AI$-functor from $\cF(W, G_W) \to \mathcal{MF}(W^T)$ which is induced by localized mirror functor \cite{CHL} (and setting $z=0$). It sends $K$ to Floer complex $\left(CW^\bullet (K,\mathbb{L}), -m_{1}^{0,\bb}\right) \bigg |_{z=0}$, which is a matrix factorization of $W^T$. We can calculate it in Figure \ref{fig:A2K} and it is given by
\begin{equation*}
-\begin{pmatrix}
y& x\\
x& -y\\
\end{pmatrix} \cdot
-\begin{pmatrix}
y& x\\
x& -y\\
\end{pmatrix}
\end{equation*}
This factorization is the compact generator of $\mathcal{MF}(W^T)$ considered in \cite{Dy},  and it's endomorphism ring is the Clifford algebra in two variables.


\subsection{Structure of the paper}
In Section \ref{sec:2}, we explain the  new phenomenon that arises in popsicle  compactifications with interior insertions, and present the
concept of  alignment data using a number of instructive examples. In Section \ref{sec:3}, we give a formal definition of
the compactified moduli space of popsicles with interior insertions.
In Section \ref{sec:qc0}, we recall the definition of quantum cap action and define $J$-holomorphic popsicles with  insertions of
a fixed symplectic cohomology cycle $\Gamma$ in a Liouville manifold $M$. 
In Section \ref{sec:qc1},  we construct an $A_\infty$-category $\CG$ from wrapped Fukaya category of $M$ on which the quantum cap action of $\Gamma$ vanishes using $J$-holomorphic popsicles with $\Gamma$-insertions.
This construction
will be generalized to orbifold quotients of Milnor fibers of weighted homogeneous polynomials in Section  \ref{sec:LGFukaya}.
A parallel construction in algebraic geometry is described in Section \ref{counterpart}, which is a restriction to a hypersurface.
In Section  \ref{sec:LGFukaya}, we explain our choice of distinguished Hamiltonian orbit $\Gamma_W$ for a Milnor fiber of a given weighted homogeneous polynomial $W$ which encodes the monodromy information. Then we define a new $\AI$-category $\cF(W,G)$ by generalizing the construction of $\CG$ to this particular class of orbifolds. In order to show its well-definedness, we classify Reeb orbits of a Milnor fiber and computes their Conley-Zehnder indices in Section \ref{sec: index computation}. Finally, we show that  $\cF(W,G)$ is well-defined for a log Fano/Calabi-Yau type polynomial in Section \ref{sec:proof2}.

In Appendix \ref{basic Floer theory}, we briefly describe the moduli spaces and perturbation scheme we use throughout the paper.  Appendix \ref{compactification} explains a compactification of popsicle moduli spaces. 


\subsection{Acknowledgement}
We would like to thank Paul Seidel for informing us a crucial error in the previous version of our compactification of popsicles and the encouragements. We would like to thank Otto van Koert, Hanwool Bae for the discussion on symplectic cohomology theory on orbifolds, and popsicle compactifications. We would like to thank Atsushi Takahashi, Philsang Yoo, Kaoru Ono for helpful discussions. The second author was partially supported by the T.J.Park science fellowship grant. 

\section{New compactification of popsicles with interior marked points: examples}\label{sec:2}
Let us first informally explain the new compactification of popsicles through several examples of increasing complexity, and
give a formal treatment later in the section.

Let us first recall the  definition of a popsicle by Abouzaid and Seidel \cite{AS} (we follow Seidel  \cite{Se18} to use rational ends).
\begin{defn}[\cite{AS},\cite{Se18}]\label{defn:popdisk}
A popsicle is a disc $D^2$ with following decorations;
\begin{enumerate}
      \item \textbf{boundary marked points}: denoted by  $z_0, z_1, \ldots, z_n$ according to their cyclic order. 
      \item \textbf{popsicle lines}: regarding the interior of $D^2$ as a hyperbolic disc, the geodesic connecting $z_i$ and $z_0$ (at infinity), denoted as $Q_i$.
      \item \textbf{flavor}: a set of flavors $F = \{1,\ldots,l \}$ and a non-decreasing map 
       \[\phi: F \to \{1,\ldots, n\}.\] 
      \item \textbf{sprinkles (interior marked points)}:   $l$ distinct interior marked points $z_1^+,\ldots,z_l^+$, such that $z_j^+$ lies on the geodesic $Q_{\phi(j)}$ for
      $j=1,\ldots,l$.
    \end{enumerate}
We say that the above popsicle has type $(n,\phi)$ (see Figure \ref{fig:popsicle}).
We call it \textit{stable} if $n+l \geq 2$. We denote a moduli space of popsicles of type $(n,\phi)$ by  $P_{n, F, \phi}$.
 \end{defn}
 
	\begin{figure}[h]
	\centering
	\def\svgwidth{5cm}
	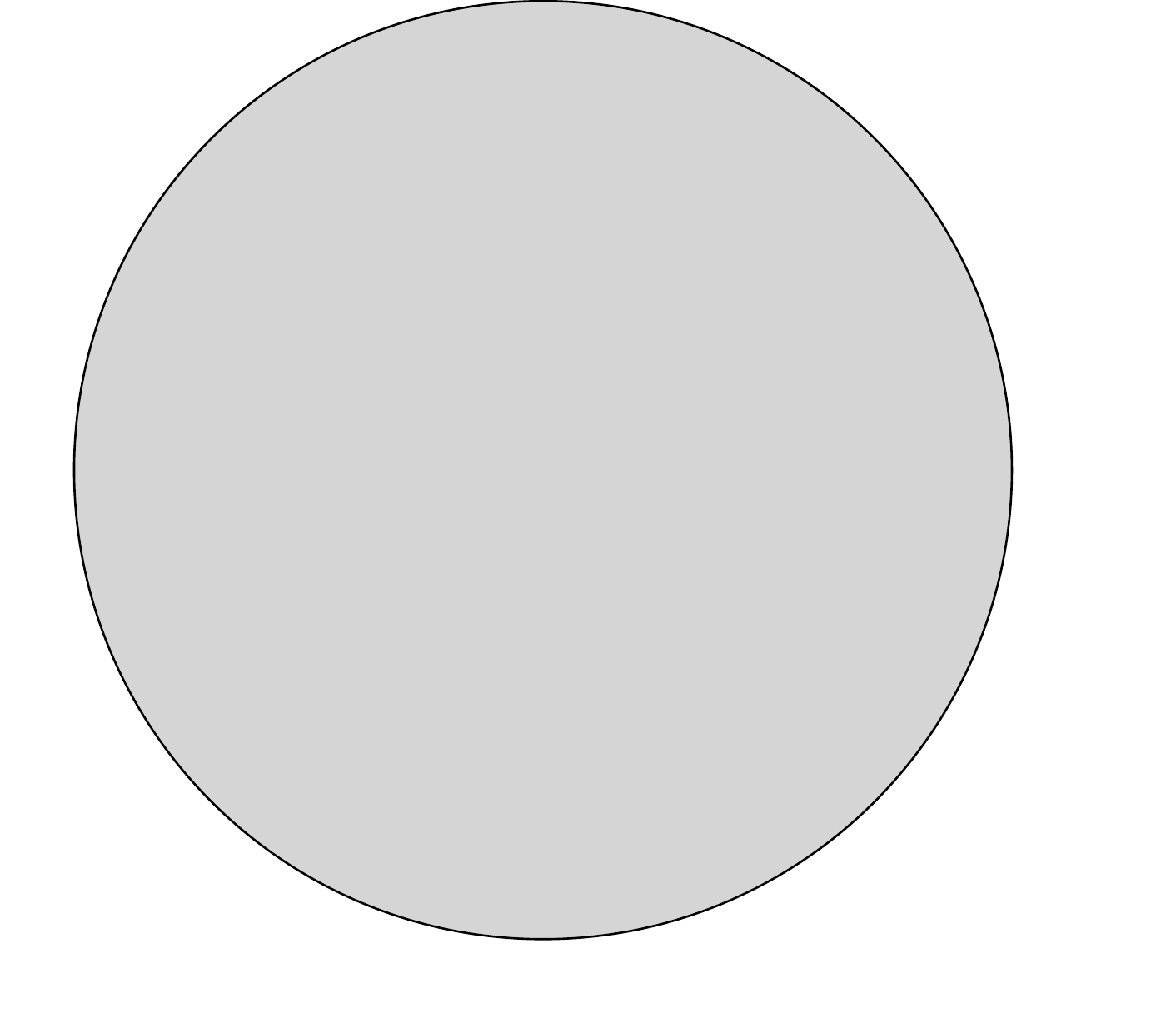
	\caption{Example of a popsicle such that $\phi(1)=\phi(2)=1$ and $\phi(l)=k$.}
	\label{fig:popsicle}
	\end{figure}

Abouzaid and Seidel defined compactifications of popsicle moduli spaces while allowing the interior marked points to coincide.
For example, $z_1^+$ and $z_2^+$ in Figure \ref{fig:popsicle} may coincide in their limit. 
But we will not allow our interior marked points to coincide, as they will be used to place inputs.

For example, let us consider a popsicle of type $(2,\phi)$, which has two sprinkles with the identity map $\phi: \{1,2\} \to \{1,2\}$.
Our compactification of the moduli space of type $(2,\phi)$ is illustrated in the Figure \ref{fig:popdeg}.
To obtain the  Abouzaid-Seidel's compactification, one has to contract the bottom edge to a point (see Figure 3 of \cite{AS}),
as sphere bubble does not arise in the limit.

Note that at the two vertices of  the bottom edge in the Figure \ref{fig:popdeg}
two popsicle lines  coincide in the limiting sphere bubble.
On the interior of the bottom edge, sphere bubbles inherit two popsicle lines from discs, and popsicle lines of the sphere and disc are aligned.

\begin{figure}[h]
\includegraphics[scale=0.45]{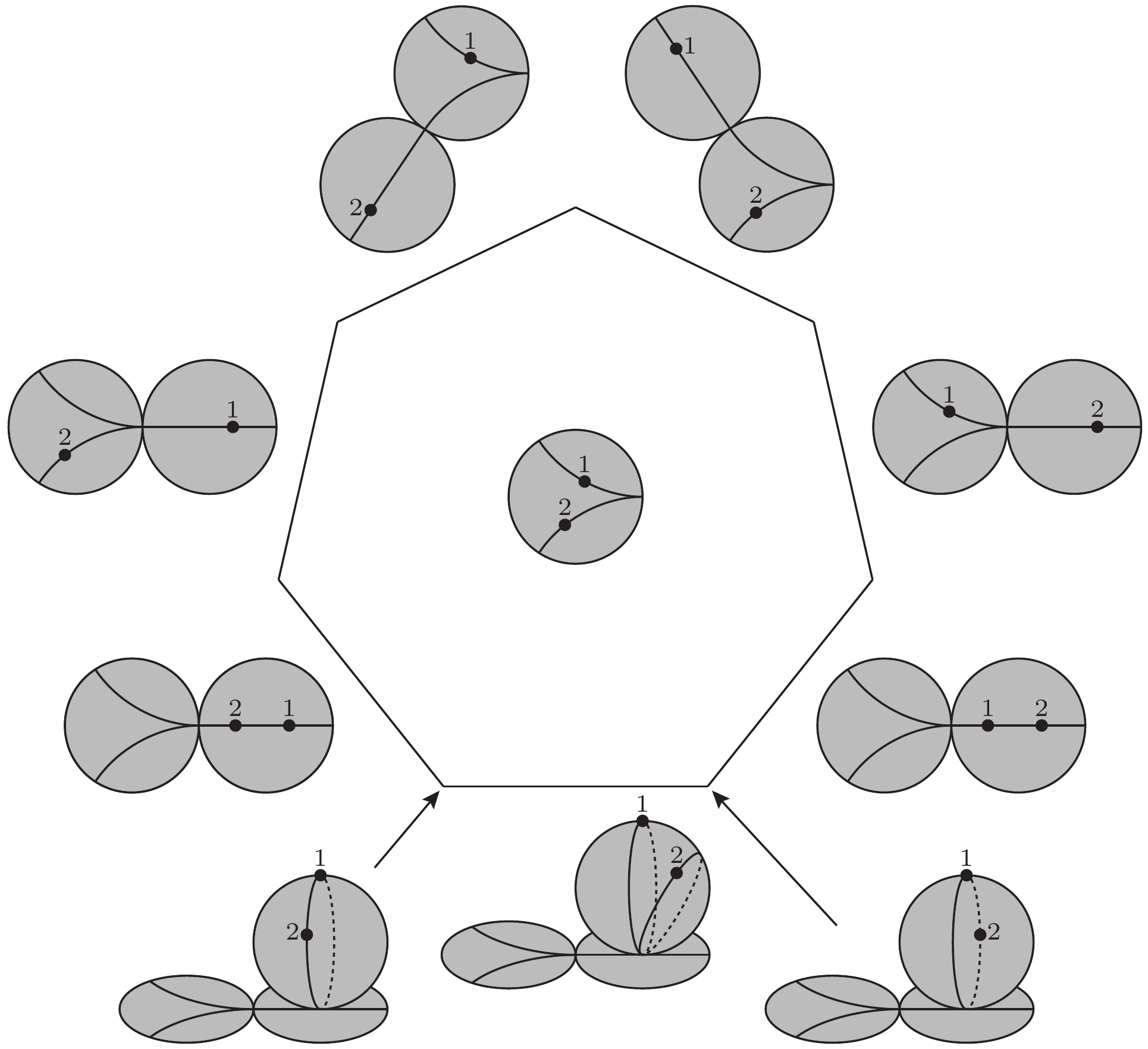}
\centering
\caption{Compactification of popsicles when sprinkles cannot overlap with each other}
\label{fig:popdeg}
\end{figure}
 
\subsection{Aligning conformal structures on a disc and a sphere}
We introduce the notion of a popsicle sphere, and explain the notion of alignment between a popsicle disc and a popsicle sphere.
\begin{defn}
A popsicle sphere is the following decoration on a rational sphere $\mathbb P^1$;
\begin{enumerate}
  \item \textbf{fixed outgoing marked point point}: denoted by $w_0^-\in \mathbb P^1$,
  \item \textbf{incoming marked points}: denoted by $\{w_{i}^+ \vert 1\leq i \leq n\}$,
  \item \textbf{popsicle lines}: regarding $\mathbb P^1\setminus w_0^-$ as a complex plane, a set of $m$ distinct oriented lines on a plane with the same direction. We can arrange them using rotation of a complex plane so that they are all oriented upward. We denote them by $Q_1, \ldots, Q_m \subset \C$ from the left to the right after this arrangement.
  \item a map $\phi: \{w_1^+, \ldots, w_n^+\} \to \{1, \ldots, m\}$ such that $w^+_{i}$ lies on $Q_{\phi(w_i^+)}$.
\end{enumerate} 
If $n \geq 2$, then we call it stable. We write such a moduli space of popsicle structure by $\mathbb P_{m, n, \phi}$.
\end{defn}

	\begin{figure}[h]
	\centering
	\def\svgwidth{10cm}
 	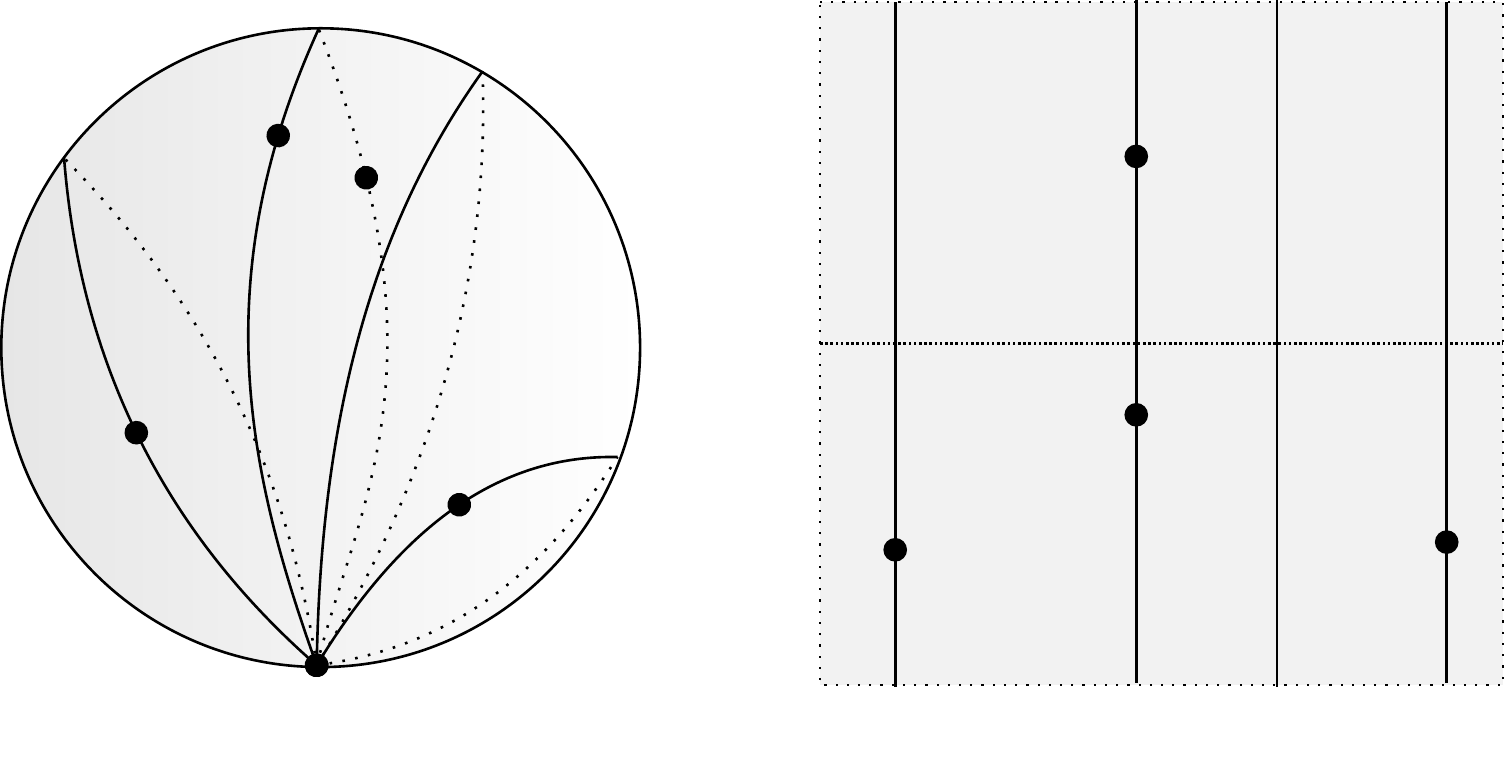
	\caption{Example of a popsicle sphere and its planar model}
	\label{fig:sphere}
	\end{figure}
 
Popsicle structure on a sphere and a disc can be compared in a natural way.
First, take out  $z_0$ (resp. $w_0^-$) from $D^2$ (resp.  $\mathbb{P}^1$) and fix an identification with the upper half plane $H$ (resp. $\C$
via stereographic projection).

In the case of a disc, boundary marked points $z_1,\cdots, z_n$ are determined by ordered real numbers $x_1, \ldots, x_n$, and 
a popsicle line between $z_0$ and $z_i$ on $D^2$ correspond to the vertical (upward) lay at $x_i$. Thus the position of sprinkle $z_i^+$
is determined by its $y$-coordinate.
\[z_i \mapsto x_i\in \R, \hskip 0.3cm Q_i \mapsto \{\mathrm{Re}z=x_i\}, \hskip 0.3cm z_j^+ \mapsto \left(x_{\phi(j)}+\sqrt{-1} y_j\right ) \in Q_{\phi(j)}.\]
The forgetful map $f: P_{n,F,\phi} \to \mathcal M_{0, n+1}$ is well-defined.  

In the case of a sphere, we may further assume that $w_1^+$ maps to $\R \in\C$ and each and every $Q_i$ head strictly upward. Then popsicle lines are determined by ordered real numbers $x_i$ and sprinkles are determined by real numbers $y_j$;
  \[Q_i \mapsto \{\mathrm{Re}z = x_i\}, \hskip 0.3cm w_1^+ \mapsto x_{\phi(w_1^+)}, \hskip 0.3cm w_j^+ \mapsto \left(x_{\phi(w_j^+)} +\sqrt{-1}y_j\right) \in Q_{\phi(j)}.\]    
We get a well-defined map $f: \mathbb P_{m,n,\phi} \to \mathcal M_{0, m+1}$, just remembering $x$-coordinates of popsicle lines.

\begin{defn}
We say a popsicle disc $\Sigma$ and a popsicle sphere $\Sigma^+$ are \textbf{aligned} if their forgetful map images coincide in $\mathcal M_{0, m+1}$. Namely, we have
\[\left(\Sigma, \Sigma^+\right) \in P_{m, F, \phi} \times_{\mathcal M_{0,m+1}} \mathbb P_{m, n, \phi'}.\] 
 When $\Sigma_1$ and $\Sigma^+$ are aligned, we will write  
$$\Sigma  \Join \Sigma^+   \;\;\; \textrm{or} \;\;\;  \Sigma^+ \Join \Sigma  $$
\end{defn}
Intuitively, if popsicle lines when seen in the upper half plane $H$ and $\C$ agree with each other, then they are aligned. Note also that a disc and a sphere with exactly two popsicle lines each, are always aligned with each other. 

A  popsicle which consists of discs and sphere bubbles  is called stable if it is stable in the usual sense.
Namely, each sphere component has at least 3 special points
(marked or nodal), and each disc component has either at least three special points (marked or nodal) or one boundary and
one interior nodal point.  Hence, stable popsicle do not contain a sphere component with several popsicle lines but with just one or zero sprinkle.
But in order to describe a neighborhood of a stable popsicle, sometimes we need to consider such unstable components for the gluing. Such unstable components will be uniquely determined from a given stable popsicle and thus they will be called auxiliary data.

 \begin{remark}
Let us remark on the alignment for the unstable case of only one popsicle line. The space of pairs $(\Sigma, \Sigma^+)$ have extra one dimension because $\mathcal M_{0, 1+1} \simeq [\bullet/ \mathbb R]$ is a stacky point. Explicitly, suppose two different sphere component $\Sigma_1^+$ and $\Sigma_2^+$ are isomorphic by a homothety automorphism. Even though $\Sigma_i^+$ represent isomorphic element in $\mathbb P_{1,F', \phi'}$, a pair $(\Sigma, \Sigma_1^+)$ and $(\Sigma, \Sigma_2^+)$ represents different element in the fiber product. This ambiguity provides extra one more dimension. 
\end{remark}

Let us discuss examples of compactifications and the alignment therein.
\subsection{Examples of new compactifications}
Let us start with the simplest example, which appears at the bottom stratum of Figure \ref {fig:popdeg}.
\begin{example}\label{ex:pop21}
Consider  $(D^2, z_0,z_1,z_2)$ with two sprinkles $z_1^+, z_2^+$, whose type is $(2, \phi)$ where $\phi:\{1,2\} \to \{1,2\}$ is an identity map.
The moduli space of popsicles of type $(2,\phi)$ has dimension two.

\begin{figure}[h] 
\begin{subfigure}[t]{0.4\textwidth}
\raisebox{7ex}{
\includegraphics[scale=0.45]{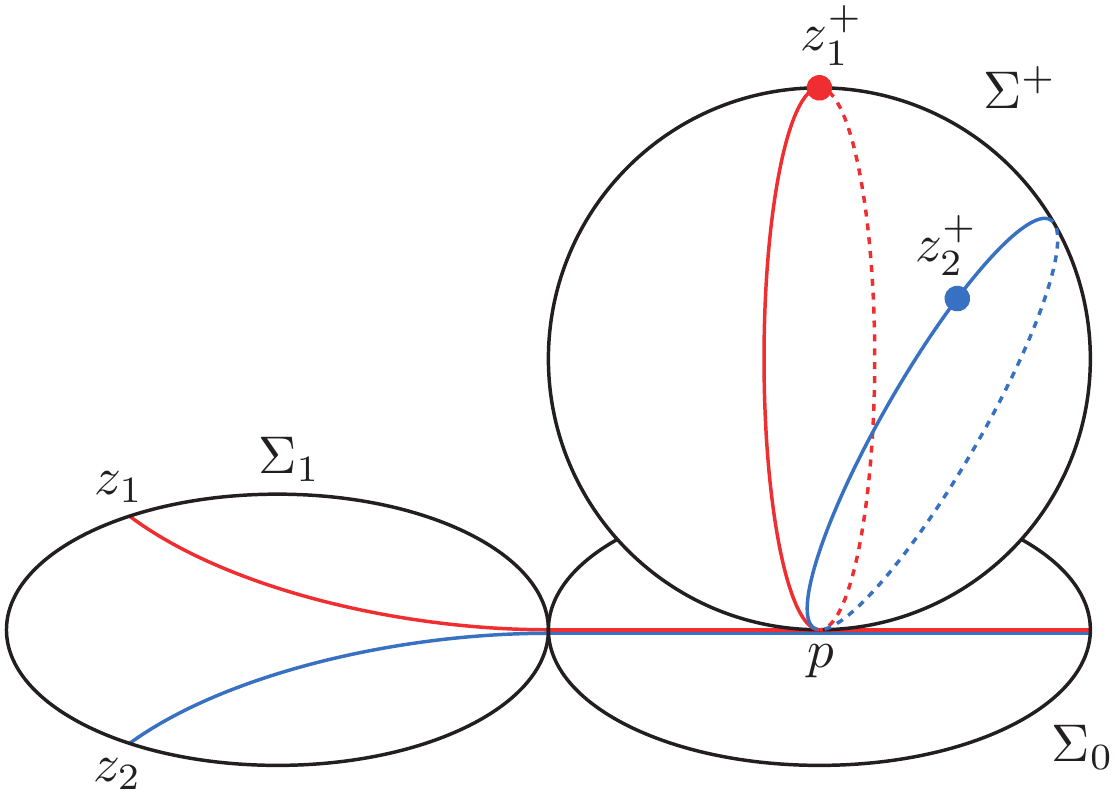}}
\centering
\caption{ }

\end{subfigure}
\qquad
\begin{subfigure}[t]{0.4\textwidth}
\includegraphics[scale=0.45]{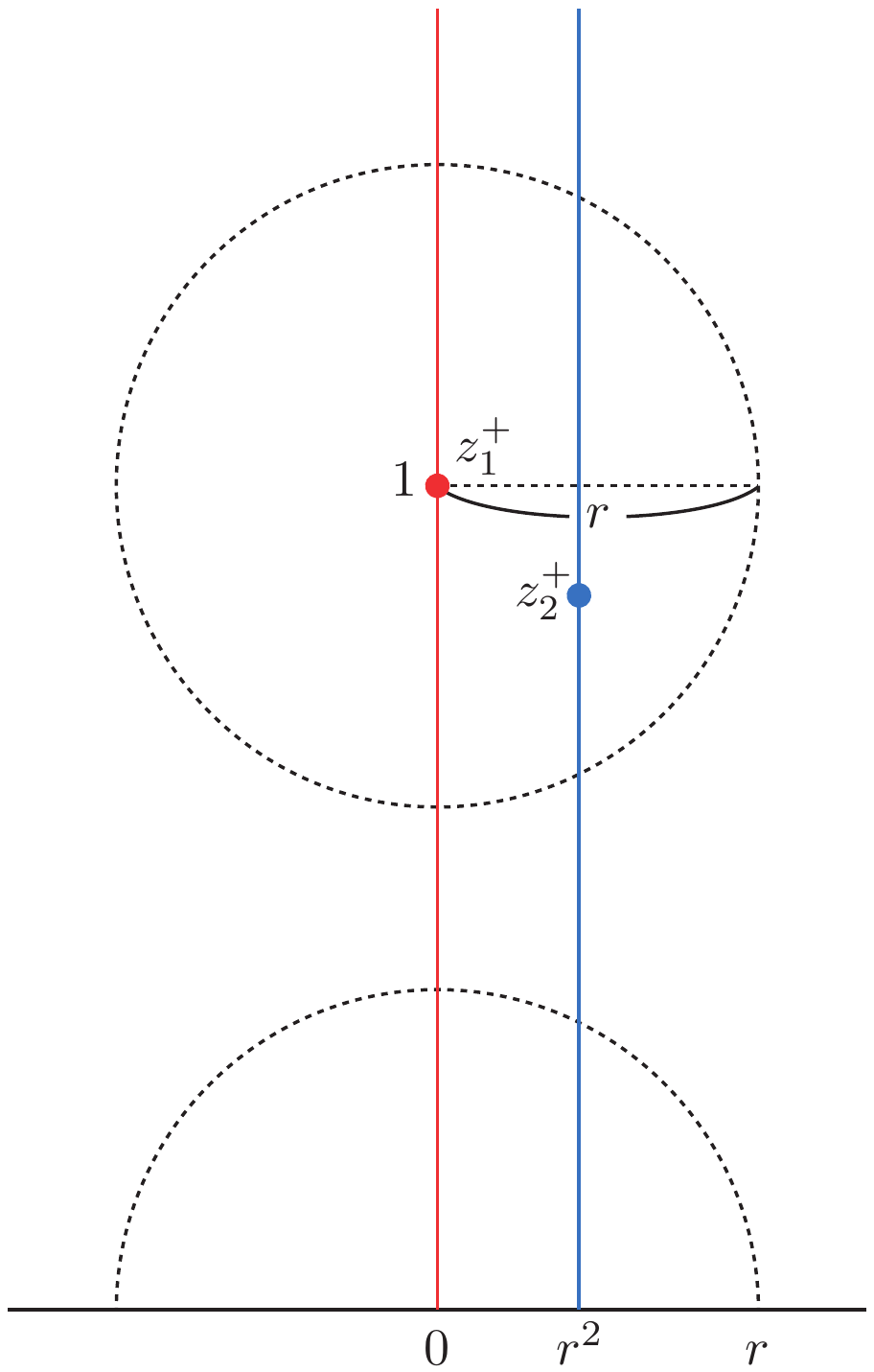}
\centering
\caption{ }
\label{fig:glue1}
\end{subfigure}
\centering
\caption{Popsicle with sphere bubble and gluing using planar model 1}
\label{fig:popdeg1}
\end{figure}

The new codimension one stratum is illustrated in Figure \ref{fig:popdeg1} (A).
Denote by  $\Sigma^+,\Sigma_0,\Sigma_1$, the
unique sphere component, the middle disc $\Sigma^+$ and the popsicle disc as in the figure, and by $\Sigma$ the whole stable popsicle.
Here, we will require that $\Sigma^+ \Join \Sigma_1$ (this condition is automatically satisified  in this case of $n=2$, but
will be non-trivial for the general case $n>2$).

This limit can appear if $z_1^+, z_2^+$ approach to each other (producing a sphere bubble) and at the same time they approach $z_0$
(producing a disc bubble).  This stratum at first might appear to be of codimension two or higher, but it is of codimension one, hence is one dimensional.

Note that $\Sigma_0,\Sigma_1$ has the unique conformal structure up to automorphism.
Since $\Sigma_0 \Join \Sigma^+$, there is one dimensional automorphisms of $\Sigma^+$
which preserve $p$ and two popsicle lines and we use it to fix the position of $z_1^+$.
Thus  there only remains the freedom to choose  $z_2^+$ on the popsicle line, which gives one dimensional family as suggested.

It is instructive to describe the explicit neighborhood  of $\Sigma$ using the gluing parameter $r$( $0< r \ll 1$) and
the parameter $y$ for the position of $z_2^+$. Here  $y$  is the vertical coordinate of $z_2^+$ in $\C \cong \Sigma^+ \setminus \{p\}$ when $z_1^+$ is at the origin of $\C$.
Given $(r,y)$, we can describe the planar model (see Figure \ref{fig:popdeg1} (B)) of the glued disc:  upper half plane is $\Sigma_0$
and $\Sigma^+$ is glued at $(0,1)$ and $\Sigma_1$ is glued at $(0,0)$.
Note that we use the same gluing parameter $r$ for the aligned disc and sphere so that popsicle lines are aligned and extend to all of $H$. 
We can place $z_2^+$ on the second vertical ray whose height is  $r^2y + 1$. This describes regular popsicles (with $r>0$) near $\Sigma$.
\end{example}
We can generalize this example to the case of $n$ sprinkles. Namely, consider $\Sigma_1$ with
$(n+1)$ marked points and $\Sigma^+$ with $n$ popsicle lines. This configuration will be
again of codimension one, with the similar planar model with several popsicle lines.

 \begin{example}
Consider the new codimension two stratum is illustrated in Figure \ref{fig:popdeg2} (which appear as
a vertex at the bottom of Figure \ref{fig:popdeg}). 
Denote the components of Figure \ref{fig:popdeg2}(A) by $\Sigma^+$, $\Sigma_0, \Sigma_1$ as before.
In this case, two popsicle lines on $\Sigma^+$ coincide and 
two sprinkles are placed on it. Note that an automorphism preserving the popsicle lines and fixing two sprinkles $z_1^+,z_2^+$ should be trivial and this stratum is 0 dimensional.

\begin{figure}[h] 
\begin{subfigure}[t]{0.4\textwidth}
\raisebox{7ex}{
\includegraphics[scale=0.45]{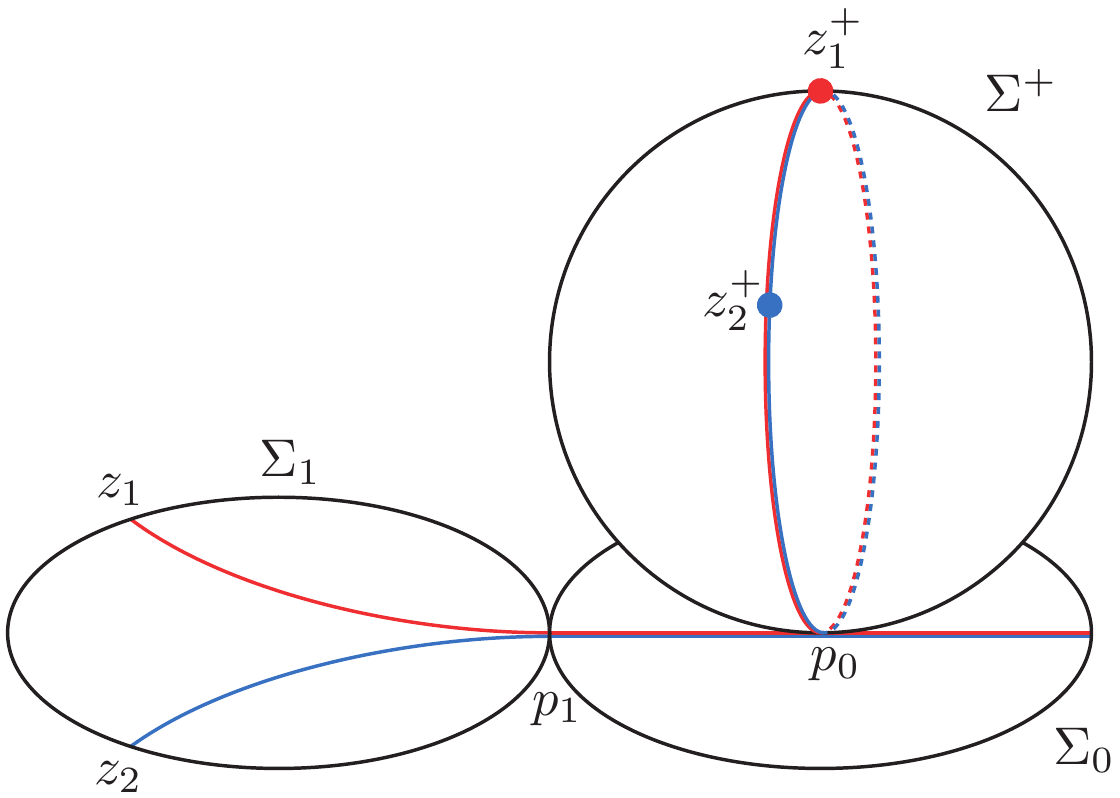}}
\centering
\caption{ }
\end{subfigure}
\qquad
\begin{subfigure}[t]{0.4\textwidth}
\includegraphics[scale=0.45]{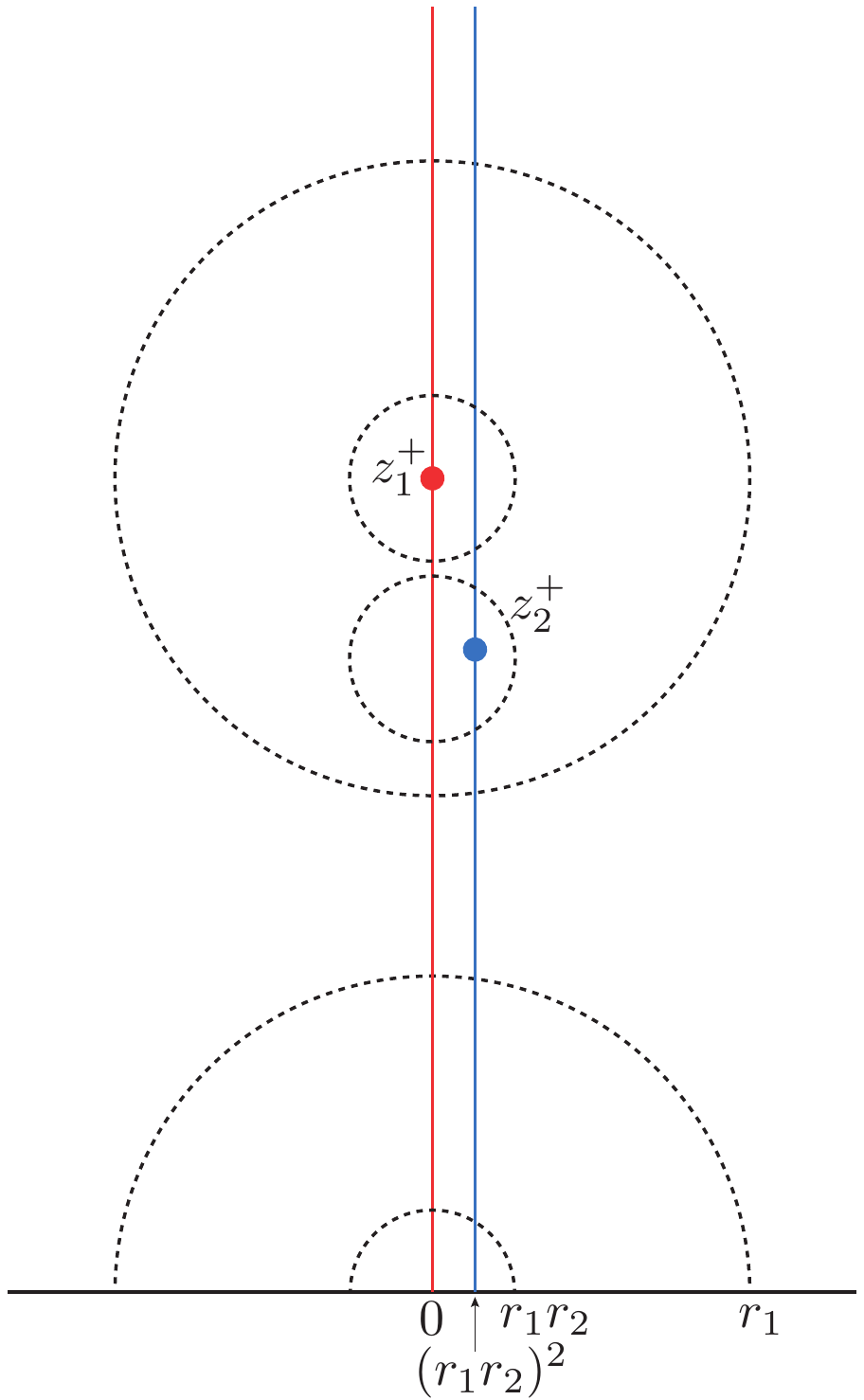}
\centering
\caption{ }
\label{fig:glue2}
\end{subfigure}
\centering
\caption{Popsicle with sphere bubble and gluing using planar model 2}
\label{fig:popdeg2}
\end{figure}

Let us explain how this stratum may arise as a limit along two different codimension one strata.

One of the limit is the usual one: consider stable popsicle with two disk components $\Sigma_0,\Sigma_1$
with $\Sigma_1$ as before, but $\Sigma_0$ has the horizontal popsicle line with $z_1^+, z_2^+$ on it.
If $z_1^+, z_2^+$ collide with each other, we obtain the sphere bubble with an induced popsicle line, and
the corresponding limit is \ref{fig:popdeg2}(A).

The other one is as a limit of Example \ref{ex:pop21} when $z_2^+$ approaches $p_0$.
It might appear that this will create another sphere bubble, but this will not happen as the resulting configuration would not be stable.
It is better seen from the planar gluing model.
In Figure \ref{fig:popdeg1} (B), suppose that $y$-coordinate of $z_2^+$ is $1 + cr$ for a small constant $c$, whereas
the popsicle line is at $x=r^2$. Note that the order of popsicle lines approaching to each other and that of $z_1^+$ and $z_2^+$
are $r^2$ and $r$ respectively, and the limit $r \to \infty$ is the desired stratum.
Note also that in the limit the alignment between $\Sigma_1$ and $\Sigma^+$  no longer exists.

Another way to see the second limit is as follows.
Identify $\Sigma^+$ with $S^2$, with south pole $p$ and north pole $z_1^+$.
Fix a popsicle line through $p$ and $z_1^+$. Use the remaining automorphism of $\Sigma^+$ to place $z_2^+$ at the equator,
which fixes the popsicle line through $z_2^+$. Then $z_2^+$ has a freedom to move along the half of the equator and
as $z_2^+$ approaches the first popsicle line, we obtain two limits corresponding to two vertices of the bottom edge of
Figure \ref{fig:popdeg}.
\end{example}

Let us explain the alignment data which we assign for this stratum.
As explained above,  $\Sigma_1$ and $\Sigma^+$
has different order of Gromov-convergence, and we record it using the following alignment data.
We first introduce the unstable popsicle disc $\Sigma_{un}$
(that is $D^2$ with $z_0, z_1$ and popsicle line(s) between them) and associate it to the nodal point $p_1$ as an auxiliary data.
Then, we align this unstable disc with $\Sigma^+$. 
As an alignment data, it will be denoted as
$$p_1 \Join \Sigma^+.$$
By aligning each sphere bubble with a disc or a nodal point, we will keep track of the order of convergence,
relate conformal structures and also use it to find the glued model.

It is helpful to describe the glued model   Figure \ref{fig:popdeg2}(B)  in the neighborhood of this codimension two stratum.
Since it is of codimension two, there are two gluing parameters, say $r_1,r_2 \ll 1$.
Here $r_1$ is the gluing parameter between $\Sigma^+$ and $\Sigma_0$.
Given another parameter $r_2$, a standard gluing of $\Sigma_0$ and $\Sigma_1$ with parameter $r_2$ is not the correct one.
Instead, we make use of the unstable disc $\Sigma_{un}$ at $p_1$ that is aligned with $\Sigma^+$, and
glue $\Sigma_0, \Sigma_{un}$ and $\Sigma_1$. 
This unstable disc (with two strip-like ends removed) corresponds to the half of annulus (centered at $0 \in H$ between two circles of radius $r_1r_2$ and $r_1$) 
after gluing in the Figure \ref{fig:popdeg2}(B). Alternatively one simply forget $\Sigma_{un}$ and glue $\Sigma_0$ and $\Sigma_1$ with gluing parameter $r_1r_2$ to get the same glued model.  This is the role of the alignment $p_1 \Join \Sigma^+$.

Another feature of  Figure \ref{fig:popdeg2}(B) is that the neighborhood of two marked points $z_1^+, z_2^+$ in $\Sigma^+$
are replaced by two discs of radius $r_1r_2$ which is the same as that of $\Sigma_1$.
In this way, popsicle lines extends consistently to the entire glued model. We may say that $z_1^+ \Join \Sigma_1$ and $z_2^+ \Join \Sigma_1$. But since it is clear how to align each sprinkle, we will not put these alignments explicitly in the alignment data.

In the above two examples, we have illustrated that how the alignment data arises from compactifications and how
we can use it to describe neighborhoods of  stable popsicles.

Now, let us consider another example, which illustrates that stable popsicle alone is not enough to 
describe the limit  in the compactification of popsicles.
In the previous two examples, the alignment data is uniquely determined from each stable popsicle. 
But a general stable popsicle might allow several compatible alignment data. For example
the order of convergence  may not be uniquely determined from the given stable popsicle.
Therefore, alignment data should be specified to describe the limit correctly.

\begin{example}
The following stable popsicle has three string of disks $\Sigma_0,\Sigma_1,\Sigma_2$ and a sphere $\Sigma^+$ attached to $\Sigma_0$ at $p_0$.
Let us denote by $p_i$ the intersection of $\Sigma_{i-1}$ and $\Sigma_{i}$ for $i=1,2$.
$\Sigma_2$ has marked points $z_1,z_2,z_3$  with three popsicle lines to $z_0$, $\Sigma_1$ has $z_3^+$ on 
the common popsicle line and on $\Sigma^+$, $z_1^+, z_2^+$ lie on the common popsicle line
(for all colors). Here we put a (distinct) color to each popsicle line for convenience.

\begin{figure}[h]
\includegraphics[scale=0.45]{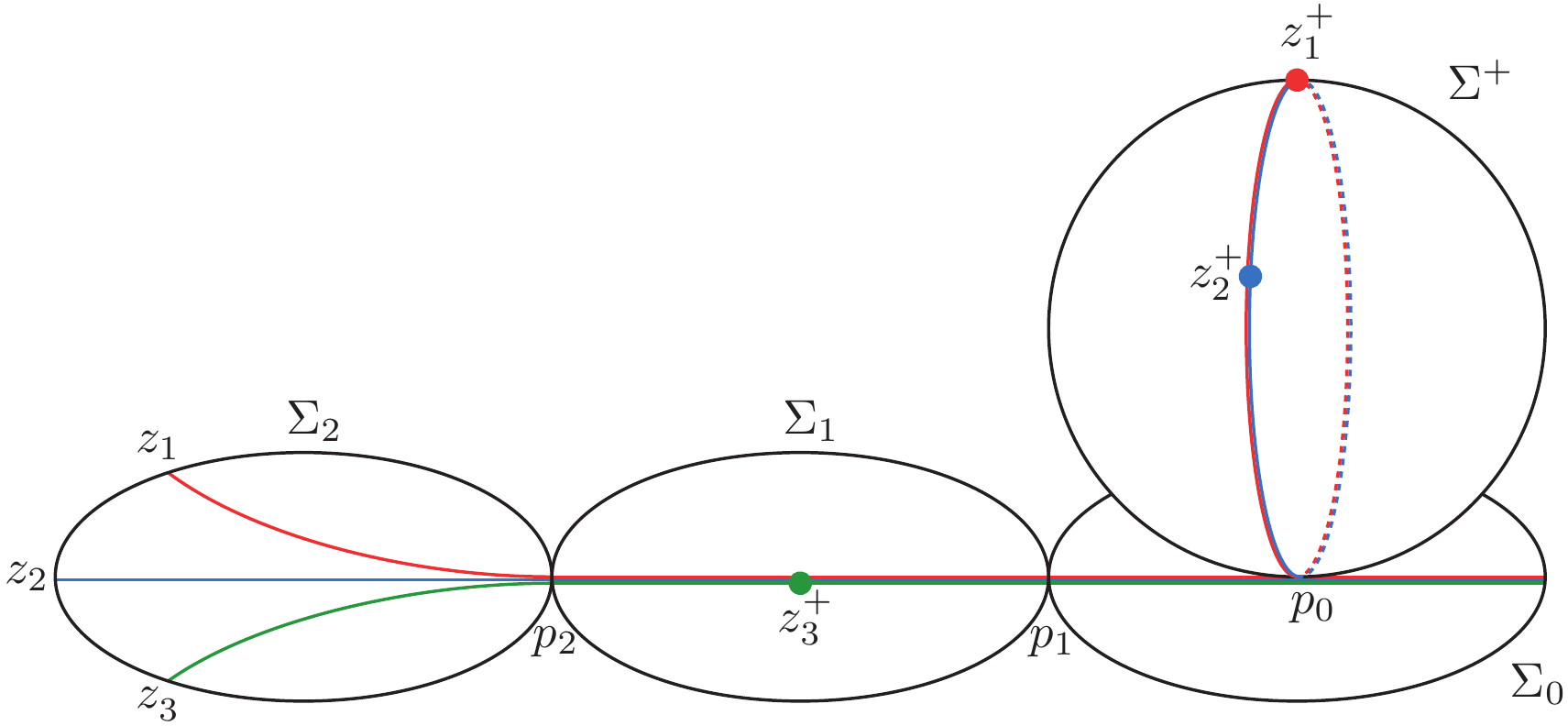}
\centering
\caption{ }
\label{fig:popdeg3}
\end{figure}

\begin{figure}[h] 
\begin{subfigure}[t]{0.3\textwidth}
\raisebox{1.4ex}{
\includegraphics[scale=0.45]{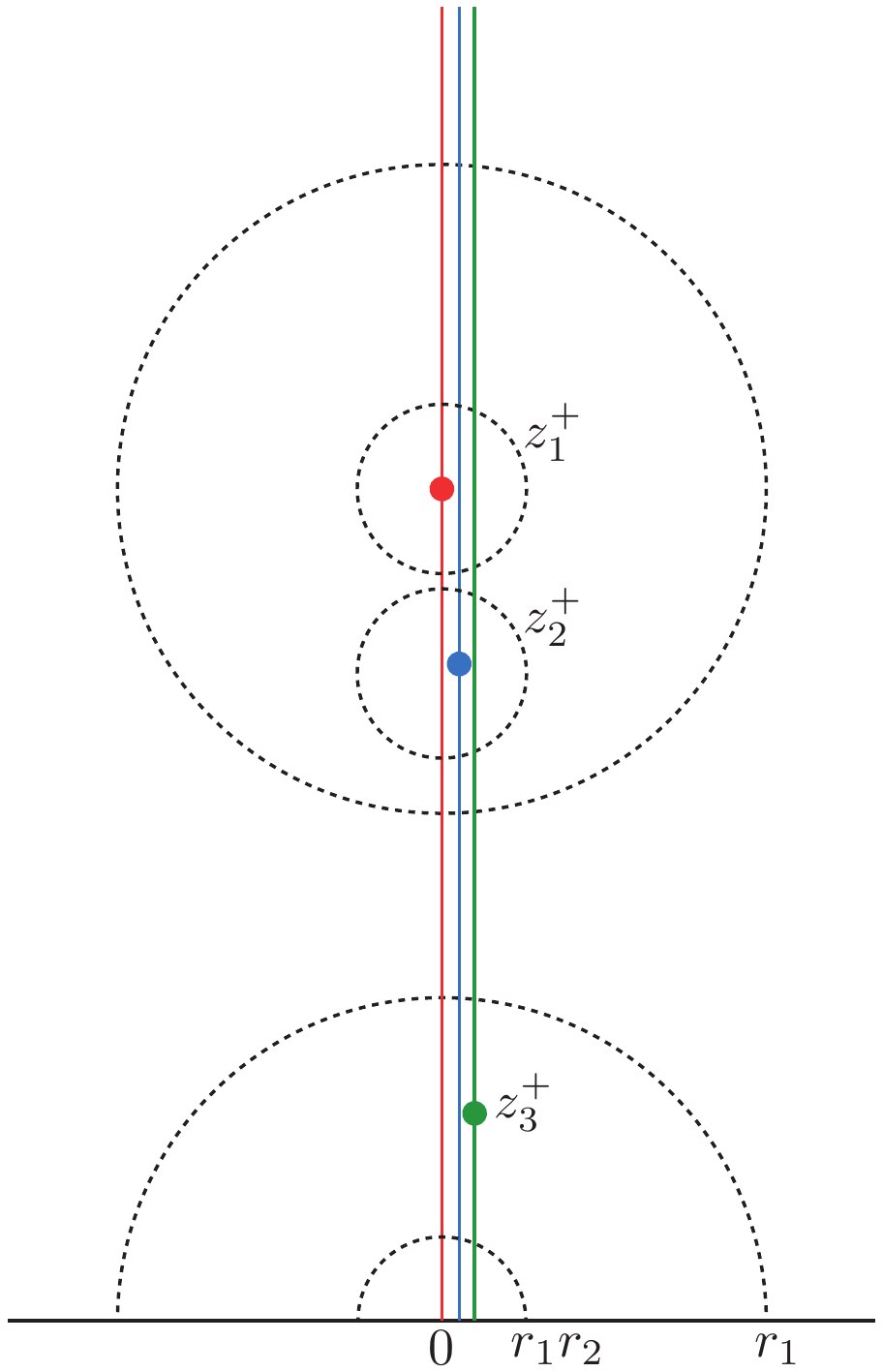}}
\centering
\caption{ }
\label{fig:glue3}
\end{subfigure}
\qquad
\begin{subfigure}[t]{0.3\textwidth}
\includegraphics[scale=0.45]{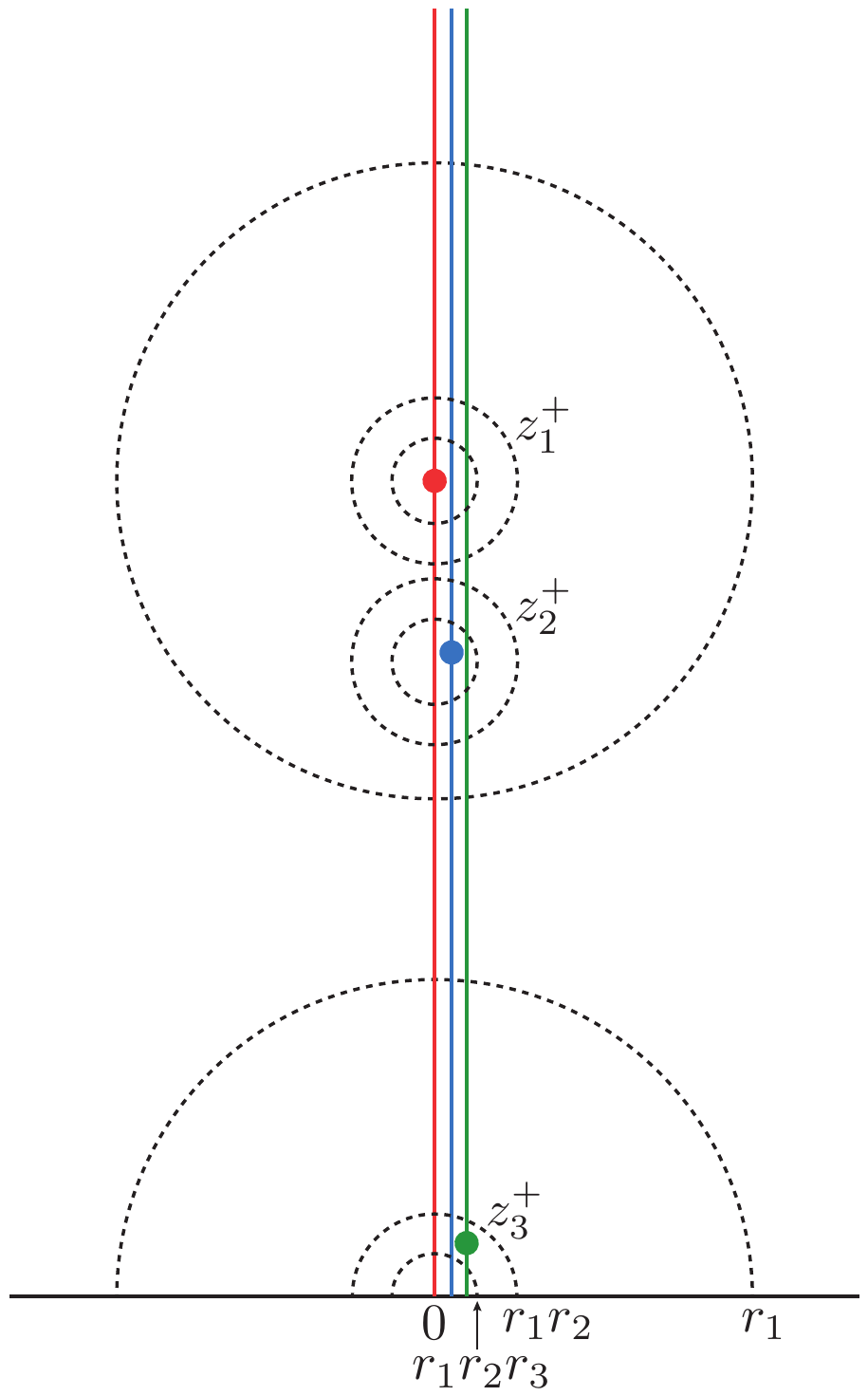}
\centering
\caption{ }
\label{fig:glue4}
\end{subfigure}
\qquad
\begin{subfigure}[t]{0.3\textwidth}
\includegraphics[scale=0.45]{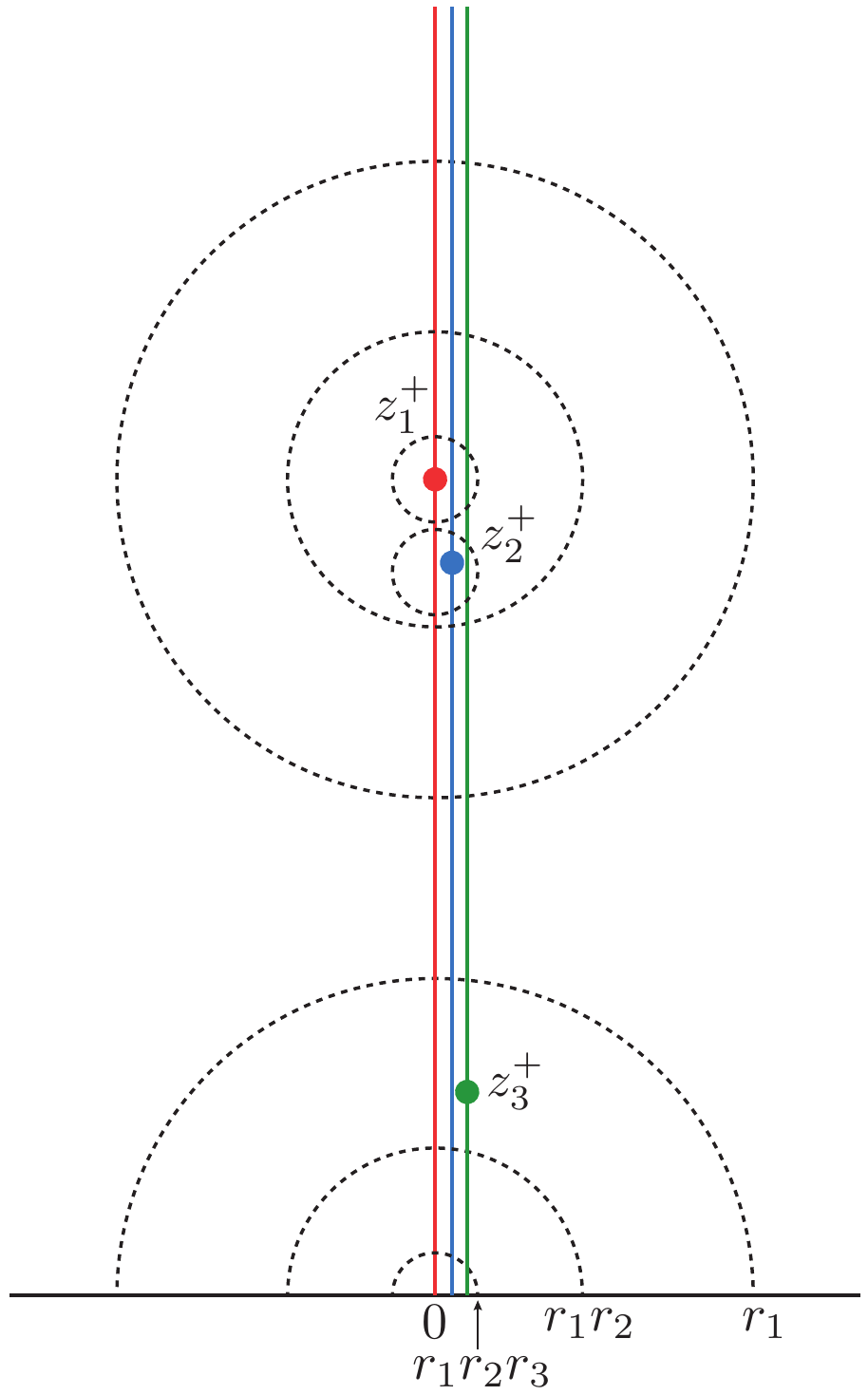}
\centering
\caption{ }
\label{fig:glue5}
\end{subfigure}
\centering
\caption{Glued models for a stable popsicle with 3 different alignment data}
\label{fig:glue_3}
\end{figure}

Let us discuss where to align $\Sigma^+$.
There are three choices, $ \Sigma_1,p_1, p_2$, and each choice corresponds to a different limit.
The planar models for each case are illustrated in Figure \ref{fig:glue_3}.
The codimension of each stratum are $2,3,3$ respectively.
Since popsicle lines become distinct in $\Sigma_2$, so $\Sigma^+$ cannot be aligned to $\Sigma_2$.

\begin{enumerate}
\item The case of $\Sigma_1 \Join \Sigma^+$:
This is when the order of convergence for  $\Sigma_1$ and $\Sigma^+$ are the same.
Note that conformal structure of $\Sigma_1$ and $\Sigma^+$ are unique up to biholomorphic map, but
as they are aligned, we need to use the same biholomorphic map for $\Sigma_1$ and $\Sigma^+$.
As a result, we get one dimensional freedom. 
 More precisely, let us fix the position of $p_1, p_2$ for $\Sigma_1$ and fix $p_0, z_1^+$ and the popsicle line for $\Sigma^+$.
 Each of $\Sigma_1$ and $\Sigma^+$ has the translation symmetry, which we use it to fix $z_3^+$ in $\Sigma_1$.
Therefore, the position of $z_2^+$ on the unique popsicle line in $\Sigma^+$ is free. As $\Sigma_2$ has one dimensional moduli,  the stratum is two dimensional, hence is of codimension two.

\item  The case of $p_1 \Join \Sigma^+$:
 Conformal structure of $\Sigma^+, \Sigma_0, \Sigma_1$ is unique up to automorphism and
hence the stratum is one dimensional (due to $\Sigma_2$) and of codimension 3. 

In the glued model  Figure \ref{fig:glue_3}(B), $\Sigma_0$ is identified with $H$, and $\Sigma^+$ is glued at $(0,1)$ with radius $r_1$.
From the alignment $p_1 \Join \Sigma^+$, we consider a unstable disc $\Sigma_{un}$, which is glued at $0 \in H$ with the
same gluing parameter $r_1$, which gives the half annulus between radius $r_1r_2$ and $r_1$ in $H$.
Then $\Sigma_1$ is glued to $\Sigma_{un}$ to give another half annulus between radius $r_1r_2r_3$ and $r_1r_2$ in $H$.
Lastly $\Sigma_2$ is glued to $\Sigma_1$ to give the half disc of radius $r_1r_2r_3$, and popsicle lines are extended from $\Sigma_2$
to all of $H$.

\item The case of  $p_2 \Join \Sigma^+$:

This is similar to the previous case but we glue  the unstable disc (aligned with $\Sigma^+$) at  $p_2$.
\end{enumerate}
\end{example}

Note that to properly describe the glued model of a stable popsicle, we need to keep track of unstable components called auxiliary data. One key feature is that such an auxiliary data can be uniquely determined from
the stable components and the alignment data. For example, an aligned sphere with several distinct popsicle lines through a nodal point 
but with only one sprinkle  is regarded as  an unstable sphere, although it  has the unique conformal structure. This
is in fact the auxiliary sphere that we glue in at each sprinkle in the above example.
One might choose to include such auxiliary data as a part of the definition of a stable popsicle, but for simplicity we will omit them from the definition.

Let us summarize informally the rules of alignment before we give a formal treatment in the next section.
Given a general stable popsicle with discs and sphere bubbles, each sphere bubble is
equipped with an induced popsicle lines on it. We need to specify an alignment for each sphere component.
A sphere component may be aligned to a disc component if the configuration of popsicle lines of the disc and sphere is
the same. If there are more than 2 distinct popsicle lines on the sphere, then it should be aligned to a disc (not to a vertex)
and there is a unique disc, whose double has the desired configuration of popsicle lines of the sphere.
So we align the sphere with such a disc component and this will align their conformal structures as well.

If a sphere component has a unique circle which is the common popsicle line, it is more delicate to
describe the rule, but roughly, it should be aligned to either a nodal
point between disc components where exactly the same set of popsicle lines pass through, or
to a disc component with the same popsicle lines. In this case, the nodal point or the disc component that is aligned should be further away from the root than the the disc component that the sphere sits on.

When several sphere components are aligned to the same disc nodal point, then
we need to also describe their respective orders of convergence, and this will be a part of the alignment data as well.

\section{New compactification of popsicles with interior marked points: formal definitions}\label{sec:3}
In this section, we define a new compactification of popsicles with interior marked points.
 We will first give a combinatorial description and then define a stable popsicles of a given combinatorial type.

Let us recall the notion of a rooted tree.
 \textbf{A rooted tree} with $n$ leaves  has $n+1$ semi-infinite edges with a single preferred one called the root, and the rest are called leaves.
 All rooted trees are oriented from the root toward the leaves. 
 \begin{defn}\label{defn:pot}
 This  orientation induces the following \textbf{partial order} on vertices and edges: For any $a,b \in \mathrm{Vertex}(T) \sqcup \mathrm{Edge}(T)$, we declare $a < b$ when $a$ is contained in the unique path from the root to $b$.
 
 For each edge $e$ of $T$, we denote by $T_e$ a subtree consisting of $e$ itself and all the vertices and edges that are greater than $e$.
 \end{defn}

	\begin{defn} 
	\textbf{A rooted ribbon tree} is a rooted tree $T$ with cyclic orders on adjacent edges for each vertex $v$ of T.
	Each vertex $v$ of a rooted ribbon tree has a single distinguished edge coming from the roots, denoted as $e_{v,0}$.
	The rests are arranged according to the cyclic structure as $\{e_{v,1}, \ldots, e_{v, \mathrm{val} (v)-1}\}$. 
	The leaves are arranged in a similar way, denoted as $\{l_1, \ldots l_n\}$.
	\end{defn}  

Using a rooted ribbon tree, we can describe limits of popsicles without sphere bubbles following \cite{AS}.
	\begin{defn}(Broken popsicles without sphere bubbles) 
	\label{broken popsicles without sphere bubbles}
	Let $T$ be a rooted ribbon tree with $n$ leaves and let $\phi: F \to \{1, \ldots, n\}$ be a non-decreasing flavor function. 
	A $\phi$-flavored stable broken popsicle (without sphere bubbles) modelled on $T$ consists of the follxowing data;
		\begin{enumerate}
		\item \textbf{decomposition of $F$}: a decomposition $F = \sqcup_{v} F_v$ where $v$ runs through $\mathrm{vertex}(T)$.
		\item \textbf{induced decomposition of $\phi$}: maps $\phi_v: F_v \to \{1, \ldots \mathrm{val}(v)-1\}$ for each vertices $v$ so that the edge $e_{\phi_v(f)}$ lies on the unique path from the root to $l_{\phi(f)}$ of $T$.
		\item \textbf{popsicles}: for each vertex $v$, a  popsicle of type $\left(\mathrm{val}(v), F_v, \phi_v\right)$
		\end{enumerate}
	We call a broken popsicle (without sphere bubble) is stable if popsicles assigned to each vertices are stable. 
	\end{defn}
	
Let us define a combinatorial notion called a layer.
Recall that $n$ interior marked points of a popsicle sphere in $\mathbb P_{m, n, \phi}$ are lying on
$m$ popsicles lines, which partitions  $n$ interior marked points into $m$ disjoint subsets. Also, the natural ordering of
popsicle lines (as vertical lines in $\C$) induces the ordering on these partitioned subsets.
	\begin{defn}\label{defn:lay}
	\label{rooted layered tree}
	\textbf{A rooted layered tree} is a rooted tree $T'$ together with 
ordered partitions of outgoing edges of from every vertices.
Namely, for each vertex $w$ with outgoing edges $\{ e_{w,1}, \cdots, e_{w, \mathrm{val}(w)-1}\}$,
we take its ordered partition
\[E_{w} := \{ E_{w, 1}, \ldots E_{w, m}\}\]
with the following ordering :  we declare $E_{w, i} < E_{w, j}$ for $i<j$. 
	We call $E_{w, i}$ an $i$-th layers at $w$. It is called stable if it is stable as a rooted tree.
	\end{defn}
We remark that a rooted tree modeling sphere bubbles does not carry a ribbon structure.

Recall from Definition \ref{defn:popdisk} that $F = \{1,\cdots, l\}$ labels interior marked points, and flavor function $\phi$ describes a popsicles line for each interior marking to sit on.
Thus, the induced marked points in the sphere bubble (which corresponds to leaves of a rooted tree) should carry this data as well.
	\begin{defn}
	An \textbf{$F$-label} of a rooted tree $T$ is an injective map $\psi: \mathrm{Leaf}(T) \to F$.
	\end{defn}
 


As we explained in the previous section, a sphere bubble of popsicles are not random and is aligned to a disc component or a nodal point.
To describe the alignment data, we introduce the notion of a coloring.

When $C$ is a set of some colors, a map $f: \mathrm{Leaf}(T) \to C$ associates a color to each leaf of $T$.
	\begin{defn}
	\label{definition of coloring}
	Let $T$ be a rooted tree, equipped with a map $f: \mathrm{Leaf}(T) \to C$. 
	For an edge $e$ of $T$, denote by $\mathrm{Leaf}_e(T) \subset \mathrm{Leaf}(T)$ the leaves of $T$ that are contained in the subtree $T_e$  (see Definition \ref{defn:pot}).

       The image of  an induced map $f: \mathrm{Leaf}_e(T) \to C$ describes a \textbf{color set} of $e$ with respect to $f$
\[C^f(e) :=\left\{f(l) \mid l \in \mathrm{Leaf}_e(T) \right\}\subset C.\]
	We call this assignment a \textbf{$C$-coloring} of a rooted tree with respect to $f$, denoted as $C^f$.	
%
	\end{defn}
The intuition behind the definition is simple. 
We "color" all the edges along the unique path from the root to each leaf $l$ by $f(l)$. 
Each edge $e\in \mathrm{Edge}(T)$ may carry multiple colors, and the set of those colors are denoted as $C^f(e)$.
There are two different colorings we would like to consider (corresponding to a disc and a sphere):

	\begin{itemize}
	\item Let $T$ be a rooted ribbon tree with $n$ leaves. A cyclic order on leaves determine the unique order preserving map $f: \mathrm{Leaf}(T) \to C=\{1, \ldots, n\}$ given by $f(l_i) =i$. We call it a \textbf{canonical coloring} of a rooted ribbon tree. We denote the color set of edge $e$ with respect to a canonical coloring simply as $C(e)$, omitting $f$.
	\item  Let $T'$ be a rooted tree with an $F$-label $\psi: \mathrm{Leaf}(T') \to F=\{1, \ldots, l\}$. Further suppose that we are given a nondecreasing flavor function $F \to C =\{1, \ldots, n\}$ as in \ref{broken popsicles without sphere bubbles}. The composition $\phi\circ\psi : \mathrm{Leaf}(T') \to \{1, \ldots, n\}$ determines a $C$-coloring of $T'$. We call this coloring an \textbf{induced coloring} on an $F$-labeled rooted tree, denoted as $C^{(\phi, \psi)}$
	
	In particular, if in addition $T'$ is layered (Definition \ref{defn:lay}), then  we can define the color set of a single layer $E_{w, k}$ from the coloring $C^{(\phi, \psi)}$:
 \[C^{(\phi, \psi)}\left(E_{w, k}\right) := \bigcup_{e \in E_{w, k}} C^{(\phi, \psi)} (e) \subset C.\]
	\end{itemize}
The notion of coloring will help us to describe the alignment data, which we will explain from now.

Given one popsicle sphere or trees of popsicle spheres, we should be able to tell whether they
may arise in the limit of popsicle discs. If they arise from such limits, there are alignments of conformal structures.
We will first describe when it is possible to align a popsicle sphere component to a popsicle disc component or a nodal point. 
Next, we will define an appropriate  compatibility between the choices of alignment data for every sphere components.

 Let us define the notion of alignability.
	\begin{defn}
	\label{aligning trees}
	Let $T$ be a rooted ribbon tree with $n$-leaves, and let $\phi: F=\{1, \ldots, l\} \to C=\{1, \ldots, n\}$  be a  (non-decreasing) flavor function.
	Let $T'$ be a rooted layered tree with an $F$-label $\psi: \mathrm{Leaf}(T') \to F$.
	Then $T$ carries a canonical coloring $C$ and $T'$ carries an induced coloring $C^{(\phi, \psi)}$ as  above.
	\begin{enumerate}
	\item We say $w \in \mathrm{Vertex}(T')$ and $v\in \mathrm{Vertex}(T)$ are alignable if for each $k\geq1$
	\[C^{(\phi, \psi)}\left(E_{w, k}\right) \subset C\left(e_{v, i_k}\right)\] 
	for some $i_k\geq1$, and $i_{k+1}>i_k$ for $\forall k\geq1.$ In fact, such $i_k$ is unique if it exists. 
	\item We say  $w \in \mathrm{Vertex}(T')$ and $e \in \mathrm{Edge}(T)$ are alignable if $w$ has only one layer $E_w$ and 
	\[C^{(\phi,\psi)}\left(E_w\right) \subset C(e).\]
	\end{enumerate}
	\end{defn}
	
The first case is when a sphere component is aligned to a disc component.
Each layer $E_{w,k}$ in the sphere bubble is from a vertical line given by  the (possibly overlapping) popsicle line(s) of color(s) $C^{(\phi, \psi)}\left(E_{w, k}\right)$.
 The first alignability is to guarantee that  the alignable disc should at least carry such popsicle lines as well.
More precisely,  the disc have a nodal (or marked) point
that contains such colors. This condition should hold for every layer of $w$. Later, this will allow us to relate the conformal structures of a disc and a sphere.

If a popsicle sphere is aligned to a vertex, we will match the conformal structure of the sphere with that of the unstable disc (with an input and an output
that are connected by a single common popsicle line), and hence the second alignability condition is needed.

\begin{example}
Let us give an example of the first alignability condition using the Figure \ref{fig:aligned model}.
The tree on the left describes a neighborhood of a vertex in a rooted ribbon tree with colors containing $\{1,2,\ldots,8\}$.
This vertex corresponds to a disc with 4 inputs and one output marked point. 
Each input is connected to an output via a geodesic in the disc, each of which are colored by $\{ \{1,2\}, \{3\}, \{4\}, \{5,6,7\}\}$. 
The layered tree on the right describes a neighborhood of a vertex in a rooted layered tree with colors containing $\{1,2,3,5,7\}$.
This vertex corresponds to a sphere with 5 inputs and one output, whose popsicle lines as drawn on the right.
We can check that the first alignability condition holds for each layer.

	\begin{figure}[h]
	\centering
	\def\svgwidth{17cm}
 	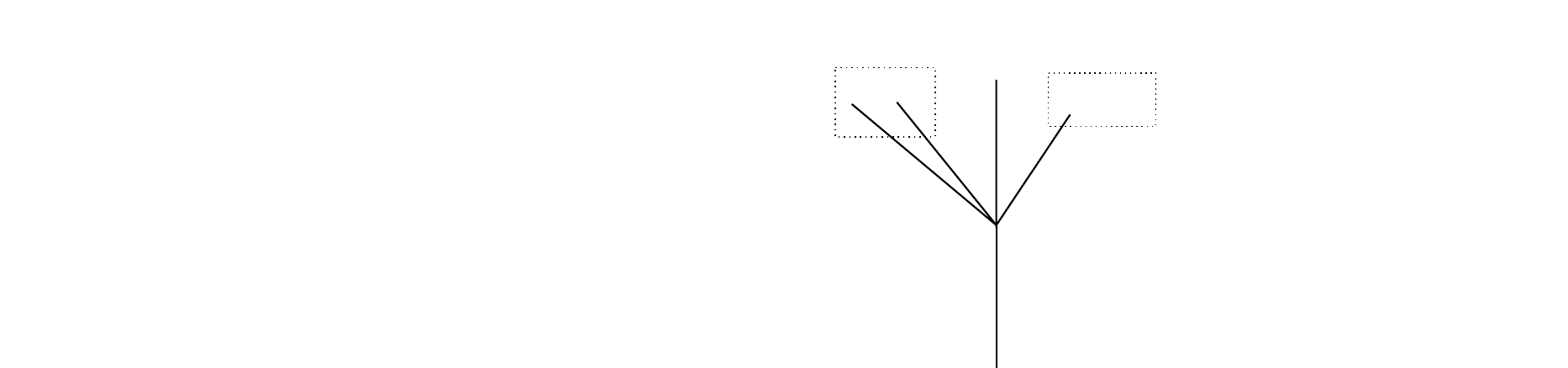
	\caption{Example of an alignable pairs and associated planar models}
	\label{fig:aligned model}
	\end{figure}
	
Let us make a few remarks. 
In this example, the colors $\{4,6,8\}$ does not appear on the right.
This is because we only keep track of a popsicle line whose sprinkle lies on the particular tree of sphere bubbles.
The sprinkles for $\{4,6,8\}$ should be in other trees of sphere bubbles or in some disc components.

Even though we do not record it, the popsicle lines of the color $\{4,6,8\}$ do exist, as one can see the dotted line on the right for the color 4.
Later, if we align the conformal structures of the disc and sphere component, the vertical lines on the left and on the right will be aligned
as in the Figure \ref{fig:glue_3}.
\end{example}

Now, let us explain the tree model for a general stable popsicle with sphere bubbles.
For each disc $D^2_v$ for the vertex $v$, there may be several tree of sphere bubbles that are attached to $D^2_v$.
We use $i,j$ to label these trees $\{T_{v,i,j}\}$, where $i$ labels the order of distinct popsicle lines on $D^2_v$, 
and $j$ labels the order of trees on each popsicle line.

\begin{defn}(Tree model for general popsicles)
Let us fix a non-decreasing flavor function $\phi: \widetilde F=\{1, \ldots, l\} \to \{1, \ldots, n\}$. A tree model for popsicles with $n$-boundary input is a tuple $\left(\widetilde T, \widetilde F, \widetilde \psi, \Phi, \Psi \right)$ consists of \;
   \begin{enumerate}
    \item \textbf{a decomposition of $\widetilde T$}: a decomposition $\widetilde T= T \bigsqcup_{v \in \mathrm{Vertex}(T)} \{T_{v,i,j}\}$ where $T $ is a rooted ribbon tree with $n$-leaves, and $\{T_{v,i,j}\}$ are rooted layered trees. 
    \item \textbf{a decomposition of $\widetilde F$}: a decomposition $\widetilde F = \bigsqcup_{v\in \mathrm{Vertex}(T)} \widetilde F_v$ and $\widetilde F_v = F_v \sqcup ( \sqcup_{i,j} F_{v,i,j})$ such that 
    \[\phi\left(F_v\right) \subset C_{e_{v,0}}, \hskip 0.2cm \phi\left(F_{v,i,j}\right) \subset C_{e_{v,i}},\]
    \item \textbf{an induced decomposition of $\phi$}: a collection of induced map 
    \[\phi_v : F_v \sqcup ( \sqcup_{i,j} F_{v,i,j}) \to \{1, \ldots, \mathrm{val}(v)-1\}\]
    such that $\phi(v), \hskip 0.1cm v\in F_v$ is defined as in Definition \ref{broken popsicles without sphere bubbles}, and $\phi\left(F_{v,i,j}\right):= i$
    \item \textbf{F-labels}:  a collection of bijective $F_{v,i,j}$-labels $\widetilde\psi = \left \{ \psi_{v, i,j}: \mathrm{Leaf}\left(T_{v,i,j}\right) \leftrightarrow F_{v, i,j} \right\}$,
    \item \textbf{alignment data I}: an order-preserving function 
    \[\Phi: \mathrm{Vertex}(\sqcup T_{v,i,j}) \to \mathrm{Vertex}(T)\sqcup\mathrm{Edge}(T)\] 
    satisfying the following compatibility condition; 
    \begin{itemize} 
     \item $w$ and $\Phi(w)$ are alignable,
     \item $\Phi \vert _{\mathrm{Vertex}(T_{v,i,j})}\subset \mathrm{Vertex}(T)_{>v}\cup \mathrm{Edge}(T)_{>v} $,
     \item $\Phi$ is strictly order-preserving on $\Phi^{-1}(\mathrm{Vertex}(T))$.
    \end{itemize} 
    \item \textbf{alignment data II}: a collection of strictly order-preserving surjection 
    $$\Psi = \left \{\Psi_e : \Phi^{-1}(e) \to \{1, \ldots, m_e\}, \hskip 0.2cm m_e \in \mathbb N\right \}$$
     for each $e$ whose $\Phi^{-1}(e)$ is nonempty.
   \end{enumerate}
   Here, order-preserving means with respect to the partial order on a rooted tree given by its orientation. We call a pair $(\Phi, \Psi)$ an alignment data for the model.
\end{defn}
The alignment data I describes the alignment for each sphere component. Note that the order preserving condition naturally arises from the compactification process.
The alignment data II concerns the case when there are several sphere components that are aligned to an edge $e$. In this case, we need to specify their relative speed of
formation of bubbles. The strictly order preserving condition is enforced within each tree of sphere bubbles.

Now, it is not difficult to associate an actual broken popsicle for the given tree model.
\begin{defn}
For a given tree model $\left(\widetilde T, \widetilde F, \widetilde \psi, \Phi, \Psi \right)$ for popsicles, we obtain a general broken popsicle of type $\left(\widetilde T, \widetilde F, \widetilde \psi, \Phi, \Psi \right)$ in the following way; 
\begin{enumerate}
 \item(popsicle discs on a ribbon tree) For each vertex $v$ of a rooted ribbon tree $T$, assign a popsicles $\Sigma_v$ of type $(\mathrm{val}(v), \phi_v)$. Our popsicle disc carries not only $F_v$-many interior marked points, but also $\#\left\{\{F_{v,i,j}\}\right\}$-many nodal points over which sphere bubbles are attached.
 \item (aligned with vertex) For a vertex $w$ of $T_{v,i,j}$ such that $\Phi(w)$ is a vertex, we assign a popsicle sphere of type $\left( \mathrm{val}(\Phi(w))-1, \mathrm{val}(w)-1, \phi_w \right)$, where $\phi_w$ is constructed as follows. A color of a layer satisfies $C^{(\phi, \psi)}\left(E_{w, k}\right) \subset C(e_{v,i_k})$ for the unique $i_k$. If $e_j \in E_{w,k}$, then we define $\phi_w(w^+_j) := i_k$. Finally, we require that a popsicle sphere $\Sigma_w^+$ and a popsicle disc $\Sigma_{\Phi(w)}$ are aligned to each other. 
 \item (aligned with edges) for a vertex $w$ of $T_{v,i,j}$ such that $\Phi(w)$ is an edge, we assign a popsicle sphere of type $(1, \mathrm{val}(w), \phi_w)$, where $\phi_w$ is just a constant map with value $1$.  It means that all incoming special points are on the single line.  
\end{enumerate}
\end{defn}
Here is a simple example of a tree model and its associated popsicle.
	\begin{figure}[h]
	\centering
	\def\svgwidth{10cm}
	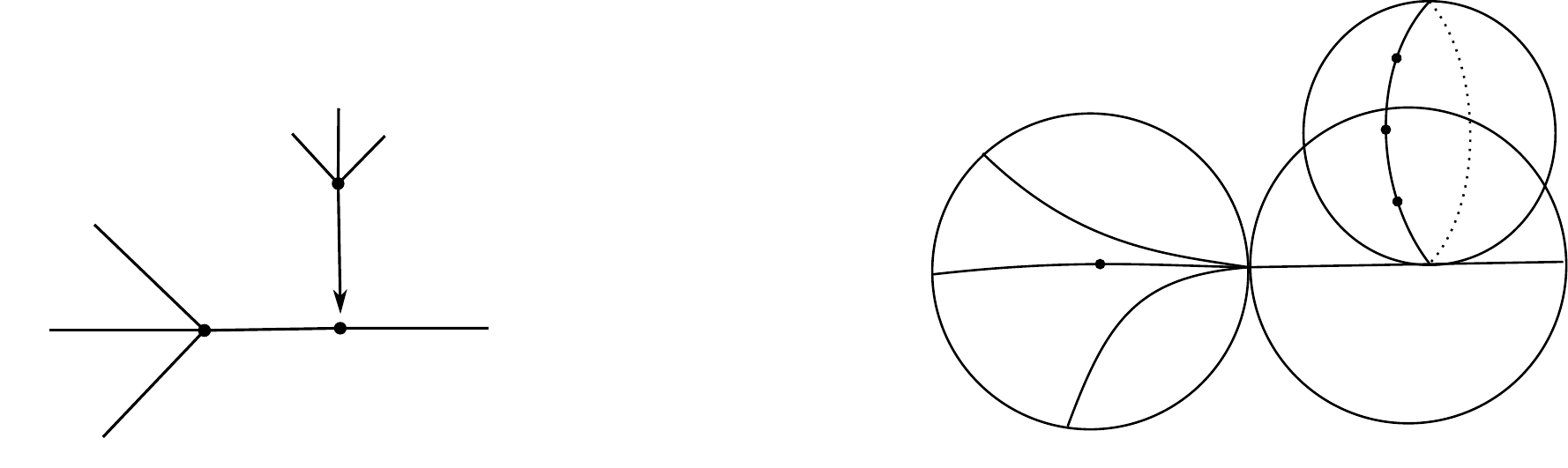
	\caption{Examples of combinatorial model for a general popsicle}
	\label{fig:sphere}
	\end{figure}
	
We call the model is stable if it is stable as stable maps (after forgetting the popsicle structure and alignments).
  We write a moduli space of general stable popsicles of type $\star = \left(\widetilde T, \widetilde F, \widetilde \psi, \Phi, \Psi \right)$ with $n$-boundary inputs by $P_{n, \widetilde F, \phi, \star}$. It is not hard to see that for a fixed $n$ and $\phi: F \to \{1, \ldots, n\}$, only finitely many combinatorial types of stable trees are possible. The compactified moduli space we consider is given by 
\[\overline{P}_{n,   F, \phi}:=\bigsqcup_\star P_{n, \widetilde F, \phi, \star}.\]

\section{Quantum cap action and $J$-holomorphic popsicles with $\Gamma$-insertions}\label{sec:qc0}

Let us recall quantum cap action and  introduce $J$-holomorphic popsicles with $\Gamma$-insertions.

%


\subsection{Quantum cap action}
A quantum cap action  of quantum cohomology can be defined on Fukaya category of a compact symplectic manifold \cite{auroux07}, and analogously
that of symplectic cohomology  can be defined on  wrapped Fukaya category of a Liouville manifold  \cite{Ga}.
Let us briefly recall the definition.
We refer readers to Appendix \ref{basic Floer theory} for a brief account of symplectic cohomology $SH^\bullet(M)$ and the wrapped Fukaya category $\mathcal{WF}(M)$.

Let $P_{n, \{i\}} \hskip 0.2cm (1\leq i \leq n)$ be a moduli space of discs with one interior marked point $z_1^+$ and boundary marked points $z_0,\ldots, z_n$ such that $z_0, z_1^+, z_i$ lie on a geodesic of $D^2$. 
By applying an automorphism of the disc, we may say that $z_1^+, z_0,z_i$ are precisely $0,1,-1$ of the disc.
Consider a moduli space 
\[\mathcal P_{n, \{i\}}(\Gamma; a_1, \ldots, a_{i-1}, b , a_{i+1}, \ldots, a_n, a_0)\]
of pseudo-holomorphic maps from $S\in P_{n, \{i\}}$ with an interior insertion $\Gamma \in SH^\bullet(M)$ at the puncture $z_1^{+}$ and with boundary insertions $(a_1, \ldots, a_{i-1}, b, a_{i+1}. \ldots, a_n)$. Here, we use a different symbol $b$ to emphasize that it is a bimodule input.

 \begin{defn}\label{defn:cap}
  A cochain level quantum cap action of $\Gamma$ is an $\AI$-bimodule map is defined by 
    \begin{align*}
      \cap \Gamma : T(\WF)\otimes \underline{\WF} \otimes T(\WF) & \to \WF \\
      (a_1, \ldots, a_{i-1}, \underline b, a_{i+1}, \ldots, a_n)  &\mapsto (-1)^{\star_{n, \{i\}}-\deg \Gamma} \bold{P}_{n, \{i\}}(\Gamma; a_1, \ldots, a_{i-1}, b, a_{i+1}, \ldots, a_n),\\
      \star_{n, \{i\}}&=\sum_{j<i} (j-1) \cdot \deg a_i +i\cdot \deg b + \sum_{i<j}j\deg a_j
    \end{align*}
    where  $\bold{P}_{n, \{i\}}(\Gamma;. a_1, \ldots, a_{i-1}, b, a_{i+1}, \ldots, a_n)$ is a sum of orientation operators associated to the moduli spaces for all possible $a_0$ (see Appendix \ref{basic Floer theory}).

  \end{defn}
   It is easy to check that $\cap \Gamma$ is a  $\AI$-bimodule map from diagonal $\AI$-bimodule  $\mathcal{WF}(M)_\Delta$ to itself:
\begin{equation*} 
 \cap \Gamma  : \mathcal{WF}(M)_\Delta \to \mathcal{WF}(M)_\Delta. 
\end{equation*}
 Also, this action is closely related to the closed-open map (see Appendix \ref{basic Floer theory}) in the following way. 
 \begin{prop}\label{QCtoCOprop}Up to homotopy, we have
    \[ \cap \Gamma(b)=m_2\left(b, \mathrm{CO}_{L_2}(\Gamma)\right)\]
  where $\textrm{CO}_{L_2}: SH^\bullet(M) \to CW^\bullet(L_2,L_2)$ is a word-length zero component of the closed-open map. 
  \end{prop}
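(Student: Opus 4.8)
The plan is to prove Proposition \ref{QCtoCOprop} by constructing an explicit one-parameter family of moduli spaces interpolating between the two operations, in the spirit of the standard comparison arguments between cap product and multiplication in operation/TQFT contexts. First I would recall that $\cap\Gamma(b)$ is defined by counting rigid $J$-holomorphic maps from a disc $S\in P_{n,\{i\}}$ with one interior puncture at $z_1^+$ (carrying the symplectic cochain representative of $\Gamma$) and with boundary punctures $z_0,\dots,z_n$, where $z_0,z_1^+,z_i$ lie on a common geodesic; specializing to the lowest arity $n=1$ (or, more precisely, regarding the bimodule input as sitting adjacent to the output) this is a disc with two boundary punctures $z_0$ (output, asymptotic to $a_0$) and $z_1$ (input $b$) together with one interior puncture for $\Gamma$. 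On the other side, $m_2(b,\mathrm{CO}_{L_2}(\Gamma))$ is the two-step operation: first apply $\mathrm{CO}_{L_2}$, which counts discs with one interior $\Gamma$-puncture and one boundary output puncture asymptotic to a generator $c\in CW^\bullet(L_2,L_2)$, then feed $c$ into $m_2$ along with $b$. The composition is counted by a Gromov-type limit of the first moduli space in which the interior puncture (with its geodesic) slides toward the output boundary puncture $z_0$ and a disc bubble splits off carrying the $\Gamma$-puncture.

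The key steps, in order, are: (1) set up the parametrized moduli space $\widetilde{\mathcal P}$ of the relevant discs where the cross-ratio-type modulus measuring the position of the interior puncture relative to $z_0$ and $z_i$ is allowed to vary over $[0,1]$ (or $[0,\infty]$ after reparametrization), with the Floer data interpolating between the data used for $\cap\Gamma$ and the glued data for $m_2\circ(\mathrm{id}\otimes\mathrm{CO}_{L_2})$; (2) identify the two endpoints of this family: at one end one recovers precisely the moduli space defining $\cap\Gamma(b)$, and at the other end the domain degenerates into a two-component nodal surface (a disc carrying $z_1^+$ and $\Gamma$, glued at a boundary node to a disc carrying $z_0$ and the $m_2$-configuration), which is exactly the moduli space computing $m_2(b,\mathrm{CO}_{L_2}(\Gamma))$; (3) check transversality and compactness for the one-dimensional parametrized moduli space in the relevant degree (that is, when the rigid count has virtual dimension zero, the parametrized space has dimension one), so that counting its boundary gives the chain-homotopy relation; (4) collect the remaining boundary strata of the one-dimensional parametrized moduli — these are the configurations where an $m_k$ or the $A_\infty$-bimodule structure map breaks off a boundary strip, or where a non-constant sphere/disc bubble with the $\Gamma$-puncture forms elsewhere — and observe, using the $A_\infty$-relations and the fact that $\Gamma$ is a cocycle, that they assemble into $d$ of a chain homotopy $H(b)$; (5) conclude that $\cap\Gamma(b) - m_2(b,\mathrm{CO}_{L_2}(\Gamma)) = dH(b) \pm H(db)$, i.e. the two bimodule maps agree up to homotopy.

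For the bookkeeping of signs and the $A_\infty$-bimodule structure I would invoke the orientation conventions already fixed in Appendix \ref{basic Floer theory} and Definition \ref{defn:cap}, checking only that the sign $(-1)^{\star_{n,\{i\}}-\deg\Gamma}$ in Definition \ref{defn:cap} matches the sign produced by gluing the $\mathrm{CO}$-disc into the $m_2$-configuration; this is a routine but tedious verification of compatibility of orientation operators under gluing. One should also note that the position of the bimodule input $b$ relative to $z_i$ and the interior puncture matters: in the homotopy one slides $z_1^+$ along the geodesic joining $z_0$ to $z_i$ toward $z_0$, and the limiting disc bubble is attached at $z_0$, producing $\mathrm{CO}_{L_2}(\Gamma)$ at an \emph{output}-adjacent position so that it is then multiplied on the appropriate side by $m_2$; an entirely parallel argument sliding toward $z_i$ would instead produce $m_2(\mathrm{CO}(\Gamma),b)$, but the geometric setup with $z_1^+$ constrained to the geodesic through $z_0$ privileges the $z_0$ end, which is why $\mathrm{CO}_{L_2}$ (with target $L_2 = $ the object carrying the output) appears.

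The main obstacle I expect is step (3)–(4): ensuring that no codimension-one sphere-bubble stratum (an interior sphere bubble carrying the $\Gamma$-insertion, pinching off in the interior rather than at the boundary) obstructs the argument. In a general Liouville manifold with a generic $\Gamma$ this is handled by choosing $\Gamma$ to be a cocycle supported away from the region where bubbling could occur and by the usual dimension count for interior bubbling being codimension two; but in view of the delicate popsicle-bubbling phenomena emphasized elsewhere in this paper, one must be careful to use here the \emph{single} interior insertion (so that the popsicle structure is that of Section \ref{sec:qc0}, i.e. a single geodesic, with no collision of sprinkles possible) — this is precisely why Proposition \ref{QCtoCOprop} is stated at the level of the one-interior-point moduli $P_{n,\{i\}}$ and not for general popsicles. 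Granting the standard transversality package of Appendix \ref{basic Floer theory}, the interior sphere bubbling is codimension two and the only codimension-one degenerations are the boundary strip-breaking (accounted for in the $A_\infty$-bimodule relation) and the boundary-node degeneration at $z_0$ (which produces exactly $m_2\circ\mathrm{CO}_{L_2}$), so the argument closes.
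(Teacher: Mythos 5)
Your overall strategy (a one--parameter family of configurations whose boundary count gives the chain homotopy) is the same as the paper's, but the specific family you construct is wrong, and this is a genuine gap rather than a cosmetic difference. You propose to let the interior puncture slide \emph{along the geodesic} joining $z_1$ and $z_0$ toward $z_0$. For the relevant configuration (two boundary punctures $z_0,z_1$ and one interior puncture) the automorphism group of the disc fixing $z_0$ and $z_1$ is the one--parameter group of hyperbolic translations along exactly that geodesic, and it acts transitively on it; so ``position along the geodesic'' is pure gauge, your family is constant, and it never reaches a degenerate configuration. Moreover, even if one reinterprets the limit as the interior puncture escaping into the strip--like end at $z_0$, such a degeneration is a breaking along a Hamiltonian chord (i.e.\ a composition of operations, the kind of stratum that enters the bimodule equations), not a boundary--nodal configuration: a disc bubble cannot be ``attached at $z_0$'', since $z_0$ is a puncture. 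Note that your own description of the desired endpoint --- a two--component surface joined at a \emph{boundary node}, one component carrying $z_1^+$ --- is correct, but it is incompatible with the mechanism you propose; this internal inconsistency is where the proof fails.

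The correct family, which is what the paper uses, moves the interior marked point \emph{off} the geodesic: fix $z_0=1$, $z_1=-1$ and put $z_1^+$ at $-it$ for $t\in[0,1]$. This is the genuine one--dimensional modulus of such discs (interior position modulo the one--dimensional automorphism group). At $t=0$ one has the popsicle configuration defining $\cap\Gamma$; at $t=1$ the interior point hits an interior point of the boundary arc, and a disc bubble containing the interior $\Gamma$--puncture splits off at a boundary node on the arc labelled $L_2$. That bubble is precisely the word--length--zero closed--open disc, and the remaining component computes $m_2$, which is why $\mathrm{CO}_{L_2}(\Gamma)$ appears multiplied against $b$ --- not because the geodesic ``privileges the $z_0$ end''. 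With the family corrected, your remaining steps (transversality, the chord--breaking strata assembling into $dH\pm Hd$, and the observation that interior sphere bubbling is codimension two for a single insertion) go through as you describe.
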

  \begin{proof}
  Consider a $1$-parameter family of moduli space of holomorphic discs with 
  \begin{itemize}
    \item one outgoing boundary marking $z_0$ at $1$
    \item one incoming boundary marking $z_1$ at $-1$
    \item one moving interior marking $z_1^+$ at $-it, \hskip 0.2cm t\in[0,1]$
  \end{itemize}
  At $t=0$, we get $P_{1,\{1\}}$. At $t=1$, we get a moduli space of discs with disc bubble containing interior marked point. See Figure \ref{QCtoCO}. It corresponds to a disc moduli space governing $m_2\left(b, \mathrm{CO}_{L_2}(\Gamma) \right)$. 
  \end{proof}
  
   \begin{figure}[h]
    \includegraphics[scale=0.7, trim=0 630 0 10, clip]{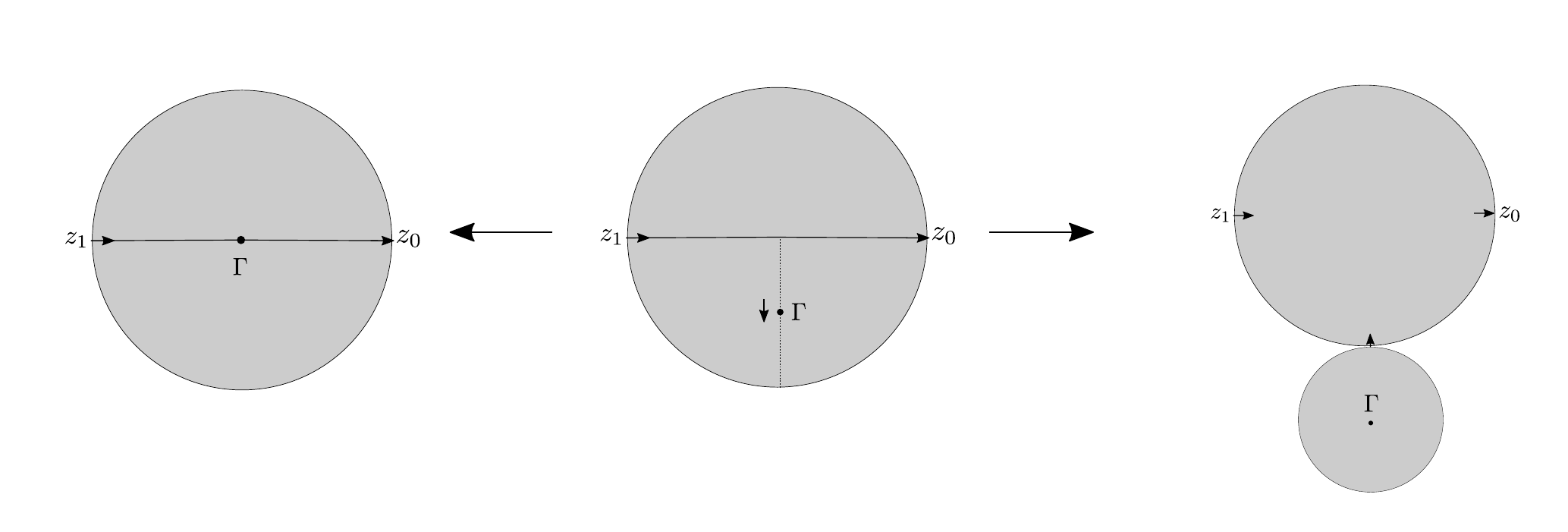}
    \centering
    \caption{Homotopy between $\cap \Gamma$ and $\mathrm{CO}(\Gamma)$}
    \label{QCtoCO}
  \end{figure}

We will construct a new $\AI$-category $\CG$ associated to the quantum cap action $\cap \Gamma$.
We first define its Hom space and the differential $M_1$ using the following $\AI$-bimodule structure. 
\begin{defn}\label{cor:tc}  
We first define $\CG$ as an $\AI$-bimodule over $\mathcal{WF}(M)$ from the following distinguished triangle:
    \[\begin{tikzcd}
    \mathcal{WF}(M) \arrow[r, "\cap \Gamma"] & \mathcal{WF}(M) \arrow[r]& \CG \arrow[r]& \phantom a 
      \end{tikzcd}\]
In particular, cohomologically we have
  \[\Hom_{\CG}^\bullet(L_1, L_2) \simeq \Cone\left(\begin{tikzcd}CW^\bullet(L_1, L_2) \arrow[r, "\cap \Gamma"] & CW^\bullet(L_1, L_2) \end{tikzcd}\right)\]
  \end{defn}
Instead of the $\AI$-bimodule structure on $\CG$, we will construct an $\AI$-category structure on $\CG$ in the next section.
The action $\cap \Gamma$ vanishes up to homotopy in $\CG$.

Intuitively, 
   \begin{itemize}
   \item objects of $\CG$ are twisted complexes
     \[\left(\begin{tikzcd}[column sep=3em] L \arrow[r, "\textrm{CO}(\Gamma)"] & L \end{tikzcd}\right), \hskip 0.2cm L\in \WF(M).\] 
   In many cases, they can be realized as a geometric surgery of $L$ with itself along $\textrm{CO}(\Gamma).$
   \item the space of morphisms is a "half" of the morphisms between twist complexes.  It consists of 
    \["a+\epsilon b"= \begin{pmatrix}a&0\\b&a\end{pmatrix} \in CW^\bullet \left( \begin{tikzcd}[column sep=3em] L_1 \arrow[r, "\textrm{CO}(\Gamma)"] & L_1 \end{tikzcd}, \begin{tikzcd}[column sep=3em] L_2 \arrow[r, "\textrm{CO}(\Gamma)"] & L_2 \end{tikzcd} \right)\]
   \end{itemize}
   This intuition works well in differential graded world (see Section \ref{counterpart}), but fails in all possible ways in the $\AI$-world. For example, $\AI$-compositions do not preserve the above space of morphisms. 
  We will consider a new type of $J$-holomorphic curves to construct a precise $\AI$-structure: Namely, the new $\AI$-structure that we will put on $\CG$ is not an algebraic one, but
  rather a geometric one, which are $J$-holomorphic maps from popsicles. It is natural in a sense that the cap action uses one hyperbolic geodesic line, and for the $\AI$-structure,
  we work with arbitrarily many geodesic lines on a disc.
  

\subsection{Popsicle maps with $\Gamma$-insertions}
Let us introduce pseudo-holomorphic maps from popsicles with $\boldsymbol{\Gamma}$-insertions. 
Consider Lagrangians $L_0,\cdots, L_n$ in the wrapped Fukaya category of $M$.
 \begin{defn}
  Let $a_i \in CF^\bullet (L_{i-1}, L_i)$ and  $\Gamma\in SH^\bullet(M)$. Define 
    \[ {\mathcal P}_{n, F, \phi}(\Gamma; a_1, \ldots, a_n, a_0)\] 
  be a  moduli space of pseudo-holomorphic maps 
  \[\left\{u:S\to M \; \Big| \; S\in P_{n, F, \phi}\right\}\]
  that satisfies 
  \begin{itemize}
    \item a boundary segment from $z_{i}$ to $z_{i+1}$ goes to $L_i$, 
    \item a boundary marking $z_i$ goes to $a_i$,
    \item all interior markings are asymptotic to $\Gamma$.
  \end{itemize}
Such a map $u$ is called a regular pseudo-holomorphic popsicle.
  \end{defn}
Using the compactification  $\overline{P}_{n,   F, \phi}$ of the domain popsicles in the last section, the compactification for regular pseudo-holomorphic popsicles can be carried out
in a standard way. The new feature of alignment data that was used in the compactification of the domain, concerns restriction of conformal structures on various disc and sphere components
in the limiting stable popsicles. And we do \textbf{not} need to align Floer data for the aligned domains as the alignment is a restriction only on the domain.

As we have shown that $\overline{P}_{n, F, \phi}$ is a manifold with corners, we can choose Floer data continuous and consistently by extending it from the lowest dimensional strata in an inductive manner  as usual.  We put no other restriction on the choice of domain-dependent Hamiltonians $H_S$, $F_S$ and almost complex structures $J_S$.
Transversality can be achieved in a standard way as we do not require compatibility of almost complex structures assigned at  aligned domains.
 
\begin{remark}
We also want this choice to be mildly equivariant following \cite{AS}, \cite{Se18}. Suppose several sprinkles are on the same popsicle line for a broken popsicle of type $\star$. Then a symmetry group acts on the corresponding strata $P_{n, \widetilde F, \star}$ by permuting their positions. We want our choice of Floer data to be invariant with respect to this action. It does not harm transversality argument because $P_{n, \widetilde F, \star}$ does not have any fixed point of the action and hence does not pose any problem on surjectivity of linearized Fredholm problem. 
\end{remark}

 Fix a collection of Hamiltonian chords $a_i \in \chi(L_{i-1}, L_i), \hskip 0.2cm i=1, \ldots n$.
 Let us describe  possible limits in the compactification
\[\overline{\mathcal P}_{n,   F, \phi} \left(\Gamma; a_1, \ldots, a_n, a_0 \right).\]
   As a first step, we allow our a tree model $\widetilde T$ of a broken popsicle may have semistable vertex. More precisely, a rooted ribbon tree $T\subset \widetilde T$ may have a bivalent vertices $v$ with $\widetilde F_v = \emptyset$, and a rooted layered tree $T_{v,i,j} \in \widetilde T$ may contains bivalent vertices.   We need to specify an alignment data for the new bivalent vertices on a tree of sphere bubbles so that it is also  compatible with the original alignment data.   This can be done as in the last section and we omit the details. In addition, we need to add cylinders (semi-stable spheres) splitting off at the interior marked points of a popsicle disc (say $v$). We need to add the corresponding new trees to the family $\{T_{v,i,j}\}$ but bubbling is local and do not carry alignment data.
          
Next, choose a Hamiltonian chord $a_{e}$ for each finite edge $e\in \mathrm{Edge}(T)$,  and choose a Hamiltonian orbit $\gamma_{e'}$ for each edge $e' \in \mathrm{Edge}(T_{v,i,j})$ . Let $v \in \mathrm{Vertex}(T)$ be a vertex and let $\Phi^{-1}(v)=\{w_i\}$. We assign the following moduli space of maps
	\begin{equation}\label{eq:pv}
	\mathcal P_{v}:=\left \{(u_v, \{u_{w_i}\})\vert u_v: \Sigma_v \to M, \hskip 0.2cm  u_{w_i}: \Sigma^+_{w_i} \to M, \hskip 0.2cm (\Sigma_v, \Sigma_{w_i}^+) \textrm{ are aligned}\right\}
	\end{equation}
such that 
	\begin{itemize}
	\item each $u_v$ and $u_{w_i}$ satisfy pseudo-holomorphic equation \eqref{eq:cr}.
	\item $u_v$ satisfies Lagrangian boundary condition induced on $\Sigma_v$, 
	\item $u_v$ and $u_{w_i}$ satisfy asymptotic condition on their strip-like/cylindrical ends. 
	\item If $\Sigma_v \simeq Z$ (resp. $\Sigma_{w_i} \simeq S^1\times \mathbb R$) is semistable, then $u_v$ (resp. $u_{w_i}$) is non-constant.
	\end{itemize}
	An aligned pair $(\Sigma_v, \{\Sigma_{w_i}\})$ carries a translation $\R$ symmetry if $\Sigma_v$ and $\sigma^+_{w_i}$ are all semi-stable. In that case, we define a similar moduli space as a quotient with respect to that symmetry group
	\begin{equation}
	\mathcal P_{v}:=\left \{(u_v, \{u_{w_i}\})\vert u_v: \Sigma_v \to M, \hskip 0.2cm  u_{w_i}: \Sigma^+_{w_i} \to M, \hskip 0.2cm (\Sigma_v, \Sigma_{w_i}^+) \textrm{ are aligned}\right\}/\R.
	\end{equation}

Similarly, when $\Psi^{-1}_e(k) =\{w_j\}$ for $e\in \mathrm{Edge}(T)$, we can define (following the case of \eqref{eq:pv}) 
	\begin{equation}
	\mathcal P_{e, k}:=\left\{\{u_{w_j}\}\vert u_{w_j}: \Sigma^+_{w_j} \to M, \hskip 0.2cm (\Sigma_{w_j}^+)_j \textrm{ are aligned}\right\},
	\end{equation}
 
 Define 
\[\mathcal P_{n, \widetilde F, \phi, \star} \left(\Gamma; a_1, \ldots, a_n, a_0 \right):= \bigcup _{a_e, \gamma_{e'}}\left(\bigcup_{v \in \mathrm{Vertex}(T)}  \mathcal P_{v}\right) \cup \left(\bigcup_{e \in \mathrm{Edge}(T)}\bigcup_{1\leq k \leq m_e} \mathcal P_{e,k} \right)\]
and finally we set
\[\overline{\mathcal P}_{n,   F, \phi} \left(\Gamma; a_1, \ldots, a_n, a_0 \right):= \bigcup_{\star} {\mathcal P}_{n, \widetilde F, \phi, \star} \left(\Gamma; a_1, \ldots, a_n, a_0 \right). \]

  \begin{lemma} For a generic choice of universal and consistent Floer data, we have the following.
  \begin{enumerate}
    \item Moduli spaces $\OL {\mathcal P}_{n, F, \phi}(\Gamma; a_1, \ldots, a_n, a_0)$ are smooth manifolds with corners and compact. 
    \item For a given input $\Gamma$ and $a_i, \hskip 0.1cm (i=1, \ldots, n)$, there are only finitely many $a_0$ for which 
    $$\OL {\mathcal P}_{n, F, \phi}(\Gamma; a_1, \ldots, a_n, a_0)$$
    is non-empty .
    \item It is a manifold of dimension 
    \[|F|(1-\deg\Gamma)+n-2 +\deg a_0 -\sum_{i=1}^n\deg a_i\]
  \end{enumerate}
  \end{lemma}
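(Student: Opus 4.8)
The plan is to follow the standard pattern for establishing that a moduli space of $J$-holomorphic curves is a compact manifold with corners of the expected dimension, adapted to the popsicle-with-sprinkles setting. The three assertions are really one statement (smoothness plus the dimension formula) plus a compactness statement plus a finiteness statement, and I would organize the argument accordingly.

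First I would treat transversality and smoothness. For a fixed combinatorial type $\star$ (including the case of a single smooth popsicle disc), the stratum $\mathcal P_{n, \widetilde F, \phi, \star}(\Gamma; a_1,\ldots,a_n,a_0)$ is cut out by the $\bar\partial$-operator on the appropriate space of maps with Lagrangian boundary conditions and asymptotic conditions at the strip-like/cylindrical ends. Since we impose \emph{no} compatibility condition between the almost complex structures on aligned disc and sphere components, the linearized operator on a glued/broken configuration is (up to the usual Sobolev completion bookkeeping) a direct sum of the linearized operators on the individual components, and I would invoke the standard Sard--Smale argument to achieve regularity of each $\mathcal P_v$ and $\mathcal P_{e,k}$ by a generic choice of domain-dependent Floer data $(H_S,F_S,J_S)$, extending inductively from the lowest-dimensional strata of $\overline P_{n,F,\phi}$ as already arranged in Section~\ref{sec:3}. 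The mild-equivariance issue on strata where several sprinkles lie on a common popsicle line does not obstruct transversality because the symmetry group acts freely on those strata (as noted in the Remark preceding the Lemma). Once each stratum is regular, the top stratum $\mathcal P_{n,F,\phi}(\Gamma;\vec a)$ is a smooth manifold, and the corner structure of $\overline{\mathcal P}_{n,F,\phi}(\Gamma;\vec a)$ is inherited from that of the domain moduli space $\overline P_{n,F,\phi}$ together with the usual breaking of Floer strips/cylinders.

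Next, the dimension formula. The open stratum fibers over $P_{n,F,\phi}$, whose dimension is $|F| + n - 2$ (the $n+1$ boundary points contribute $n-2$ after quotienting by $\mathrm{Aut}(D^2)$, and each of the $|F|$ sprinkles adds one real parameter along its popsicle line). The fiber is the space of solutions to the $\bar\partial$-problem with the prescribed asymptotics, whose virtual dimension is the Fredholm index of the linearized operator; by the standard index computation for discs with Lagrangian boundary conditions and interior punctures asymptotic to a symplectic cohomology generator $\Gamma$, this index equals $\deg a_0 - \sum_{i=1}^n \deg a_i - |F|\deg\Gamma$ (each interior puncture asymptotic to $\Gamma$ shifts the index by $-\deg\Gamma$ relative to the underlying disc problem). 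Adding the base and fiber dimensions gives
\[
|F|(1-\deg\Gamma) + n - 2 + \deg a_0 - \sum_{i=1}^n \deg a_i,
\]
as claimed. I would double-check the normalization conventions (whether $\deg\Gamma$ is the Robbin--Salamon/Conley--Zehnder degree or a shifted version, and the sign of the puncture contribution) against the conventions fixed in Appendix~\ref{basic Floer theory}, but this is bookkeeping rather than a genuine difficulty.

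Finally, compactness and finiteness. Compactness is Gromov--Floer compactness: a sequence of popsicle maps either converges (subsequentially) to a smooth popsicle map, or degenerates by some combination of (i) Floer strip/cylinder breaking at the ends, (ii) sphere/disc bubbling in the interior, and (iii) collision of sprinkles, which—crucially—is accounted for by the new compactification of Section~\ref{sec:3}: colliding sprinkles force sphere bubbles carrying induced popsicle lines, and the alignment data records the resulting relations between conformal structures. The new compactified domain moduli space $\overline P_{n,F,\phi}$ being itself compact (a manifold with corners, as established earlier), the limiting configuration lies in one of the strata $\mathcal P_{n,\widetilde F,\phi,\star}$, so $\overline{\mathcal P}_{n,F,\phi}(\Gamma;\vec a)$ is compact. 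For finiteness of the possible outputs $a_0$, I would use the standard action/energy estimate: the geometric energy of a popsicle map is controlled by the topological energy, which is a linear combination of the actions of $a_0$, the $a_i$, and $|F|$ copies of the action of $\Gamma$; for fixed inputs this bounds the action of $a_0$ from above, and since there are only finitely many Hamiltonian chords in $\chi(L_0,L_n)$ below any given action level (after a generic small perturbation ensuring the chords are nondegenerate and have discrete action spectrum), only finitely many $a_0$ can occur. The main obstacle in the whole argument is verifying that the Gromov--Floer limit of a sequence of popsicle maps genuinely lands in the new compactification—i.e., that the conformal-structure alignments between bubbled spheres and their host discs (and the relative bubbling speeds encoded in $\Psi$) are exactly what the analysis produces in the limit; this is the delicate point flagged in the introduction as the reason the standard stable-map compactification is insufficient, and it requires carefully tracking the rescaling rates of colliding sprinkles versus degenerating neck parameters, which is precisely the content packaged into the compactification of Section~\ref{sec:3} (and Appendix~\ref{compactification}).
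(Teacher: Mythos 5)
Your proposal is correct and takes essentially the same route as the paper: the paper likewise reduces the statement to the domain compactification being a compact manifold with corners (Theorem \ref{thm:corner}), invokes standard transversality, Gromov--Floer compactness and a priori energy bounds for the finiteness of outputs $a_0$, and obtains the dimension from the standard index formula. The only cosmetic difference is the bookkeeping of the dimension count: the paper writes it as $\dim_\R \overline{\mathcal M}_{|F|;n,1}(\Gamma,\ldots,\Gamma;a_1,\ldots,a_n,a_0)-|F|$, i.e.\ cutting down the freely moving interior marked points by the $|F|$ popsicle-line constraints, which agrees with your base-plus-Fredholm-fiber computation.
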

  \begin{proof}
Recall that the domain broken popsicles are shown to be smooth manifolds with corners in Theorem \ref{thm:corner}.
The desired transversality and compactness for pseudo-holomorphic maps can be proved as well using the standard methods and the above theorem.
 A standard index formula tells us that 
  \begin{align*}\textrm{dim}_\R \overline {\mathcal P}_{n, F, \phi}(\Gamma; a_1, \ldots, a_n,a_0) & =  \textrm{dim}_\R  \overline{\mathcal M}_{|F|;n,1}(\Gamma, \ldots, \Gamma;a_1, \ldots, a_n, a_0) - |F|\\
  &=(2|F|+n-2)+\deg a_0-\sum_{i=1}^n\deg a_i-|F|\cdot \deg \Gamma -|F|\\
  &= |F|(1-\deg \Gamma) +n-2 +\deg a_0 -\sum_{i=1}^n\deg a_i.
  \end{align*}
  \end{proof} 
  Let  $\bold{P}^\Gamma_{n, F, \phi}$
  be a sum of orientation operators associated to the zero-dimensional component of 
  $$\OL{\mathcal P}_{n, F, \phi}(\Gamma; a_1, \ldots, a_n, a_0)$$ for all possible $a_0$. A degree of this operator is $2-n-|F|(1-\deg\Gamma)$. Also,  $\bold{P}^\Gamma_{n, F, \phi}$ describes $\AI$-structure $\{m_n\}$ if $F$ is empty, and describes quantum cap action $\cap \Gamma$ if $|F|=1$.

\section{The new $\AI$-category $\CG$}\label{sec:qc1}
In this section, we will define a new $\AI$-category $\CG$ of a Liouville manifold $M$ using popsicles with $\Gamma$-insertions, where $\Gamma$ is a certain symplectic cohomology class of $M$.
A key feature of the new $\AI$-category is that the image of quantum cap actions by $\Gamma$  vanishes in cohomology of $\CG$.

To control the new sphere bubbling phenomenon  of the popsicle moduli space, we restrict ourselves to the following setting.
\begin{assumption}\label{as:main}
We will assume that a Liouville manifold $M$ satisfies the following in this section.
		\begin{enumerate} 
		\item $c_1(M) = 0$.
		\item Reeb flow $\mathcal R_t$ on $\partial M$ defines a free $S^1$-action.
		\item linearized Reeb flow is complex linear.
		    \item  Robbin-Salamon index of a principal Reeb orbit component is non-negative.
		   \item Symplectic cochain $\Gamma_M$ that represents the fundamental class of principal orbits is closed.
		\end{enumerate}
\end{assumption}
More details on  these assumptions will be given in Section \ref{sec:bbass}.  
 The Milnor fiber quotients that we will investigate later in this paper do not satisfy the first two hypotheses, and
 we will explain later how to generalize the construction in those cases.


Under the above assumptions, we prove the following theorem in this section.
\begin{thm}\label{thm:fulou}
Given a Liouville manifold $M$ and the element $\Gamma_M \in SH^\bullet(M)$ satisfying the above assumption, we define a new $\Z$-graded $\AI$-category $\CG$ such that 
\begin{enumerate}
\item $\CG$ has the same set of objects as the wrapped Fukaya category $\WF(M)$,
\item for two objects $L_1,L_2$, its morphisms are given by
 \[\Hom_{\mathcal{C}_\Gamma}(L_1, L_2) = CW(L_1, L_2) \oplus CW(L_1, L_2)\epsilon \]
 where $CW(L_1,L_2)$ is the morphism space for $\WF(M)$ and $\deg \epsilon =-1+ \deg \Gamma_M,$
\item a natural inclusion $\Psi : \WF(M) \to \CG$ is an $\AI$-functor,
\item regarding the $\AI$-category $\CG$ as an $\AI$-bimodule over $\WF(M)$ (using $\Psi$), we have a distinguished triangle of 
$\AI$-bimodules
\begin{equation*} 
\begin{tikzcd}   \mathcal{WF}(M)_\Delta \arrow[r, "\cap \Gamma "] & \mathcal{WF}(M)_\Delta \arrow[r] & \CG \arrow[r] & \;\; \end{tikzcd}
\end{equation*}
\end{enumerate}
\end{thm}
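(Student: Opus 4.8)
The plan is to put the $\AI$-structure on $\CG$ directly from counts of rigid $J$-holomorphic popsicles with $\Gamma_M$-insertions, and then to read the bimodule triangle off the resulting operations. \textbf{Setting up the operations.} Write a morphism of $\CG$ as $a+b\,\epsilon$ with $a,b\in CW(L_{i-1},L_i)$ and $\deg\epsilon=-1+\deg\Gamma_M$; this is an integer because $c_1(M)=0$ (Assumption \ref{as:main}(1)) provides a $\Z$-grading and the principal-orbit cochain $\Gamma_M$ is homogeneous of integral degree. For inputs $\widehat a_1,\dots,\widehat a_k$, $\widehat a_i=a_i+b_i\epsilon$, I would set
\[ M_k(\widehat a_1,\dots,\widehat a_k)=\sum(\pm)\,\mathbf{P}^{\Gamma}_{k,F,\phi}\bigl(\Gamma_M;c_1,\dots,c_k\bigr), \]
summing over the flavor data $\phi\colon F\to\{1,\dots,k\}$ permitted by the $\epsilon$-decorations: an $\epsilon$ on the $i$-th input contributes a sprinkle on the popsicle line $Q_i$ and forces $c_i=b_i$, an ordinary input gives $c_i=a_i$ and no sprinkle, and whether the output of $\mathbf{P}^\Gamma$ is recorded in the ordinary slot or the $\epsilon$-slot is fixed by the combinatorial type, normalized so that the dimension formula of Section \ref{sec:qc0} is $0$ on rigid popsicles — this normalization is exactly what forces $\deg\epsilon=-1+\deg\Gamma_M$. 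Signs come from the orientation operators $\mathbf{P}^\Gamma$ together with the Koszul rule for $\epsilon$. When no input carries an $\epsilon$ there are no sprinkles, so $M_k$ on the ordinary slots is literally $m_k^{\WF}$ and stays in the ordinary slot; on morphism spaces this is statement (2).

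\textbf{The $\AI$-relations.} Following \cite{AS,Se18}, these come from identifying the codimension-one boundary of the one-dimensional strata of $\overline{\mathcal P}_{k,F,\phi}(\Gamma_M;a_1,\dots,a_n,a_0)$ for a generic universal and consistent choice of Floer data over the manifold-with-corners $\overline P_{k,F,\phi}$ of Section \ref{sec:3} (Theorem \ref{thm:corner}). The faces are of two kinds: (i) broken popsicles without sphere bubbles, which assemble into $\sum\pm M(\dots,M(\dots),\dots)$ — this includes the degenerations in which a sprinkled sub-popsicle splits off, which is what produces the $\cap\Gamma_M$-type terms of $M_1$ — and (ii) the new strata carrying sphere bubbles with the induced aligned popsicle structure. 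The whole argument hinges on the faces of type (ii) contributing nothing: under Assumption \ref{as:main}(4) the Robbin--Salamon index of a principal Reeb orbit component is non-negative, so Proposition \ref{prop:v1} (equivalently Proposition \ref{prop:vanishing2}, in the form stated as Proposition \ref{prop:ii}) shows that the codimension-one sphere-bubble strata with $\Gamma_M$-insertions are empty. Hence only the type-(i) faces remain, and their cancellation is precisely the $\AI$-relation for $\CG$.

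\textbf{The functor and the triangle.} Define $\Psi\colon\WF(M)\to\CG$ to be the identity on objects, the strict inclusion $CW(L_1,L_2)\hookrightarrow CW(L_1,L_2)\oplus CW(L_1,L_2)\epsilon$ on morphisms, and zero in higher order; by the last sentence of the first step $M_k^{\CG}\circ(\Psi^1)^{\otimes k}=\Psi^1\circ m_k^{\WF}$, so $\Psi$ is a strict $\AI$-functor, which is statement (3). Pulling back the diagonal bimodule of $\CG$ along $\Psi$: an ordinary $\CG$-element in the module slot surrounded by ordinary $\WF$-inputs yields only ordinary output, so the ordinary slot is a sub-$\AI$-bimodule, and since it sees no sprinkles it is $\WF(M)_\Delta$; the quotient is $CW\epsilon$, on which only the sprinkle-free components of $\mu^{s|1|t}$ survive, so it is $\WF(M)_\Delta$ shifted by $\deg\epsilon$. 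The resulting short exact sequence of $\AI$-bimodules is a distinguished triangle, and its connecting morphism is the one-sprinkle part of $\mu^{s|1|t}$ applied to an $\epsilon$-class, which by Definition \ref{defn:cap} is exactly $\cap\Gamma_M$; thus
\[\begin{tikzcd}\mathcal{WF}(M)_\Delta \arrow[r,"\cap\Gamma_M"] & \mathcal{WF}(M)_\Delta \arrow[r] & \CG \arrow[r] & \;\;\end{tikzcd}\]
in agreement with Definition \ref{cor:tc}, which is statement (4) and re-proves statement (2) cohomologically.

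\textbf{The main obstacle.} Everything apart from the second step — the homological algebra of the triangle, the sign verifications, and transversality over $\overline P_{k,F,\phi}$ — is routine. The genuine difficulty is excluding the codimension-one sphere-bubble faces of the new alignment compactification: the stratum of Figure \ref{fig:twobubble} has exactly the dimension needed to obstruct the $\AI$-equation, and ruling it out is precisely where Assumption \ref{as:main}(4) and the degree-and-action estimates underlying Proposition \ref{prop:v1} are indispensable.
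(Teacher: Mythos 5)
Your overall strategy is the paper's own: define the operations by counting rigid popsicles with $\Gamma_M$-insertions over the new compactification, rule out the codimension-one aligned sphere-bubble strata via Proposition \ref{prop:v1} under Assumption \ref{as:main}, observe that the sprinkle-free part reproduces $\WF(M)$ (giving the strict inclusion functor), and read off the bimodule triangle with connecting map the one-sprinkle operation $\cap\Gamma$ of Definition \ref{defn:cap}. Those parts match the paper.

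The gap is in the definition of the $\AI$-operations themselves, which is where the real content lies. As literally stated ("an $\epsilon$ on the $i$-th input contributes a sprinkle on $Q_i$"), every operation has only an ordinary output: then $M_1(\epsilon b)=\cap\Gamma(b)$ with no $\epsilon\, m_1(b)$ term, and $M_1^2(\epsilon b)=m_1(\cap\Gamma(b))\neq 0$, since the chain-map property only gives $m_1\cap\Gamma\pm\cap\Gamma\, m_1=0$; likewise the quotient bimodule in your triangle argument would carry the zero structure, not the shifted diagonal. Your later phrase "only the sprinkle-free components survive in the quotient" tacitly uses the opposite convention, so the proposal is internally inconsistent on this point. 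The paper's Definition \ref{defn:newa} is the resolution: with $F$ the set of $\epsilon$-positions, the ordinary component $M_n^a$ counts popsicles of type $\mathbf{P}^{\Gamma}_{n,F}$ (all $\epsilon$-inputs sprinkled), while the $\epsilon$-component $M_n^b$ is a sum over $j$ of $\mathbf{P}^{\Gamma}_{n,\widehat F^j}$, in which exactly one $\epsilon$-input loses its sprinkle. This is forced by requiring $M_n$ to have degree $2-n$; your normalization "the dimension formula is $0$ on rigid popsicles" does not pin it down, since rigid popsicles exist for every $|F|$. Correspondingly, the $\AI$-identity is not simply "the sphere-free faces assemble into compositions": the $a$- and $b$-output identities are checked separately, the $b$-identity compares boundary strata of the moduli spaces for all $\widehat F^j$ simultaneously (Figures \ref{fig:pop0} and \ref{fig:popb}), and the codimension-one faces whose induced flavor on a component is non-injective (several sprinkles landing on a single line after breaking) must be shown to vanish via the mildly equivariant choice of Floer data, Proposition \ref{two interior markings implies vanishing}. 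None of these steps appear in your sketch, and without them the construction as written does not yield an $\AI$-category.
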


Roughly speaking, $\AI$-operations are defined by  {\em popsicles with $\Gamma$-insertions at
sprinkles}.
One of the difficulty is to rule out any codimension one (aligned) sphere bubbles with $\Gamma$-insertions, and we achieve it using  Conley-Zehnder indices and action values
under the Assumption \ref{as:main}. The rest of codimension one strata  are analogous to
those of Abouzaid-Seidel \cite{AS}, and Seidel \cite{Se18}  and we obtain the desired $\AI$-category.

\begin{remark}
Popsicle structures were introduced by Abouzaid-Seidel \cite{AS} and Seidel \cite{Se18}.
It is a good moment to point out differences and similarities. 

 In \cite{AS},\cite{Se18},  sprinkles were allowed to coincide resulting a different compactification of popsicles. 
In \cite{AS}, popsicles were used to mark the places to put sub-closed one forms for continuation maps between linear Hamiltonians. 
They localize the continuation map and obtain a big chain complex involving countable family of Hamiltonians $\{nH\}_{n \in \mathbb{N}}$. 
This is a definition of wrapped Fukaya category in a linear Hamiltonian setting. 
 Seidel \cite{Se18} considered the  continuation map for Lefschetz fibration, but he also considered its cone
\[CF^\bullet(L_0,L_1;-H) \to CF^\bullet(L_0,L_1;0) \to CF^\bullet(L_0,L_1;\mathrm{conti}),\] 
not only its localization. Popsicle maps were used to construct an $\AI$-algebra structure on $CF^\bullet(L_0,L_1;\mathrm{conti})$. 
This brings out the effect of removing contributions from compact part of Lefschetz fibration, and recovers the Floer cohomology of the fiber. 

Our geometric setting is different from the references because we regard sprinkles as genuine inputs for the symplectic cohomology class $\Gamma$. 
But the algebraic properties we desire are similar to those of \cite{Se18}. 
In our case, we want to remove the effect of the quantum cap action $\cap \Gamma$ from the wrapped Fukaya category.
Namely, wrapped generator in the image of quantum cap action will be killed.

  \end{remark}

\subsection{Codimension one analysis}
 Let us discuss the codimension one strata of the compactification $$\overline{\mathcal P}_{n,   F, \phi} \left(\Gamma; a_1, \ldots, a_n, a_0 \right).$$
The obvious ones are given by splitting off a non-trivial  cylinder (at sprinkles) or a  strip (at boundary markings). These will correspond to the differential
of $\Gamma$ or $a_i$'s. Beyond these ones, the remaining codimension one strata arise when the domain regular popsicle degenerates.
Recall from \eqref{codimension formula}, that we can explicitly determine the codimension of the broken popsicle 
of type $\star=(\widetilde T, \widetilde F, \widetilde \psi, \Phi, \Psi)$ to be 
\[\vert \mathrm{Vertex}(T) \vert -1 + \sum_{e\in \mathrm{Edge}(T)}m_e .\]

Since $m_e \geq 0$, codimension can may arise in the following cases: 
	\begin{enumerate}
	\item $\vert \mathrm{Vertex}(T) \vert =1$ and $m_e=1$ for a single $e$ and $m_{e'}=0$ for other $e'$.
	\item $\vert \mathrm{Vertex}(T) \vert =2$ and $m_e=0$. 
	\end{enumerate}
The first case do not arise, since there is only one disc component, there is no possible edge $e$.
The second case is quite interesting. 
Label two disc components as $\mathrm{Vertex}(T)=\{v_1, v_2\}$ with an edge $e$ from $v_1$ to $v_2$.

If there are no sphere components, then any such stratum is given by a broken popsicle consisting of two popsicles without sphere bubbles, and these will describe the $A_\infty$-relations
(as in Abouzaid-Seidel \cite{AS} and Seidel \cite{Se18}).

Let us consider the case that there is at least one sphere component.
As the alignment data $m_e$ counts sphere bubbles that are aligned to the edge $e$, the condition $m_e=0$ implies that all sphere components $\Sigma_{w}^+$ (if there is) should be aligned to the disc component $v_2$. Note that there is no restriction on the number of sphere components. From the restriction on the alignment data, all sphere components should be
attached to the disc $v_1$ (if they are stacked on each other, then it violates the strictly order preserving condition).
We may have several sphere bubbles in codimension one! (see Figure \ref{fig:twobubble} for an example).

	Therefore, sphere components in the compactification  provide  genuine obstructions of defining $A_\infty$-structure from $\overline{\mathcal P}_{n,   F, \phi} \left(\Gamma; a_1, \ldots, a_n, a_0 \right)$.   Fortunately, we can show these obstructions vanish in certain favorable situations. The following degree estimate that we establish will be crucial for this purpose.

Let us first set up the notations for such codimension one strata with sphere bubbles.
Consider the following pseudo-holomorphic broken popsicle with appropriate boundary conditions:
\[u:=(u_{v_1}, u_{v_2}, \{u_{w_i}\}): \Sigma = \Sigma_{v_1} \cup \Sigma_{v_2} \cup \Sigma^+_{w_1} \cup \cdots \cup \Sigma^+_{w_m} \to M,\]
We assume that all spheres $\Sigma^+_{w_i}$ are aligned to the stable disc $\Sigma_{v_2}$.
We assume that $u$  is  Fredholm-regular and rigid. We make the following notations.
	\begin{itemize}
	\item  $\Sigma_{v_2}$ has $n$ boundary insertions, 
	\item $k_{v_i} = F_{v_i}$ denotes a number of sprinkles on $\Sigma_{v_i}$ for $i=1,2$, 
	\item $k_{w_i}=\mathrm{val}(w_i)-1$ denotes a number of incoming insertion of $\Sigma^+_{w_i}$ (where $\Gamma$'s are placed) and $k_{w_i} \geq 2$,
	\item $\gamma_{w_i} \in \mathcal O$ denotes an output of $u_{w_i}$,
	\item $\mathcal M$ denotes a moduli space of underlying aligned pairs of popsicles $\Sigma = \left(\Sigma_{v_1},\Sigma_{v_2}, \{\Sigma_{w_j}^+\}\right)$. 
	\end{itemize}
Observe that we can decompose its tangent space as 
\begin{equation}\label{eq:dfspb}
T_\Sigma \mathcal M \simeq  \big( \R^{k_{v_1}} \times \R^{m-1}\big) \times  \big( T\mathcal M_{0, n+1} \times \R^{k_{v_2}} \big) \times \R^{k_{w_1}-1} \times \cdots \times \R^{k_{w_m}-1}.
\end{equation}
Here, the first two components handle the deformation of marked points on the discs $\Sigma_{v_1}$, $\Sigma_{v_2}$ respectively  and the component  $\R^{k_{w_i}-1}$ for that of
the aligned sphere $\Sigma_{w_i}$ for $i=1,\cdots,m$.

	\begin{prop}
	\label{degree inequality 1}
	In this situation, the following inequality holds;
	\begin{equation}
	k_{w_i} \deg \Gamma - k_{w_i}-n+3 \leq \deg \gamma_{w_i} \leq k_{w_i} \deg \Gamma - k_{w_i} +1
	\end{equation}
	\end{prop}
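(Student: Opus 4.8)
The plan is to extract the inequality from two sources: a global dimension (index) count for the whole broken configuration $u=(u_{v_1},u_{v_2},\{u_{w_i}\})$, combined with the fact that $u$ is rigid, and the constraint that the moduli space of the aligned pair of popsicles has the tangent-space decomposition \eqref{eq:dfspb}. First I would write down the expected dimension of the stratum $\mathcal M$ of aligned domain popsicles using \eqref{eq:dfspb}: it is $(k_{v_1}+m-1)+( (n-2)+k_{v_2})+\sum_{i=1}^m (k_{w_i}-1)$. Since $u$ lives in a codimension-one stratum of $\overline{\mathcal P}_{n,F,\phi}(\Gamma;a_1,\dots,a_n,a_0)$ and is rigid, the index of the linearized operator over this stratum equals $1$, i.e.\ the total dimension of the space of maps over $\mathcal M$ (with $a_i,\Gamma$ fixed asymptotics and varying $a_0$) is $1$. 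Using additivity of the index over the nodal components together with the gluing/matching conditions at the interior nodes (each sphere $\Sigma_{w_i}^+$ contributes, via its output $\gamma_{w_i}\in\mathcal O$ and $k_{w_i}$ inputs asymptotic to $\Gamma$, an index of the form $2k_{w_i}+k_{w_i}-1 + \deg\gamma_{w_i}\cdot(\text{sign}) - k_{w_i}\deg\Gamma$ once one subtracts the $k_{w_i}$ popsicle constraints, modulo the precise normalizations already recorded in the dimension lemma for $\mathcal P_{n,F,\phi}$), I would solve for $\deg\gamma_{w_i}$.

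Concretely, the cleanest route is to isolate the sphere component $\Sigma_{w_i}^+$ and the disc $\Sigma_{v_2}$ to which it is aligned. Because $\Sigma_{w_i}^+$ is aligned to $\Sigma_{v_2}$, the conformal modulus of $\Sigma_{w_i}^+$ is \emph{not} free — it is pinned to that of $\Sigma_{v_2}$ through the forgetful maps to $\mathcal M_{0,\bullet}$ — so the only genuine moduli of $\Sigma_{w_i}^+$ are the positions of its $k_{w_i}-1$ non-normalized sprinkles along its popsicle line(s), accounting for the $\R^{k_{w_i}-1}$ factor. Running the index formula for a holomorphic sphere with one output $\gamma_{w_i}$ and $k_{w_i}$ inputs equal to $\Gamma$ (using $c_1(M)=0$, or the orbifold Robbin–Salamon normalization from Section \ref{sec: index computation}), the virtual dimension is $\deg\gamma_{w_i}-k_{w_i}\deg\Gamma + 2(k_{w_i}-1) + \dim(\text{extra sphere marked-point freedom})$; matching this against the allotted $\R^{k_{w_i}-1}$ in \eqref{eq:dfspb} (an upper bound on how much dimension the sphere may carry before the total exceeds the codimension-one budget) yields the upper estimate $\deg\gamma_{w_i}\le k_{w_i}\deg\Gamma-k_{w_i}+1$. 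For the lower estimate I would instead demand that the \emph{complementary} piece — the map $u_{v_2}$ on the stable disc with $n$ boundary inputs, $k_{v_2}$ sprinkles, one incoming node carrying $\gamma_{w_i}$ (and the other nodal/asymptotic data), and output $a_0$ — has nonnegative virtual dimension, since it must at least contain the rigid solution; this forces $\deg\gamma_{w_i}\ge k_{w_i}\deg\Gamma-k_{w_i}-n+3$, the $n$ and the $+3$ coming precisely from the $n$ boundary inputs and the $(n-2)$ in $\dim\mathcal M_{0,n+1}$ together with the codimension-one shift.

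The main obstacle I expect is bookkeeping the index contributions correctly across the aligned node: because alignment removes a modulus that would normally be present, the naive stable-map index count over-counts, and one has to be careful to quotient by exactly the conformal automorphisms/moduli that the alignment identifies, and to track the mildly-equivariant Floer-data normalization when several sprinkles sit on one popsicle line. I would handle this by reducing, via the tangent-space splitting \eqref{eq:dfspb}, to the statement that the dimension of the space of maps over each factor is bounded by the dimension of that factor (for the sphere factors) and bounded below by $0$ (for the disc factor carrying the output), both of which are immediate from regularity of the individual components and transversality of the glued problem. Summing these inequalities and using that the total is $1$ (rigidity in codimension one) collapses everything to the two-sided bound on $\deg\gamma_{w_i}$ stated in the proposition. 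A secondary subtlety is the grading convention for $\epsilon$ (degree $-1+\deg\Gamma_M$) and hence for outputs of sphere components; I would fix it once at the start so that the arithmetic lands exactly on $k_{w_i}\deg\Gamma-k_{w_i}\pm(\cdots)$ without stray constants.
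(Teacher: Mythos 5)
Your overall framework is the right one (linearize at the rigid broken popsicle and exploit the splitting \eqref{eq:dfspb}), but the two mechanisms you propose for the two inequalities are not the ones that work, and the step that actually produces the stated constants is missing. In the paper both bounds come from the single component operator $D_{u_{w_i}}$ restricted to $T\mathcal M_{0,n+1}\times\R^{k_{w_i}-1}\times V_{w_i}$, whose index is $(n-2)+(k_{w_i}-1)+\deg\gamma_{w_i}-k_{w_i}\deg\Gamma$. The lower bound is the statement that this index is $\geq 0$, which follows because $D_{u_{w_i}}$ is surjective (surjectivity is inherited from $D_u$), so its index equals $\dim\mathrm{Ker}\,D_{u_{w_i}}\geq 0$. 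Your proposed lower-bound mechanism cannot replace this: you ask that the complementary disc $u_{v_2}$ ``with one incoming node carrying $\gamma_{w_i}$'' have nonnegative virtual dimension, but in the configuration at hand the spheres are attached to $\Sigma_{v_1}$ (they are only \emph{conformally} aligned to $\Sigma_{v_2}$), so $\gamma_{w_i}$ is an asymptotic of $u_{v_1}$, not of $u_{v_2}$; moreover no complementary component ever sees the $k_{w_i}$ copies of $\Gamma$, so its index can never produce the term $k_{w_i}\deg\Gamma$, and combined with rigidity of the total configuration such a constraint would bound the sphere's index from \emph{above}, not below.

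The upper bound is where the genuinely new idea sits, and your ``budget'' phrasing does not capture it. Dimensions do not simply add across the shared factor $T\mathcal M_{0,n+1}$ (that failure of additivity is exactly why this stratum is codimension one), and read literally your bound ``the space of maps over each sphere factor is bounded by the dimension of that factor'' either is unjustified (if the factor is $\R^{k_{w_i}-1}$) or yields only $\deg\gamma_{w_i}\leq k_{w_i}\deg\Gamma$ (if it includes $T\mathcal M_{0,n+1}$), which is weaker than the claimed $k_{w_i}\deg\Gamma-k_{w_i}+1$ by $k_{w_i}-1$. The paper's argument is: any element of $\mathrm{Ker}\,D_{u_{w_i}}$ whose $T\mathcal M_{0,n+1}$-component vanishes extends by zero (in the block form of $D_u$, where the only coupling between components is through the shared $T\mathcal M_{0,n+1}$ column) to an element of $\mathrm{Ker}\,D_u$, contradicting rigidity; hence $\mathrm{Ker}\,D_{u_{w_i}}$ projects injectively to $T\mathcal M_{0,n+1}$ and $\mathrm{Index}\,D_{u_{w_i}}=\dim\mathrm{Ker}\,D_{u_{w_i}}\leq n-2$. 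Without this extension-by-zero kernel argument (or an equivalent substitute) your proposal does not close either inequality.
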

	
	\begin{proof}
	Consider a linearized Fredholm problem asssociated to $u$.
	It can be written (using the decomposition  \eqref{eq:dfspb}) 
	\begin{equation}
	D_u: T_\Sigma \mathcal M \times \left( V_{v_1} \times  V_{v_2} \times V_{w_1} \times \cdots \times V_{w_k}\right) \to \left(W_{v_1} \times  W_{v_2} \times W_{w_1} \times \cdots \times W_{w_k} \right)
	\end{equation}
	Let us write  $D$ in the matrix form.
	\begin{equation} D_u=
	\begin{pmatrix}
		 D_{u_{v_1}}&0 	&   0	& 0	&0   & \cdots & 0 \\
	0&D_{0v_2}	&  D_{v_2v_2}	& 0	& 0	& \cdots & 0 \\
	0&D_{0,w_1}	& 0 	& D_{w_1w_1}	& 0	& \cdots 	& 0 \\
	\vdots & \vdots 	  &\vdots	& \vdots 	&\\
	0& D_{0w_k}	& 0  	& 0 	& 0	&  \cdots & D_{w_kw_k}
	\end{pmatrix}.
	\end{equation} 
	Let us explain each components.  For $v_1$ and $v_2$, we have 
	$$D_{u_{v_1}}:  \big( \R^{k_{v_1}} \times \R^{m-1}\big)  \times V_{v_1} \to W_{v_1}$$
		\[D_{u_{v_2}} = (D_{0v_2}, D_{v_2v_2}):   T\mathcal M_{0, n+1}  \times \left(\R^{k_{v_2}}\times V_{v_2} \right) \to W_{v_2}\] 
	is a linearized Fredholm operator associated to $u_{v_2}$ and 
	\[D_{u_{w_i}} = (D_{0w_i}, D_{w_iw_i}):  T\mathcal M_{0, n+1}  \times \left( \R^{k_{w_i}-1}\times V_{w_i}\right) \to W_{w_i}\] 
	are linearized Fredholm operators associated to $u_{w_i}$ respectively. 
	In this way, we can separate the contribution of  $T\mathcal M_{0, n+1}$, which affects the disc $v_2$ as well as the spheres $\{w_i\}$ due to the alignment.	
	
	Since $u$ is Fredholm regular and rigid, $D_u$ is surjective and $\mathrm{Ker}(D_u)=\{0\}$.	
	And each $D_{u_v}$ and $D_{u_{w_i}}$ is surjective because $D_u$ is surjective. It means that $u_v$ and $u_{w_i}$ are Fredholm regular in their own rights. 
	
	Next, observe that $\mathrm{Ker}D_{u_{w_i}} \cap \{0\}\times \left(\R^{k_{w_i}-1} \times V_v\right)=\{0\}$. If not, we can extend a non-zero vector $(0, \alpha, \beta) \in \mathrm{Ker}D_{u_{w_i}} \cap \{0\} \times \left( \R^{k_{w_i}-1 }\times V_v \right)$ to 
	\[(0, \ldots, 0, \alpha, \beta, 0, \ldots, 0) \in \mathrm{Ker}D_u,\]  
	which contradicts to $\mathrm{Ker}D_u=\{0\}$. Therefore, $\mathrm{Ker}D_{u_{w_i}}$ must projects isomorphically into $T\mathcal M_{0, n+1}$ as a vector space, which again implies $\mathrm{dim}\mathrm{Ker}D_{u_{w_i}} \leq \mathrm{dim} T\mathcal M_{0,n+1}$. Combine these two observations with Fredholm index theorem, we get 
	\[0\leq \mathrm{dim}\mathrm{Ker}D_{u_{w_i}} = \mathrm{Index}D_{u_{w_i}}=\mathrm{dim}T\mathcal M_{0,n+1} + (k_{w_i}-1) + \deg \gamma_{w_i} - k_{w_i} \deg \Gamma \leq \mathrm{dim}T\mathcal M_{0, n+1}.\]
	 The proposition follows from these inequalities.
	\end{proof}
	
	\begin{cor}
	\label{degree inequality 2}
	In this situation, there is at least one component $u_{w_i}$ such that 
	\[\deg \gamma_{w_i} \leq k_{w_i} \deg \Gamma - \frac{k_{w_i}}{2}.\]
	\end{cor}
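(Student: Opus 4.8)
The plan is to obtain this immediately from the upper bound in Proposition \ref{degree inequality 1}, using only the stability constraint $k_{w_i}\ge 2$ that is built into the situation (every sphere bubble $\Sigma^+_{w_i}$ carries at least two $\Gamma$-insertions).

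Concretely, fix any sphere component $\Sigma^+_{w_i}$. By Proposition \ref{degree inequality 1} applied to $u_{w_i}$ we have $\deg\gamma_{w_i}\le k_{w_i}\deg\Gamma-k_{w_i}+1$. Since $k_{w_i}\ge 2$, the elementary inequality $k_{w_i}-1\ge \tfrac{k_{w_i}}{2}$ holds, equivalently $-k_{w_i}+1\le -\tfrac{k_{w_i}}{2}$, and therefore
\[
\deg\gamma_{w_i}\ \le\ k_{w_i}\deg\Gamma-k_{w_i}+1\ \le\ k_{w_i}\deg\Gamma-\frac{k_{w_i}}{2}.
\]
This in fact holds for every $i$, so a fortiori for at least one component, which is the assertion. (If one prefers a derivation that only invokes the index estimate $\mathrm{Index}\,D_{u_{w_i}}=\dim\mathrm{Ker}\,D_{u_{w_i}}\le \dim T\mathcal{M}_{0,n+1}=n-2$ established inside the proof of Proposition \ref{degree inequality 1}, one may instead sum the identities $\mathrm{Index}\,D_{u_{w_i}}=(n-2)+(k_{w_i}-1)+\deg\gamma_{w_i}-k_{w_i}\deg\Gamma$ over $i$, getting $\sum_i\big(\deg\gamma_{w_i}-k_{w_i}\deg\Gamma\big)\le -\sum_i(k_{w_i}-1)\le -\tfrac12\sum_i k_{w_i}$, and then the pigeonhole principle yields an index $i$ with $\deg\gamma_{w_i}-k_{w_i}\deg\Gamma\le -\tfrac{k_{w_i}}{2}$.)

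There is essentially no obstacle here: the entire content is the bookkeeping inequality $k_{w_i}\ge 2\Rightarrow k_{w_i}-1\ge k_{w_i}/2$. The reason this particular reformulation is worth isolating is that the bound $\deg\gamma_{w_i}\le k_{w_i}\deg\Gamma-k_{w_i}/2$ is precisely the form that will be played against the action/energy estimates in Propositions \ref{prop:v1} and \ref{prop:vanishing2}, where non-negativity of the Robbin--Salamon index of the principal Reeb orbit forces a contradiction and thereby excludes these codimension one sphere bubbles.
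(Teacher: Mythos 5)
Your proof is correct, and it rests on exactly the same two ingredients as the paper's: the upper bound of Proposition \ref{degree inequality 1} and the stability constraint $k_{w_i}\geq 2$. The route is mildly different, though. The paper first sums the individual bounds, uses $m\leq \tfrac12\sum_i k_{w_i}$ to get $\sum_i \deg\gamma_{w_i}\leq (\sum_i k_{w_i})\deg\Gamma-\tfrac12\sum_i k_{w_i}$, and then extracts a single good component by the pigeonhole principle; your parenthetical alternative is essentially this argument. Your main derivation skips the summation entirely: since $k_{w_i}\geq 2$ gives $k_{w_i}-1\geq k_{w_i}/2$, the bound $\deg\gamma_{w_i}\leq k_{w_i}\deg\Gamma-k_{w_i}+1$ already implies $\deg\gamma_{w_i}\leq k_{w_i}\deg\Gamma-k_{w_i}/2$ for \emph{every} sphere component, which is a fortiori the stated "at least one" (assuming, as the setup does, that at least one sphere component is present). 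So your argument is both shorter and yields a slightly stronger conclusion than the corollary as stated; for the later applications in Propositions \ref{prop:vanishing} and \ref{prop:vanishing2} only the existence of one such component is used, so either version suffices.
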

	\begin{proof}
	Since $k_{w_i}\geq2$ for all $i$, we know that the number of sphere component (denoted by $m$) is less then the half of the number of interior marked points. Therefore,
	\begin{equation}
	\sum \deg \gamma_{w_i} \leq (\sum k_{w_i}) \deg \Gamma - (\sum k_{w_i}) + m \leq (\sum k_{w_i}) \deg \Gamma - (\sum k_{w_i})/2.
	\end{equation}
	The corollary follows by the pigeon hole principle.
	\end{proof}

\subsection{Vanishing of sphere bubbles}\label{sec:bbass} 
We prove the following key proposition, which allows us to define the desired $\AI$-structure.
	\begin{prop}\label{prop:v1}
	Under the assumption \ref{as:main},   sphere bubbles of codimension one do \textbf{not} appear in the compactification $\overline{\mathcal P}_{n,   F, \phi} \left(\Gamma; a_1, \ldots, a_n, a_0 \right).$ 
		\end{prop}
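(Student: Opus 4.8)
The plan is to combine the degree estimates of the previous subsection (Proposition \ref{degree inequality 1} and Corollary \ref{degree inequality 2}) with the index hypothesis in Assumption \ref{as:main}(4) to derive a contradiction from the existence of any codimension one stratum carrying a sphere bubble. Recall from the codimension one analysis that such a stratum necessarily has $\vert \mathrm{Vertex}(T)\vert = 2$, $m_e = 0$, with all sphere components $\Sigma_{w_i}^+$ attached to the first disc $\Sigma_{v_1}$ and aligned to the second disc $\Sigma_{v_2}$, and each $k_{w_i}\geq 2$. So assume for contradiction that such a rigid Fredholm-regular broken popsicle $u = (u_{v_1}, u_{v_2}, \{u_{w_i}\})$ exists.

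\textbf{Step 1: extract a low-degree output.}
By Corollary \ref{degree inequality 2}, there is at least one sphere component $u_{w_i}$ whose output orbit $\gamma_{w_i}\in\mathcal O$ satisfies
\[
\deg \gamma_{w_i} \leq k_{w_i}\deg\Gamma - \frac{k_{w_i}}{2}.
\]
Fix this component. The idea is that its output has degree bounded above by something that, under Assumption \ref{as:main}, is too small to be occupied by any actual Hamiltonian orbit.

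\textbf{Step 2: action/degree positivity for Hamiltonian orbits of $M$.}
Under Assumption \ref{as:main}(2)--(3), the Reeb flow on $\partial M$ generates a free $S^1$-action with complex-linear linearized return map, so the relevant Hamiltonian orbits of $M$ (for the wrapping Hamiltonian used in $\WF(M)$) come in $S^1$-families modelled on iterates of the principal orbit, and their Robbin--Salamon indices are controlled by that of the principal orbit together with the winding/iteration number. Since $\Gamma = \Gamma_M$ is precisely the class of the principal orbit, $\deg\Gamma$ equals (up to the usual normalization) the Robbin--Salamon index of the principal orbit, which by Assumption \ref{as:main}(4) is non-negative. An iterated orbit $\gamma$ that appears as the output $\gamma_{w_i}$ has $\deg\gamma$ at least the corresponding multiple of $\deg\Gamma$ minus a bounded correction coming from the Morse index on $\partial M$; more precisely, for the output of a sphere with $k_{w_i}$ inputs each asymptotic to $\Gamma$, an energy/action computation forces $\gamma_{w_i}$ to wrap at least $k_{w_i}$ times, so $\deg\gamma_{w_i} \geq k_{w_i}\deg\Gamma - c$ for a small constant $c$ independent of $k_{w_i}$ (coming from the finite-dimensional Morse datum on the link). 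Combined with Step 1 this gives
\[
k_{w_i}\deg\Gamma - c \;\leq\; \deg\gamma_{w_i} \;\leq\; k_{w_i}\deg\Gamma - \frac{k_{w_i}}{2},
\]
hence $k_{w_i}/2 \leq c$, which fails once $k_{w_i}$ is large; and for the small values of $k_{w_i}$ (namely $k_{w_i}=2,3$) one checks directly, using that $\deg\Gamma\geq 0$ and the precise form of the Morse-Bott index computation, that no closed orbit of the required degree exists, contradicting the non-emptiness of the moduli space of $u_{w_i}$.

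\textbf{Step 3: conclude.}
Since every choice of the distinguished component leads to a contradiction, no codimension one stratum of $\overline{\mathcal P}_{n, F, \phi}(\Gamma; a_1,\ldots,a_n,a_0)$ can contain a sphere bubble. This is exactly the assertion of Proposition \ref{prop:v1}.

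\textbf{Main obstacle.}
The delicate point is Step 2: making precise that the ``bounded correction'' $c$ is genuinely independent of $k_{w_i}$ and that, for the finitely many small $k_{w_i}$, the degree window forced by Corollary \ref{degree inequality 2} is actually empty of Hamiltonian orbits. This requires pinning down the Robbin--Salamon/Conley--Zehnder index of the iterated principal orbits precisely in terms of the Morse function chosen on $\partial M$ (Assumption \ref{as:main}(2)--(4)), and checking that the action filtration forces the output orbit to have winding number at least $k_{w_i}$ — i.e.\ a positivity-of-energy argument for sphere components with $\Gamma$-insertions. Everything else (the codimension count, the Fredholm decomposition, the pigeonhole step) is already in hand from the previous subsection.
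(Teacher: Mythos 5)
Your overall strategy (pigeonhole via Corollary \ref{degree inequality 2}, then play the index hypothesis of Assumption \ref{as:main} against an action estimate) is the same as the paper's, but the pivotal Step 2 contains a genuine gap: the action estimate is used in the wrong direction. Since $\deg\Gamma=-\mu_{RS}(\partial M_1)\le 0$, an orbit of winding (period) $l$ has degree bounded \emph{below} by $l\cdot\deg\Gamma=-l\mu$ (this is exactly the content of the filtration/spectral sequence of Lemma \ref{energy spectral sequence}); so a lower bound of the form $\deg\gamma_{w_i}\ge k_{w_i}\deg\Gamma-c$ requires an \emph{upper} bound on the winding of the output, roughly $l\lesssim k_{w_i}$, not the "wraps at least $k_{w_i}$ times" that you assert. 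What positivity of topological energy actually gives (with the weights $w_0^-=N$, $w_i^+=1$ and actions $\mathcal A\approx -l^2$ for the output and $\approx-1$ for each $\Gamma$-input) is precisely this upper bound, $l^2\le N^2(1+O(\epsilon))$; your claimed lower bound on the winding is both false in general and, even if true, would push the degree floor $-l\mu$ the wrong way when $\mu>0$. Relatedly, your fallback for small $k_{w_i}$ ("one checks directly that no closed orbit of the required degree exists") cannot work as stated: when $\mu>0$ there are orbits of arbitrarily negative degree (high winding), so nothing at the level of degrees alone excludes them — only the action bound on the output's period does.

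The paper's proof assembles the same ingredients correctly: (i) from Corollary \ref{degree inequality 2}, some sphere output $\xi$ with $N\ge2$ inputs satisfies $\deg\xi\le -N\mu-\tfrac{N}{2}$; (ii) if $\mu=0$ this already contradicts the non-negativity of degrees of all generators (Lemma \ref{energy spectral sequence}); (iii) if $\mu>0$ and $\xi$ sits in filtration level $l$, the chain-level bound $\deg\xi\ge -l\mu$ forces $l\ge N\bigl(1+\tfrac{1}{2\mu}\bigr)>N$, while the action/energy estimate forces $l^2\le N^2(1-\epsilon)-\epsilon$, i.e.\ $l\lesssim N$ — a contradiction for every $N\ge 2$. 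To repair your write-up you would need to replace your winding claim by this upper bound on the output period and handle $\mu=0$ separately (your constant $c$ and the division into "large $k$" versus "$k=2,3$" then become unnecessary).
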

Let us first explain the assumptions in more detail.
The first assumption $c_1(M) =0$ is to use degree arguments for the vanishing.
The second assumption on the existence of free $S^1$ action on $\partial M$ is given for two purposes.
The first is that  we will have such a $S^1$-action (although it is not free) in our main application, and the second reason is that $S^1$-action make the symplectic cohomology setting to be Morse-Bott and we can use associated spectral sequences.
We recall the third assumption from \cite{KvK16} which makes the local system associated to the Morse-Bott components to be trivial:
\begin{defn}
A linearized Reeb flow is complex linear if there exist a compatible  complex structure $J$ for the contact structure $\xi$ (with $d\lambda$) 	such that for every periodic Reeb orbit, its linearized Reeb flow is complex linear with respect to some unitary trivialization of $(\xi, J, d\lambda)$.
\end{defn}
Under the assumption of the $S^1$-action, Reeb orbits are degenerate and it is more convenient to work with the Robbin-Salamon index.
We will  make a perturbation in the Morse-Bott setting 
to define the Hamiltonian orbit $\Gamma_M$ that corresponds to the (fundamental class of) the set of principal orbits. 
We suppose that $\Gamma_M$ is a closed element in the symplectic cochain complex. Perturbation only guarantees that it is closed in the neighborhood of the Morse-Bott component, but
 pseudo-holomorphic cylinders might escape such a neighborhood, and closedness has to be checked using the geometry of $M$.

Now, let us explain the proof of the proposition  in two steps.
\subsubsection{Step I: spectral sequence associated to an energy filtration}
	\begin{lemma}\label{energy spectral sequence}
	There is a spectral sequence converging to $SH^*(M)$ 
	\[E^{pq}_1 = 
	\left \{ 
	\begin{array}{ll}
	H^q(M) & (p=0) \\
	H^{p+q- p\cdot\mu_{RS}(\partial M_1)}(\partial M) & (p <0) \\
	0 & (p>0)
	\end{array}
	\right.\]
	\end{lemma}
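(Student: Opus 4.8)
The plan is to compute $SH^\bullet(M)$ by a Morse--Bott model adapted to the free $S^1$-action that the Reeb flow generates on $\partial M$, and then to filter the resulting symplectic cochain complex by the wrapping number (number of covers of the underlying Reeb orbit) of a generator. Concretely, I would first fix a cofinal family of Hamiltonians $H_\lambda$ which on the cylindrical end $[1,\infty)\times\partial M$ depend only on the radial coordinate $r$, are $C^2$-small Morse in the interior, and have slope $\lambda$ at infinity with $\lambda$ avoiding the integer multiples of the common Reeb period $T$. Since the Reeb flow is a \emph{free} $S^1$-action, such $H_\lambda$ are degenerate along circles of Reeb orbits, so I would perturb in the Morse--Bott scheme of Bourgeois--Oancea and Kwon--van Koert \cite{KvK16}: a Morse function on the interior handles the constant orbits, and a Morse function on each Morse--Bott family handles the non-constant ones. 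The generators then organize as follows. The critical points of the interior Morse function form a subcomplex with vanishing Robbin--Salamon index whose underlying differential is the Morse differential of $M$; and for each integer $k\ge 1$ there is a Morse--Bott family $\mathcal N_k$ of $k$-fold covered Reeb orbits. Because the $T$-periodic Reeb flow is free, evaluation at time $0$ identifies $\mathcal N_k\simeq\partial M$ for every $k$, and Assumption \ref{as:main}(3) (complex linearity of the linearized Reeb flow) trivializes the local system of orientation and capping data along $\mathcal N_k$, so the cascade subcomplex of $\mathcal N_k$ computes $H^\bullet(\partial M)$ with constant coefficients.

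\emph{The wrapping filtration.} Next I would assign filtration level $p=0$ to a constant orbit and $p=-k$ to a generator supported on $\mathcal N_k$. This is a refinement of the action filtration -- on the end, the action of a $k$-fold orbit is $r h'(r)-h(r)$ evaluated at the level where $h'(r)=kT$, which is monotone in $k$ -- so the standard a priori energy estimate $E(u)\le\mathcal A(x_-)-\mathcal A(x_+)$ for Floer cylinders shows that the Floer differential is compatible with the filtration on each $HF^\bullet(H_\lambda)$. For fixed $\lambda$ the filtration is exhaustive and bounded below (only finitely many $\mathcal N_k$ occur), hence yields a convergent spectral sequence abutting to $HF^\bullet(H_\lambda)$; the continuation maps are filtered and, on the associated graded, restrict to the identity on each $\mathcal N_k$-piece, so passing to the colimit $SH^\bullet(M)=\varinjlim_\lambda HF^\bullet(H_\lambda)$ produces a spectral sequence converging to $SH^\bullet(M)$. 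Here Assumption \ref{as:main}(1), $c_1(M)=0$, is what makes the $\Z$-grading, hence the bigrading $E_r^{pq}$, well defined.

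\emph{The $E_1$ page.} It remains to identify $E_1$ with the cohomology of the associated graded. The $p=0$ stratum is the Morse cochain complex of $M$ with its Morse differential, whose cohomology is $E_1^{0,q}=H^q(M)$, with no shift since constants have $\mu_{RS}=0$. For $p=-k<0$ the associated graded differential is the Morse--Bott cascade differential internal to $\mathcal N_k\simeq\partial M$, with cohomology $H^\bullet(\partial M)$; a class $c\in H^j(\partial M)$ appears in cohomological degree $j$ shifted by the grading of the family, namely $-k\,\mu_{RS}(\partial M_1)$, where $\partial M_1$ denotes the principal Reeb orbit. The shift uses that a $k$-fold iterate of a simple orbit sitting in a clean $S^1$-family with complex-linear linearized return map has Robbin--Salamon index exactly $k$ times that of the simple orbit, $\mu_{RS}(\gamma^k)=k\,\mu_{RS}(\partial M_1)$, there being no extra term because the nullity is constant along the family. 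Hence, writing $p=-k$, a class in $H^j(\partial M)$ lies in total degree $p+q=j+p\,\mu_{RS}(\partial M_1)$, i.e.\ $j=p+q-p\,\mu_{RS}(\partial M_1)$, which gives $E_1^{pq}=H^{p+q-p\,\mu_{RS}(\partial M_1)}(\partial M)$ for $p<0$; and $E_1^{pq}=0$ for $p>0$ since no generator has positive wrapping number.

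\emph{The main obstacle.} The delicate point is the chain-level Morse--Bott analysis: transversality and gluing for the cascade moduli spaces in the presence of the residual $S^1$-symmetry, and the precise identification of the cascade complex of $\mathcal N_k$ with the Morse complex of $\partial M$ with \emph{trivial} coefficients together with the correct normalization of the index shift -- this is exactly where Assumption \ref{as:main}(3) enters, through the argument of \cite{KvK16}. I would handle it by citing the Morse--Bott computations of Bourgeois--Oancea and Kwon--van Koert \cite{KvK16} and only indicating the minor modifications needed because here the families $\mathcal N_k$ are honest copies of $\partial M$ rather than prequantization circle bundles. One should also record the routine check that the wrapping filtration is respected by the Floer differential at each finite slope, which is immediate from the energy estimate above, and that the colimit of the finite-slope spectral sequences is again a spectral sequence with the stated $E_1$-page.
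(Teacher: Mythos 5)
Your proof is correct and follows essentially the same route as the paper: a Morse--Bott perturbation of the $S^1$-families of Reeb orbits (all diffeomorphic to $\partial M$ since the action is free, with trivial local systems by Assumption \ref{as:main}(3)), a filtration by period/action preserved by the Floer differential, and identification of the $E_1$-page with $H^\bullet(\partial M)$ shifted by $\lvert p\rvert\,\mu_{RS}(\partial M_1)$ via the iteration property of the Robbin--Salamon index for the linearized loop. The only difference is cosmetic: you use linear Hamiltonians of finite slope and pass to a colimit, whereas the paper works with a single quadratic-at-infinity Hamiltonian and separates the families by explicit action windows around $-n^2$, so no limit argument is needed.
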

\begin{proof}
We only sketch the construction (see  \cite{Se06} and \cite{KvK16} for more details).  Choose a Hamiltonian function $H$ which is $C^2$-small in the interior, and also a $C^2$-small time dependent Hamiltonian $F_t$ which is constructed from Morse function $f$ on $\partial M$ so that $H_t = H+F_t$. Nontrivial orbits of $H_t$ appear as a small perturbation of possible family of (degenerate) orbits. Since the action of $\mathcal R_t$ is free, there are integer many such families, classified by its period, all diffeomorphic to $\partial M$. An action value of orbits of period $n$ is given by
    \begin{align*} 
    \mathcal A_{H_{S^1}}(\gamma) &:= -\int_{S^1}\gamma^*\lambda+\int_0^1H_{S^1}(\gamma(t))dt\\
    &=-2\int_0^1r^2dt+\int_0^1H(\gamma(t))dt+\int_0^1 F(\gamma(t))dt, \hskip 0.2cm (H_{S^1}=H(r)+F(r,t))\\
    &= -\int_0^1r^2+\mathrm{error} \\
    &\in (-r^2-\epsilon, -r^2+\epsilon), \hskip 0.2cm \epsilon= \mathrm{max}f - \mathrm{min}f.
    \end{align*}  
Here, we choose small enough $\epsilon \ll 1$ so that $[-(r+1)^2-2\epsilon,  -(r+1)^2 +2\epsilon]$ is disjoint from $[-r^2-2\epsilon, -r^2 +2\epsilon]$ for all $r \in \Z_{\geq0}.$ We define an action filtration on $CH^*(M; H_{S^1})$ as
\[F^{-n} := \left\{ \gamma \in \mathcal O_{H_{S^1}} \vert \mathcal A_{H_{S^1}}(\gamma) > -n^2- \epsilon \right\},\]
the spectral sequence associated to $F^*$ converges to $SH^*(M)$, and its first page is
	\[E^{pq}_1 = 
	\left \{ 
	\begin{array}{ll}
	H^q(M) & (p=0) \\
	H^{p+q + \star_p}(\partial M) & (p <0)
	\end{array}
	\right.\]
A degree shifting number $\star_p$ (for $p <0$)  is given as $\mu_{RS}(\partial M_p)$, where $\partial M_p$ denotes a component corresponds to the period $-p$ component. Notice that, we have $\mu_{RS}(\partial M_p) = p\cdot \mu_{RS}(\partial M_1)=-p\cdot \deg(\Gamma_M)$ because $\Gamma_M$ is chosen to be  $1_{\partial M}\in H^0(\partial M_1)$. 
\end{proof}  

\subsubsection{Step II: vanishing}
\begin{prop}\label{prop:vanishing}
If Robbin-Salamon index of a principle component (that is, $\mu_{RS}(\partial M_1)$)is non-negative, then popsicle spheres with two or more $\Gamma_M$ inputs vanish.
\end{prop}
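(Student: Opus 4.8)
The plan is to prove Proposition~\ref{prop:vanishing} — and hence, via the codimension one analysis above, Proposition~\ref{prop:v1} — by showing that a Fredholm regular, rigid popsicle sphere component $u_w\colon\Sigma_w^+\to M$ carrying $k_w=\mathrm{val}(w)-1\geq 2$ interior inputs all equal to $\Gamma_M$, with output Hamiltonian orbit $\gamma_w$, cannot occur. The strategy is to trap $\deg\gamma_w$ between an upper bound coming from the Fredholm index of the aligned configuration and a lower bound coming from an action estimate together with the Morse--Bott description of generators in Lemma~\ref{energy spectral sequence}. Throughout, $\deg\Gamma_M=-\mu_{RS}(\partial M_1)\leq 0$ by the hypothesis $\mu_{RS}(\partial M_1)\geq 0$.

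For the upper bound I would invoke Proposition~\ref{degree inequality 1} and Corollary~\ref{degree inequality 2} directly: in a codimension one stratum with sphere bubbles (all aligned to the stable disc $\Sigma_{v_2}$ and attached to $\Sigma_{v_1}$), the alignment identifies the conformal parameter of each $\Sigma_{w_i}^+$ with that of $\Sigma_{v_2}$, so rigidity of $u$ forces $\mathrm{Ker}\,D_{u_{w_i}}$ to inject into $T\mathcal M_{0,n+1}$, bounding $\mathrm{Index}\,D_{u_{w_i}}$ from above; the pigeonhole step then produces an index $i$ with
\[
\deg\gamma_{w_i}\ \leq\ k_{w_i}\deg\Gamma_M-\tfrac{k_{w_i}}{2}.
\]
No work beyond Section~\ref{sec:bbass} is needed here.

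For the lower bound the action estimate enters. The bubble $u_{w_i}$ is a (perturbed) pseudo-holomorphic popsicle sphere with $k_{w_i}$ ends asymptotic to the simple principal orbit and one end asymptotic to $\gamma_{w_i}$; since $\mathcal R_t$ generates a free $S^1$-action, $\gamma_{w_i}$ is either a $p$-fold cover of the simple orbit for some $p\geq 1$ or, if $u_{w_i}$ maps into the interior, a constant orbit. Non-negativity of the energy of $u_{w_i}$, up to the uniformly bounded error $C=C(f)$ from the $C^2$-small Morse--Bott perturbation $F_t$, gives $\mathcal A(\gamma_{w_i})\geq k_{w_i}\mathcal A(\Gamma_M)-C$; combined with the action computation in the proof of Lemma~\ref{energy spectral sequence} ($\mathcal A(\Gamma_M)$ near $-1$, the $p$-fold multiples near $-p^2$) this forces the covering multiplicity $p$ of $\gamma_{w_i}$ to satisfy $p^2\lesssim k_{w_i}$, in particular $p\leq k_{w_i}$. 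By the $E_1$-page of Lemma~\ref{energy spectral sequence} (equivalently by the Robbin--Salamon index computation), a generator lying over the $p$-fold family has degree at least $p\deg\Gamma_M$, and a constant orbit has degree at least $0$; since $\deg\Gamma_M\leq 0$ and $p\leq k_{w_i}$, in all cases $\deg\gamma_{w_i}\geq k_{w_i}\deg\Gamma_M$. Comparing with the upper bound gives $0\leq-\tfrac{k_{w_i}}{2}$, contradicting $k_{w_i}\geq 2$. (When $\deg\Gamma_M=0$ the lower bound $\deg\gamma_{w_i}\geq 0$ is immediate from Lemma~\ref{energy spectral sequence} and the action estimate is not needed; it is required precisely in the strictly log-Fano case $\deg\Gamma_M<0$.) Thus no such bubble exists, which is the claim, and the remaining codimension one strata are then the familiar ones of \cite{AS} and \cite{Se18}.

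I expect the main obstacle to be exactly the action estimate of the previous paragraph: one must verify, in the new compactification, that a popsicle sphere bubble really satisfies a Floer energy identity that pushes its output orbit into the low covering multiplicity range, making precise the contribution of the $C^2$-small perturbation $F_t$ and controlling the auxiliary unstable and semistable components attached along the alignment. By contrast the index bookkeeping behind the upper bound and the degree window behind the lower bound are already available from Section~\ref{sec:bbass} and Lemma~\ref{energy spectral sequence}, so once the energy estimate is secured the proposition follows from the elementary inequality above.
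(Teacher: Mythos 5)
Your overall architecture is the paper's: the upper bound from Corollary \ref{degree inequality 2}, the lower bound $\deg\xi\geq -l\,\mu_{RS}(\partial M_1)$ for a generator of filtration level $l$ coming from Lemma \ref{energy spectral sequence}, and an action estimate tying $l$ to the number $N$ of $\Gamma_M$-inputs. The gap is the action estimate itself, which you state in an unweighted form that is false and which you leave unproved. With the quadratic Hamiltonian and the Floer data of Appendix \ref{basic Floer theory}, the closed one-form $\alpha_S$ on a sphere must have balanced flux, so if the $N$ inputs carry weight $1$ the output end necessarily carries weight $N$; energy positivity then reads $\frac{1}{N}\mathcal A_{H_{S^1}}(\xi)\geq\sum_{i=1}^N\mathcal A_{H_{S^1}}(\Gamma_M)$ (exactly the inequality the paper uses, with $w_0^-=N$, $w_i^+=1$), which only gives $l^2\lesssim N^2$, i.e.\ $l\lesssim N$. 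Your unweighted version $\mathcal A(\gamma_{w})\geq k\,\mathcal A(\Gamma_M)-C$, which would give the much stronger $p^2\lesssim k$, cannot hold: already for iterated pair-of-pants products in $SH^\bullet$ (say of $T^*S^1$), $k$ inputs in the period-one family (action $\approx -1$) can output an orbit in the period-$k$ family, whose action with a quadratic Hamiltonian is $\approx -k^2$, violating your inequality for large $k$. So the step you yourself flag as the main obstacle is genuinely wrong as formulated, not merely unverified.

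Fortunately your final chain only uses $p\leq k$, not $p\leq\sqrt{k}$, and the correct weighted estimate does deliver this (either keep the $\epsilon$-terms as the paper does, or use integrality of the period to round $l\leq N\sqrt{1+O(\epsilon)}$ down to $l\leq N$). Once you substitute the weighted estimate, your argument becomes a rearrangement of the paper's own proof of Proposition \ref{prop:vanishing}: the paper combines $\deg\xi\leq -N\mu-\frac{N}{2}$ with $\deg\xi\geq -l\mu$ to get $l\geq N\left(1+\frac{1}{2\mu}\right)$ and contradicts this with $l^2\leq N^2(1+O(\epsilon))$ from the weighted action estimate, while you feed $l\leq N$ back into the degree bounds; the two are equivalent. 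One reassurance on a point you worried about: the auxiliary semistable components of the alignment constrain only the domain conformal structures, not the Floer data (the paper explicitly does not align Floer data), so they play no role in the energy identity.
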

\begin{proof}
For simplicity, we write $\mu:=\mu_{RS}(\partial M_1)$. 

Let $\xi$ be an output of a sphere component satisfying Corollary \ref{degree inequality 2} with $N$-many $\Gamma_M$ inputs. Since $\deg(\Gamma_W) = -\mu$, we obtain the following inequality  by Corollary \ref{degree inequality 2}
	\begin{equation}
	\label{degree inequality 3}
	\deg \xi \leq N\cdot \deg \Gamma_M - \frac{N}{2} = -N\cdot \mu-\frac{N}{2}.
	\end{equation}

Suppose $\mu=0$. Then Lemma \ref{energy spectral sequence} implies $SH^*$ is concentrated in positive degrees, while \ref{degree inequality 3} implies the degree of $\xi$ is less then $-N/2$. This implies the proposition. 

For the general case $\mu \geq 0$, we proceed as follows. Suppose $\xi \in F^{-l}$. Since $\mu$ is non-negative, we must have $\deg \xi \geq -l \cdot \mu$ from Lemma \ref{energy spectral sequence}.
Combining with the inequality from Corollary \ref{degree inequality 2}, we have 
	\begin{equation}
	\label{degree inequality 41}
	-l\cdot \mu \leq \deg \xi \leq -N\cdot \mu-\frac{N}{2} \Longrightarrow l\geq N\left (1+\frac{1}{2\mu}\right)
	\end{equation}
On the other hand,  from  the action inequality, we have
	\begin{equation}
\frac{-l^2 + \epsilon}{N}\geq \frac{\mathcal A_{H_{S^1}}(\xi)}{N} \geq \sum_{i=1}^N \mathcal A_{H_{S^1}}(\Gamma_M)\geq N \cdot(-1-\epsilon)	\end{equation}
Here, we choose our weights at each cylindrical ends as $w_0^-=N$ and $w_i^+=1$. 
Thus we obtain
	\begin{equation}
	\label{action inequality0}
	 l^2 \leq N^2(1-\epsilon)-\epsilon
	\end{equation}

Combine \ref{degree inequality 41} and \ref{action inequality0}, we should have the following inequality due to the sphere component.
	\[ N^2 \left(1+\frac{1}{\mu}\right)^2 \leq l^2 \leq  N^2 (1-\epsilon) -\epsilon. \]
If we choose $\epsilon$ small enough,  this inequality does not hold for any integer $N\geq 2$. 
Hence such a sphere bubble does not arise.
\end{proof}
    

  \subsection{$\boldsymbol{A_\infty}$-category $\boldsymbol{\CG}$}
  We construct a new $\Z$-graded $A_\infty$-category $\CG$ under the assumption  \ref{as:main}.
  
   We start with the following cancellation results from Abouzaid-Seidel.
     \begin{prop}(\cite{AS}) \label{two interior markings implies vanishing}
  If $\phi:F\to \{1, \ldots, n\}$ is not injective, then $\bold{P}^\Gamma_{n, F, \phi}$ vanishes.
  \end{prop}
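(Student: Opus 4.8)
The plan is to exploit the symmetry of the moduli space coming from sprinkles lying on a common popsicle line, following Abouzaid--Seidel \cite{AS}. Suppose $\phi$ is not injective, so there are distinct $j, j' \in F$ with $\phi(j) = \phi(j') =: i$; then the sprinkles $z_j^+$ and $z_{j'}^+$ of every popsicle in $P_{n,F,\phi}$ both lie on the line $Q_i$. Let $\tau$ be the relabelling involution exchanging $z_j^+ \leftrightarrow z_{j'}^+$. Since all interior marked points carry the same asymptotic condition $\Gamma$, and since the universal Floer data was chosen invariant under permutations of sprinkles supported on a single popsicle line (the ``mildly equivariant'' condition of the Remark above), $\tau$ induces a self-map of the compactified moduli space $\overline{\mathcal P}_{n, F, \phi}(\Gamma; a_1, \ldots, a_n, a_0)$ carrying pseudo-holomorphic popsicles to pseudo-holomorphic popsicles.

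First I would check that $\tau$ acts freely on the zero-dimensional stratum. On the open locus $\mathcal P_{n,F,\phi}$ this is immediate because $z_j^+ \neq z_{j'}^+$. A fixed point could only occur on the boundary, where $z_j^+$ and $z_{j'}^+$ have bubbled onto a common sphere component; but there $\tau$ still permutes two distinct interior marked points of that sphere, equivalently two distinct leaves of the corresponding layered tree carrying distinct $F$-labels, so again no fixed points arise. This is precisely the absence of fixed points of the permutation action on the strata $P_{n,\widetilde F,\star}$ noted before, which is also what makes the equivariant perturbation scheme compatible with transversality.

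The core of the argument is the sign count. For a rigid solution $u$, the involution $\tau$ interchanges the two real parameters $y_j, y_{j'}$ recording the positions of $z_j^+, z_{j'}^+$ along $Q_i$ in the planar model and acts compatibly with the linearized operator on all remaining data; hence $\tau$ reverses the canonical orientation, i.e. $u$ and $\tau(u)$ enter $\mathbf{P}^{\Gamma}_{n,F,\phi}$ with opposite signs. As $\tau$ is a free involution of the finite zero-dimensional moduli space, its points pair off into sign-cancelling pairs, so the signed count, and therefore $\mathbf{P}^{\Gamma}_{n,F,\phi}$, vanishes.

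I expect the sign bookkeeping to be the only genuine obstacle: one must compare the orientation conventions of $\overline{\mathcal P}_{n,F,\phi}$ with those of the determinant lines and confirm that the transposition of two sprinkles on a common line is truly orientation-reversing after all identifications, rather than being absorbed by a compensating sign in the coherent-orientation or gluing conventions. This is carried out in \cite{AS}, and the adaptation to the present $\Gamma$-insertion setting is routine once equivariance of the Floer data is in place; the compactness and transversality inputs are exactly those of the preceding Lemma.
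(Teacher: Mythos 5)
Your argument is correct and is essentially the paper's own proof: both use the transposition of two sprinkles sharing a popsicle line, which extends to the moduli space of maps because all interior insertions are the same class $\Gamma$ and the Floer data is chosen mildly equivariantly, acts freely, and reverses orientation, so the rigid counts cancel in pairs. Your additional remarks on fixed-point freeness and the sign bookkeeping (deferred to \cite{AS}) match the paper's treatment, so there is nothing to correct.
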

  \begin{proof}
    The assumption means that at least one popsicle stick carries more than two interior markings. Then $Sym^\phi$ contains a nontrivial transposition. Since we put the same class $\Gamma$ for all interior markings, the transposition extends to $\OL{\mathcal P}_{n, F, \phi}(\Gamma; a_1, \ldots, a_n)$ also. It induces an orientation-reversal automorphism on $\OL{\mathcal P}_{n, F, \phi}$.  Therefore the contribution of this moduli space should vanish.
  \end{proof}
  Now we can focus on the case when $\phi:F \to \{1, \ldots, n\}$ is injective. Then $F$ can be considered as a subset $\{i_1,\ldots, i_k\}\subset \{1, \ldots, n\}$. In this case, we omit a notation $\phi$ and simply write $\overline{\mathcal P}_{n,F}$ and $\bold{P}^\Gamma_{n, F}$. 
  \begin{defn}
  An admissible cut of $F$ consists of 
    \begin{enumerate}
      \item $n_1, n_2 \geq 1$ such that $n_1+n_2=n+1$,
      \item a number $i \in \{1, \ldots, n\}$,
      \item $F_1\subset \{1, \ldots, n_1\}$ and $F_2\subset \{1, \ldots, n_2\}$ such that $|F_1|+|F_2|=|F|$,
    \end{enumerate}
    satisfying the following properties;
    \begin{itemize}
      \item $F\supset \{k|k\in F_1, k<i\}$ and $F\supset \{k+n_2-1|k\in F_1, k>i\}$,
      \item $F\supset \{k+i-1|k\in F_2\}$.
    \end{itemize}
    If $i\notin F_1$, then this completely recovers $F$. Otherwise, $F$ has one more element among $\{i, i+1, \ldots, (i+n_2-1)\}.$
  \end{defn}
  An admissible cut describes a stratum $P_{n_1,F_1}\times P_{n_2,F_2}$ of a moduli space $\overline P_{n, F}$. They describe precisely codimension $1$ strata whose associated flavour $\phi_j: F_j \to \{1, \ldots, n_j\} \hskip 0.2cm (j=1,2)$ is still injective (when sphere bubbles of codimension one do not appear). Combined with \ref{two interior markings implies vanishing}, we get a quadratic relation
  \begin{align*}
  \sum_{\forall\textrm{admissible cuts}}&(-1)^\clubsuit  \bold{P}^\Gamma_{n_1, F_1}(a_1, \ldots, a_{i-1}, \bold{P}^\Gamma_{n_2, F_2}(a_i, \ldots, a_{i+|F_2|-1}),a_{i+|F_2|}, \ldots, a_{n}) =0, \\
  \clubsuit &= n_2 i +i+1+n_1|F_2|+(n_2+|F_2|)\left(\sum _{j\leq i+|F_2|} \deg a_j\right)\\
  &+\left| \left\{(f_1, f_2) \in F_1 \times F_2 | f_1<f_2 \hskip 0.1cm \textit{inside $F$}\right\} \right|
  \end{align*}

 Now we are ready to define a new $A_\infty$-category. 
  \begin{defn}\label{defn:newa}
The $\AI$-category $\mathcal{C}_\Gamma$ for
 $\Gamma_M \in SH^\bullet(M)$ from Assumption \ref{as:main}, consists of  
    \begin{enumerate}
      \item a set of objects $Ob(\mathcal{C}_\Gamma)=Ob(\mathcal{WF}(M))$. 
      \item morphisms between two objects
        \[\Hom_{\mathcal{C}_\Gamma}(L_1, L_2) = CW(L_1, L_2) \oplus CW(L_1, L_2)\epsilon \]
      Here $\deg\epsilon=-1 + \mu_{RS}(\Gamma)$, and we denotes the element of this complex by $c:=a+\epsilon b$. 
      \item An $A_\infty$-structure $\{M_n\}_{n=1}^\infty$ is given as follows.
       We may write 
       $$M_n(c_1,\ldots,c_n) = M_n^a(c_1,\ldots,c_n) + \epsilon M_n^b(c_1,\ldots,c_n)$$       
      \begin{enumerate}
      \item Suppose $c_i = a_i$ for all $i$ ( all the inputs do not have $\epsilon$ components), then  we set 
             $$M_n(a_1,\ldots,a_n) = m_n(a_1,\ldots,a_n)$$ where $\{m_n\}$ is the $\AI$-operation for $\mathcal{WF}(M)$.
      \item Suppose $c_i = \epsilon b_i$ for $ i  \in \{i_1,\ldots, i_k\}$, and $c_i = a_i$  for  $i  \notin \{i_1,\ldots, i_k\}$.   Then we set 
       $$F=\{i_1, \ldots i_k\}, \;\;\; \widehat F^j =\{i_1, \ldots,\widehat{i_j}, \ldots, i_k\},$$
       and define 
$$M_n^a(c_1,\ldots, c_n) =(-1)^{\star^a_{n, F}} \bold{P}^\Gamma_{n ,F}(a_1,\ldots,b_{i_1},\ldots,b_{i_j},\ldots, b_{i_k},\ldots, a_n) $$
$$ M_n^b(c_1,\ldots, c_n)   = \sum_{j=1}^k (-1)^{\star_{j-1} + \star^b_{n, \widehat {F} ^j}} \bold{P}^\Gamma_{n ,\widehat F^j}(a_1,\ldots,b_{i_1},\ldots,b_{i_j},\ldots, b_{i_k},\ldots, a_n)$$

If we use the common notion $x_i$ to denote $a_i$ and $b_i$.
Then  
\begin{eqnarray*}
\star^a_{n, F}  &=&  \sum_j j \deg x_j + \sum_{f \in F, l>f} (\deg x_l -1) \\
\star^b_{n, \widehat{F}^j} &=& \sum_j j \deg x_j + \sum_{f \in \widehat{F}^j , l>j} (\deg x_l -1) \\
\star_j &=& \sum_{l =1}^{j} (\deg x_l -1)
\end{eqnarray*}
       \end{enumerate}
    \end{enumerate}
  \end{defn}
  As a sanity check, let us check the degree of $M_n$. Recall that the degree of $\bold{P}^\Gamma_{n, F}$ is $2-n-|F|(1- \mu_{RS}(\Gamma_M))$, and
  the corresponding operation in terms of $M_n$ has  degree $2-n$ because the degree of $\epsilon$ is $-1+\mu_{RS}(\Gamma_M)$ and we have $|F|$ many $\epsilon$-inputs.
%

We follow the sign analysis of Abouzaid-Seidel \cite{AS}, to which we refer readers for full details.
Using the orientation operators, they have separated the signs from the popsicle domain and the popsicle maps.
They used  the standard gluing theorem identify the sign factors from the popsicle maps, and the signs from
degeneration of popsicle domains and the rearrangements of factors determined the sign convention.
As we have a fixed interior input $\Gamma_M$, the contribution from the popsicle maps can be still glued using the standard gluing theorem, whereas
the remaining part of the sign analysis is the same as the reference. Thus, the same sign convention can be used in our cases as well.

\subsection{Proof of $\boldsymbol{\AI}$-identity}
   \begin{prop}
  $\mathcal{C}_\Gamma$ is an $A_\infty$-category. Namely, for any composable $(c_1,\ldots,c_n)$,
  we have
  $$ \sum_{n_1+n_2=n+1}(-1)^{\sum_{j=1}^{i-1}|c_j|'} M_{n_1}(c_1,\ldots,c_{i-1}, M_{n_2}(c_i, \ldots,c_{i+n_2-1}),\ldots,c_n) =0.$$
  \end{prop}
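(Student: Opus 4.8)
The plan is to read off the $A_\infty$-relations for $\{M_n\}$ from the codimension one boundary of the one-dimensional components of the compactified moduli spaces $\overline{\mathcal P}_{n,F}(\Gamma; a_1,\ldots,a_n,a_0)$ (smooth compact manifolds with corners of the predicted dimension, by Section \ref{sec:qc0}), following the orientation-operator formalism of Abouzaid--Seidel \cite{AS}; the only genuinely new input is that the dangerous aligned sphere-bubble strata have already been excluded. By multilinearity I may assume each input $c_i$ is either a pure chord $a_i$ or a pure $\epsilon$-chord $\epsilon b_i$; let $F\subset\{1,\ldots,n\}$ collect the positions carrying an $\epsilon$. First I would enumerate the codimension one boundary strata of a generic such one-manifold: (i) a Floer cylinder splitting off at a sprinkle, whose total contribution is $\bold{P}^\Gamma$ evaluated with the symplectic-cochain differential of $\Gamma_M$ inserted there, and hence vanishes since $\Gamma_M$ is a cycle; (ii) a splitting of the popsicle domain into two popsicles along an admissible cut (allowing the degenerate cuts $n_1=1$ or $n_2=1$, which contribute the $M_1$-terms, i.e. Floer differentials and, when the tiny component carries a sprinkle, cap actions $\cap\Gamma$); and (iii) sphere bubbling. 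Proposition \ref{prop:v1} rules out (iii) under Assumption \ref{as:main}.

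Next I would reduce (ii) to admissible cuts with both flavour functions $\phi_1,\phi_2$ injective: by Proposition \ref{two interior markings implies vanishing}, any popsicle with two sprinkles on one stick contributes zero. Each surviving stratum is $P_{n_1,F_1}\times P_{n_2,F_2}$ and realises, via the standard gluing/degeneration picture, a composition $\bold{P}^\Gamma_{n_1,F_1}(\ldots,\bold{P}^\Gamma_{n_2,F_2}(\ldots),\ldots)$. As a compact oriented one-manifold has signed boundary count zero, summing over all boundary components gives exactly the quadratic identity for the operators $\bold{P}^\Gamma_{n,F}$ recorded just before Definition \ref{defn:newa}.

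It then remains to repackage this into the stated $A_\infty$-relation. Writing $M_n=M_n^a+\epsilon M_n^b$, I would split the relation into its $a$-part and its $\epsilon b$-part and match terms. The combinatorial heart of the matter is the dictionary between the position of the formal $\epsilon$'s in an $A_\infty$-composition and the shape of the cut: an admissible cut with $i\notin F_1$ ``recovers $F$'' and corresponds to the breaking chord entering the outer operation as an ordinary chord (the inner operation contributing its $a$-part), whereas a cut with $i\in F_1$ carries an extra sprinkle on the stick through the breaking point and corresponds to the breaking chord entering as an $\epsilon$-chord; dually, in the $\epsilon b$-part the two families of dropped-sprinkle operators $\bold{P}^\Gamma_{n,\widehat F^{\,j}}$ arise according to whether the dropped stick lies in the inner block (so $M_{n_2}$ contributes its $\epsilon$-part) or in the outer block -- the latter including the case where the dropped stick is the one through the breaking point. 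I expect this bookkeeping, together with verifying that the induced signs are exactly the constants $\star^a_{n,F}$, $\star^b_{n,\widehat F^{\,j}}$, $\star_j$, $\clubsuit$ of Definition \ref{defn:newa}, to be the main obstacle. The sign verification itself is not new: since the interior insertion is the fixed cochain $\Gamma_M$, the popsicle-map part of every sign glues by the standard gluing theorem, and the remaining analysis -- signs from domain degenerations and reordering of tensor factors -- is identical to \cite{AS}. Once the matching is complete, the $A_\infty$-identity follows.
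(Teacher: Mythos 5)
Your proposal is correct and follows essentially the same route as the paper: both arguments read the identity off the codimension-one boundary of the compactified popsicle moduli spaces (with aligned sphere bubbles excluded by Proposition \ref{prop:v1}, non-injective flavours killed by Proposition \ref{two interior markings implies vanishing}, and cylinder breaking at sprinkles vanishing since $\Gamma_M$ is closed), and then match terms componentwise, using the flavour $F$ moduli spaces for the $a$-output and the $\widehat F^{\,j}$ moduli spaces for the $\epsilon b$-output, with the sign analysis deferred to Abouzaid--Seidel. Your write-up simply spells out the admissible-cut bookkeeping that the paper conveys through Figures \ref{fig:pop0} and \ref{fig:popb}.
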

  \begin{proof}
We check the identity on each component of the output. 
We first show that 
$$\sum  \big( M_{n_1}^a (\ldots, M_{n_2}^a(\ldots),\ldots) \pm M_{n_1}^a (\ldots, M_{n_2}^b(\ldots),\ldots)\big)= 0.$$
This identify follows from the compactification of  popsicle moduli spaces. Namely,
a codimension one stratum of the popsicle moduli space corresponds to a term in the above equation. 
In Figure \ref{fig:pop0}, we illustrated   corresponding broken popsicles for the case $|F|=4$.
 Even for broken popsicles, we can consider a sequence of hyperbolic geodesics connecting
$z_0$ and $z_{i_j}$. In the figure, dotted lines are such geodesics that do not contain a sprinkle.

\begin{figure}[h]
\includegraphics[scale=0.75, trim=0 700 0 10, clip]{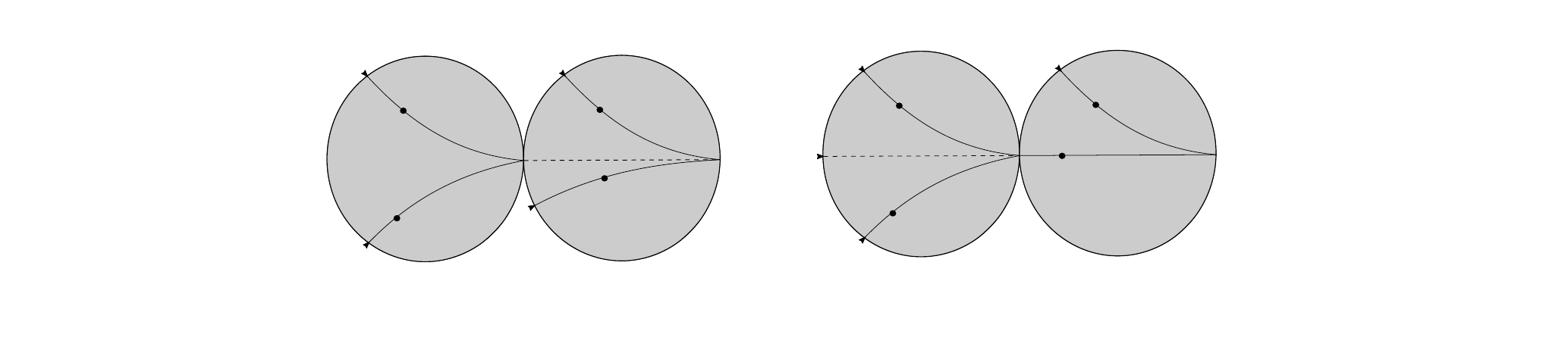}
\centering
\caption{\label{fig:pop0}$\AI$-identity with $a$-output}
\end{figure}

Next we show that
$$\sum  \big( M_{n_1}^b (\ldots, M_{n_2}^a(\ldots),\ldots) \pm M_{n_1}^b (\ldots, M_{n_2}^b(\ldots),\ldots)\big)= 0.$$
This identify follows from the compactification of popsicle moduli spaces for $\widehat{F}^j$ for all $j$.
In Figure \ref{fig:popb}, we illustrated  corresponding broken popsicles  for the case $|F|=4$ and $j=1$. We also expressed the forgotten geodesic for $M^b$-operation as dotted lines.
\begin{figure}[h]
\includegraphics[scale=0.75, trim=0 700 0 10, clip]{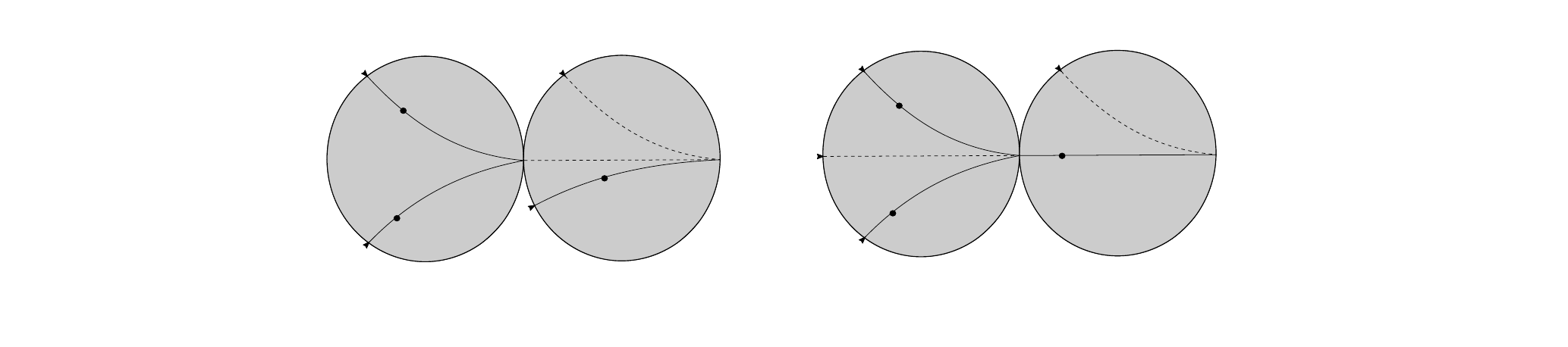}
\centering
\caption{\label{fig:popb}$\AI$-identity with $b$-output}
\end{figure}



 \end{proof}

\subsection{Example: $\boldsymbol{M_2}$-operation}

Let us examine the following Leibniz rule for the input $(a, \epsilon b)$. 
\begin{equation}\label{eq:leib1}
 M_1(M_2(a,\epsilon b)) + M_2(M_1(a),\epsilon b) + (-1)^{|a|'} M_2(a,M_1(\epsilon b))=0.
 \end{equation}
For simplicity, we will omit the signs from the formulas.
From the definition
$$M_2(a,\epsilon b) = \bold{P}^\Gamma_{2,\{2\}}(a,b) + \epsilon m_2(a,b)$$
$$M_1(\epsilon b) = \bold{P}^\Gamma_{1,\{1\}}(b) + \epsilon m_1(b) $$
We have 
\begin{eqnarray*}
M_1(M_2(a,\epsilon b)) &=& M_1(\bold{P}^\Gamma_{2,\{2\}}(a,b) + \epsilon m_2(a,b)) = m_1 (\bold{P}^\Gamma_{2,\{2\}}(a,b)) + 
\big( \bold{P}^\Gamma_{1,\{1\}} (m_2(a,b)) + \epsilon m_1 (m_2(a,b)) \big)\\
M_2(M_1(a),\epsilon b) &=& M_2(m_1(a), \epsilon b) =  \bold{P}^\Gamma_{2,\{2\}}(m_1(a),b) + \epsilon m_2(m_1(a),b)\\
M_2(a,M_1(\epsilon b)) &=& M_2(a, \bold{P}^\Gamma_{1,\{1\}}(b) + \epsilon m_1(b)) =
m_2(a, \bold{P}^\Gamma_{1,\{1\}}(b)) + \bold{P}^\Gamma_{2,\{2\}}(a,m_1(b))+ \epsilon m_2(a, m_1(b))
\end{eqnarray*}
If we collect the terms with $\epsilon$  in \eqref{eq:leib1}, 
we obtain the original $\AI$-identity
$$\epsilon\big( m_1(m_2(a,b)) + m_2(m_1(a),b) + (-1)^{|a|'}m_2(a,m_1(b))\big)=0.$$
\begin{figure}[h]
\includegraphics[scale=0.7, trim=0 600 0 0, clip]{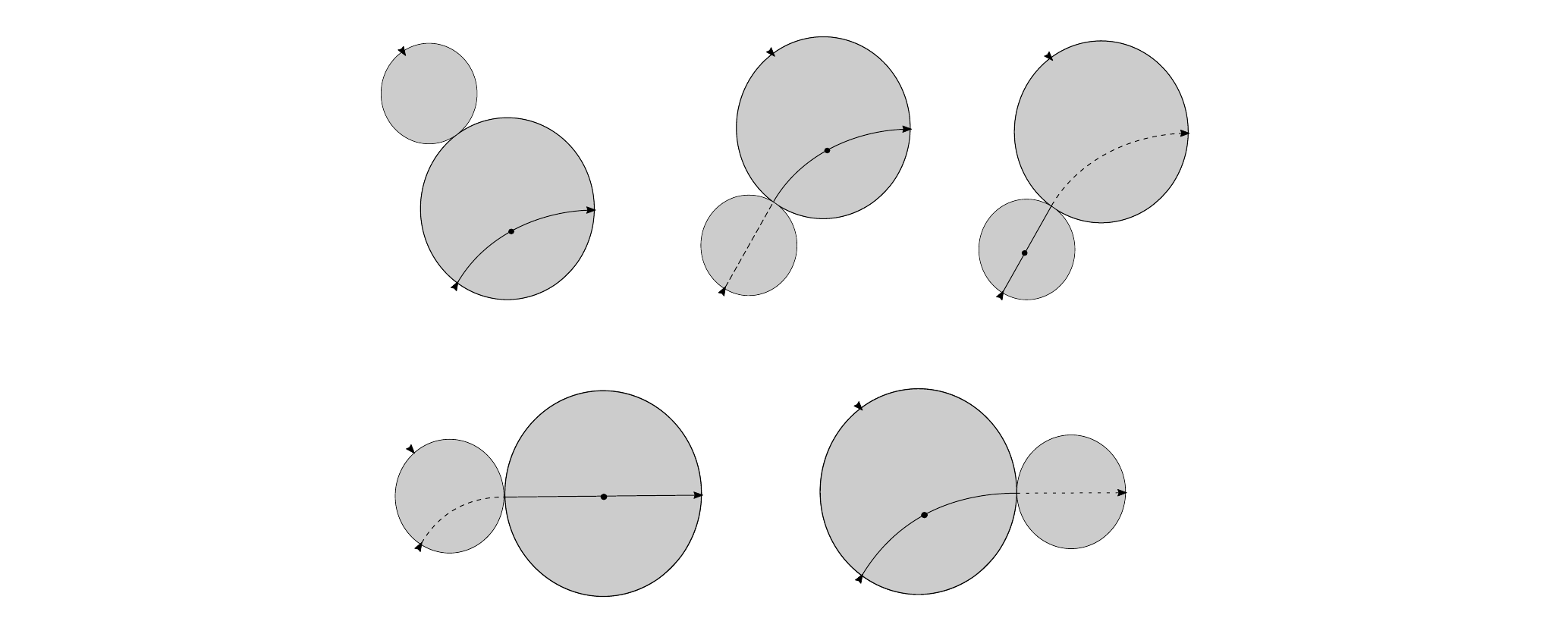}
\centering
\caption{\label{fig:Leib1}Leibniz rule for the inputs $(a, \epsilon b)$}
\end{figure}
Collecting the terms without $\epsilon$ in \eqref{eq:leib1}, we get the following(up to sign)
$$  \bold{P}^\Gamma_{2,\{2\}}(m_1(a),b) + \bold{P}^\Gamma_{2,\{2\}}(a, m_1(b)) + m_2(a,  \bold{P}^\Gamma_{1,\{1\}}(b)) +  \bold{P}^\Gamma_{1,\{1\}}(m_2(a,b))  + m_1 \bold{P}^\Gamma_{2,\{2\}}(a,b) =0.$$
These terms correspond to the codimension one degenerations (given by disc bubblings) in Figure \ref{fig:Leib1}.
Here dotted lines just indicate  paths to the $0$-th vertex, and do not give any restriction to the domain.
Hence one may remove dotted lines to find the corresponding $\AI$-operations.

Let us examine Leibniz rule for the input $(\epsilon b_1, \epsilon b_2)$.
Namely, we want to verify
\begin{equation}\label{eq:leib2}
 M_1(M_2(\epsilon b_1,\epsilon b_2)) + M_2(M_1(\epsilon b_1),\epsilon b_2) + (-1)^{|b_1|} M_2( \epsilon b_1,M_1(\epsilon b_2))=0.
 \end{equation}
 We have 
\begin{eqnarray*}
M_1(M_2(\epsilon b_1,\epsilon b_2)) &=& M_1(\bold{P}^\Gamma_{2,\{1,2\}}(b_1,b_2) + \epsilon \bold{P}^\Gamma_{2,\{2\}}(b_1,b_2) + \epsilon \bold{P}^\Gamma_{2,\{1\}}(b_1,b_2) )  \\
&=&   m_1(\bold{P}^\Gamma_{2,\{1,2\}}(b_1,b_2)) + \bold{P}^\Gamma_{1,\{1\}}( \bold{P}^\Gamma_{2,\{2\}}(b_1,b_2) + \bold{P}^\Gamma_{2,\{1\}}(b_1,b_2) ) + \epsilon m_1 \big( \bold{P}^\Gamma_{2,\{2\}}(b_1,b_2) + \bold{P}^\Gamma_{2,\{1\}}(b_1,b_2) \big) 
\\
M_2(M_1(\epsilon b_1),\epsilon b_2) &=& M_2(\bold{P}^\Gamma_{1,\{1\}}(b_1) + \epsilon m_1(b_1), \epsilon b_2)\\ 
&=&  \bold{P}^\Gamma_{2,\{2\}}(\bold{P}^\Gamma_{1,\{1\}}(b_1),b_2) + \epsilon m_2(\bold{P}^\Gamma_{1,\{1\}}(b_1),b_2) \\
&& + \bold{P}^\Gamma_{2,\{1,2\}}(m_1(b_1),b_2) + \epsilon \bold{P}^\Gamma_{2,\{2\}}(m_1(b_1),b_2) + \epsilon \bold{P}^\Gamma_{2,\{1\}}(m_1(b_1),b_2)  \\
M_2(\epsilon b_1,M_1(\epsilon b_2)) &=& M_2(\epsilon b_1, \bold{P}^\Gamma_{1,\{1\}}(b_2) + \epsilon m_1(b_2)) \\                                                                          
&=& \bold{P}^\Gamma_{2,\{1\}}( b_1, \bold{P}^\Gamma_{1,\{1\}}(b_2)) + \epsilon m_2(b_1, \bold{P}^\Gamma_{1,\{1\}}(b_2)) \\
&&+ \bold{P}^\Gamma_{2,\{1,2\}}(b_1,m_1(b_2)) + \epsilon \bold{P}^\Gamma_{2,\{2\}}(b_1,m_1(b_2)) + \epsilon \bold{P}^\Gamma_{2,\{1\}}(b_1,m_1(b_2))
 \end{eqnarray*}
 
 The following figure \ref{fig:Leib2} describes the terms without $\epsilon$ in the above (in the same order).
 \begin{figure}[h]
\includegraphics[scale=0.7, trim=0 600 0 0, clip]{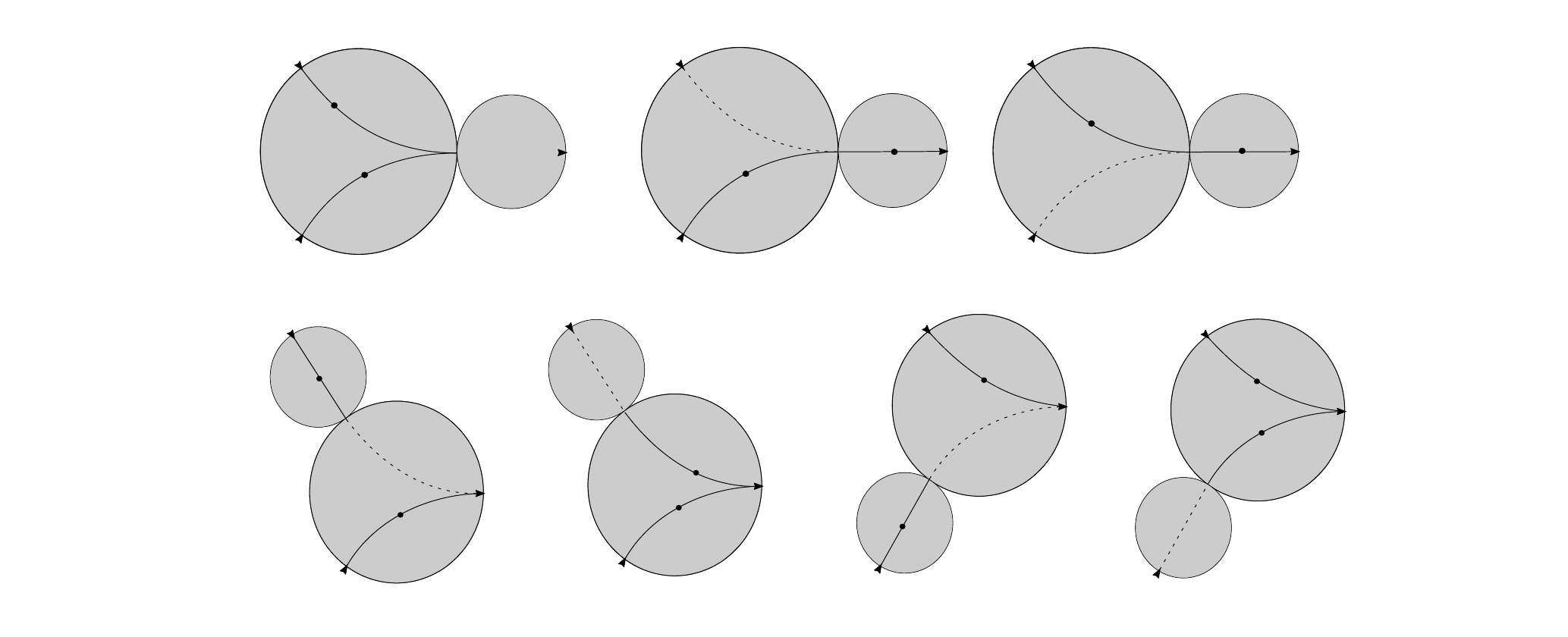}
\centering
\caption{\label{fig:Leib2}Leibniz rule for the inputs $(\epsilon b_1, \epsilon b_2)$}
\end{figure}
It is not hard to see that these arise from codimension one boundary strata of $\OL{P}_{2, \{1,2\}}$.  The terms with $\epsilon$ are similar.


\section{Mirror counterpart: Hypersurface restriction}\label{counterpart}
A mirror construction of last section for algebraic geometry is just a categorical reformulation of the restriction to a hypersurface. 
 
 \subsection{Restriction to a hypersurface in $\boldsymbol{D^bCoh}$}
  Let $S$ be an algebra. Choose an element 
    \[g \in  Z(S) \cong HH^0(S,S)\] 
  The DG-bimodule $\begin{tikzcd}S \arrow[r, "g"] &S\end{tikzcd}$ is quasi-isomorphic to an ideal quotient $S/(g)$ which carries a natural algebra structure. One can directly construct DG algebra structure on the bimodule itself:

\begin{defn}
Define a DG algebra 
      \[\mathcal B:= S[\epsilon]\bigg/\left ( \begin{array}{l}\epsilon^2=0\\d\epsilon =g \end{array}\right ), \hskip 0.2cm \]
      Here $\deg \; \epsilon = -1$ and the differential $d$ on $S$ is set to be zero.
\end{defn}

  Further assume that $S$ is commutative. Consider an affine variety $X=\Spec(S)$ and a hypersurface $Y=V(g)$ with an inclusion $i:Y \hookrightarrow X $. We have the following elementary lemma whose proof is omitted.
\begin{lemma}  
 We have an isomorphism $\mathcal B \simeq i_*\mathcal O_Y$. Moreover, we have the following.
    \begin{enumerate}
      \item A sheaf $\cF$ on a hypersurface $Y$ corresponds to an $\mathcal B$-module object. It is a pair $(i_*\cF, h_{\cF})$ where $i_*\mathcal F$ is a pushforward of $\cF$ equipped with a homotopy $h_\cF$ between the zero map and a multiplication of $ g$. It is an action of $\epsilon \in \mathcal B$.
      \item  Moreover, 
        \[\Hom_Y(\cF_1, \cF_2)\simeq \Hom_\mathcal B ((i_*\cF_1, h_{\cF_1}), (i_*\cF_2, h_{\cF_2})).\] 
    \end{enumerate}
\end{lemma}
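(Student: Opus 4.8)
The plan is to realize $\mathcal B$ as a free resolution of $i_*\mathcal O_Y$ over $S$ and then transport module structures along the resulting quasi-isomorphism. Since $Y=V(g)$ is a genuine hypersurface, $g\in S$ is a nonzerodivisor, so the two-term complex underlying $\mathcal B$, namely $\bigl(S\epsilon\xrightarrow{\ \cdot g\ }S\bigr)$ placed in degrees $-1$ and $0$, has $H^{0}=S/(g)$ and $H^{-1}=\mathrm{Ann}_S(g)=0$. The assignment $\mathcal B\to S/(g)$ that is the ring projection on $S$ and sends $\epsilon\mapsto 0$ is a chain map (because $d(s\epsilon)=sg$ maps to $0$ in $S/(g)$) and is multiplicative (because $\epsilon^{2}=0$ and $\epsilon$ acts as $0$ on the target), hence a quasi-isomorphism of DG algebras. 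As $i$ is a closed affine immersion, $i_*$ is exact and agrees with restriction of scalars along $S\twoheadrightarrow S/(g)$, so $i_*\mathcal O_Y\cong S/(g)$ and $\mathcal B\simeq i_*\mathcal O_Y$. This is exactly the bimodule computation recalled just above, now upgraded to the level of algebras.

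For part (1), I would unwind what a DG $\mathcal B$-module structure on a complex of $S$-modules $(M,d_M)$ amounts to: in addition to the $S$-module structure it is the action of $\epsilon$, i.e.\ a degree $-1$ endomorphism $h_{\cF}:=\epsilon\cdot(-)$ of $M$, and the Leibniz rule $d_M(\epsilon m)=(d\epsilon)m-\epsilon(d_M m)$ becomes
\[
d_M\circ h_{\cF}+h_{\cF}\circ d_M=g\cdot\mathrm{id}_M ,
\]
while $\epsilon^{2}=0$ says $h_{\cF}^{2}=0$. Thus a DG $\mathcal B$-module is precisely an $S$-module complex together with a (square-zero) null-homotopy of multiplication by $g$. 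Restriction of scalars along the quasi-isomorphism $\mathcal B\to S/(g)$ is an equivalence of derived categories, and under it a coherent sheaf (or bounded complex) $\cF$ on $Y$ goes to $i_*\cF$ with $g$ acting as $0$ and $h_{\cF}=0$, while conversely any pair $(M,h)$ is quasi-isomorphic as a $\mathcal B$-module to such a complex supported on $Y$. This gives the asserted correspondence $\cF\leftrightarrow(i_*\cF,h_{\cF})$.

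For part (2), once the equivalence of part (1) is established the $\Hom$ comparison is formal: restriction of scalars along a quasi-isomorphism of DG algebras induces isomorphisms on morphism complexes, and since $i$ is a closed affine immersion one has $R\Hom_Y(\cF_1,\cF_2)=R\Hom_{\mathcal O_Y}(\cF_1,\cF_2)$ with no higher direct images to account for. Alternatively, one can verify it directly by resolving $\cF_1$ by free $\mathcal B$-modules — equivalently, tensoring over $S$ a free $S$-resolution of $i_*\cF_1$ with $\mathcal B$ — and comparing with a free $S/(g)$-resolution; this is the routine bookkeeping that the statement calls elementary.

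I expect the only genuine subtlety to be the precise formulation of part (1): pinning down in which categories ``corresponds to'' is meant (DG $\mathcal B$-modules up to quasi-isomorphism versus honest module categories), and arranging the square-zero condition $h_{\cF}^{2}=0$ without altering the quasi-isomorphism type. Everything else reduces to the Koszul-resolution computation of the first paragraph together with standard change of rings along a quasi-isomorphism.
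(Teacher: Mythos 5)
Your proposal is correct, but note that the paper never writes down a proof of this lemma at all (it is stated as ``elementary, proof omitted''), so the comparison is with the approach implicit in the surrounding text rather than with an actual argument. The paper's point of view is the concrete one: replace $i_*\cF$ by a finite free resolution over $\mathcal O_X$ (as done explicitly for $\mathcal O_Y$ via $0\to\mathcal O_X\xrightarrow{g}\mathcal O_X\to i_*\mathcal O_Y\to 0$), record the homotopy $h_\cF$ realizing the $\epsilon$-action, and compute $\Hom_{\mathcal B}$ by hand on these models — this is exactly how the subsequent theorem is proved, with the categorical version delegated to Preygel's Barr--Beck argument. Your route — observe that $\mathcal B\to S/(g)$ is a quasi-isomorphism of DG algebras (using that $g$ is a nonzerodivisor, which the paper assumes implicitly since its two-term resolution is exact), identify strict DG $\mathcal B$-modules with $S$-complexes carrying a null-homotopy $h$ of multiplication by $g$ via the Leibniz rule $d_M h+h d_M=g$, and then invoke invariance of derived categories and derived Homs under restriction of scalars along a quasi-isomorphism — is more conceptual and is essentially the change-of-rings mechanism that the Barr--Beck reference packages; it buys you parts (1) and (2) simultaneously and without resolving anything by hand, at the cost of having to say in which homotopical sense ``module'' and ``$\Hom_{\mathcal B}$'' are meant (derived Hom, with the paper's explicit models serving as the cofibrant representatives). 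The subtlety you flag about $h_\cF^2=0$ is genuine for strict modules but harmless here: it holds automatically on the two-term presentations the paper uses (a degree $-1$ map squares into a degree that vanishes), and on the restricted-scalars model one simply has $h=0$. Only one small wording slip: restriction of scalars induces isomorphisms on \emph{derived} morphism spaces, not on strict Hom complexes; with that understood, your argument is complete.
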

For the sheaf $\mathcal O_Y$ on $Y$, its pushfoward $i_*\mathcal O_Y$ has a simple free resolution.
    \[
      \begin{tikzcd}
        0\arrow[r]&\mathcal O_X \arrow[r,"g"]&\mathcal O_X \arrow[r] &i_*\mathcal O_Y \arrow[r] &0
      \end{tikzcd}
    \]  
  An action of degree $-1$ element $\epsilon$, or a homotopy $h$, is given as follows.
     \[
      \begin{tikzcd}
        0\arrow[r]&\mathcal O_X \arrow[r,"g"]  &\mathcal O_X \arrow[r]& 0\\
        0\arrow[r] \arrow[ur, "0", leftarrow]&\mathcal O_X \arrow[r,"g"]  \arrow[ur, "id", leftarrow]&\mathcal O_X \arrow[r] \arrow[ur, "0", leftarrow]& 0
      \end{tikzcd}
     \]
     
 A category of coherent sheaves on $Y$ is described as $\mathcal B$-modules of $X$. 
    \begin{thm}
    Let $Y\subset X$ as before. 
    Then 
      \[DCoh(Y)\simeq \mathcal B-mod(DCoh(X))\]
    \end{thm}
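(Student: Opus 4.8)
The plan is to reduce the statement to the tautological description of sheaves on an affine closed subscheme as modules over the structure sheaf of the ambient space, after recognizing $\mathcal{B}$ as a resolution of $i_*\mathcal{O}_Y$. First I would note that the underlying complex of the DG algebra $\mathcal{B}$ is exactly the two-term complex $(\mathcal{O}_X\xrightarrow{\,g\,}\mathcal{O}_X)$ placed in degrees $-1,0$, with the generator $\epsilon$ in degree $-1$ and $d\epsilon=g$, and that its multiplication is the evident one on this complex. Since $Y=V(g)$ is a genuine hypersurface, $g$ is a non-zero-divisor in $S$, so this complex is a free resolution of $S/(g)=i_*\mathcal{O}_Y$; equivalently, the augmentation $\mathcal{B}\to S/(g)$ is a quasi-isomorphism of DG $S$-algebras. (This also reproves the exactness of $0\to\mathcal{O}_X\xrightarrow{g}\mathcal{O}_X\to i_*\mathcal{O}_Y\to 0$ used in the preceding Lemma.)

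Next, the quasi-isomorphism $\mathcal{B}\xrightarrow{\sim}S/(g)$ of DG algebras induces an equivalence between the derived category of DG $\mathcal{B}$-modules and that of $S/(g)$-modules, with restriction of scalars in one direction and $(-)\otimes^{\mathbb{L}}_{\mathcal{B}}S/(g)$ in the other. Since $H^\bullet(\mathcal{B})$ is concentrated in degree zero and equals the Noetherian ring $S/(g)$, this equivalence restricts to the bounded-coherent subcategories; so $\mathcal{B}\text{-mod}(DCoh(X))$ — an object of $DCoh(X)$ equipped with an action of $\epsilon$, i.e. a square-zero null-homotopy of multiplication by $g$ — is identified with the category of $S/(g)$-module objects in $DCoh(X)$. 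But such objects are literally bounded complexes of $S/(g)$-modules with coherent cohomology, i.e. objects of $DCoh(Y)$ since $\Spec(S/(g))=Y$; and under all these identifications the composite equivalence is precisely the functor $\mathcal{F}\mapsto(i_*\mathcal{F},0)$ furnished by the preceding Lemma, which on its own already gives full faithfulness via $R\Hom_Y(\mathcal{F}_1,\mathcal{F}_2)\simeq R\Hom_{\mathcal{B}}(i_*\mathcal{F}_1,i_*\mathcal{F}_2)$. (Alternatively, essential surjectivity follows from Barr--Beck--Lurie applied to the adjunction $i^*\dashv i_*$ of the affine, indeed closed, morphism $i$, whose monad $i_*i^*=i_*\mathcal{O}_Y\otimes^{\mathbb{L}}_{\mathcal{O}_X}(-)$ is exactly the $\mathcal{B}$-module monad by the first step.)

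The only content beyond formal manipulation lies in the passage from the sheaf-level statement of the Lemma to its derived counterpart: one must know that a $\mathcal{B}$-module object on which $\epsilon$ acts through a nonzero but null-homotopic homotopy $h$ is, \emph{coherently} and not merely objectwise, the pushforward of a complex from $Y$. This is exactly what the homotopy-invariance of derived module categories under the quasi-isomorphism $\mathcal{B}\xrightarrow{\sim}S/(g)$ supplies, so I would lean on that; the delicate point, were one to avoid the DG/$\infty$-categorical formalism altogether, would be a perturbation argument straightening out $h$ against the differential, and that is where the real work would sit.
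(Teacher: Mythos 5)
Your argument is correct, but it is not the route the paper takes. The paper's own proof reduces, using affineness, to the single object $\mathcal O_Y$ and verifies the statement there by an explicit degree-by-degree computation of $\Hom_{\mathcal B}\left((i_*\mathcal O_Y,h),(i_*\mathcal O_Y,h)\right)$, matching it with $H^\bullet\left(\Hom_X(\mathcal O_X,\mathcal O_X)\xrightarrow{g}\Hom_X(\mathcal O_X,\mathcal O_X)\right)\simeq \Hom^\bullet_Y(\mathcal O_Y,\mathcal O_Y)$; the full categorical statement is outsourced to the Barr--Beck argument in Preygel (Cor.\ 3.3.1), which is exactly the alternative you mention in parentheses. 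You instead observe that $\mathcal B$ is the length-one Koszul resolution of $S/(g)$ (using that $g$ is a non-zero-divisor, which the paper also uses implicitly when it calls $\mathcal O_X\xrightarrow{g}\mathcal O_X\to i_*\mathcal O_Y$ a resolution), so $\mathcal B\to S/(g)$ is a quasi-isomorphism of DG algebras, and you invoke invariance of derived module categories under quasi-isomorphism together with the check that the equivalence preserves bounded coherent subcategories. What your route buys is a proof that treats all objects at once rather than just the generator, and it isolates the genuinely non-formal point (straightening a square-zero null-homotopy $h$ of multiplication by $g$ into an honest $S/(g)$-structure, coherently) as a consequence of standard DG homotopy invariance; what the paper's computation buys is an explicit description of the morphism complexes, which it reuses immediately afterwards to motivate the ``half of the morphisms'' picture and the $\AI$-analogue. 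Both are valid; yours is closer in spirit to the citation the paper leans on than to the elementary verification it actually writes out.
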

     \begin{proof} A concise categorical proof using Lurie's Barr-Beck theorem can be found in Corollary 3.3.1 in  \cite{Preygel2011}. We present an elementary proof to illustrate the idea. Since everything is affine, it is enough to consider a structure sheaf $\mathcal O_Y \in DCoh(Y)$.
         Computation shows that the morphism complex is 
    \begin{align*}
    \Hom^{-1}_\mathcal B ((i_*\mathcal O_Y, h), (i_*\mathcal O_Y, h)) &\simeq \left \{
        \begin{array}{c|c}
        \begin{tikzcd}[ampersand replacement=\&]
          \mathcal O_X \arrow[r,"g"] \& \mathcal O_X \\
          \mathcal O_X \arrow[r,"g"] \arrow[ur,"a_{21}", leftarrow]\& \mathcal O_X 
        \end{tikzcd}
        &   \textrm{$a_{21}$ can be arbitrary}
        \end{array}
        \right \} \\
      \Hom^{0}_\mathcal B ((i_*\mathcal O_Y, h), (i_*\mathcal O_Y, h)) &\simeq \left \{
        \begin{array}{c|c}
        \begin{tikzcd}[ampersand replacement=\&]
          \mathcal O_X \arrow[r,"g"]\ar[d, "a_{11}"]\& \mathcal O_X \arrow[d,"a_{22}"]\\
          \mathcal O_X \arrow[r,"g"] \& \mathcal O_X 
        \end{tikzcd}
        & a_{11}\circ h=h\circ a_{22} \hskip 0.2cm \textrm{which implies} \hskip 0.2cm  a_{11}=a_{22}
        \end{array}
        \right \} \\
     \Hom^1_\mathcal B ((i_*\mathcal O_Y, h), (i_*\mathcal O_Y, h)) &\simeq \left \{
        \begin{array}{c|c}
        \begin{tikzcd}[ampersand replacement=\&]
          \mathcal O_X \arrow[r,"g"] \arrow[dr, "a_{12}"]\& \mathcal O_X \\
          \mathcal O_X \arrow[r,"g"] \& \mathcal O_X 
        \end{tikzcd}
        & h\circ a_{12}=0 \hskip 0.2cm \textrm{which implies $a_{12}=0$}
        \end{array}
        \right \}   
      \end{align*}
    Therefore, $\Hom^\bullet_\mathcal B ((i_*\mathcal O_Y, h), (i_*\mathcal O_Y, h))$ is isomorphic to
    \[ H^\bullet (\begin{tikzcd} \Hom_X(\mathcal O_X, \mathcal O_X) \arrow[r, "g"] &\Hom_X(\mathcal O_X, \mathcal O_X) \end{tikzcd})\simeq \Hom_Y^\bullet (\mathcal O_Y, \mathcal O_Y).\]
    \end{proof}
  Intuitively,     
  objects of $\mathcal B-mod(DCoh(X))$ are cones
     \[\left(\begin{tikzcd}\cF[1] \arrow[r, "g"] & \cF \end{tikzcd}\right), \hskip 0.2cm \cF\in DCoh(X).\] 
   It is quasi-isomorphic to a quotient $\cF/(g)$ and the space of morphisms is a half of the original one.  It consists of 
    \[ \begin{pmatrix}a&0\\b&a\end{pmatrix} \in \Hom_X^\bullet \left( \begin{tikzcd}\cF_1[1] \arrow[r, "g"] & \cF_1 \end{tikzcd}, \begin{tikzcd}\cF_2[1] \arrow[r, "g"] & \cF_2 \end{tikzcd} \right).\]
This is closed under DG operations whereas $\AI$-analogue is not (see the paragraph after the Definition \ref{cor:tc}).

 \subsection{Restriction to a graph hypersurface in Matrix factorizations}
 Let us explain an analogous construction for matrix factorizations.
 First, let us recall its definition for readers' convenience.
 
 Let $R$ be a polynomial algebra over the algebraically closed field $k$ of characteristic 0.

\begin{defn}
For $W \in R$, a DG category of matrix factorization of $W$, denoted by $\MF(W)$ consists of the following:
\begin{enumerate}
\item its objects are matrix factorization $(P,\delta_P)$ of $W$.  $P$ is $\mathbb{Z}/2$-graded free $R$-module and $\delta_P$ is an odd degree endomorphism such that $\delta_P^2 = W \cdot id$.
\item $\Hom_{\MF(W)}^\bullet \left((P, \delta_P),(P', \delta_{P'})\right):=\left(\Hom_R^\bullet(P, P'), d\right)$
with usual composition $\circ$. A differential $d$ on morphisms is defined as
$$d(\phi) = \delta_{P'} \circ \phi - (-1)^{\deg(\phi)} \phi \circ \delta_{P}.$$
\end{enumerate}
\end{defn}

  We are interested in the following situation. Consider a polynomial of the form $U=U_1(x_1, \ldots, x_{n-1})+x_n\cdot U_2(x_1, \ldots, x_{n-1})$. We consider a graph of some polynomial $f$
  \begin{align*}
    \mathbb A^{n-1} &\to \mathbb A^n\\
    (x_1, \ldots, x_{n-1}) &\mapsto (x_1, \ldots, x_{n-1}, f)
  \end{align*}
  and a pull-back 
    \[V(x_1, \ldots, x_{n-1})=U(x_1, \ldots, x_{n-1}, f)=U_1+f\cdot U_2\]  
  along the graph. 
  We explain how to obtain a similar relation between $\MF(U)$ and $\MF(V)$. We start by collecting functorial properties between two matrix factorization categories, which we refer to \cite{Orlov} and \cite{POS11a}. 
     
     Let $X=\{U= 0\}\subset \C^n$ and $Y=\{V=0\}\subset \C^{n-1}$. We view $Y$ as a hypersurface $\{x_n=f\}\subset X$ . A closed embedding $Y \hookrightarrow X$ is proper and has a finite tor-dimension. A usual adjoint pair of functors $(i^*, i_*)$ extends to categories of singularities. 
    \[i^*: D^b_{sg}(X) \longleftrightarrow D^b_{sg}(Y):i_*\]
    On the other hand, there are Orlov's equivalences
      \[\MF(U) \simeq \overline{D^b_{sg}(X)}, \hskip 0.2cm \MF(V) \simeq \overline{D^b_{sg}(Y)}\]
    Here, $\overline{\mathcal{C}}$ denotes Karoubi completion of a category $\mathcal{C}$. This functor sends 
      \[M=(
        \begin{tikzcd}
          M^{odd} \arrow[r,"\phi_{10}", shift left=1] & M^{even}\arrow[l, "\phi_{01}", shift left=1]
        \end{tikzcd} 
        ) \mapsto \mathrm{coker}(\phi_{10})
      \]
    We have an induced pair
      \[i^*: \MF(U) \longleftrightarrow \MF(V):i_*\]
    \begin{prop}\label{prop:MFadjoint}
    Let 
      \[M=(\begin{tikzcd}
          M^{odd} \arrow[r,"\phi_{10}", shift left=1] & M^{even}\arrow[l, "\phi_{01}", shift left=1]
        \end{tikzcd})\in \MF(U),
      \]
      \[N=(\begin{tikzcd}
          N^{odd} \arrow[r,"\psi_{10}", shift left=1] & N^{even}\arrow[l, "\psi_{01}", shift left=1]
        \end{tikzcd})\in \MF(V).
      \] 
      Then
      \begin{enumerate}
        \item $(i^*, i_*)$ is an adjoint pair.
        \item $i^*M\simeq M|_{x_n=f} \in \MF(V)$.
        \item $i_*N \simeq N\otimes \left(\begin{tikzcd} \mathbb C[x_1, \ldots, x_n]\arrow[r, bend left=30, "x_n-f"]& \mathbb C[x_1, \ldots, x_n] \arrow[l, bend left=30, "U_2"] \end{tikzcd}\right)\in \MF(U)$
        \item $(i_* \circ i^*)M = \Cone((x_n-f):M[1]\to M) \in \MF(U)$
      \end{enumerate}
    \end{prop}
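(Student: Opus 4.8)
The four statements in Proposition~\ref{prop:MFadjoint} are essentially the matrix-factorization incarnations of the derived-category facts recalled just before it, so the plan is to transport the already-stated equivalences $\MF(U)\simeq\overline{D^b_{sg}(X)}$ and $\MF(V)\simeq\overline{D^b_{sg}(Y)}$ and the adjunction $(i^*,i_*)$ on singularity categories through Orlov's equivalence, then make each functor explicit at the level of factorizations. For (1), the adjunction on $\MF$ is inherited from the adjunction on $D^b_{sg}$ together with the fact that Orlov's equivalences are equivalences of triangulated categories and Karoubi completion is fully faithful; alternatively one checks the unit/counit directly using the explicit formulas in (3) and (4). For (2), $i^*$ is pullback along the graph embedding $Y=\{x_n=f\}\hookrightarrow X$, which at the level of free modules with twisted differential is simply base change $-\otimes_{\mathbb C[x_1,\dots,x_n]}\mathbb C[x_1,\dots,x_{n-1}]$ sending $x_n\mapsto f$; one verifies that the resulting pair $(\phi_{10}|_{x_n=f},\phi_{01}|_{x_n=f})$ squares to $V\cdot\mathrm{id}=U(x_1,\dots,x_{n-1},f)\cdot\mathrm{id}$, so $i^*M$ lands in $\MF(V)$ and equals $M|_{x_n=f}$.

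For (3), the pushforward of the Koszul-type factorization: one writes $U=U_1+x_nU_2$ and notes that $U(x_1,\dots,x_n)-V(x_1,\dots,x_{n-1})=U_1+x_nU_2-(U_1+fU_2)=(x_n-f)U_2$, so the rank-one object $\left(\begin{tikzcd} \mathbb C[x_1,\dots,x_n]\arrow[r, bend left=30, "x_n-f"]& \mathbb C[x_1,\dots,x_n] \arrow[l, bend left=30, "U_2"] \end{tikzcd}\right)$ is a genuine matrix factorization of $(x_n-f)U_2=U-V$. Then for $N\in\MF(V)$ (viewed over $\mathbb C[x_1,\dots,x_n]$ by extension of scalars), the tensor product $N\otimes_{\mathbb C[x_1,\dots,x_n]}(\cdots)$ is a factorization of $V+(U-V)=U$, and one checks this agrees with the derived pushforward $i_*N$ by comparing cokernels under Orlov's functor — this is where I would invoke the cited functorial properties from \cite{Orlov} and \cite{POS11a} rather than re-derive them. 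Statement (4) is then a formal consequence: composing (2) and (3) gives $(i_*\circ i^*)M = M|_{x_n=f}\otimes(\cdots)$, and a short computation (a two-step filtration / mapping cone argument on the rank-one factor) identifies this with $\Cone\big((x_n-f)\colon M[1]\to M\big)$; concretely, tensoring $M$ with the two-term object $(x_n-f,U_2)$ is exactly forming the cone of the endomorphism "multiplication by $x_n-f$" of $M$ in $\MF(U)$, because $x_n-f$ acts on $M$ with the required null-homotopy supplied by the $U_2$-component (using $\delta_M^2=U\cdot\mathrm{id}$ and $U\equiv V\equiv 0$ restricted appropriately — more precisely the homotopy is $U_2^{-1}$-free once one expands $U-V=(x_n-f)U_2$).

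The main obstacle I anticipate is item (4), or rather the bookkeeping that makes (2)–(4) mutually consistent: one must be careful that the two natural ways of producing the cone — (a) via the abstract composite $i_*i^*$ of adjoint functors on singularity categories, and (b) via the explicit tensor-with-rank-one-factorization formula — actually coincide on the nose (not merely up to a shift or a $\mathbb Z/2$-grading swap), and that the null-homotopy witnessing $x_n-f\simeq 0$ on $i^*M$ is the one induced by the counit. This is a sign- and grading-chasing argument through Orlov's cokernel functor; I would handle it by reducing to the generator $\mathcal O_Y$ (as in the proof of the preceding theorem) where everything is a two-by-two matrix and the identification is transparent, then appeal to additivity and the fact that $\MF(V)$ is generated by $i^*$ of objects of $\MF(U)$ together with the known equivalences to conclude in general.
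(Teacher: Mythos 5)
Your overall skeleton matches the paper's: (1) is quoted from \cite{POS11a}, (2) is base change along the graph embedding, and (3) hinges on the observation that $U-V=(x_n-f)U_2$ so that the rank-one Koszul object is a factorization of $U-V$. Where you diverge is in how (3) and (4) are actually established, and this is where the content sits. For (3) the paper does not "compare cokernels and invoke functoriality": it writes down an explicit Fourier--Mukai kernel $\Gamma$, a Koszul-type matrix factorization of $V(\vec x)-U(\vec y)$, identifies it under Orlov's equivalence with the stabilization of the graph $\Gamma_{Y\to X}$ (so that $-\otimes\Gamma$ \emph{is} the pushforward), and then factors $-\otimes\Gamma\simeq-\otimes\Delta_V\otimes K$ with $\Delta_V$ the stabilized diagonal. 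Your route (check that $N\otimes K$ is a factorization of $U$, then match cokernels under Orlov's functor) can be made to work, but note that identifying the cokernel of the tensor product with $i_*$ of $\operatorname{coker}(\psi_{10})$ is itself the nontrivial step, essentially equivalent to identifying the kernel, so you are not saving work relative to the paper, just relocating it.

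The soft spot is (4). The paper does \emph{not} deduce it from (2) and (3): it computes the image of $i_*i^*M$ under Orlov's equivalence directly as $\operatorname{coker}\bigl(\phi_{10}|_{x_n=f}\bigr)$ (with $x_n$ acting by $f$) and then exhibits $\Cone\bigl((x_n-f)\colon M[1]\to M\bigr)$ as the periodic tail of the double complex whose rows are the unfolding of $M$ and whose vertical maps are $x_n-f$, hence as a stabilization of that module. Your claim that (4) is "a formal consequence" of (2) and (3) via a short computation hides exactly this step: $M|_{x_n=f}\otimes K$ and $\Cone\bigl((x_n-f)\colon M[1]\to M\bigr)$ are not the same factorization on the nose --- the first has differential entries with $x_n$ replaced by $f$ together with a $U_2$ off-diagonal term, the second keeps $\delta_M$ and has only $x_n-f$ off-diagonal --- so identifying them in $\MF(U)$ requires either an explicit homotopy equivalence or the same cokernel/stabilization comparison the paper performs; your parenthetical description of the null-homotopy ("$U_2^{-1}$-free") does not parse as written. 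This is a fixable presentational gap rather than a wrong approach, but as stated the "short computation" is precisely where the proof lives.
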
  
    \begin{proof}
    The first proposition is proven in more general setup. See \cite{POS11a} Section 2.1. 
    Second proposition follows from the fact that cokernel commutes with tensor product.
      \[\mathrm{coker}(\phi_{10})\otimes_{\C[x_1,\ldots, x_n]} \C[x_1,\ldots, x_{n-1}] \simeq \mathrm{coker}(\phi_{10}|_{x_n=f}).\] 
    To prove a third proposition, we should specify Fourier-Mukai kernel of a push-forward functor. Write
    \[V(x_1, \ldots, x_{n-1})-V(y_1, \ldots, y_{n-1}) = \sum_i^{n-1} (x_i-y_i)\cdot V_i\] 
    Define a Koszul-type matrix factorization $\Gamma$ of $V(\vec x)-U(\vec y)= V(\vec x)-\left(V(\vec y) +(y_n-f) U_2(\vec y)\right )$ as 
    \[\Gamma:=\left( \Lambda^\bullet \langle e_1, \ldots, e_n\rangle, \left(\sum_1^{n-1}(x_i-y_i)i_{e_i} +(y_n-f) i_{e_n}+\sum_1^{n-1}V_i (\cdot \wedge e_i) +U_2(\cdot \wedge V)\right)\right ).\]
    Under Orlov's equivalence $\Gamma$ corresponds to a stabilization of a graph $\Gamma_{Y\to X}$. Therefore a Fourier-Mukai functor associated to $\Gamma$ is a pushforward functor. Notice that
    \[-\otimes \Gamma \simeq -\otimes \Delta_V \otimes \left(\begin{tikzcd} \mathbb C[x_1, \ldots, x_n]\arrow[r, bend left=30, "x_n-f"]& \mathbb C[x_1, \ldots, x_n] \arrow[l, bend left=30, "U_2"] \end{tikzcd}\right)\]
    where $\Delta_V$ is a stabilized diagonal of $V$. This proves the third proposition. 
     
    For the fourth proposition, observe that $i_*\circ i^*M$ goes to  
      \[\mathrm{coker} \left(\phi_{10}|_{x_n=f}:M^{odd}|_{x_n=f} \to M^{even}|_{x_n=f}\right)\] 
    under Orlov's equivalence. It is easy to see that the periodic tail of the following double complex realizes the matrix factorization associated to that module. 
      \[
      \begin{tikzcd}
        \cdots \arrow[r, "\phi_{10}"]  & M^{even} \arrow[r,"\phi_{01}"] \arrow[d, "x_n-f"] & M^{odd} \arrow[r,"\phi_{10}"] \arrow[d, "x_n-f"]&  M^{even} \arrow[d, "x_n-f"]\\
        \cdots \arrow[r, "\phi_{10}"] & M^{even} \arrow[r,"\phi_{01}"]& M^{odd} \arrow[r,"\phi_{10}"]&  M^{even}
      \end{tikzcd}
      \]
    This is equal to $\Cone((x_n-f): M[1] \to M)$.  
    \end{proof}
  The fourth of Proposition \ref{prop:MFadjoint} means
  \[(i_*\circ i^*)M \simeq \left(\begin{tikzcd}M[1] \arrow[r, "x_n-f"]& M\end{tikzcd}\right) = M[\epsilon]\bigg/\left ( \begin{array}{l}\epsilon^2=0\\d\epsilon =x_n-f \end{array}\right ).\]
  This is a perfect analogy of $\mathcal B$-module objects we considered in the last subsection. 
  \begin{defn}\label{defn:newdg}
  Define a DG category $\MF(U)|_{x_n-f}$ as follows:
  \begin{itemize}
    \item its objects consist  of the matrix factorizations $(i_*\circ i^*)M$ for $M\in \MF(U),$
    \item its morphisms $\Hom_{\MF(U)|_{x_n-f}}\left( (i_*\circ i^*)M_1, (i_*\circ i^*)M_2 \right)$ consist  of  
    \[a+\epsilon b \in \Hom_{\MF(U)} \left( (i_*\circ i^*)M_1, (i_*\circ i^*)M_2 \right)=\Hom(M_1[\epsilon], M_2[\epsilon])\]
    Differentials and compositions are induced from $\MF(U)$.
  \end{itemize} 
  \end{defn}  
  The next proposition explains the relation between MF of the restriction $V = U|_{x_n-f=0}$ and the DG category   $\MF(U)|_{x_n-f}$. 
    \begin{cor}\label{cor:DGmodel} 
    Let 
    \[i_*\circ i^* : \MF(U) \to \MF(U)|_{x_n-f}\] 
    be a natural inclusion and  
    \[c: \MF(U)|_{x_n-f} \to \MF(V)\] 
    be a DG functor sending $(i_*\circ i^*)M$ to  the matrix factorization $M|_{x_n-f=0}$. 
    \begin{enumerate}
    \item The following diagram commutes.
  \[\begin{tikzcd} \MF(U) \arrow[r, "i_*\circ i^*"] \arrow[dr, swap, "i^*"] &\MF(U)|_{x_n-f} \arrow[d, "c"] \\  
   &\MF(V) \end{tikzcd}\] 
    \item A category $\MF(U)|_{x_n-f}$ fits into a diagram of distinguished triangle of DG bimodules:
    \[\begin{tikzcd} \MF(U) \arrow[r, "x_n-f"] & \MF(U) \arrow[r, "i_* \circ i^*"] & \MF(U)|_{x_n-f} \arrow[r] & \phantom{a} \end{tikzcd}\]
    \item $c$ is fully faithful. It is a quasi-equivalence whenever $i^*:\MF(U) \to \MF(V)$ is essentially surjective.
    \end{enumerate}
    \end{cor}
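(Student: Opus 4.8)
The plan is to treat Proposition~\ref{prop:MFadjoint} and Definition~\ref{defn:newdg} as black boxes: Parts (1) and (2) will then be essentially bookkeeping, and the only step carrying real content is the identification, in Part (3), of the chain map induced by $c$ with a reduction-modulo-$(x_n-f)$ quasi-isomorphism. For Part (1) I would argue straight from the definitions. On objects, $c\bigl((i_*\circ i^*)M\bigr)=M|_{x_n-f=0}=i^*M$ by Proposition~\ref{prop:MFadjoint}(2). On morphisms, $i_*\circ i^*$ sends $\phi\colon M_1\to M_2$ to the morphism of the cones $(i_*\circ i^*)M_j=\Cone(x_n-f\colon M_j[1]\to M_j)$ induced by $\phi$, i.e.\ to $\phi+\epsilon\cdot 0$ in the $a+\epsilon b$ notation, while $c$ sends $a+\epsilon b$ to $a|_{x_n-f=0}$; composing gives $i^*\phi$, so the triangle commutes.

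For Part (2) I would set $\mathcal B=R[\epsilon]/(\epsilon^2=0,\,d\epsilon=x_n-f)$, observe that $(i_*\circ i^*)M_j=M_j[\epsilon]=M_j\otimes_R\mathcal B$ is free over $\mathcal B$ (since $M_j$ is finitely generated free over $R$), and then use tensor--hom adjunction together with Definition~\ref{defn:newdg} to obtain a natural isomorphism
$$\Hom_{\MF(U)|_{x_n-f}}\!\bigl((i_*\circ i^*)M_1,(i_*\circ i^*)M_2\bigr)\;\cong\;\Hom_R(M_1,M_2)\otimes_R\mathcal B ,$$
the differential on the right combining the matrix-factorization differential of $\Hom_R(M_1,M_2)$ with that of $\mathcal B$ (which sends $a+\epsilon b\mapsto(x_n-f)b$). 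Since $\mathcal B=\Cone(x_n-f\colon R[1]\to R)$ as a complex of $R$-modules, the right-hand side is $\Cone\bigl(x_n-f\colon\Hom_R(M_1,M_2)[1]\to\Hom_R(M_1,M_2)\bigr)$, which as a DG bimodule over $\MF(U)$ through $i_*\circ i^*$ is exactly the mapping cone of multiplication by $x_n-f$ on $\MF(U)_\Delta$; this gives the asserted distinguished triangle.

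For full faithfulness in Part (3) I would fix $M_1,M_2$ and use that $x_n-f$ is a non-zero-divisor in $R$ (indeed $R/(x_n-f)\cong\mathbb{C}[x_1,\dots,x_{n-1}]$), so that the two-term complex $\mathcal B$ is a free resolution of $R/(x_n-f)$; since $\Hom_R(M_1,M_2)$ is a bounded complex of flat $R$-modules, the base-change map
$$\Hom_R(M_1,M_2)\otimes_R\mathcal B\;\xrightarrow{\ \sim\ }\;\Hom_R(M_1,M_2)\otimes_R R/(x_n-f)\;=\;\Hom_{\MF(V)}\!\bigl(i^*M_1,i^*M_2\bigr)$$
is a quasi-isomorphism (the last equality because $M_1$ is finitely generated free and $i^*M_j=M_j|_{x_n-f=0}$). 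The remaining point is to check that, under the identification of Part (2), this base-change map is the one induced by $c$, namely $a+\epsilon b\mapsto a|_{x_n-f=0}$; granting this, $c$ is a quasi-isomorphism on every morphism complex, hence fully faithful. An alternative route would be to chase the adjunction isomorphism $\Hom_{\MF(V)}(i^*M_1,i^*M_2)\simeq\Hom_{\MF(U)}(M_1,i_*i^*M_2)$ of Proposition~\ref{prop:MFadjoint}(1) through these identifications.

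For the last clause of Part (3) I would note that $c\circ(i_*\circ i^*)=i^*$ by Part (1), while $i_*\circ i^*\colon\MF(U)\to\MF(U)|_{x_n-f}$ is essentially surjective by construction, every object of $\MF(U)|_{x_n-f}$ being of the form $(i_*\circ i^*)M$; hence $c$ and $i^*$ have the same essential image, so $c$ is essentially surjective as soon as $i^*$ is, and with full faithfulness $c$ is then a quasi-equivalence. The step I expect to be the main obstacle is the one flagged above: pinning down that the chain map induced by $c$ is the flat base-change quasi-isomorphism $-\otimes_R\mathcal B\to-\otimes_R R/(x_n-f)$. Once the morphism complexes of $\MF(U)|_{x_n-f}$ are recognized, via tensor--hom adjunction, as complexes of $\mathcal B$-linear maps between the free $\mathcal B$-modules $M_j[\epsilon]$, the remaining homological algebra is routine.
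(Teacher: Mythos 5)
Your proposal is correct, and parts (1), (2) and the essential-surjectivity clause run exactly as in the paper (the paper simply declares (1)--(2) immediate from the definitions and, for surjectivity, notes that the image of $c$ on objects is the image of $i^*$). The real comparison is in the full-faithfulness step of (3). Both you and the paper begin by recognizing $\Hom_{\MF(U)|_{x_n-f}}\bigl((i_*i^*)M_1,(i_*i^*)M_2\bigr)$ as the complex of $\mathcal B$-linear maps between the free $\mathcal B$-modules $M_j[\epsilon]$, i.e.\ as $\Hom_R(M_1,M_2)\otimes_R\mathcal B \simeq \Cone\bigl(x_n-f\colon \Hom_R(M_1,M_2)\to\Hom_R(M_1,M_2)\bigr)$. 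From there the paper takes the route you only mention as an alternative: it rewrites this as $\Hom_{\MF(U)}\bigl(M_1,(i_*\circ i^*)M_2\bigr)$ and invokes the adjunction $(i^*,i_*)$ of Proposition \ref{prop:MFadjoint}(1) (imported from the singularity-category literature) to land on $\Hom_{\MF(V)}(i^*M_1,i^*M_2)$. You instead argue directly that, since $x_n-f$ is a non-zero-divisor and the hom complex consists of free $R$-modules, the cone-to-quotient map $\Hom_R(M_1,M_2)\otimes_R\mathcal B\to\Hom_R(M_1,M_2)\otimes_R R/(x_n-f)$ is a quasi-isomorphism; this is more elementary and self-contained, at the price of having to check that this base-change map is the chain map induced by $c$ — a compatibility which, to be fair, the paper's chain of isomorphisms also leaves implicit, and which does hold because $c$ acts on morphisms by $a+\epsilon b\mapsto a|_{x_n-f=0}$.

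One cosmetic caution: $\Hom_R(M_1,M_2)$ with its matrix-factorization differential is $\Z/2$-graded (two-periodic), so calling it a \emph{bounded} complex of flat modules is not literally accurate; but your argument does not need boundedness, since the quasi-isomorphism $\Cone(x_n-f)\to \Hom_R(M_1,M_2)/(x_n-f)$ follows from the short exact sequence of complexes $0\to C\xrightarrow{x_n-f}C\to C/(x_n-f)C\to 0$, valid verbatim in the periodic setting.
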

    \begin{proof}
      The first and the second proposition follows directly from the definition. For the third one, observe 
      \[i_*\circ i^*M \simeq M[\epsilon]\bigg/\left ( \begin{array}{l}\epsilon^2=0\\d\epsilon =x_n-f \end{array}\right ).\]
   Therefore, we have
    \begin{align*}
      \Hom_{\MF(U)|_{x_n-f}}\left( (i_*\circ i^*)M_1, (i_*\circ i^*)M_2 \right) & \simeq \Hom_{\C[x_1, \ldots, x_n,\epsilon]/(d\epsilon = x_n-f)}(M_1[\epsilon], M_2[\epsilon])\\
      & \simeq \Hom_{\MF(U)} (M_1, M_2[\epsilon])\bigg/\left ( \begin{array}{l}\epsilon^2=0\\d\epsilon =x_n-f \end{array}\right )\\
      & \simeq \Hom_{\MF(U)} (M_1, (i_*\circ i^*)M_2)\\
      & \simeq \Hom_{\MF(V)}(i^*M_1, i^*M_2). 
    \end{align*}
    If $i^*$ is essentially surjective, we have $i_*N  \simeq \left(\begin{tikzcd}M[1] \arrow[r, "x_n-f"]& M\end{tikzcd}\right) $ for some $M\in \MF(U)$. This implies $c$ is also essentially surjective. 
    \end{proof}
In this way, we can regard  $\MF(U)|_{x_n-f}$ as another DG-model representing $\MF(V)$.


\section{$\AI$-category for a weighted homogeneous polynomial with a symmetry group}\label{sec:LGFukaya}

 In this section, we will generalize  the construction of Section \ref{sec:qc1} and define a new $A_\infty$-category $\cF(W,G_W)$ for
 a  weighted homogeneous polynomial $W$ paired with its group $G_W$ of diagonal symmetries. 
 From the monodromy transformation of $W$ and its flow induced on the boundary of Milnor fiber $M_W$, we define
 a Reeb orbit $\Gamma_W$ in the quotient orbifold $[M_W/G_W]$ and use $J$-holomorphic popsicles with $\Gamma_W$-insertions
 to define such an $\AI$-category.

\subsection{Weighted homogeneous polynomials and variation operator}\label{variation operator}
\begin{defn}
We say a weighted homogeneous polynomial $W$ is 
	\begin{enumerate}
	\item log Fano if $\sum_{k=1}^n w_i -h >0$
	\item log Calabi-Yau if $\sum_{k=1}^n w_i -h =0$
	\item log general if $\sum_{k=1}^n w_i -h < 0$
	\end{enumerate}
\end{defn}
 Assume that $W$ has an isolated singularity at the origin. Set $V_t = V_t(W) = \{ z \in \C^n \mid W(z) = t\}$. $V_0$ is an hypersurface of isolated singularity at 0 and $V_t$ ($t \neq 0$) is non-singular. \textit{Milnor fiber} $M_W$ is defined to be  $V_1(W)$.
For the well-known Milnor fibration 
$$\frac{W}{|W|}:S^{2n-1}_\epsilon \setminus K \to S^1$$
with $K =(S^{2n-1}_\epsilon \cap V_0)$,  its fiber is diffeomorphic to $M_W$.
 Geometric monodromy $\Phi_W:\C^n \to \C^n$ is defined by
\begin{equation}\label{eq:gm}
\Phi_W(x_1,\ldots, x_m) =(e^{2\pi i w_1/h} x_1,\ldots, e^{2\pi iw_m/h} x_m)
\end{equation}
which restricts to $M_W$.
It is known that $S^{2n-1}_\epsilon \setminus K$ is diffeomorphic to the manifold
obtained by identifying two ends of $M_W \times [0,1]$ by $\Phi_W$
(see \cite{Milnor} Lemma 9.4).

It is another famous theorem of Milnor that the homotopy type of $M_W$ is a bouquet of $(n-1)$ spheres.
One may define the notion of vanishing skeleton, but it is usually very singular (for Brieskorn-Pham singularities, it is called \textit{Pham's spine}). If we perturb  $W$ to a complex Morse function $W'$, then we may obtain vanishing spheres in the Milnor fiber of $W'$,
and  Fukaya-Seidel directed $\AI$-category of $W$ is defined on these objects. 
But since   $G_W$-symmetry is broken after the perturbation, it is not known how to work equivariantly within Fukaya-Seidel category framework.

On the other hand, it is well-known that non-compact Lagrangians are related to vanishing cycles via the variation operator.
 Consider monodromy homomorphism (from a parallel transport fixing the boundary)
\begin{equation}\label{eq:mh}
h_*: H_*(\OL{M}_W) \to H_*(\OL{M}_W), \;h_*: H_*(\OL{M}_W, \partial \OL{M}_W) \to H_*(\OL{M}_W, \partial \OL{M}_W).
\end{equation}

\begin{defn}  
A \textbf{variation operator} (around the origin in $\C$) 
\begin{equation}\label{eq:vo}
\mathrm{var}: H_{n-1}(\OL{M}_W, \partial \OL{M}_W) \to H_*(\OL{M}_W).
\end{equation}
is defined by sending $[c] \mapsto (h_* - id)([c])$.
\end{defn}
This map is known to be an isomorphism. We will find a symplectic categorical analogue of this variation operator for weighted homogenous polynomials in the next section. We define a distinguished Reeb orbit $\Gamma_W$ from the geometric monodromy \eqref{eq:gm}. 
The analogue of monodromy homomorphism \eqref{eq:mh} will be given by the quantum cap action by $\Gamma_W$
$$ \cap \Gamma_W : \mathcal{WF}(L,L) \to \mathcal{WF}(L,L).$$
Then, the analogue of the variation operator \eqref{eq:vo} will be the new Fukaya category $\mathcal{C}_{\Gamma_W}$,
with a distinguished triangle of $\AI$-bimodules
    \[\begin{tikzcd}
    \mathcal{WF}(M) \arrow[r, "\cap \Gamma_W"] & \mathcal{WF}(M) \arrow[r]& \CG \arrow[r]& \phantom a 
      \end{tikzcd}\]


  \subsection{Diagonal symmetry group of weighted homogeneous polynomials}
  We collect some facts about weighted homogeneous polynomials and their diagonal symmetry group
from  \cite{FJR13} and a reference therein. 
  
  Let $W$ be a weighted homogeneous polynomial of weight $(w_1, \ldots, w_n;h).$
    \begin{defn}
    The maximal diagonal symmetry group $G_W$ of $W$ is defined as 
    \[G_W:=\left\{\vec \lambda \in (\C^*)^n\mid W(\lambda_1\cdot x_1, \ldots, \lambda_n\cdot x_n)=W(x_1, \ldots, x_n)\right\}.\] 
    \end{defn}
    Note that geometric monodromy \eqref{eq:gm} is an element of $G_W$.
    A class of polynomials that we are interested in are those with finite diagonal symmetries. 
    \begin{defn}[\cite{FJR13} Definition 2.1.5]
    A weighted homogeneous polynomial $W$ is called non-degenerate if 
    \begin{enumerate}
      \item $W$ contains no monomial of the form $x_ix_j$ for some $i\neq j$;
      \item $W$ is an isolated singularity at the origin.  
    \end{enumerate} 
    \end{defn}
   If $W$ is non-degenerate, the number of monomials of $W$ must be greater than or equal to the number of variables. The first finiteness condition that we need is the following.
    \begin{prop}[\cite{HK}, \cite{FJR13}]
    Let $W$ be a non-degenerate weighted homogeneous polynomial. Then
      \begin{enumerate}
      \item The weight $(w_1, \ldots, w_n;h)$ of $W$ is bounded by $\frac{w_i}{h} \leq\frac{1}{2}$ and it is unique.
      \item The maximal diagonal symmetry group $G_W$ is a finite abelian group. 
      \end{enumerate}
    \end{prop}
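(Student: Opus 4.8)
The plan is to prove the two statements essentially independently. For part (1), bounding the weights and establishing uniqueness, I would start from the non-degeneracy hypothesis. Since $W$ is an isolated singularity at the origin, a classical theorem (Kouchnirenko, or the argument in \cite{HK}) says that $W$ must contain, for each variable $x_i$, either a pure power $x_i^{a_i}$ or a monomial of the form $x_i^{a_i} x_j$ (a ``pointing'' monomial). This is the key structural fact: non-degenerate weighted homogeneous polynomials with isolated singularity are, up to a change of the monomial data, built from chains and loops (the Kreuzer--Skarke classification). From such a normal form one reads off the weights directly: if $x_i^{a_i}$ appears then $a_i w_i = h$, so $w_i/h = 1/a_i \le 1/2$ provided $a_i \ge 2$, and $a_i \ge 2$ because $W$ contains no monomial $x_i x_j$ with $i \ne j$ and in particular no linear term (a linear term would make the origin a smooth point, contradicting that it is a singularity); for a pointing monomial $x_i^{a_i} x_j$ one solves the resulting linear system and checks the same bound. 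Uniqueness of $(w_1,\dots,w_n;h)$ with $\gcd = 1$ follows because the exponent matrix of a non-degenerate polynomial is invertible over $\mathbb{Q}$ (each variable is ``used'' by its own monomial), so the weight vector is the unique positive rational solution of $A \vec w = h \vec{1}$ up to scale, and the $\gcd$-normalization pins it down.

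For part (2), finiteness of $G_W$, the clean approach is again via the exponent matrix $A = (a_{ij})$. An element $\vec\lambda = (\lambda_1,\dots,\lambda_n) \in (\mathbb{C}^*)^n$ lies in $G_W$ iff $\prod_j \lambda_j^{a_{ij}} = 1$ for every monomial $\prod_j x_j^{a_{ij}}$ of $W$. Writing $\lambda_j = e^{2\pi i t_j}$, this says $A \vec t \in \mathbb{Z}^n$, i.e. $\vec t \in A^{-1}\mathbb{Z}^n$ (here I use that $A$ is invertible over $\mathbb{Q}$, which is exactly the non-degeneracy/isolated-singularity input again). Hence $G_W \cong A^{-1}\mathbb{Z}^n / \mathbb{Z}^n$, which is a finite abelian group of order $|\det A|$. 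It is visibly abelian since it is a subgroup of $(\mathbb{C}^*)^n$, and finite because $A^{-1}\mathbb{Z}^n \supset \mathbb{Z}^n$ is a lattice of the same rank $n$. I would also note the geometric monodromy $\Phi_W$ from \eqref{eq:gm} sits inside $G_W$: indeed $W(\Phi_W(x)) = e^{2\pi i h/h} W(x) = W(x)$ since $W$ is weighted homogeneous of degree $h$.

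The main obstacle is really one of bookkeeping rather than conceptual depth: the precise statement ``$W$ must contain a pure power or pointing monomial in each variable'' requires either citing \cite{HK} (or \cite{FJR13}) or reproving Kouchnirenko's criterion, and then checking the weight bound $w_i/h \le 1/2$ case-by-case across the chain/loop building blocks. I expect to simply invoke \cite{HK} and \cite{FJR13} for this normal-form input, as the proposition is attributed to them, and spend the written proof on the linear-algebra deduction: invertibility of $A$ over $\mathbb{Q}$, the identification $G_W \cong A^{-1}\mathbb{Z}^n/\mathbb{Z}^n$, and the uniqueness of the weight system. The one place to be slightly careful is ensuring $a_i \ge 2$ for every pure power (equivalently, ruling out linear monomials), which follows from condition (1) of non-degeneracy together with the singularity hypothesis, since a variable appearing only linearly or in $x_ix_j$ is excluded.
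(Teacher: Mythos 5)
The paper offers no proof of this proposition at all --- it is quoted verbatim from \cite{HK} and \cite{FJR13} --- so there is nothing internal to compare against; what you have written is essentially the standard argument from those references, and it is sound in outline. Two points of bookkeeping deserve correction. First, the full exponent matrix $A$ of a non-degenerate $W$ need not be square (non-degenerate is weaker than invertible: $W$ may have more monomials than variables), so ``$A$ is invertible over $\mathbb{Q}$'' is not literally meaningful; the right statement is that the structural lemma lets you \emph{select} one monomial per variable, of the form $x_i^{a_i}$ or $x_i^{a_i}x_j$ with $a_i\geq 2$ (linear terms are excluded by the singularity at $0$, and $x_ix_j$ by non-degeneracy), and the resulting $n\times n$ submatrix is invertible --- e.g.\ it is strictly diagonally dominant, each row having diagonal entry $\geq 2$ and at most one off-diagonal entry equal to $1$. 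This selected matrix is what pins down the weights uniquely and what you should feed into the lattice argument. Second, for the same reason $G_W \cong A^{-1}\mathbb{Z}^n/\mathbb{Z}^n$ of order $|\det A|$ is only correct when $W$ is invertible; in general the monomial conditions for the selected submatrix give an embedding of $G_W$ into a finite abelian group of order $|\det A'|$, which is all you need for (2). Finally, the appeal to the Kreuzer--Skarke chain/loop classification and the ``case-by-case'' check of the bound is unnecessary: once you know each variable $x_i$ occurs in a monomial $x_i^{a_i}$ or $x_i^{a_i}x_j$ with $a_i\geq 2$, weighted homogeneity gives $a_iw_i = h$ or $a_iw_i + w_j = h$ with $w_j>0$, hence $w_i/h \leq 1/2$ in one line. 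With these repairs your proposal is a complete and correct substitute for the cited proof.
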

     For a subgroup $K\subset G_W$, let $\C^{N_{K}}:=(\C^n)^K$ be the set of fixed points of $K$. Notice that they are always a coordinate plane. Let $W_K$ be a restriction of $W$ on $\C^{N_K}$. The second finiteness condition that we will use is the following.
  \begin{prop}[\cite{FJR13} Lemma 2.10]
    Let $W$ be a non-degenerate weighted homogeneous polynomial. Then 
    \begin{enumerate}
    \item $W_K$ is a non-degenerate weighted homogeneous polynomial on $\C^{N_K}$ for $\forall K<G_W$.
    \item $G_{W_K}$ is canonically embedded inside $G_W$.
    \end{enumerate} 
  \end{prop}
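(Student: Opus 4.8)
The plan is to reduce both statements to the combinatorics of monomials, using crucially that $\C^{N_K}=(\C^n)^K$ is a \emph{coordinate} subspace. Write $\{1,\dots,n\}=I\sqcup J$ with $I=\{\,i:\lambda_i=1\ \text{for all}\ \lambda\in K\,\}$, so that $\C^{N_K}=\{x_j=0:j\in J\}$ and $W_K=W|_{x_j=0,\ j\in J}$; splitting the monomials of $W$ according to whether they are divisible by some $x_j$, $j\in J$, gives a decomposition $W=W_K+W'$ with every monomial of $W'$ divisible by some such $x_j$. Two of the three conditions in (1) are then immediate: restricting the Euler relation $\sum_k w_k x_k\,\partial_k W=hW$ to $\C^{N_K}$ (the $k\in J$ terms vanish since $x_k=0$, and for $k\in I$ the operator $\partial_k$ commutes with setting the $x_j$, $j\in J$, to $0$) shows $W_K$ is weighted homogeneous of weight $\big((w_i)_{i\in I};h\big)$, which one normalizes by dividing out the $\gcd$; and $W_K$ contains no monomial $x_ix_j$ because its monomials form a subset of those of $W$.

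The substantive part of (1) is that $W_K$ has an isolated singularity at $0\in\C^{N_K}$. Assume not and pick a critical point $p\neq 0$ of $W_K$; I claim $(p,0)\in\C^n$ is a critical point of $W$, contradicting non-degeneracy of $W$. For $i\in I$ one has $\partial_i W(p,0)=\partial_i W_K(p)=0$, since $\partial_i$ commutes with the restriction. For $j\in J$, the only monomials of $W$ that can contribute a nonzero term to $\partial_j W$ at $(p,0)$ are those of the form $x_j\cdot m$ with $m$ a monomial in the $x_i$, $i\in I$, alone (any further $x_{j'}$, or a higher power of $x_j$, makes the term vanish at $(p,0)$). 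But such a monomial carries the $K$-character $\chi_j$, the character by which $K$ acts on $x_j$, which is nontrivial precisely because $j\in J$, whereas $K$-invariance of $W$ forces every monomial of $W$ to have trivial $K$-character. Hence $W$ has no such monomial, $\partial_j W(p,0)=0$, and $(p,0)\neq 0$ is a critical point of $W$ — the desired contradiction. This step looks like the crux but, as just seen, it is dispatched cleanly once one uses $K$-invariance of the monomials.

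For (2) I would use the restriction homomorphism. Since $G_W$ is abelian it preserves $\C^{N_K}$, and since it preserves $W$ it preserves $W_K$, so $\lambda\mapsto\lambda|_{\C^{N_K}}$ gives $r\colon G_W\to G_{W_K}$. The plan is to show $r$ is surjective and admits a canonical section, which yields the asserted canonical embedding $G_{W_K}\hookrightarrow G_W$. Surjectivity is equivalent to the identity $M_W\cap\Z^I=M_K$ of exponent sublattices, where $M_W=\langle\mathcal{E}(W)\rangle\subset\Z^n$ and $M_K=\langle\mathcal{E}(W_K)\rangle\subset\Z^I$: an integral combination of exponents of $W$ lying in $\Z^I$ has vanishing $J$-part, and since the $J$-parts of the exponents of $W$ all lie in $\ker(\Z^J\to\widehat K)$ one can unwind this to show the combination lies in $M_K$; applying $\Hom(-,\C^*)$ (which is exact, $\C^*$ being divisible) turns the injection $\Z^I/M_K\hookrightarrow\Z^n/M_W$ into the surjection $r$.

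The genuine obstacle is making the section canonical: a surjection of finite abelian groups need not split, and even when it does the splitting need not be unique (e.g.\ $G_W\cong\Z/4\times\Z/2$ surjecting onto $G_{W_K}\cong\Z/4$ admits two distinct sections), so this step really uses the rigidity of weighted homogeneous polynomials. For invertible $W$ it is cleanest: by the Kreuzer--Skarke normal form $W$ decomposes (up to coordinates) as a Thom--Sebastiani sum of Fermat, chain and loop atoms on disjoint variable blocks, $\C^{N_K}$ respects this decomposition — the groups $G_{\mathrm{chain}}$ and $G_{\mathrm{loop}}$ only allow the coordinate restrictions that pass to a terminal sub-chain (still containing the pure-power term) or delete a whole loop block — and $G_W$, $G_{W_K}$ are the corresponding products of atomic symmetry groups, so one embeds each factor canonically (pad characters with $1$'s on Fermat atoms; map into the unique subgroup of the correct order of the cyclic group $G_{\mathrm{chain}}\cong\Z/(a_1\cdots a_m)$ on chain atoms). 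For a general non-degenerate $W$ one instead pins the section down by compatibility with the distinguished weight (monodromy) elements, $\Phi_{W_K}\mapsto\Phi_W$. The statement being \cite[Lemma 2.10]{FJR13}, the remaining bookkeeping follows theirs.
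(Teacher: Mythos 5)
The paper gives no argument for this proposition at all — it is quoted from \cite{FJR13} — so your proof has to stand on its own rather than be compared with an internal one. Part (1) does stand: the splitting $W=W_K+W'$ with every monomial of $W'$ divisible by some $x_j$, $j\in J$, the restricted Euler relation, and the observation that a critical point $p\neq 0$ of $W_K$ forces $\partial_j W(p,0)=0$ for $j\in J$ because a monomial $x_j\cdot m$ with $m$ supported in $I$ would carry the nontrivial $K$-character of $x_j$ (each monomial of a $G_W$-invariant sum is itself invariant) — this is the standard argument and it is correct. The only point to tighten is the final contradiction: a critical point $(p,0)\neq 0$ contradicts ``isolated singularity at the origin'' after you note that each $\partial_iW$ is again weighted homogeneous, so the whole weighted $\C^*$-orbit of $(p,0)$ consists of critical points accumulating at $0$.

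Part (2) has a genuine gap, and in fact your strategy is aimed at an impossible target. You interpret ``canonically embedded'' as ``the restriction homomorphism $r\colon G_W\to G_{W_K}$ admits a canonical section,'' and you flag non-uniqueness of splittings; but the situation is worse: sections can fail to exist at all. Take the chain $W=x^3+xy^3$ and $K=\langle(1,\zeta_3)\rangle<G_W$ with $\zeta_3$ a primitive cube root of unity, so $\mathrm{Fix}(K)$ is the $x$-axis and $W_K=x^3$. Then $G_W\cong\Z/9$, $G_{W_K}\cong\mu_3$, and $r$ is the surjection $\Z/9\to\Z/3$, whose kernel $K$ is the unique order-three subgroup of $G_W$; hence $r$ has no section whatsoever (and the unique order-three subgroup acts trivially on $\mathrm{Fix}(K)$). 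So no amount of normalization — Kreuzer--Skarke atoms, compatibility with monodromy elements, or otherwise — can produce an embedding splitting $r$; whatever the cited lemma means by the embedding, it cannot be that, and the proof of (2) as planned cannot close. Two further problems: the Kreuzer--Skarke reduction is in any case unavailable, since the proposition concerns arbitrary non-degenerate $W$ (e.g.\ $x^3+y^3+z^3+xyz$), not only invertible ones; and even the surjectivity of $r$ is only asserted — the lattice identity $M_W\cap\Z^I=M_K$ is exactly what would need a proof, and ``one can unwind this'' together with ``the remaining bookkeeping follows theirs'' defers precisely the content of the result being proved.
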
 
  An orbifold strata $\{\textrm{Fix}(K)\}_{K<G_W}$ on $\C^n$ is compatible with restrictions. This will enable us to do inductive arguments on the number of variables later on. A pair $(W, G)$ of non-degenerate homogeneous polynomial $W$ and a subgroup $G<G_W$ of diagonal symmetries is called a \textit{orbifold Landau-Ginzburg model}.

 
  \subsection{Orbifold wrapped Fukaya category}
  The starting point of our construction is an orbifold wrapped Fukaya category of $[M_W/G_W]$. We follow \cite{Se2} closely. 
   
    We consider the collection $\mathcal W$ of Lagrangians   $L \in [\Mil_W/G_W]$ with the following properties.
      \begin{itemize}
    \item It is given by  the family of embedded Lagrangians $\{L_i\}_{i \in I}$ and an action of $G_W$ on the index set $I$
    such that $g (L_i )= L_{g \cdot i}$.
    \item $L$ lies away from the singular locus.
    \item It is either compact or conical at the end.
    \item It is equipped with an orientation and a spin structure (compatible with $G_W$-action).
    \item  $G_W$-orbits  intersect transversally to each other without triple intersections. 
     \end{itemize} 
  \begin{defn}
  A $\Z/2$-graded \textit{orbifold wrapped Fukaya category}  $\mathcal {WF}([M_W/G_W])$ consists of;
    \begin{enumerate}
     \item a set of objects $\mathcal W$;
     \item For two such Lagrangians $L_0, L_1\in\mathcal W$,  its morphism space is defined by 
  \[CW^{ \bullet}(L_0, L_1):= \left(\bigoplus_{g, h\in G_W} CW^\bullet\left(g\cdot \widetilde{L_0}, h\cdot \widetilde{L_1}\right)\right)^{G_W}\]
  where $\widetilde{L_i}$ is a lift of $L_i$ and we take $G_W$-invariant part.
     \item   $A_\infty$-operations are  induced from the $m_k$-operation of $\mathcal {WF}(M_W)$. 	
   \end{enumerate}
  \end{defn}
  
 Following Seidel \cite{Se2}, we can make the $\AI$-structure $G_W$-equivariant. We omit the details and refer readers to \cite{Se2}.
	In addition to the usual $\Z/2$-grading, we can also define $\frac{1}{\vert G_W\vert } \Z$ -graded category.
	Consider a canonical holomorphic volume form  $\Omega = \mathrm{Res}_{W}dz_1\wedge\cdots\wedge dz_n$ on a fiber of $W:\C^n \to C$. 
	It is not difficult to show that we can define the Milnor fiber $M_W$ so that $\Omega$ induces a non-vanishing $n$-form on
	 $\Omega_{M_W}$ on $M_W$.
	Regarding a finite group action of $G_W$ on $M_W$, $\Omega_{M_W}$ does not descend to $[M_W/G_W]$ because $G_W \not\subset SL_n$ (hence $\Z$-grading does not exist).
	But its tensor power $\Omega_{M_W}^{\otimes \vert G_W \vert}$ does descend. 
	This allows us to define $\frac{1}{\vert G_W\vert } \Z$-graded version of wrapped Fukaya category following \cite{Se}, \cite{AAEKO}.
         We can also forget the fractional grading and simply work with $\Z/2$-grading as well.
  
A holomorphic disc $u:S\to \Mil_W$ defining $A_\infty$-structure can be considered as a smooth holomorphic disc 
$\overline u:S\to [M_W/G_W]$  ramified at orbifold points accordingly. Conversely, because $S$ has a vanishing orbifold fundamental group, all smooth holomorphic discs lifts to $M_W$ in a $G_W$-equivariant manner. Therefore,  
 Fukaya category of $[M_W/G_W]$ is the same as $G_W$-equivariant Fukaya category of $M_W$.


  \subsection{Monodromy flow and orbits}
  
  Fix a non-degenerate weighted homogeneous polynomial $W$ of weight $(w_1, \ldots, w_n;h)$ once and for all. Choose a slightly rescaled symplectic form $\omega$ and a Liouville form $\lambda$ on $\C^n$ (see \cite{KvK16} for the relation to the standard one) as
  \begin{equation*}\omega= \sum_k \frac{1}{2\pi iw_k}dz_k\wedge d\overline z_k, \hskip 0.2cm \lambda= \sum_{k}\frac{i}{4\pi w_k}(z_kd\overline z_k-\overline z_kdz_k).
  \end{equation*} 
  \begin{defn}
   The \textit{monodromy flow} is the Hamiltonian flow 
   \[\Phi_W(s)(x_1, \ldots, x_n):=(e^\frac{2\pi i w_1 s}{h}x_1, \ldots, e^\frac{2\pi i w_n s }{h}x_n )\] 
   of a quadratic Hamiltonian $H:=\frac{1}{2} \sum_{i=1}^n |x_i|^2$. The \textit{monodromy transformation} $\Phi_W=\Phi_W(1)$ is a time-$1$ flow
    \[x_i \mapsto e^{\frac{2\pi iw_i}{h}}x_i.\]
  \end{defn}
  
  Geometrically, a Hamiltonian action of $H$ is a lifting of a rotation action on the base of a fibration $W:\mathbb C^n \to \mathbb C$.  
We have $W\circ \Phi_{W}(s) = e^{2\pi i s}W$, which means that the flow of $H$ acts as a circle action of an $S^1$ family of Milnor fiber $W = e^{2\pi is}$. Set $s=1$ then we get a desired automorphism. 
  
  The monodromy flow restricts to a singular fiber $W^{-1}(0)$ and a \textit{link} of $W$ 
    \[L_{W, \delta} := W^{-1}(0) \cap S^{2n-1}_\delta\]
for small $\delta > 0$. The monodromy flow $\Phi_W(s)$ becomes a Reeb flow $\mathcal R$ on $L_{W, \delta}$, where the contact one form is given by a restriction of $\lambda$. Starting from any point $x\in L_{W, \delta}$, we get a Reeb chords 
  \[\gamma: [0, 1] \to L_{W, \delta}, \hskip 0.2cm \gamma(0)=x, \hskip 0.2cm \gamma(1) =\Phi_W(x)\]
  Notice that $\Phi_W$ always gives an element $J$ of a maximal symmetry group $G_W$. Reeb chords become \textbf{orbits} of a quotient $\left(L_{W, \delta}/ G_W\right)$. A space of time-$1$ Reeb orbits of the quotient is a total space $L_{W, \delta}/G_W$. 
  
  The link $L_{W, \delta}$ can be symplectically identified with $W^{-1}(1) \cap S^{2n-1}_{\delta}$ in the following way (see \cite{Sei00}). Choose a cutoff function $\psi$ with $\psi(t^2)=1$ for $t^2 \leq\frac{\delta}{3}$ and $\psi(t^2)=0$ for $t^2\geq\frac{2\delta}{3}$. Define 
  \[F := \{x\in B^{2n}_\delta \mid W(x)=\psi(|x|^2)\}.\]
  It was shown in \cite{Sei00} that $F$ is symplectic manifold with $\partial F=L_{W,\delta}$. Moreover $F$ is diffeomorphic to $W^{-1}(1) \cap B^{2n}$ by a smooth cobordism
  \[Bor_s:=\{x\in B^{2n}_\delta \mid W(x)=s\psi(|x|^2)+(1-s)\}.\]
  Shrinking $\delta$ if it is necessary, we see that $F$ and $M_W$ contain $W^{-1}(1) \cap B^{2n}_{\delta/3}$ as a Liouville submanifold and Liouville isotopy compresses both $F$ and $M_W$ to $\mathrm{Int}\left(W^{-1}(1) \cap B^{2n}_{\delta/3} \right)$. Since $\psi$ only depends on $|x|^2$, the whole construction is compatible with the action of $G_W$. Therefore $L_{W, \delta}/G_W$ serves as a model for the contact type boundary of $M_W/G_W$.
  
  \subsection{Main theorem}
 The reader would realize that we are in a similar situation to the case of Assumption \ref{as:main}. Reeb flow $\mathcal R_t$ is an $S^1$-action on $L_W/G_W$, and therefore $L_W/G_W$ provides a Morse-Bott component of Hamiltonian orbits. The orbit that corresponds to its fundamental class will denote by $\Gamma_W$. We state our main theorem.
 
\begin{thm}\label{thm:fulg}
Let $W$ be a nondegenerate weighted homogeneous polynomial $\sum_1^nw_i -h \geq0$.  
With the monodromy orbit $\Gamma_W$ of $[M_W/G_W]$, we define a new $\frac{\Z}{|G_W|}$-graded $\AI$-category $\mathcal{C}_{\Gamma_W}$ such that 
	\begin{enumerate}
	\item $\mathcal{C}_{\Gamma_W}$ has the same set of objects as the  $\frac{\Z}{|G_W|}$-graded wrapped Fukaya category $\WF([M_W/G_W])$,
	\item for two objects $L_1,L_2$, its morphisms are given by
		\[\Hom_{\mathcal{C}_\Gamma}(L_1, L_2) = CW(L_1, L_2) \oplus CW(L_1, L_2)\epsilon \]
		where $CW(L_1,L_2)$ is the morphism space for $\WF([M_W/G_W])$ and $\deg \epsilon =-1 + \mu_{RS}(\Gamma_W)$,
	\item a natural inclusion $\Psi : \WF([M_W/G_W]) \to \mathcal{C}_{\Gamma_W}$ is an $\AI$-functor,
	\item regarding the $\AI$-category $\mathcal{C}_{\Gamma_W}$ as an $\AI$-bimodule over $\WF([M_W/G_W])$ (using $\Psi$), we have a distinguished triangle of $\AI$-bimodules
		\begin{equation*} 
		\begin{tikzcd}   \mathcal{WF}([M_W/G_W])_\Delta \arrow[r, "\cap \Gamma_W "] & \mathcal{WF}([M_W/G_W])_\Delta \arrow[r] & \CG \arrow[r] & \;\; \end{tikzcd}
		\end{equation*}
	\end{enumerate}
\end{thm}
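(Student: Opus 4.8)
The plan is to transplant the construction of $\CG$ from Section \ref{sec:qc1} to the orbifold $[M_W/G_W]$, with the symplectic cohomology class $\Gamma_M$ replaced by the monodromy orbit $\Gamma_W$, and to verify the one analytic input that the $\AI$-relations require — vanishing of codimension one aligned sphere bubbles carrying $\Gamma_W$-insertions — under the log Fano/Calabi-Yau hypothesis \eqref{eq:FC}. Granting that vanishing, the Hom spaces $CW(L_1,L_2)\oplus CW(L_1,L_2)\epsilon$, the operations $M_n=M_n^a+\epsilon M_n^b$ assembled from the popsicle operators $\bold{P}^{\Gamma_W}_{n,F}$, the sign bookkeeping, and the proof of the $\AI$-identity are formally identical to Definition \ref{defn:newa} and the propositions that follow it, now carried out inside $\WF([M_W/G_W])$; the inclusion $\Psi$ being an $\AI$-functor and the bimodule triangle of parts (3), (4) then follow exactly as in Theorem \ref{thm:fulou}. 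So the substance is the geometric input behind the orbifold analogue of Proposition \ref{prop:v1}.

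First I would set up the Morse-Bott picture on the contact boundary $L_W/G_W$. Since $G_W$ acts diagonally, every isotropy stratum is a coordinate plane $\mathrm{Fix}(K)$ on which $W$ restricts to a non-degenerate weighted homogeneous polynomial $W_K$ with $G_{W_K}$ canonically inside $G_W$; this stratification lets one build a $G_W$-invariant Morse-Smale function on the link whose gradient flow respects the orbifold strata (Proposition \ref{equivariant Morse function}), and hence define $\Gamma_W$ as the generator representing the fundamental class of the family of principal Reeb orbits. The same device describes the twisted sectors: the chords $\gamma:[0,l]\to M_W$ with $g\cdot\gamma(l)=\gamma(0)$ for some $g\in G_W$ and $\gamma$ a Reeb trajectory, which descend to Reeb orbits on the quotient and which are the generators of the (undefined, but only needed through its generators and their indices) orbifold symplectic cochain complex. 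I would not try to define that complex; I only need to enumerate these generators and compute their Robbin-Salamon indices. The period $l$ may be fractional, and the index is obtained by splitting the linearized Reeb flow into coordinate eigenspaces and summing the corresponding Robbin-Salamon contributions (Proposition \ref{prop:RS}); the principal orbit is the special case, giving $\mu_{RS}(\Gamma_W)=2\big((\sum_i w_i)-h\big)/h$ as in Corollary \ref{cor:gin}, so that \eqref{eq:FC} is exactly $\mu_{RS}(\Gamma_W)\ge 0$.

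Next I would prove the index inequality of Proposition \ref{orbifold degree inequality 1}, bounding below the Robbin-Salamon index of every twisted Reeb orbit in terms of the weights of $W$; combined with \eqref{eq:FC}, this forces every generator that could occur as the output of a codimension one sphere bubble into non-negative degree. With that in hand the vanishing argument of Section \ref{sec:bbass} applies essentially verbatim: Proposition \ref{degree inequality 1} and Corollary \ref{degree inequality 2} are purely Fredholm-theoretic, using only $c_1=0$ along the relevant Morse-Bott families and the alignment-induced splitting \eqref{eq:dfspb} of the domain tangent space, both available here; the energy/action filtration together with the degree shift by $\mu_{RS}$ yields a two-sided estimate on any sphere component carrying $\ge 2$ copies of $\Gamma_W$ that, once the perturbation parameter is small enough, has no integer solution. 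Hence no codimension one aligned sphere bubble arises (Proposition \ref{prop:vanishing2}), the rest of the codimension one boundary is of Abouzaid–Seidel type, and the $\AI$-identity holds, giving the category $\mathcal{C}_{\Gamma_W}=\cF(W,G_W)$ with properties (1)–(4).

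The hard part will be the two orbifold-specific steps: making sense of, and computing Robbin-Salamon indices for, the fractional-period twisted Reeb orbits that serve as the symplectic cochain generators of $[M_W/G_W]$, in the absence of a general orbifold symplectic cohomology theory; and pushing the Fredholm and energy estimates of Section \ref{sec:bbass} through a Morse-Bott setup in which the Reeb circle action is not free along the orbifold locus. Once Proposition \ref{orbifold degree inequality 1} is established and \eqref{eq:FC} invoked, the categorical conclusions are automatic, and the Fukaya category for an arbitrary subgroup $G<G_W$ is then defined as the semidirect product $\cF(W,G):=\cF(W,G_W)\rtimes G^T$.
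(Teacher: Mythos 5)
Your route is the one the paper takes: classify the $G_W$-twisted Reeb orbits, compute their Robbin--Salamon indices (Proposition \ref{prop:RS}, Corollary \ref{cor:gin}), prove the degree-shifting inequality of Proposition \ref{orbifold degree inequality 1}, and rerun the action/degree argument of Section \ref{sec:bbass} to exclude codimension one aligned sphere bubbles, after which the popsicle construction of $\CG$ transplants to $\WF([M_W/G_W])$. But your claim that once Proposition \ref{orbifold degree inequality 1} is established and \eqref{eq:FC} invoked ``the categorical conclusions are automatic'' skips a step the paper must prove separately: that $\Gamma_W$ is \emph{closed} in the Floer-theoretic sense. Since $\Gamma_W$ is inserted at every sprinkle, one codimension one stratum of $\overline{\mathcal P}_{n,F,\phi}(\Gamma_W;a_1,\ldots,a_n,a_0)$ consists of a rigid Floer cylinder splitting off at a sprinkle; unless every such cylinder remains inside the local Floer complex of the Morse--Bott family $\Sigma_{J^{-1},1}/G_W$ (where $\Gamma_W$, as the fundamental class, is a cocycle), these strata contribute terms involving the differential of $\Gamma_W$, which would spoil both the statement that $\cap\Gamma_W$ is a chain map (needed for the cone description in part (4)) and the $\AI$-identities. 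The paper's Lemma \ref{gamma is closed} supplies exactly this, by an argument your plan does not contain: action estimates rule out outputs of period $l>1$; an output of period $0<l\le 1$ would force a fixed point of the Reeb flow $\mathcal R_{1-l}$, and a constant/Morse output would force a fixed point of the monodromy $\Phi_W$, both impossible away from the origin because $w_i/h\le \tfrac{1}{2}$. Your outline needs this lemma (or an equivalent) before the vanishing of sphere bubbles can close the argument.

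A smaller omission: you should also justify why the perturbation scheme survives the orbifold locus. The paper does this by observing that $\Gamma_W$ is supported on $\Sigma_{J^{-1},1}/G_W$, which lies in the smooth part of $[M_W/G_W]$, so popsicle maps with $\Gamma_W$-inputs see no isotropy at their interior asymptotics, and orbifold nodal degenerations are of codimension two and hence do not enter the codimension one analysis you quote. With these two points added, the remainder of your plan coincides with the paper's proof, including the fractional $\tfrac{1}{|G_W|}\Z$-grading via $\Omega_{M_W}^{\otimes|G_W|}$ and the definition $\cF(W,G)=\cF(W,G_W)\rtimes G^T$ for subgroups.
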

 It turns out that we still need a substantial amount of work to generalize Theorem \ref{thm:fulou} to the current setting, and
 we will give the proof in the next section.
 
 Let us just mention about the fractional grading here.
One can assign factional gradings to Hamiltonian orbits as in the case of wrapped Fukaya category following \cite{Se}, \cite{AAEKO}.
First, let us explain the underlying $\Z/2$-grading. Recall that $\Z/2$-grading for symplectic cohomology complex is easy
to define. Choose a trivialization of the tangent bundle along a
hamiltonian orbit, and the parity of the Conley-Zehnder index of the
linearized flow gives the desired grading. Note that the change of
trivialization does not change the parity of the index. Robbin-Salamon
index of a symplectic loop under any trivialization is even. Thus  one find that  $\Z/2$-grading
for $\Gamma_W$ is still even, since under the trivialization of the
tangent bundle over a principal orbit, we still have a symplectic
loop.

Now, additional fractional grading can be assigned as usual.  Recall that we have non-vanishing form $\Omega_{M_W}^{\otimes \vert G_W \vert}$ on $[M_W/G_W]$,
 which can be used to define fractional Conley-Zehnder or Robbin-Salamon indices 
 (see \cite{Mc}).
 In particular, for $\Gamma_W$, Robbin-Salamon index of
 the orbit with respect to the form $\Omega_{M_W}^{\otimes \vert G_W \vert}$ 
is denoted as $\mu_{RS}^{\Omega^{\otimes \vert G_W\vert}}$, and we set
\[\mu_{RS}(\Gamma_W) =  \frac{1}{\vert G_W\vert}\mu_{RS}^{\Omega^{\otimes \vert G_W\vert}}\]

%
%
%

\subsection{Fukaya category $\cF(W, G)$ for any  subgroup $\boldsymbol{G < G_W}$}\label{sec:ggw}
\begin{defn}
	The  $\Z/2$-graded Fukaya category $\cF(W, G_W)$ of a Landau-Ginzburg orbifold $(W, G_W)$ is defined to be the underlying
$\Z/2$-graded $\AI$-category of $\mathcal C_{\Gamma_W}$.
\end{defn}
For any subgroup  $G < G_W$, we define the Fukaya category $\mathcal{F}(W,G)$ using the dual group $G_W^T$-action on $\cF(W, G_W)$. 

Let us first explain the algebraic setting.
Let $\mathcal{C}$ be an $\AI$-category with an action by finite abelian group $H$. By taking a $H$-invariant part,
we may obtain an $\AI$-category $\mathcal{C}^H$   whose
object is a $H$-family of objects $\{ h \cdot \tilde K\}_{h \in H}$, denoted as $K$,
and its morphisms between $K$ and $K'$ are given by 
$$\bigoplus_{h \in H} \Hom_\mathcal{C}(K, hK') \cong \left( \bigoplus_{h_1,h_2\in H} \Hom_\mathcal{C}(h_1\tilde K, h_2\tilde K')\right)^H.$$
Then the character group $H^* = \Hom(H, \C^*)$ naturally acts on the $\AI$-category $\mathcal{C}^H$:
 we define $\chi \in H^*$ action for an element $X \in \Hom_\mathcal{C}(K, hK')$ (resp. its dual formal variable $x$) as 
$$\chi \cdot X = \chi(h^{-1}) X  \;\;\;\; (\textrm{resp.} \; \chi \cdot x = \chi(h) x ).$$

In \cite{Se2}, Seidel observed that one can recover $\mathcal{C}$ from $\mathcal{C}^H$ using semi-direct product.
By taking a tensor product of $\Hom_{\mathcal{C}^H}(K,K')$ with the group ring $\C[H^*]$, and
extending the $\AI$-operation suitably, we obtain a semi-direct product $\AI$-category $\mathcal{C}^H \rtimes H^*$,
which can be canonically identified with $\mathcal{C}$.
More precisely, the component $\Hom_{\mathcal{C}^H}(K,K') \otimes \chi$ in the morphism space can be identified with the $\chi$-eigenspace of $H$-action on $\mathcal{C}$ (see \cite{CL_ks} for an explicit identification). 
Geometrically, it means that $\AI$-operations in $\mathcal{C}^H$ actually come from $\mathcal{C}$.
For the case of Fukaya category, this corresponds to the fact that a smooth holomorphic discs in the quotient can be always lifted.

Let us discuss the case of the $\AI$-category $\cF(W, G_W)$.
For a $J$-holomorphic disc without $\Gamma_W$-insertion, 
$$m_k(\chi \cdot w_1,\ldots, \chi \cdot w_k)= \chi \cdot m_k(w_1,\ldots,w_k)$$
holds because disc has a lift to upstairs and still is a disc.  
When we walk along the boundary Lagrangians, we come back to the original branch that we start with.

We have additional operations of popsicles which have $\Gamma_W$ interior insertions.
These $J$-holomorphic discs do not have lifts to upstairs, so we may proceed as follows. 
Denote by $g_W \in G_W$ be the group element for the monodromy of $W$ in \eqref{eq:gm}.
 For each sprinkle with $\Gamma_W$ insertion, we can make a branch cut in the domain $D^2$ to the boundary arc between two
marked points $z_0,z_1$. We can make the cuts disjoint from each other. Then, the resulting disc lifts to the cover, but
if we restrict to the arc $z_0z_1$, it has discontinuous lift.  Namely, $z_0z_1$ boundary arc has as many $g_W$ jumps as the number of sprinkles. This can be used to show that 
$$m_{k,F}^{\Gamma_W} (\chi \cdot w_1,\ldots, \chi \cdot w_k) = \chi (g_W^{-|F|}) m_{k,F}^{\Gamma_W} (w_1,\ldots,w_k)$$
To accommodate this new factor, we introduce $G_W^*$-action on the formal parameter $\epsilon$ in the Definition \ref{defn:newa}. It is not hard to check that
this gives the desired $G_W^*$-action on $\mathcal{C}_{\Gamma_W}$.
\begin{prop}
We set $\chi \cdot \epsilon = \chi(g_W^{-1}) \epsilon$.
Then $\AI$-category $\mathcal{C}_{\Gamma_W}$ admits $G_W^*$-action.
\end{prop}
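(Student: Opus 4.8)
The plan is to verify that the assignment $\chi \cdot \epsilon = \chi(g_W^{-1})\epsilon$ extends to a genuine $A_\infty$-action of the character group $G_W^* = \Hom(G_W, \C^*)$ on $\mathcal{C}_{\Gamma_W}$. Recall that $\WF([M_W/G_W])$ already carries a $G_W^*$-action by the general formalism of Seidel \cite{Se2} recalled above; concretely, on $\Hom_{\WF}(L_1, L_2) = \bigoplus_{h\in G_W} CW(h\widetilde{L_1}, \widetilde{L_2})$ the element $\chi$ acts by $\chi(h^{-1})$ on the $h$-summand, and this is compatible with $\{m_k\}$ because $J$-holomorphic discs with no sprinkles lift to $M_W$. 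Since $\Hom_{\mathcal{C}_{\Gamma_W}}(L_1,L_2) = CW(L_1,L_2) \oplus CW(L_1,L_2)\epsilon$, extending the action amounts to the single formula for $\chi\cdot\epsilon$, and the content is that the $A_\infty$-operations $M_n$ of Definition \ref{defn:newa} intertwine this action.

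First I would fix notation: write $\chi\cdot(a + \epsilon b)$ for $\chi\cdot a + \chi(g_W^{-1})\,\chi\cdot(\epsilon)\cdot(\chi\cdot b)$, i.e. $\chi$ acts on the $CW$-factors by the ambient $\WF$-action and picks up the extra scalar $\chi(g_W^{-1})$ for each $\epsilon$. Because $G_W^*$ is abelian and each $\chi$ acts by scalars on the relevant eigenspaces, checking that this is a group action (i.e. $(\chi_1\chi_2)\cdot x = \chi_1\cdot(\chi_2\cdot x)$ and $1\cdot x = x$) is immediate. The real point is $A_\infty$-equivariance: for every $n$ and every tuple of inputs one must have
\[
M_n(\chi\cdot c_1,\ldots,\chi\cdot c_n) = \chi \cdot M_n(c_1,\ldots,c_n).
\]
By the bilinearity in the $a$- and $b$-components and the decomposition $M_n = M_n^a + \epsilon M_n^b$ from Definition \ref{defn:newa}, it suffices to check this when each $c_i$ is homogeneous, i.e. either $c_i = a_i$ or $c_i = \epsilon b_i$, with the $\epsilon$-inputs indexed by a subset $F = \{i_1,\ldots,i_k\}$. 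For the term with no $\epsilon$-inputs this is exactly the known $\WF$-equivariance of $\{m_n\}$. For the terms built from $\bold{P}^\Gamma_{n,F}$ and $\bold{P}^\Gamma_{n,\widehat F^j}$, I would invoke the branch-cut argument already sketched just before the proposition: a popsicle disc with $|F|$ sprinkles carrying $\Gamma_W$ lifts to $M_W$ after making $|F|$ disjoint branch cuts from each sprinkle to the boundary arc $z_0z_1$, so that the lift of the Lagrangian boundary along that arc acquires exactly $|F|$ monodromy jumps by $g_W$. Counting eigenvalue contributions along the boundary exactly as in the sprinkle-free case but with these extra jumps yields
\[
\bold{P}^\Gamma_{n,F}(\chi\cdot a_1,\ldots,\chi\cdot b_{i_1},\ldots,\chi\cdot a_n) = \chi(g_W^{-|F|})\;\chi\cdot \bold{P}^\Gamma_{n,F}(a_1,\ldots,b_{i_1},\ldots,a_n),
\]
and likewise for $\widehat F^j$ with $|\widehat F^j| = |F|-1$. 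Comparing with the definition $M_n^a = (-1)^{\star}\bold{P}^\Gamma_{n,F}$ (which outputs a pure $CW$-element and has $|F|$ many $\epsilon$-inputs, so needs the scalar $\chi(g_W^{-|F|})$ to match the LHS) and $M_n^b = \sum_j \pm\,\bold{P}^\Gamma_{n,\widehat F^j}$ (whose output carries one $\epsilon$, contributing $\chi(g_W^{-1})$ on top of the $\chi(g_W^{-(|F|-1)})$ from the curve count, again totaling $\chi(g_W^{-|F|})$) shows both sides of the equivariance identity carry the same scalar, as desired.

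I expect the main obstacle to be the bookkeeping in the branch-cut/eigenvalue count: one must check that the $g_W$-jump picked up at each sprinkle is precisely $g_W$ (the monodromy element of \eqref{eq:gm}) and not some other power or its inverse, and that the signs $\star^a_{n,F}$, $\star^b_{n,\widehat F^j}$, $\star_j$ from Definition \ref{defn:newa} are genuinely $\chi$-independent (they are, since $\chi$ acts by scalars and does not change degrees), so that the scalar factors cleanly out of the sum over admissible cuts and over $j$. Once the count $\bold{P}^\Gamma_{n,F}(\chi\cdot\vec w) = \chi(g_W^{-|F|})\chi\cdot\bold{P}^\Gamma_{n,F}(\vec w)$ is established, the rest is the routine verification above, and the compatibility with $\{m_n\}$ for the sprinkle-free strata is inherited verbatim from \cite{Se2}. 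Finally I would remark that this $G_W^*$-action is what makes the semi-direct product $\cF(W,G) := \cF(W,G_W) \rtimes G^T$ of Section \ref{sec:ggw} well-defined for any subgroup $G < G_W$.
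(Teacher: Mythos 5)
Your proposal takes essentially the same route as the paper: equivariance of the sprinkle-free operations via Seidel's lifting argument, the branch-cut lift for popsicle discs with $\Gamma_W$-insertions producing a defect involving $\chi(g_W^{\pm|F|})$, and the twist $\chi\cdot\epsilon=\chi(g_W^{-1})\epsilon$ to absorb it; the paper's own justification is exactly this paragraph-long argument (and is equally brief about the final check).

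One concrete caveat: your scalar matching at the end does not close as written. With $\chi\cdot(\epsilon b_i)=\chi(g_W^{-1})\,\epsilon\,(\chi\cdot b_i)$, the $\epsilon$-inputs of $M_n(\chi\cdot c_1,\ldots,\chi\cdot c_n)$ already contribute $\chi(g_W^{-|F|})$; if on top of that the curve count satisfies $\mathbf{P}^{\Gamma_W}_{n,F}(\chi\cdot\vec x)=\chi(g_W^{-|F|})\,\chi\cdot\mathbf{P}^{\Gamma_W}_{n,F}(\vec x)$ as you display, then the $a$-component of the left-hand side becomes $\chi(g_W^{-2|F|})\,\chi\cdot M_n^a$, while the right-hand side is $\chi\cdot M_n^a$: the two factors compound instead of cancelling. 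What the compensation with $\chi\cdot\epsilon=\chi(g_W^{-1})\epsilon$ actually needs is the opposite exponent, namely that the output chord of an $|F'|$-sprinkle popsicle lies in the $G_W$-summand shifted by $g_W^{|F'|}$ relative to the product of the input summands, so that its $\chi$-eigenvalue gains an extra $\chi(g_W^{-|F'|})$:
\[
\chi\cdot\mathbf{P}^{\Gamma_W}_{n,F'}(\vec x)=\chi(g_W^{-|F'|})\,\mathbf{P}^{\Gamma_W}_{n,F'}(\chi\cdot\vec x),
\qquad\text{equivalently}\qquad
\mathbf{P}^{\Gamma_W}_{n,F'}(\chi\cdot\vec x)=\chi(g_W^{|F'|})\,\chi\cdot\mathbf{P}^{\Gamma_W}_{n,F'}(\vec x),
\]
which is presumably the intended reading of the paper's defect formula. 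Applying this with $|F'|=|F|$ for $M_n^a$ and $|F'|=|F|-1$ for $M_n^b$, both components of $M_n(\chi\cdot\vec c)=\chi\cdot M_n(\vec c)$ balance exactly against the $\chi(g_W^{-|F|})$ from the $\epsilon$-inputs and the single $\chi(g_W^{-1})$ on the output $\epsilon$. This is precisely the jump-direction ambiguity you flagged yourself; once the branch-cut count is pinned down in this form, the rest of your argument coincides with the paper's.
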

For any subgroup $G <G_W$, we define its dual group  $G^T = \Hom(G_W/G, \C^*)$ following Berglund-Henningson \cite{BH95}.
We are ready to define a Fukaya category of the Landau-Ginzburg orbifold $(W,G)$.
\begin{defn}\label{defn:gf}
 We define  a Fukaya $\AI$-category for $(W,G)$ by
$$\mathcal{F}(W,G) :=  \mathcal{C}_{\Gamma_W} \rtimes G^T.$$
Here, the quotient map $G_W \to G_W/G$ induces the map $G^T \to G_W^*$ and hence a $G^T$-action on $\mathcal{C}_{\Gamma_W}$.
\end{defn}

\section{Twisted Reeb orbits in the Milnor fiber quotients and their indices}\label{sec: index computation}
We give a proof of Theorem \ref{thm:fulg} in this section and the next, generalizing the construction of Section \ref{sec:qc1}.
The main scheme is the same, but our orbifold $[M_W/G_W]$ does not satisfy Assumption \ref{as:main}, and hence we have the following issues.
	\begin{enumerate}
	\item $[M_W/G_W]$ have a nontrivial $c_1$.
	\item The action of Reeb flow $\mathcal R_t$ is not free. We need a classification of Morse-Bott components. 
	\item Inequality relating a period and CZ index is required. 
	\item We need to define $\Gamma_W$ and show that it is closed in a suitable sense. 
	\end{enumerate}
We will address each of these items, and then Theorem \ref{thm:fulg} should follow as in construction of Section \ref{sec:qc1}.

The first item can be overcome by introducing the fractional grading as we explained. Let us discuss the remaining issues one by one.
\subsection{Classifying twisted Reeb orbits} 
The second issue is that there can be several Morse-Bott components of Hamiltonian orbits in $[M_W/G_W]$. 
We investigate such orbits and their fractional gradings in this subsection.


In the construction of $\mathcal C_{\Gamma_W}$, we only use the principal orbit $\Gamma_W$. Even though we do not use other Reeb orbits as interior insertions, they might appear in the compactification of popsicles. It is well-known that a perturbed $J$-holomorphic cylinder with bounded energy in the smooth locus of the orbifold can limit to a periodic orbit in the orbifold locus.  
And the purpose of the study of these non-principal orbits is to rule out their appearances in the compactification of our popsicles.

A loop in the orbifold $[M_W/G_W]$ is given by a $G_W-$twisted loop in $M_W$. 
\begin{defn}
For an element $g \in G_W$, a {\em a $g$-twisted loop} $(g, \gamma)$ of period $l$ is a path $\gamma : [0,l] \to M_W$ such that $g\cdot \gamma(l) = \gamma(0)$. 
We have a $G_W$-action on the space of twisted loops given by 
$$k\cdot(g, \gamma) = (kgk^{-1}, k\gamma),\;\; k\in G_W.$$
\end{defn}

Now, let us restrict ourselves to twisted Reeb orbits on the link of the singularity $W$.
Let us first introduce some notations. 
\begin{defn}
	For each element $g  \in G_W$, we set
	\[\vec{\theta}_g = \mathrm{diag}(\theta_{g,1}, \ldots, \theta_{g,n}), \;\;  \theta_{g, i} \in [0, 1)\;\;  \forall i.\]
	so that  we have 
	\[g=\mathrm{exp}(2\pi i \vec{\theta}_g):= \big(\mathrm{exp}(2\pi i \theta_{g,1}), \ldots, \mathrm{exp}(2\pi i \theta_{g,n})\big).\]
	In particular, for a given system of weights $(w_1,\ldots,w_n;h)$ of $W$, we have
	\[\vec{\theta}_J = (w_1/h, \ldots, w_n/h), \hspace{0.2cm} J = \mathrm{exp}(2\pi i \vec{\theta}_J) \in G_W. \]
	Notice that Reeb flow $\mathcal R_t$ is given by $\mathrm{exp}(2\pi i t \vec{\theta}_J)$ and $J$ is just $\mathcal R_1$.
\end{defn}

\begin{defn}
We denote by $(\C^n)^{g, l}$ the space of $g$-twisted Reeb orbits of period $l$ in $\C^n$. We also denote by $\Sigma_{g, l}$ the space of $g$-twisted Reeb orbits in the link of $W$:  
	\[\Sigma_{g,l} = (\C^n)^{g, l} \cap L_W.\]
\end{defn}

Notice that $p \in (\C^n)^{g, l}$ precisely when $ g \circ \mathcal R_l(p) =p$, and hence  $(\C^n)^{g, l}$ is a fixed locus of $ g \circ \mathcal R_l$-action on $\C^n$.  Equivalently, we have
	\begin{equation}\label{glz}
	(\C^n)^{g, l} :=\left\{ (z_1, \ldots, z_n) \in \C^n : \hspace{0.1cm} \textrm{if $z_i \neq0 $, then} \hspace{0.1cm} \theta_{g, i} + l \theta_{J, i}=0 \hspace{0.1cm} \mathrm{mod} \Z  \right\}
	\end{equation}
A set $\left\{\Sigma_{g, l}\right\}_{g\in G_W, l\in \R_{>0}}$ classifies all possible Morse-Bott components of twisted Reeb orbits.
\begin{lemma}
We have
	\[\Sigma_{g, l} \simeq \Sigma_{gJ^{-1}, l+1}, \;\; \Sigma_{J^{-1}, 1} \simeq L_W.\]
\end{lemma}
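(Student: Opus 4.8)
The plan is to reduce both assertions to the single elementary fact that the Reeb flow on the link is the diagonal action $\mathcal R_t = \exp(2\pi i t\,\vec\theta_J)$ and that $J = \mathcal R_1$, so that $\mathcal R_{l+1} = \mathcal R_l\circ\mathcal R_1 = \mathcal R_l\circ J$. First I would recall from \eqref{glz} that a point $p=(z_1,\ldots,z_n)$ lies in $(\C^n)^{g,l}$ precisely when $\theta_{g,i} + l\,\theta_{J,i}\in\Z$ for every $i$ with $z_i\neq 0$, equivalently when the diagonal map $g\circ\mathcal R_l$ fixes $p$. Since $g$, $J$ and $\mathcal R_l$ are commuting diagonal matrices, $gJ^{-1}\circ\mathcal R_{l+1} = gJ^{-1}\circ J\circ\mathcal R_l = g\circ\mathcal R_l$ as maps $\C^n\to\C^n$, so their fixed loci coincide and $(\C^n)^{g,l} = (\C^n)^{gJ^{-1},l+1}$; intersecting with $L_W$ gives $\Sigma_{g,l} = \Sigma_{gJ^{-1},l+1}$. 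At the level of $\theta$-vectors this is the observation that $\theta_{gJ^{-1},i}\equiv\theta_{g,i}-\theta_{J,i}\pmod{\Z}$, hence $\theta_{gJ^{-1},i}+(l+1)\theta_{J,i}\equiv\theta_{g,i}+l\,\theta_{J,i}\pmod{\Z}$, so \eqref{glz} is insensitive to the shift $(g,l)\mapsto(gJ^{-1},l+1)$. Concretely, a $g$-twisted Reeb orbit $\gamma\colon[0,l]\to L_W$ with $g\cdot\gamma(l)=\gamma(0)$ is sent to the $gJ^{-1}$-twisted orbit $\tilde\gamma\colon[0,l+1]\to L_W$, $\tilde\gamma(t)=\mathcal R_t(\gamma(0))$, obtained by running the Reeb flow one extra unit of time; both families are parametrized by the same set of initial points, so the identification is a homeomorphism.

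For the second assertion I would specialize to $g=J^{-1}$, $l=1$: then $J^{-1}\circ\mathcal R_1 = J^{-1}\circ J = \mathrm{id}$, which fixes every point, so $(\C^n)^{J^{-1},1}=\C^n$ and therefore $\Sigma_{J^{-1},1}=\C^n\cap L_W = L_W$. In other words, through every point $p$ of the link there passes a $J^{-1}$-twisted Reeb orbit of period one, namely the trajectory $t\mapsto\mathcal R_t(p)$, $t\in[0,1]$, which closes up in the quotient orbifold because $J^{-1}\mathcal R_1(p)=p$; this is exactly the family of principal orbits underlying $\Gamma_W$, as recorded after \eqref{eq:gm} and in Corollary \ref{cor:gin}.

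Each identification above is either the identity on initial points or the reparametrization $[0,l]\rightsquigarrow[0,l+1]$ extending a Reeb trajectory, so there is no analytic content; I do not anticipate a genuine obstacle. The only point worth spelling out is the passage from the multiplicative description $gJ^{-1}\in G_W$ to the additive description $\vec\theta_{gJ^{-1}}$ modulo $\Z$ (since $\theta_{\bullet,i}$ is normalized to $[0,1)$), together with the remark that $\Sigma_{g,l}$ is genuinely determined by, and homeomorphic to, the fixed locus $(\C^n)^{g,l}\cap L_W$ rather than a quotient of it — which is immediate from the definition of a twisted Reeb orbit as $t\mapsto\mathcal R_t(p)$.
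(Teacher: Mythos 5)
Your argument is correct and is exactly the paper's reasoning: the authors dispose of the lemma with the single remark that it ``follows immediately from $\mathcal R_1 = J$,'' which is precisely the identity $gJ^{-1}\circ\mathcal R_{l+1} = g\circ\mathcal R_l$ (and $J^{-1}\circ\mathcal R_1=\mathrm{id}$) that you spell out via \eqref{glz}. Your write-up simply makes the fixed-locus computation explicit; no further comment is needed.
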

The lemma follows immediately from $\mathcal R_1 = J$. One can also easily check that there are finitely many diffeomorphism types of $\Sigma_{g, l}$.

\begin{example}(Principal component) 
	Since $\mathcal R_1 = J$, every point in $L_W$ arises $J^{-1}$-twisted Reeb orbit of period $1$, and hence, $\Sigma_{J^{-1}, 1} \simeq L_W$. 
	This gives the principal component of the $J^{-1}$-twisted Reeb orbits in $[M_W/G_W]$ of period 1. 
	The orbit $\Gamma_W$, a fundamental class of of this component, is called principal orbits. 
	We have $J^{-l}$-twisted Reeb orbits of period $l$ in $[M_W/G_W]$ for any $l\in \mathbb N$ in a similar manner.  
\end{example}

We may ask whether the period of a twisted Reeb orbit is always an integer as we saw in the previous section.
The answer is no in general, and this complicates the story quite a bit.
Namely, we can have Reeb orbits of shorter periods (starting at $p$) if some of the coordinates (of $p$) vanish.
Here is one example of a fractional period of $l = \frac{1}{3}$.

\begin{example}(A component with a fractional period)
\label{ex1}
 Consider $W= x^2y +y^4$. Its weight  is $(3,2;8)$, and  $\vec{\theta}_J = (\frac{3}{8}, \frac{1}{4})$. 
		For $(g, l)= (J^{-3}, \frac{1}{3})$, we have  $(\C^2)^{g, l} = \C_x \times \{0\}$. Indeed, 
		 $$J^{-3} \circ \mathcal R_{\frac{1}{3}}=(e^{\frac{7\pi i}{4}}, e^{\frac{\pi i}{2}}) \circ (e^{\frac{\pi i}{4}}, e^{\frac{\pi i}{6}})=(1, e^{\frac{2\pi i}{3}}).$$ 
Note that $\C \times \{0\}$ does not arise as a fixed locus for any $g \in G_W$, and moreover
$W \big\vert_{\C \times \{0\}} = 0$. 
\end{example}

Let us comment on the relationship between twisted sectors and twisted Reeb orbits, although we do not use it in this paper.
As $G_W$ is abelian, a twisted sector of the quotient orbifold $[M_W/G_W]$ for an element $k \in G_W$ is
given by $[(M_W)^k/G_W]$ where $(M_W)^k$ is the $k$-fixed locus. On the other hand, as $k$ acts linearly on $\C^n$
we can consider the restriction of $W$ on the $k$-fixed subspace $\textrm{Fix}(k) \subset \C^n$ to obtain $W^k : \textrm{Fix}(k)\to \C$.
One can easily check that $(M_W)^k$  can be identified with $M_{W^k}$.

 As $\mathcal R_t$ commutes with $G_W$-action,  the canonical Reeb flow on the link $L_{W^k}$
is the restriction of that of $L_W$. In particular, if a point of the Reeb flow $\gamma$ is fixed by $k$, then the whole trajectory is fixed by $k$.
A twisted Reeb orbit $(g,\gamma)$ gives an element of the chain complex $SC^\bullet([M_W/G_W])$,
but if (a point of ) it is fixed by some $k$, then it also gives rise to an element of $SC^\bullet([M_{W^k}/G_W])$ as well.


	\begin{remark}
	Recall that $W$ raises a canonical exact sequence of symmetry groups
	\[
	\begin{tikzcd}
	0 \arrow[r]& G_W \arrow[r] & \widetilde{G}_W \arrow[r, "\chi"] &\C^*_R \arrow[r] & 0 
	\end{tikzcd}
	\]
	and $\widetilde{G}_W$ acts naturally on $W^{-1}(0)$. Notice that Morse-Bott components $\{\Sigma_{g, l}\}$ corresponds to twisted sectors of $[(W^{-1}(0)\setminus \{0\})/ \widetilde{G}_W]$. 
	\end{remark}

\subsubsection{Conley-Zehnder indices of twisted orbits} 
We have identified generators of the  chain complex $SC^*([M_W/ G_W])$.
We compute their Robbin-Salamon indices.
\begin{remark}
We do not know how to define the differential and products in our setting of perturbation scheme (which follows Abouzaid-Seidel), but we do not need them in this paper (except $m_1$ of $\Gamma_W$).
\end{remark}

We start with the following elementary observations.
\begin{lemma}\label{lem:m2}
Consider a weighted homogeneous polynomial $W$, and its maximal diagonal symmetry group $G_W$.
For each $g \in G_W$, we define $F_g \subset \{1,\ldots,n\}$ so that $$  i \in F_g  \;\textrm{if and only if } \;\; g \cdot x_i = x_i.$$
We write 
$$W = W_{F_g} + W_{\textrm{move},g}$$
where $W_{F_g}$ is the sum of monomials of $W$ that consists of variables whoses indices are in $F_g$,
and $W_{\textrm{move},g}$ is the sum of remaining monomials.
Let $\mathbf{m}$ be the ideal generated by $x_i$ with $i \notin F_g$.
Then we have 
$$W_{\textrm{move},g}   \in \mathbf{m}^2$$
\end{lemma}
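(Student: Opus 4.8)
The plan is to unwind the definitions and show that every monomial of $W$ which is not entirely built from the $F_g$-variables must contain at least two factors drawn from the ideal $\mathbf{m}$. First I would recall the non-degeneracy hypothesis on $W$: since $W$ is non-degenerate, it contains no monomial of the form $x_i x_j$ with $i \neq j$, so each monomial of $W$ is either a pure power $x_i^{a}$ with $a \geq 2$, or a genuinely mixed monomial $\prod_j x_j^{a_{ij}}$ with total degree $\geq 2$ in which each individual variable that appears does so (it suffices to observe) — but more importantly, no monomial is linear and no monomial is a product of exactly two \emph{distinct} variables. Actually the cleanest route: every monomial of a non-degenerate $W$ has the property that whenever a variable $x_k$ appears in it, the monomial is divisible by $x_k^2$ \emph{or} is divisible by $x_k$ and by some other variable to a positive power; in all cases a monomial involving $x_k$ lies in the ideal generated by $x_k$ times the maximal ideal, unless the monomial is $x_k^2$ itself — which is still in $\mathbf{m}^2$ if $k \notin F_g$. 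So the only thing I really need is: a monomial of $W$ that involves at least one variable with index \emph{not} in $F_g$ must lie in $\mathbf{m}^2$.

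Next I would argue this. Fix a monomial $\mu = \prod_{j=1}^n x_j^{a_j}$ appearing in $W$, and suppose $\mu \notin W_{F_g}$, i.e. there is some index $k \notin F_g$ with $a_k \geq 1$. I must show $\mu \in \mathbf{m}^2$, i.e. the total exponent of the $\mathbf{m}$-variables, namely $\sum_{i \notin F_g} a_i$, is at least $2$. If $a_k \geq 2$ this is immediate. If $a_k = 1$, then because $\mu$ is not a pure power (it involves $x_k$ to the first power, so if it were a pure power it would be $x_k$, a linear monomial — forbidden) $\mu$ must involve some other variable $x_m$ with $m \neq k$ and $a_m \geq 1$. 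Now I claim $m \notin F_g$ as well, or else — wait, that is not automatic. Instead I should use that $g \in G_W$: since $g$ preserves $W$, it scales the monomial $\mu$ trivially, i.e. $\prod_j (\theta_{g,j})\cdot a_j \equiv 0 \bmod \mathbb{Z}$ in the additive notation $g = \exp(2\pi i \vec\theta_g)$. If $\mu = x_k x_m$ with $k \neq m$ then this is a degree-two monomial with two distinct variables, again forbidden by non-degeneracy; so in the case $a_k = 1$ we actually have that $\mu$ has total degree $\geq 3$ and hence, even if only $x_k$ among its variables lies outside $F_g$, we still need $\sum_{i\notin F_g} a_i \geq 2$ — which does \emph{not} yet follow.

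So the real content is the following refinement, which is where the main obstacle lies: I must rule out a monomial $\mu$ of $W$ with $a_k = 1$ for exactly one index $k \notin F_g$ and all other variables of $\mu$ lying in $F_g$. Suppose such $\mu = x_k \cdot \nu$ with $\nu$ a monomial in the $F_g$-variables only. Since $g$ fixes every $F_g$-variable, $g$ acts on $\mu$ by the single factor $\exp(2\pi i \theta_{g,k})$; invariance of $W$ under $g$ forces $\theta_{g,k} \equiv 0 \bmod \mathbb{Z}$, i.e. $\theta_{g,k} = 0$, i.e. $k \in F_g$ — contradiction. This is the key step, and the argument is short once set up correctly: it is exactly the statement that if the monomial's $\mathbf{m}$-part has total degree $1$ then that single $\mathbf{m}$-variable would be forced by $g$-invariance to actually be an $F_g$-variable. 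I would then conclude: every monomial of $W_{\textrm{move},g} = W - W_{F_g}$ has $\mathbf{m}$-degree $\geq 2$, hence $W_{\textrm{move},g} \in \mathbf{m}^2$, as claimed. I expect the write-up to be no more than half a page; the only subtlety to state carefully is the dichotomy "a monomial of $W$ is either a pure power of total degree $\geq 2$ or involves $\geq 2$ distinct variables with total degree $\geq 3$," which is precisely the non-degeneracy condition (no linear monomials because of isolated singularity / the weight bound $w_i/h \leq 1/2$, and no $x_ix_j$ monomials by hypothesis), together with the $g$-invariance computation above.
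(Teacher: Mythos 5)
Your final argument is correct and is essentially the paper's own proof: since $g$ acts diagonally, each monomial of $W$ is individually $g$-invariant, and a monomial of $W_{\textrm{move},g}$ whose total degree in the $\mathbf{m}$-variables were only one would be scaled by the nontrivial factor $e^{2\pi i \theta_{g,k}}$ for its single moving variable $x_k$, a contradiction. The detour through the non-degeneracy condition (no $x_ix_j$ monomials, no linear terms) is unnecessary --- the $g$-invariance computation alone settles every case --- but it does not affect correctness.
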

\begin{proof}
Note that $g$-action leaves $W_{F_g}$ to be invariant. Since $g$-action leaves $W$ to be invariant, the same holds for
each monomial of $W_{\textrm{move},g}$.   Note that $W_{\textrm{move},g}  \in \mathbf{m}$. If a monomial of it does not lie in $ \mathbf{m}^2$,
then it cannot be invariant under $g$-action.
\end{proof}

\begin{lemma}\label{lem:gindex}
We have the following identities of partial derivatives.
\begin{enumerate}
\item For $g \in G_W$, we have
$$ \partial_iW(x) = \mathrm{exp}( 2\pi i  \theta_{g,i}) \partial_iW(gx) $$
\item We have 
\begin{eqnarray}
W(R_t(x)) &=& \mathrm{exp}(2\pi i t) W(x) \\
\partial_iW(x) &=&  \mathrm{exp}\big( 2\pi i l(-1 + \theta_{J,i})\big) \partial_iW(R_l(x))
\end{eqnarray}
\end{enumerate}
\end{lemma}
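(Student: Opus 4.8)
The plan is to derive both identities directly from the chain rule, using only that elements of $G_W$ and the Reeb flow $\mathcal{R}_t$ act diagonally on $\C^n$, together with the two defining invariance properties of $W$: invariance under $G_W$ and weighted homogeneity. Since each of these transformations acts one coordinate at a time, the computation is short and the roles of the exponential factors are easy to keep apart.

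For part (1), I would begin with the identity $W(gx)=W(x)$, which holds by the definition of $g\in G_W$. Writing $g=\mathrm{exp}(2\pi i\vec{\theta}_g)$, the $j$-th entry of $gx$ is $\mathrm{exp}(2\pi i\theta_{g,j})x_j$, so among the coordinates of $gx$ only the $i$-th one depends on $x_i$. Differentiating $W(gx)=W(x)$ with respect to $x_i$ and applying the chain rule then gives $\mathrm{exp}(2\pi i\theta_{g,i})\,(\partial_iW)(gx)=(\partial_iW)(x)$, which is exactly the asserted formula.

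For part (2), the first identity is weighted homogeneity $W(\lambda^{w_1}z_1,\dots,\lambda^{w_n}z_n)=\lambda^hW(z)$ evaluated at $\lambda=\mathrm{exp}(2\pi it/h)$: one has $\lambda^{w_k}=\mathrm{exp}(2\pi it\,w_k/h)=\mathrm{exp}(2\pi it\,\theta_{J,k})$, so $(\lambda^{w_1}x_1,\dots,\lambda^{w_n}x_n)=\mathcal{R}_t(x)$ and $\lambda^h=\mathrm{exp}(2\pi it)$, whence $W(\mathcal{R}_t(x))=\mathrm{exp}(2\pi it)\,W(x)$. Differentiating this relation in $x_i$, the $i$-th coordinate of $\mathcal{R}_t(x)=\mathrm{exp}(2\pi it\vec{\theta}_J)x$ contributes a factor $\mathrm{exp}(2\pi it\,\theta_{J,i})$ while the other coordinates do not depend on $x_i$, so $\mathrm{exp}(2\pi it\,\theta_{J,i})\,(\partial_iW)(\mathcal{R}_t(x))=\mathrm{exp}(2\pi it)\,(\partial_iW)(x)$; solving for $(\partial_iW)(x)$ and setting $t=l$ yields $(\partial_iW)(x)=\mathrm{exp}\!\big(2\pi il(-1+\theta_{J,i})\big)\,(\partial_iW)(\mathcal{R}_l(x))$.

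There is no substantial obstacle here: the entire content is the coordinatewise chain rule, and the only point requiring attention is bookkeeping of which exponential factor arises from differentiating which coordinate. As a consistency check, specializing (1) to $g=J$ reproduces the $l=1$ case of (2), since $\mathcal{R}_1=J$ and $\mathrm{exp}(-2\pi i)=1$.
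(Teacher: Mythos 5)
Your proof is correct: differentiating the invariance $W(gx)=W(x)$ and the weighted-homogeneity identity $W(\mathcal{R}_t(x))=\mathrm{exp}(2\pi i t)W(x)$ coordinatewise via the chain rule gives exactly the stated factors, and your specialization check $g=J$, $l=1$ is consistent. The paper states this lemma as an elementary observation without proof, and your chain-rule computation is precisely the intended argument.
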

 
To compute the Robbin-Salamon index of a twisted Reeb orbit $(g,\gamma)$, we first find a good frame for the tangent bundle of the Milnor fiber $M_W$ with respect to $g$-action.

\begin{defn} 
We define another index set $I_{g,l} \subset  \{1, \ldots, n\}$
$$I_{g,l} =\left\{i \colon \partial_iW\vert_{(\C)^{g, l}} = 0 \right\}$$
We will simply write it as $I$ sometimes.
\end{defn}
\begin{prop}\label{prop:frame}
Let $\gamma$ be a $g$-twisted loop of period $l$.
The following frame (for a fixed $j_0 \in I^c$) trivializes $T(M_W)|_{\gamma}$: 
 	\[\left\{ e^{-2\pi i \theta_{g, i}t/l} e_i \colon i\in I \right\} \cup \left\{ \frac{e_j}{\partial_jW}-\frac{e_{j_0}}{\partial_{j_0} W} \colon j  \in I^c \right \}.\]
	 \end{prop}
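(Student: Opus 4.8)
\textbf{Proof plan for Proposition \ref{prop:frame}.}

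The plan is to verify directly that the $n$ proposed sections span $T_\gamma M_W$ fiberwise and that each one is a genuine (continuous, hence smooth) section of the tangent bundle of the Milnor fiber along the twisted loop $\gamma$. Recall that the Milnor fiber sits inside $\C^n$ as $W^{-1}(1)$, so at a point $z$ the tangent space $T_z M_W$ is the kernel of $dW_z = \sum_i \partial_i W(z)\, dz_i$. The two families of vectors are designed precisely with this in mind: for $i \in I$ one has $\partial_i W$ vanishing identically on $(\C^n)^{g,l}$, so $e_i$ itself is tangent to $M_W$ along $\gamma$ (since $\gamma$ takes values in $(\C^n)^{g,l}\cap L_W$); for $j \in I^c$, the combination $\frac{e_j}{\partial_j W} - \frac{e_{j_0}}{\partial_{j_0} W}$ is tangent because $dW$ annihilates it: $\partial_j W \cdot \frac{1}{\partial_j W} - \partial_{j_0} W \cdot \frac{1}{\partial_{j_0} W} = 0$. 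So the first point to check is that all these vectors lie in $\ker dW$ along $\gamma$ — immediate from the definition of $I$ — and the denominators $\partial_j W$, $\partial_{j_0} W$ for $j, j_0 \in I^c$ are nowhere zero on $\gamma$, again by definition of $I$.

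Next I would check that these $n$ vectors are pointwise linearly independent, hence a frame. The vectors $\{e_i : i \in I\}$ together with $\{e_j/\partial_j W - e_{j_0}/\partial_{j_0} W : j \in I^c, j\neq j_0\}$ form a set of $|I| + (|I^c|-1) = n-1$ vectors, and one checks independence by looking at coordinates: the $e_i$ ($i\in I$) contribute the coordinate directions indexed by $I$, while the remaining $|I^c|-1$ vectors are supported on the $I^c$ coordinates and are visibly independent there (their "difference" structure with a common $-e_{j_0}/\partial_{j_0}W$ term is the standard basis for a hyperplane). This gives $n-1$ independent tangent vectors, which is exactly $\dim_\C M_W = n-1$, so they form a frame of $T M_W|_\gamma$. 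The twisting factor $e^{-2\pi i \theta_{g,i} t/l}$ multiplying $e_i$ is a nowhere-vanishing scalar, so it does not affect the spanning property — it is there only to make the section well-defined as a loop, i.e. $g$-equivariant.

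The point that actually requires the lemmas, and which I expect to be the main (if modest) obstacle, is checking \emph{well-definedness of the frame as a loop in the quotient orbifold}, i.e. that each section is invariant under the identification $\gamma(l) \sim g\cdot\gamma(0)$ built into the notion of a $g$-twisted orbit. For the $i\in I$ part, under the $g$-action $e_i \mapsto e^{2\pi i\theta_{g,i}} e_i$, so one multiplies by $e^{-2\pi i\theta_{g,i}t/l}$ to cancel exactly this factor as $t$ runs from $0$ to $l$; this is a direct computation. For the $j\in I^c$ part, I would invoke Lemma \ref{lem:gindex}(1), $\partial_j W(x) = e^{2\pi i\theta_{g,j}}\partial_j W(gx)$, to see how $e_j/\partial_j W$ transforms under the combined $g$-action and Reeb flow: the factor $e^{2\pi i\theta_{g,j}}$ from $e_j \mapsto e^{2\pi i\theta_{g,j}}e_j$ is cancelled by the factor from $1/\partial_j W$, and similarly using Lemma \ref{lem:gindex}(2) for the Reeb-flow part — so $e_j/\partial_j W$ is \emph{genuinely invariant}, no twisting prefactor needed, and likewise for $e_{j_0}/\partial_{j_0}W$, hence for their difference. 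I would also note that the requirement $i\in I$ is exactly what is needed for $e_i$ to remain tangent under the flow, via Lemma \ref{lem:gindex}(2): if $\partial_i W$ does not vanish on $(\C^n)^{g,l}$ then $e_i$ fails to be tangent, which is why those directions are handled by the difference construction instead. Assembling these verifications gives the claimed trivialization, setting up the Robbin–Salamon index computation that follows.
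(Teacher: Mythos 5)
Your proposal is correct and follows essentially the same route as the paper's proof: show the vectors lie in $\ker dW$ along $\gamma$ (using that $\partial_i W$ vanishes on $(\C^n)^{g,l}$ for $i\in I$ and that the difference vectors are annihilated by $dW$), then verify compatibility with the identification $g\cdot\gamma(l)=\gamma(0)$, which is immediate for the twisted $e_i$'s and follows from Lemma \ref{lem:gindex}(1) for the $e_j/\partial_jW$ terms, exactly as in the paper. Your added dimension/independence count ($|I|+|I^c|-1=n-1$) is a harmless and correct supplement that the paper leaves implicit.
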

\begin{proof}
From the definition of $I$, we have
$$dW\vert_{(\C^n)^{g, l}} = \sum_{i \in I^c} \partial_iW dz_i.$$
Thus it is clear that the above vectors lie in the Kernel of $dW$.
It remains to check that it is compatible with $g$-twisting $g \cdot \gamma(l) = \gamma(0)$.
It is clear for the basis vectors of the first type.


 For the second type, from the above lemma, we have
\[\left(  \partial_jW (\gamma(l)) \right) = e^{2\pi i \theta_{g,j}}  \left(  \partial_jW (g \gamma(l)) \right) =  e^{2\pi i \theta_{g,j}}  \left(  \partial_jW (\gamma(0)) \right)\]
Therefore, we obtain the claim as follows.
\[ e^{2\pi i \theta_{g,j}} \cdot \frac{e_j}{\partial_jW (\gamma(l)) } = \frac{e^{2\pi i \theta_{g,j}} e_j}{ e^{2\pi i \theta_{g,j}} \partial_jW (\gamma(0))} = \frac{e_j}{\partial_jW (\gamma(0)) }\]
\end{proof}

Now, we measure how much the frame rotates with respect to the canonical volume form 
 $\Omega_{M_W}$.
\begin{lemma}\label{lem:rf}
 The rotation number (along $\gamma$) of the frame  in Proposition \ref{prop:frame} w.r.t.  $\Omega_{M_W}$ is
$$ \left(-\sum _{i \in I} \theta_{g, i} + \sum_{j \in I^c} (l\theta_{J, j}-l)\right) $$
\end{lemma}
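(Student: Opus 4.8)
The plan is to compute the rotation number of each basis vector of the frame in Proposition \ref{prop:frame} separately with respect to $\Omega_{M_W}$, and then sum the contributions. Recall that the rotation number of a frame with respect to a volume form is the winding number of $\Omega_{M_W}$ evaluated on the (ordered wedge of the) frame vectors as $t$ runs over $[0,l]$, divided by the full period, i.e. it records the total angular change of $\Omega_{M_W}(e_1(t)\wedge\cdots\wedge e_{n-1}(t))\in\C^*$ divided by $2\pi$. Since $\Omega_{M_W}$ is multilinear, this winding number is the sum of the winding numbers coming from each individual frame vector, so it suffices to track, for each $i\in I$ and each $j\in I^c$, how the corresponding vector rotates as a section of the trivialization coming from $\Omega_{M_W}$.

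First I would handle the vectors of the first type, $e^{-2\pi i\theta_{g,i}t/l}e_i$ for $i\in I$. As $t$ runs over $[0,l]$, the scalar factor $e^{-2\pi i\theta_{g,i}t/l}$ makes a total phase change of $-2\pi\theta_{g,i}$; the vector $e_i$ itself is constant along $\gamma$ (the coordinate vector fields are parallel in $\C^n$), so this contributes rotation number $-\theta_{g,i}$. Summing over $i\in I$ gives $-\sum_{i\in I}\theta_{g,i}$, which is the first term in the claimed formula. Here one should note that the twisting condition $g\cdot\gamma(l)=\gamma(0)$ is exactly what forces the phase $e^{-2\pi i\theta_{g,i}}$ to be the "holonomy" correction making this a genuine closed frame vector for the twisted bundle, consistent with the proof of Proposition \ref{prop:frame}.

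Next I would handle the vectors of the second type, $\dfrac{e_j}{\partial_jW}-\dfrac{e_{j_0}}{\partial_{j_0}W}$ for $j\in I^c$. The constant vectors $e_j$, $e_{j_0}$ contribute nothing, so the rotation comes entirely from the scalar coefficients $\dfrac{1}{\partial_jW(\gamma(t))}$. By Lemma \ref{lem:gindex}(2), $\partial_jW(\mathcal R_l(x))=e^{2\pi i l(1-\theta_{J,j})}\partial_jW(x)$, so along the Reeb flow $\partial_jW(\gamma(t))$ picks up total phase $2\pi l(1-\theta_{J,j})$ over $[0,l]$; hence $1/\partial_jW$ picks up phase $-2\pi l(1-\theta_{J,j})=2\pi(l\theta_{J,j}-l)$, contributing rotation number $l\theta_{J,j}-l$. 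Summing over $j\in I^c$ gives $\sum_{j\in I^c}(l\theta_{J,j}-l)$. (The terms involving $e_{j_0}/\partial_{j_0}W$ appear in every second-type vector, but since they enter $\Omega_{M_W}$ always wedged in the same slot with the rest, a standard column-operation argument shows they do not change the winding number, so one may as well compute with the $e_j/\partial_jW$ alone; alternatively one reorganizes the frame so the $j_0$-correction terms cancel when wedged together.) Adding the two contributions yields
\[
-\sum_{i\in I}\theta_{g,i}+\sum_{j\in I^c}(l\theta_{J,j}-l),
\]
as claimed.

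The main obstacle I expect is the bookkeeping in the second-type vectors: one must make precise that the appearance of the common summand $-e_{j_0}/\partial_{j_0}W$ in all of them, and the fact that $j_0\in I^c$ itself is omitted from the list while the $i\in I$ slots are filled differently, do not contaminate the winding-number count. The cleanest way to handle this is to observe that changing the frame by an invertible \emph{upper-triangular} (or more precisely, constant-determinant-up-to-the-tracked-phases) transition matrix does not alter the rotation number with respect to $\Omega_{M_W}$, reducing the computation to the diagonal frame $\{e_i\}_{i\in I}\cup\{e_j/\partial_jW\}_{j\in I^c}$, for which the above per-vector computation is immediate. Verifying that the relevant transition matrix indeed has the required triviality property (so that $c_1$-type corrections vanish) is the one genuinely careful point; everything else is a direct application of Lemma \ref{lem:gindex}.
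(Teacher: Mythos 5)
Your route is in spirit the same as the paper's (evaluate the frame against $\Omega_{M_W}$ and read off the phases from Lemma \ref{lem:gindex}), and the phase arithmetic for both types of vectors is correct, but there is a genuine bookkeeping gap in how the $j_0$-term of the sum over $I^c$ arises. The frame of Proposition \ref{prop:frame} contains only $\vert I^c\vert-1$ vectors of the second type: the index $j=j_0$ gives the zero vector and must be excluded, as $T M_W$ has complex dimension $n-1$ (the proposition's indexing "$j\in I^c$" is loose on this point). A genuinely per-vector count therefore yields only $\sum_{j\in I^c\setminus\{j_0\}}(l\theta_{J,j}-l)$ and is short by one term. In the paper the missing term is supplied by the coefficient $1/\partial_{j_0}W$ appearing in the explicit trivialization $\Omega_{M_W}=\Omega^n_{\C^n}/dW=\frac{dz_1\cdots\widehat{dz_{j_0}}\cdots dz_n}{\partial_{j_0}W}$ of \eqref{eq:omegaw}, whose winding along $\gamma$ is exactly $l\theta_{J,j_0}-l$ by Lemma \ref{lem:gindex}. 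Your proposal never invokes this coefficient and instead sums over all of $I^c$ as if there were a frame vector for $j_0$; the formula comes out right only because the phantom contribution happens to equal the omitted one.

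Relatedly, your two suggested fixes for the cross-terms do not work as stated. The summand $-e_{j_0}/\partial_{j_0}W$ points in the $z_{j_0}$-direction, which is not in the span of the other frame vectors, so it cannot be removed by column operations performed within the frame; and the proposed "diagonal frame" $\{e_i\}_{i\in I}\cup\{e_j/\partial_jW\}_{j\in I^c}$ has $n$ vectors, not all tangent to $M_W$ (indeed $dW(e_j/\partial_jW)=1$), hence is not a frame of $T M_W$ at all. The correct mechanism is simpler and is what the paper uses: since the $(n-1)$-form $dz_1\wedge\cdots\wedge\widehat{dz_{j_0}}\wedge\cdots\wedge dz_n$ contains no $dz_{j_0}$, the $e_{j_0}$-components are invisible upon evaluation, the resulting determinant is diagonal with entries $e^{-2\pi i\theta_{g,i}t/l}$ for $i\in I$ and $1/\partial_jW$ for $j\in I^c\setminus\{j_0\}$, and multiplying by the coefficient $1/\partial_{j_0}W$ of the form produces the total factor $\prod_{i\in I}e^{-2\pi i\theta_{g,i}t/l}\prod_{j\in I^c}\frac{1}{\partial_jW}$, whose winding is the claimed number. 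Note also that your blanket appeal to multilinearity ("the winding is the sum of per-vector windings") is only valid after exactly this diagonalization, so it cannot be used to bypass the step it is meant to justify.
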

\begin{proof}
Given the frame,  $\partial_{z_{j_0}} W \neq 0$ for the chosen $j_0$ in $I^c$.  (If $I^c$ is empty, then choose any $j_0$ 
with such property). 
Hence we have
\begin{equation}\label{eq:omegaw}
 \Omega^n_{\C^n}/dW = \frac{ dz_1 \cdots \widehat{dz_{j_0}} \cdots dz_n}{\partial_{z_{j_0}}W}
 \end{equation}
By evaluating the basis vectors of the first type (for $i \in I$), we obtain the rotation number $- \sum _{i \in I} \theta_{g, i}$.
For the basis vectors of the second type (for $i \in I^c$), note that if we evaluate such vectors to the form $\Omega_{M_W}$, we obtain
the factor $$\prod_{j \in I^c} \frac{1}{\partial_{z_{j}}W} $$
where the contribution of $j_0 \in I^c$ come from the coefficient of  \eqref{eq:omegaw}.
From the lemma \ref{lem:gindex}, the rotation number of such a factor is 
$$\sum_{j \in I^c} l (\theta_{J,j} - 1)$$
\end{proof}

To compute the Maslov index of the linearized Reeb flow, we write the linearized Reeb flow  in terms of this frame: for $t \in [0,l]$, 
\[\left\{ e^{2\pi i ( \theta_{J, i}+\theta_{g, i}/l)t}  \big( e^{-2\pi i \theta_{g, i}t/l} e_i \big) \colon i\in I \right\} \cup \left\{ e^{2\pi i   t}\left(\frac{e_j}{\partial_jW}-\frac{e_{j_0}}{\partial_{j_0} W}\right) \colon j, j_0 \in I^c \right \}\]
\begin{lemma}\label{lem:RS}
Robbin-Salamon index of the linearized Reeb flow w.r.t. the frame in Proposition \ref{prop:frame} is
 	\begin{align}
	&\sum _{i\in I} Q(l \theta_{J, i} + \theta_{g, i}) + \;\; (\vert I^c\vert -1) Q(l), \\
	Q(s) &= \left\{
		\begin{array}{ll}
		2s & s\in  \mathbb Z\\
		2\lfloor s \rfloor +1 & s \notin \mathbb Z
		\end{array} 
		\right.
		\end{align}
 \end{lemma}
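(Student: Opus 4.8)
The plan is to read the Robbin--Salamon index off the block decomposition of the linearized Reeb flow supplied by the frame of Proposition \ref{prop:frame}, thereby reducing the whole computation to the standard index of a single complex rotation. First I would check the description of the linearized flow displayed before the statement. As $\mathcal R_t=\exp(2\pi i t\,\vec\theta_J)$ acts linearly and diagonally, $d\mathcal R_t(e_i)=e^{2\pi i t\theta_{J,i}}e_i$; writing this in terms of the time-$t$ frame vector $e^{-2\pi i\theta_{g,i}t/l}e_i$ gives the factor $e^{2\pi i(\theta_{J,i}+\theta_{g,i}/l)t}$ for $i\in I$. For $j\in I^c$, the computation of Lemma \ref{lem:gindex}(2) applied with period $t$ gives $\partial_jW(\gamma(t))=e^{2\pi i t(1-\theta_{J,j})}\partial_jW(\gamma(0))$, so $d\mathcal R_t\bigl(e_j/\partial_jW(\gamma(0))\bigr)=e^{2\pi i t}\,e_j/\partial_jW(\gamma(t))$; hence the flow acts on the $I^c$-part $\ker(dW)\cap\mathrm{span}\{e_j:j\in I^c\}$ of $T(M_W)|_\gamma$ simply by the scalar $e^{2\pi i t}$, and in particular on its $(|I^c|-1)$-dimensional spanning family $\{e_j/\partial_jW-e_{j_0}/\partial_{j_0}W\}$ the same way. (That the $I$/$I^c$ splitting is defined along the whole orbit uses only that $\mathcal R_t$ and $g$ commute, so $\gamma(t)\in(\C^n)^{g,l}$ for all $t$.)

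Next I would make the Robbin--Salamon index unambiguous by replacing the frame with a compatible symplectic trivialization. Since $\omega=\sum_k(2\pi i w_k)^{-1}dz_k\wedge d\bar z_k$ is block diagonal, and each line $\C e_i$ together with the complex hyperplane $\ker(dW)\cap\mathrm{span}\{e_j:j\in I^c\}$ are complex (hence symplectic) and mutually $\omega$-orthogonal, one may choose a compatible unitary trivialization respecting this splitting; as the flow acts by complex scalars on each summand, it retains exactly the same block-diagonal rotation form. Thus on $[0,l]$ the linearized flow is, up to this change of trivialization, the direct sum of the rotations $t\mapsto e^{2\pi i(\theta_{J,i}+\theta_{g,i}/l)t}$ on $\C e_i$ for $i\in I$ and $(|I^c|-1)$ copies of $t\mapsto e^{2\pi i t}$.

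Then I would invoke the elementary index of a rotation, which is additive over direct sums: for $s\mapsto e^{2\pi i\alpha s}$ on $[0,1]$ the standard crossing-form computation yields $2\lfloor\alpha\rfloor+1$ if $\alpha\notin\Z$ and $2\alpha$ if $\alpha\in\Z$ (a crossing at each $s=k/\alpha$, crossing form $\mathrm{sign}(\alpha)$ times a definite form on $\R^2$, half weight at the endpoints), i.e.\ exactly $Q(\alpha)$. Reparametrizing $t=ls$ turns the $i$-th block into $s\mapsto e^{2\pi i(l\theta_{J,i}+\theta_{g,i})s}$ and each $I^c$-block into $s\mapsto e^{2\pi i l s}$; summing their contributions gives $\sum_{i\in I}Q(l\theta_{J,i}+\theta_{g,i})+(|I^c|-1)Q(l)$, as claimed.

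The only genuinely delicate point is the bookkeeping of the first two paragraphs---verifying that the frame of Proposition \ref{prop:frame} is, after rescaling, a symplectic trivialization and that in it the linearized flow is precisely the stated direct sum of rotations with the stated angles. Once that is settled the index count is routine, so that is where I would concentrate the effort.
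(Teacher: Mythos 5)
Your proposal is correct and follows essentially the same route as the paper: the paper likewise writes the linearized Reeb flow in the frame of Proposition \ref{prop:frame} as the direct sum of the complex rotations $e^{2\pi i(\theta_{J,i}+\theta_{g,i}/l)t}$ ($i\in I$) and $(|I^c|-1)$ copies of $e^{2\pi i t}$, and then reads off the index as the sum of the standard rotation indices $Q$. Your extra bookkeeping (the use of Lemma \ref{lem:gindex}(2) for the $I^c$-blocks and the remark on choosing a compatible trivialization respecting the splitting) just makes explicit the steps the paper leaves implicit.
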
 
 
 Combining Lemma \ref{lem:rf}, \ref{lem:RS}, we obtain the following
 \begin{prop}\label{prop:RS}
 Robbin-Salamon index of the linearized Reeb flow in $\Sigma_{g,l}$ is
 $$ \mu_{\mathrm{RS}}(\Sigma_{g, l})  = 2\left(-\sum _{i \in I} \theta_{g, i} + \sum_{j \in I^c} (l\theta_{J, j}-l)\right) 
	+\sum _{i\in I}Q(l \theta_{J, i} + \theta_{g, i}) + (\vert I^c\vert -1) Q(l) $$
\end{prop}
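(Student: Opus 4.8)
The plan is to assemble the index from the two pieces that have already been isolated in Lemmas \ref{lem:rf} and \ref{lem:RS}, using the standard fact that the Robbin--Salamon index is computed relative to a chosen symplectic trivialization and that changing the trivialization shifts the index by twice the rotation number of the change of frame. Concretely: the Robbin--Salamon index $\mu_{\mathrm{RS}}(\Sigma_{g,l})$ is, by definition, the index of the path of symplectic matrices obtained by writing the linearized Reeb flow in a trivialization of $T(M_W)\vert_\gamma$ that is \emph{compatible with the canonical volume form} $\Omega_{M_W}$ (this is what makes the fractional grading well defined, via \cite{Mc}). First I would take the explicit frame of Proposition \ref{prop:frame}, which trivializes $T(M_W)\vert_\gamma$ but is \emph{not} $\Omega_{M_W}$-compatible; Lemma \ref{lem:RS} computes the Robbin--Salamon index of the linearized Reeb flow with respect to \emph{that} frame, giving $\sum_{i\in I}Q(l\theta_{J,i}+\theta_{g,i}) + (\vert I^c\vert-1)Q(l)$. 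Then I would correct for the discrepancy between this frame and an $\Omega_{M_W}$-compatible one: Lemma \ref{lem:rf} says the chosen frame rotates, relative to $\Omega_{M_W}$, by total angle $\bigl(-\sum_{i\in I}\theta_{g,i}+\sum_{j\in I^c}(l\theta_{J,j}-l)\bigr)$ along $\gamma$, so passing to an $\Omega_{M_W}$-compatible trivialization shifts the index by twice this quantity. Adding the shift to the Lemma \ref{lem:RS} value gives exactly the claimed formula
\[
\mu_{\mathrm{RS}}(\Sigma_{g,l}) = 2\left(-\sum_{i\in I}\theta_{g,i}+\sum_{j\in I^c}(l\theta_{J,j}-l)\right) + \sum_{i\in I}Q(l\theta_{J,i}+\theta_{g,i}) + (\vert I^c\vert-1)Q(l).
\]

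The one point that needs genuine care, rather than bookkeeping, is the \emph{normalization and additivity conventions} for the Robbin--Salamon index in this degenerate (Morse--Bott) situation. Since the Reeb flow on $L_W/G_W$ comes with an $S^1$-action, the orbit $\Sigma_{g,l}$ is a Morse--Bott family and the linearized return map is \emph{not} nondegenerate: it has eigenvalue $1$ exactly on the directions tangent to the family. I would handle this by using the Robbin--Salamon crossing-form definition, under which the index of the rotation path $t\mapsto e^{2\pi i\alpha t}$ on $[0,l]$ in $\C$ contributes $Q(l\alpha)$ with $Q$ as in Lemma \ref{lem:RS} — i.e.\ $2l\alpha$ when $l\alpha\in\mathbb Z$ (crossings at both endpoints, each counted with weight $1/2$, plus the interior full turns) and $2\lfloor l\alpha\rfloor+1$ otherwise. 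I would verify that the frame of Proposition \ref{prop:frame} genuinely diagonalizes the linearized flow into such one-dimensional complex blocks — this is the content of the explicit expressions for the flow listed just before Lemma \ref{lem:RS}, coming from the fact that $\mathcal R_t = \mathrm{exp}(2\pi i t\vec\theta_J)$ acts diagonally on $\C^n$ and the second-type basis vectors $e_j/\partial_jW - e_{j_0}/\partial_{j_0}W$ are each scaled by $e^{2\pi i t}$ (which uses Lemma \ref{lem:gindex}(2), the homogeneity $W(\mathcal R_t(x)) = e^{2\pi i t}W(x)$, so that $\partial_jW$ transforms with the predicted phase). Once the block decomposition is in place, additivity of $\mu_{\mathrm{RS}}$ over a symplectic direct sum finishes the computation.

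The main obstacle, I expect, is not any single estimate but making sure the two lemmas are being combined with \emph{consistent} conventions: Lemma \ref{lem:rf} measures a rotation number against $\Omega_{M_W}$ (a top form, hence sensitive to the determinant of the frame), while Lemma \ref{lem:RS} measures a Robbin--Salamon index against the \emph{frame itself} (sensitive to the full symplectic path, not just its determinant). The formula is correct precisely because the Robbin--Salamon index transforms under a change of unitary frame by twice the degree of the determinant of the transition loop — so the two contributions are exactly complementary and add. I would spell this out carefully, noting that the first-type basis vectors $e^{-2\pi i\theta_{g,i}t/l}e_i$ were chosen with built-in phase corrections precisely so that the frame closes up under the $g$-twisting $g\cdot\gamma(l)=\gamma(0)$ (Proposition \ref{prop:frame}), and that it is this twisting which forces the fractional contributions $\theta_{g,i}$ into the $Q(\cdot)$ terms. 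With those conventions pinned down, the proof is a two-line sum of the outputs of Lemmas \ref{lem:rf} and \ref{lem:RS}.
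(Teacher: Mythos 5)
Your proposal matches the paper's argument: the paper obtains Proposition \ref{prop:RS} precisely by combining Lemma \ref{lem:rf} (the rotation number of the frame relative to $\Omega_{M_W}$, contributing twice that quantity as a trivialization-change correction) with Lemma \ref{lem:RS} (the frame-relative Robbin--Salamon index given by the $Q$-terms). Your additional care about the change-of-trivialization convention and the Morse--Bott crossing-form normalization is exactly the bookkeeping the paper leaves implicit, so the approach is the same.
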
	
\begin{remark}
$\mu_{\mathrm{RS}}(\Sigma_{g, l})$ always lies in $\frac{1}{\vert G_W\vert } \Z$.
\end{remark}

Now, we rewrite the above expression of $\mu_{\mathrm{RS}}(\Sigma_{g, l})$ as 	
\begin{equation}\label{eq:rsmod}
	 \mu_{\mathrm{RS}}(\Sigma_{g, l})  = 2l ( \sum_{i =1}^n \theta_{J, i} -1) + \sum _{i\in I}\left( Q(l\theta_{J, i}+\theta_{g, i}) - 2(l\theta_{J, i}+\theta_{g, i})\right) + (\vert I^c\vert -1) (Q(l)-2l)
\end{equation}
	
Let us illustrate our formula in some examples.
\begin{example}\label{ex: index}
	\begin{enumerate}
		\item For  $g=J^{-l}$,we have $(\C^n)^{g, l} = \C^n$ and $I = \emptyset$.
		\[ \mu_{\mathrm{RS}}(\Sigma_{J^{-l}, l}) = 2l (\sum \theta_{J, i} -1) = \frac{2l(\sum_{k=1}^n w_i -h)}{h}.\]
		
		\item  For the example \ref{ex1} of $W= x^2y +y^4$ with  $(g, l)= (J^{-3}, \frac{1}{3})$.  We have 
$$\vec{\theta}_g = \left(\frac{7}{8}, \frac{1}{4}\right), \vec{\theta}_J = \left(\frac{3}{8}, \frac{1}{4}\right).$$
On $\C \times \{0\}$, $\partial_1 W = 2xy = 0$. Thus $I = \{1\}, I^{c} =\{2\}, l=\frac{1}{3}$.
$$ \mu_{\mathrm{RS}}(\Sigma_{g, l}) =  2 \left( - \frac{7}{8} + \frac{1}{3}\left(\frac{1}{4}-1\right)\right) + Q\left(\frac{1}{3} \cdot \frac{3}{8} + \frac{7}{8}\right)+0
= -2+2= 0.$$
 
	\end{enumerate}
	
\end{example}

\subsection{Degree shifting number and its non-negativity}

The following number $\star_{g, l} \in \mathbb{Q}$ will be used for degree shifting.
\begin{defn}\label{defn:star}
We define $\star_{g, l} \in \mathbb{Q}$ as 
$$(n-1)-\mu_{\mathrm{RS}}(\Sigma_{g, l})-\frac{1}{2}(\mathrm{dim}_\R \Sigma_{g,l} +1)$$
It is easy to see that $\star_{g,l} = n - \mathrm{dim}_\C (\C^n)^{g, l} - \mu_{\mathrm{RS}}(\Sigma_{g, l})$
\end{defn}

\begin{prop}
\label{orbifold degree inequality 1}
We have the following inequality.
\[\star_{g, l} \geq -\frac{2l(\sum_{k=1}^n w_i -h)}{h}\]
\end{prop}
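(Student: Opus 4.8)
The plan is to use the explicit formula \eqref{eq:rsmod} for $\mu_{\mathrm{RS}}(\Sigma_{g,l})$ together with Definition \ref{defn:star}, and reduce everything to elementary inequalities satisfied by the function $Q$. First I would record from \eqref{eq:rsmod} that
\[
\mu_{\mathrm{RS}}(\Sigma_{g, l})  = 2l \Big( \sum_{i =1}^n \theta_{J, i} -1\Big) + \sum _{i\in I}\big( Q(l\theta_{J, i}+\theta_{g, i}) - 2(l\theta_{J, i}+\theta_{g, i})\big) + (\vert I^c\vert -1) (Q(l)-2l),
\]
and that $2l(\sum_i \theta_{J,i}-1) = \tfrac{2l(\sum_k w_k - h)}{h}$, so the claimed inequality $\star_{g,l}\ge -\tfrac{2l(\sum_k w_k-h)}{h}$ is equivalent, after substituting Definition \ref{defn:star}, to
\[
n - \dim_\C(\C^n)^{g,l} - \sum_{i\in I}\big(Q(l\theta_{J,i}+\theta_{g,i}) - 2(l\theta_{J,i}+\theta_{g,i})\big) - (\vert I^c\vert-1)(Q(l)-2l) \;\ge\; 0.
\]

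Next I would isolate the two elementary facts about $Q$ that drive the estimate: for any real $s$, $Q(s) - 2s \le 1$ (with equality iff $s\notin\Z$, and $Q(s)-2s=0$ iff $s\in\Z$); and $Q(s)-2s \ge -1$ always, in fact $Q(s)-2s\in\{0\}\cup(-1,1)$ — more precisely $Q(s)-2s = 1-2\{s\}$ when $s\notin\Z$ where $\{s\}$ is the fractional part, hence $Q(s)-2s\in(-1,1]$. Then for $i\in I^c$ the term $(\vert I^c\vert-1)(Q(l)-2l)$: here I need to recall that $\dim_\C(\C^n)^{g,l}$ counts the coordinates $i$ with $\theta_{g,i}+l\theta_{J,i}\in\Z$ (by \eqref{glz}), i.e.\ exactly the indices for which $Q(l\theta_{J,i}+\theta_{g,i})-2(l\theta_{J,i}+\theta_{g,i})=0$; combining with the definition of $I$ and of $I^c$, the key bookkeeping is $\dim_\C(\C^n)^{g,l} = \#\{i : \theta_{g,i}+l\theta_{J,i}\in\Z\}$, so that $n - \dim_\C(\C^n)^{g,l}$ counts exactly the indices $i$ with $\theta_{g,i}+l\theta_{J,i}\notin\Z$. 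The heart of the argument is then the combinatorial claim that $n - \dim_\C(\C^n)^{g,l}$ dominates $\sum_{i\in I}\big(Q(l\theta_{J,i}+\theta_{g,i})-2(l\theta_{J,i}+\theta_{g,i})\big) + (\vert I^c\vert-1)(Q(l)-2l)$: for the $I$-sum each summand is $\le 1$ and is $0$ precisely on the integer indices (which are subtracted on the left), and the $I^c$-contribution is handled by noting that on $I^c$ one has $\partial_iW|_{(\C^n)^{g,l}}\neq 0$ hence (by the structure of $(\C^n)^{g,l}$ and Lemma \ref{lem:gindex}) the value $l$ itself satisfies the relevant congruence or the term is controlled by the corresponding count of non-integer indices.

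The step I expect to be the main obstacle is precisely this last bookkeeping: carefully matching which indices $i$ contribute a nonzero $Q(s)-2s$ term against the quantity $n-\dim_\C(\C^n)^{g,l}$, and in particular controlling the sign and size of the $(\vert I^c\vert -1)(Q(l)-2l)$ term when $l\notin\Z$ (the fractional-period case, cf.\ Example \ref{ex1}), where $Q(l)-2l$ can be negative and is multiplied by a possibly large factor $\vert I^c\vert - 1$. I would resolve this by splitting $I^c$ further according to whether $\theta_{g,i}+l\theta_{J,i}\in\Z$, using \eqref{glz} to relate membership in $(\C^n)^{g,l}$ to these congruences, and showing that whenever $l\notin\Z$ the negativity of $Q(l)-2l$ is compensated because then \emph{all} the corresponding $I^c$-indices are non-integer indices contributing to $n-\dim_\C(\C^n)^{g,l}$; when $l\in\Z$ the term $Q(l)-2l$ vanishes and only the $\le 1$ bound on the $I$-sum is needed. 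Once the combinatorics is pinned down the inequality is immediate, and equality cases (as in Example \ref{ex: index}(2)) serve as a sanity check.
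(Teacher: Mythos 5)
Your reduction and your key estimates are the same ones the paper uses: both arguments start from \eqref{eq:rsmod} and Definition \ref{defn:star}, use that $Q(s)-2s<1$ with $Q(s)-2s=0$ exactly when $s\in\Z$, and use Lemma \ref{lem:gindex} to extract the congruence $l\theta_{J,i}+\theta_{g,i}\equiv l \ (\mathrm{mod}\ \Z)$ for $i\in I^c$. Where you genuinely diverge is the bookkeeping. The paper splits into the cases $W|_{(\C^n)^{g,l}}\neq 0$ and $W|_{(\C^n)^{g,l}}=0$; in the first case it combines two congruences to conclude $l\in\Z$ and then invokes Lemma \ref{lem:m2} together with nondegeneracy to identify $I^c=F_{gJ^l}$, so that $n-\dim_\C(\C^n)^{g,l}=|I|$, the sum is estimated termwise, and $Q(l)-2l=0$. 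You instead compare $I$ and $I^c$ directly with the set $K=\{i\colon l\theta_{J,i}+\theta_{g,i}\in\Z\}$, whose cardinality equals $\dim_\C(\C^n)^{g,l}$ by \eqref{glz}. This is a legitimate, uniform route that avoids both the case split and Lemma \ref{lem:m2}; it closes cleanly once you note that for $i\in I^c$ one has $\{l\theta_{J,i}+\theta_{g,i}\}=\{l\}$, hence $Q(l\theta_{J,i}+\theta_{g,i})-2(l\theta_{J,i}+\theta_{g,i})=Q(l)-2l$, so the quantity you must bound equals $\sum_{i\in K^c}\bigl(Q(s_i)-2s_i\bigr)-(Q(l)-2l)$ with $s_i=l\theta_{J,i}+\theta_{g,i}$, where each summand is $<1$ and, whenever $I^c\neq\emptyset$, one summand equals $Q(l)-2l$ and cancels the subtracted term.

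Two points in your plan need tightening. First, the configuration you single out as the main danger, namely $Q(l)-2l<0$ multiplied by a large factor $|I^c|-1$, in fact helps you, since that product sits on the side you are bounding from above; the genuinely delicate cases are $l\notin\Z$ with $Q(l)-2l>0$ and $|I^c|$ large (handled precisely by your observation that $I^c\subseteq K^c$, so each such index also contributes $1$ to $n-\dim_\C(\C^n)^{g,l}$), and $I^c=\emptyset$ with $\{l\}>\tfrac12$, where the leftover term $-(Q(l)-2l)=2\{l\}-1>0$ has no $I^c$-index available to absorb it. Second, that last case is not covered by your compensation argument as stated and must be dispatched separately, for instance by observing that $I^c=\emptyset$ means every point of $(\C^n)^{g,l}$ is a critical point of $W$, so the isolated-singularity hypothesis forces $(\C^n)^{g,l}\subseteq\{0\}$ and $\Sigma_{g,l}=\emptyset$, making the assertion vacuous (this is essentially what the paper's second case is meant to handle). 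With these two adjustments your plan yields a complete proof.
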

In particular, $\star_{g,l} \geq 0$ for log Fano and log Calabi-Yau cases.
\begin{proof}
\begin{enumerate}
\item  Suppose $W\vert_{(\C^n)^{g, l}} \neq 0$.
	
	 There is a ${j_0}$ such that $j_0 \in I^c$.
	Since $\partial_{j_0} W \neq 0$ on $(\C^n)^{g, l}$,  from the Lemma \ref{lem:gindex},
we obtain the condition 
 $$l\theta_{J, j_0} + \theta_{g, j_0} \equiv l \;\; \textrm{modulo} \; \Z.$$
Also, as $z_{j_0}$ is fixed under the action of $g\circ \mathcal R_l$, we obtain another condition
 $$l\theta_{J, j_0} + \theta_{g, j_0} \equiv 0 \;\; \textrm{modulo} \; \Z.$$
Therefore, $l \in \mathbb N$ and $g\circ \mathcal R_l = g\circ J^l \in G_W$. Hence $(\C^n)^{g, l}$ is a fixed locus of $gJ^l$-action.

Recall from Lemma \ref{lem:m2} that 
$$W = W_{F_{gJ^l}} + W_{move,gJ^l}$$
and  $W_{move,gJ^l} \in \mathbf{m}^2$.
Hence, if $j \notin F_{gJ^l}$,  $\partial_j W_{move,gJ^l}$ still contains at least one moving variable hence vanishes on $(\C^n)^{g, l}$.
Therefore, $$F_{gJ^l}^c \subset I_{g,l}.$$
Since $W$ is a non-degenerate polynomial, we also have $F_{gJ^l} \subset I_{g,l}^c$, which implies that $I_{g,l}^c= F_{gJ^l}$.

	Therefore, using the identity \ref{eq:rsmod}, we have
	\begin{align}
	\star_{g, l}&= n - \mathrm{dim}_\C (\C^n)^{g, l} - \mu_{\mathrm{RS}}(\Sigma_{g, l}) = \vert I_{g,l} \vert -  \mu_{\mathrm{RS}}(\Sigma_{g, l})\\
	& = - 2l ( \sum_{i =1}^n \theta_{J, i} -1) + \sum _{i\in I_{g,l}}\left(1- ( Q(l\theta_{J, i}+\theta_{g, i}) - 2(l\theta_{J, i}+\theta_{g, i})) \right) - (\vert I_{g,l}^c\vert -1) (Q(l)-2l)\\
	&  \geq - 2l ( \sum_{i =1}^n \theta_{J, i} -1)
	\end{align}
	Here, we use the fact that $Q(s) -2s<1$ for all $t$ and $Q(l)-2l =0$ as $l \in \Z$. 
	
\item	Suppose $W\vert_{(\C^n)^{g, l}} = 0$.
  If $i \in I_{g,l}^c$, then $\partial_i W \vert_{(\C^n)^{g, l}} \neq 0$, but the latter vanishes from the assumption.
Hence, $I_{g,l}^c = \emptyset$ and $I_{g,l} = \{1,\cdots, n\}$.
Consider the action of $g \circ R^l$ on $(z_1,\cdots,z_n)$ and say $i \in K \subset \{1,\cdots,n\}$ if and only if $z_i$ is fixed under this action.
Then $ \mathrm{dim}_\C (\C^n)^{g, l} = \vert K \vert $. Also, we have $l\theta_{J, i}+\theta_{g, i} \in \Z$ for $i \in K$.
Therefore, we have
	\begin{align}
	\star_{g, l}&=  n -\mathrm{dim}_\C (\C^n)^{g, l} - \mu_{\mathrm{RS}}(\Sigma_{g, l}) \\
& = n -|K|  - 2l ( \sum_{i =1}^n \theta_{J, i} -1) - \sum_{i=1 }^n \left(  Q(l\theta_{J, i}+\theta_{g, i}) - 2(l\theta_{J, i}+\theta_{g, i}) \right) + (Q(l)-2l)  
\\ 
& =  - 2l ( \sum_{i =1}^n \theta_{J, i} -1) + \sum_{ i \notin K}  \left(  1 - ( Q(l\theta_{J, i}+\theta_{g, i}) - 2(l\theta_{J, i}+\theta_{g, i})) \right) 
+ (Q(l) - 2l) \\
	&  \geq - 2l ( \sum_{i =1}^n \theta_{J, i} -1)
\end{align}
This completes the proof.
\end{enumerate}
 \end{proof}

\section{Monodromy orbit $\Gamma_W$ and vanishing of spheres with $\Gamma_W$}\label{sec:proof2}
\subsection{Morse-Smale function for perturbation}
Let us first define a Morse-Smale function on  $\left(L_{W, \delta}/G_W \right)$ that is needed for perturbations of Morse-Bott components.
 Although Morse-Smale function might not exist for an orbifold, our situation is rather special so that we can construct one as follows.
 
  \begin{lemma}\label{equivariant Morse function}
    There is an $G_W$-equivariant Morse function $h$ on $L_{W, \delta}$ such that 
    \begin{itemize}
    \item if a critical point $p \in \mathrm{crit}(h)$ lies in $\mathrm {Fix}(K)$ for some $K<G_W$, then the unstable manifold $W^- (p)$ of $p$ is also contained in $\mathrm{Fix}(K)$. 
    \item its Morse-Smale-Witten complex is well-defined and computes the cohomology of $\left(L_{W, \delta}/G_W \right)$.
    
  \end{itemize}  
  \end{lemma}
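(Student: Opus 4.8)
The plan is to build $h$ by a descending induction over the $G_W$-invariant stratification of $L_{W,\delta}$ by fixed loci, and then to arrange Morse--Smale transversality one stratum at a time. First I would fix, once and for all, a $G_W$-invariant Riemannian metric on the (compact) link $L_{W,\delta}$, obtained by averaging an arbitrary metric over the finite group $G_W$. For any $G_W$-invariant function $f$ the gradient $\nabla f$ is then $G_W$-invariant, hence tangent to every fixed locus $\mathrm{Fix}(K)$; consequently each $L_K := \mathrm{Fix}(K)\cap L_{W,\delta}$ is invariant under the gradient flow of $f$. By the restriction statements recalled earlier in the excerpt, $\mathrm{Fix}(K)$ is a coordinate plane and $L_K$ is the link of the non-degenerate weighted homogeneous polynomial $W_K$; moreover the $L_K$ form a finite poset under inclusion, and the stratum $S_K\subset L_K$ of points with isotropy exactly $K$ satisfies $\overline{S_K}=L_K$ and $L_K\setminus S_K=\bigcup_{K'\supsetneq K}S_{K'}$.

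Order the strata $S_{K_0},\dots,S_{K_m}$ by non-increasing codimension of $L_{K_j}$, so that the deepest strata come first and $\overline{S_{K_j}}\setminus S_{K_j}\subset S_{K_0}\cup\dots\cup S_{K_{j-1}}$. I would construct $h$ inductively so that after step $j$ it is defined, $G_W$-invariant and Morse on a $G_W$-invariant neighborhood of $S_{K_0}\cup\dots\cup S_{K_j}$, with the required unstable-manifold property for all of its critical points. At step $j$, $h$ is already prescribed near the boundary $L_{K_j}\setminus S_{K_j}$; since $G_W$ acts on $L_{K_j}$ through the finite quotient $G_W/K_j$, I extend this to a $G_W$-invariant Morse function $h_{K_j}$ on all of $L_{K_j}$ (extend, then perturb in the interior keeping the germ near the boundary fixed, all equivariantly), and then thicken it over a $G_W$-invariant tubular neighborhood $U_{K_j}$ by setting $h = h_{K_j}\circ\pi + \tfrac{1}{2}\|\nu\|^2$ in the normal coordinates $(\pi,\nu)$ coming from the invariant exponential map. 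This thickened function is $G_W$-invariant, has exactly the critical points of $h_{K_j}$, and at each of them its Hessian is positive definite in the normal directions; since $L_{K_j}$ is gradient-flow invariant, the full unstable manifold $W^-(p)$ of any critical point $p\in L_{K_j}$ lies in $L_{K_j}\subseteq\mathrm{Fix}(K_p)$, where $K_p$ denotes the isotropy at $p$. After the last step $h$ is a $G_W$-invariant Morse function on $L_{W,\delta}$ satisfying the first bullet.

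For the second bullet I would pass to the orbifold $[L_{W,\delta}/G_W]$. The invariant gradient $\nabla h$ descends to a vector field whose zeros are the images $[p]$ of the critical points of $h$; in the orbifold chart at $[p]$, the quotient of a normal slice by $K_p$, the function $h$ is $K_p$-invariant and Morse, so the descended function $\overline h$ has a non-degenerate critical point there. Morse--Smale transversality on the quotient is then achieved stratum by stratum: having already fixed metric and gradient on the deeper strata, I perturb the metric equivariantly in the interior of each successive $S_{K_j}$ (the usual Smale argument, applied to the finite action of $G_W/K_j$) to make the stable and unstable manifolds inside $L_{K_j}$ mutually transverse; because unstable manifolds never leave their fixed loci, a trajectory asymptotic to $[p]$ in backward time lies in the same stratum as $[p]$, so transversality of $W^-([p])$ with $W^+([q])$ for arbitrary $[q]$ follows from the within-stratum transversality on $\overline{S_{K_p}}$ together with the fact that $W^+([q])$ meets $S_{K_p}$ in an open set. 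The resulting Morse--Smale--Witten complex of $\overline h$ is therefore well-defined, and over a field of characteristic zero it is canonically identified with the $G_W$-invariant part of the Morse--Smale--Witten complex of $h$ on $L_{W,\delta}$, which computes $H^\bullet(L_{W,\delta})^{G_W}\cong H^\bullet\!\big(L_{W,\delta}/G_W\big)$ by the transfer isomorphism.

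The step I expect to be the main obstacle is the simultaneous, compatible achievement of Morse--Smale transversality across all strata on the quotient: one must verify that the equivariant perturbations needed to make trajectories generic inside a deep stratum never need to be undone when passing to a shallower stratum (they do not, since the gradient flow never leaves a fixed locus), and that transversality at the orbifold points — where the local model is (normal slice)$/K_p$ rather than a manifold — is genuinely realized by $K_p$-invariant perturbations. A secondary, more routine point is the bookkeeping of the closures of strata and the patching of the tubular models, which must be carried out so that the normal-quadratic terms keep dominating near each stratum and the Hessian signatures are preserved globally.
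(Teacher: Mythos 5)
Your construction is essentially the paper's: build $h$ inductively over the fixed-locus strata of the link, starting from the deepest stratum and extending with a positive-definite quadratic term in the normal directions so that the (equivariant) gradient flow preserves each $\mathrm{Fix}(K)$ and unstable manifolds of critical points stay in their fixed loci, then arrange transversality stratum by stratum by perturbing in the smooth part, and finally identify the complex with $H^\bullet\left(L_{W,\delta}/G_W\right)$ (the paper via the cell decomposition of the quotient by unstable manifolds, you via the transfer isomorphism in characteristic zero — an acceptable variant since the paper works over $\C$). One small imprecision: $W^+([q])\cap \mathrm{Fix}(K_p)$ is the stable manifold of the flow restricted to the stratum, in general a proper submanifold rather than an open subset; this does not harm the argument, because — exactly as in the paper — every trajectory emanating from $p$ lies in $\mathrm{Fix}(K_p)$, so within-stratum transversality (together with the equality of ambient and stratum Morse indices forced by the positive-definite normal Hessians) already makes all trajectory moduli spaces regular and the Morse--Smale--Witten complex well-defined.
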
 
  \begin{proof}
   Since orbifold strata $\{\textrm{Fix}(K)\}_{K<G_W}$ of $\C^n$ intersect transversally to the link, it induces orbifold strata for $L_{W, \delta}$. We construct a Morse function $h'$ in an inductive way. The action of a diagonal symmetry group is effective on the link. Therefore the deepest strata does not have a nontrivial fixed locus. Choose a Morse function on it and extend it strata by strata in a way that it depends on the radial direction of the normal bundle of the strata inside the next one. We may choose our extension so that the negative Morse flow line of our extension flows into the lower strata.  This is possible because our orbifold strata are induced from coordinate planes. When several stratum intersect along a lower dimensional one, their normal directions are compatible.
   
   Morse flow of $h'$ respects orbifold strata. If $p\in \mathrm{crit}(h')\cap \mathrm{Fix}(K)$, then its unstable manifold $W^-(p)$ is contained in $\mathrm{Fix}(K)$. If $q \in \mathrm{crit}(h') \cap \mathrm{Fix}(K')$ for some $K\leq K'$, then its stable manifold $W^+(q)$ intersect $W^-(p)$ at smooth points of $\textrm{Fix}(K)$ only. Therefore, the moduli space of gradient flow trajectories can be defined as in the smooth case. They intersect transversally after a small perturbation of $h'$ in the smooth part of $\mathrm{Fix}(K)$. We do it strata by strata to obtain $h$. 

   Because of this first property, Morse-Witten complex of $h$ makes sense. i.e, its differential does square  to zero. Moreover, the unstable manifolds of $h$ provide a cell decomposition of $\left(L_{W, \delta}/G_W \right)$. Hence Morse-Witten complex computes the singular cohomology of $\left(L_{W, \delta}/G_W \right)$ (for a similar argument for $G=S^1$ in the context of contact homology, see \cite{Bourgeois03}).

  \end{proof}

\subsection{A symplectic cohomology cocycle $\Gamma_W$}
Finally, let  us define $\Gamma_W$.

We adapt the idea of \cite{CFHW96} and \cite{KvK16}. Choose a small normal neighborhood $\nu\left(\Sigma_{J^{-1}, 1}/G_W\right) \simeq L_{W, \delta}/G_W \times (-\epsilon, \epsilon)$ inside $M_W/G_W$. Choose a sufficiently small, nonnegative bump function $\rho$ supported inside $(-\epsilon, \epsilon)$. Use our Morse function $h = h_{J^{-1}, 1}$ in \ref{equivariant Morse function} to define an equivariant time-dependent perturbation term by 
  \begin{align*}
    \OL{h}: \nu\left(L_{W, \delta}/G_W\right) \times S^1 &\to \R\\
    (p, v, t) &\mapsto h(Fl^\mathcal R_t(p))\rho(v)
  \end{align*} 
  \begin{defn}
  A \textit{local Floer cochain complex} 
    \[CF^\bullet_{loc}\left(L_{W, \delta}/G_W, H_{S^1}\right)=\bigoplus_{\gamma\in \mathcal O(H_{S^1})} o_\gamma\] 
  is defined to be a free module generated by Hamiltonian orbits of time-dependent Hamiltonian $H_{S^1}:= H+\OL h$. Its differential is defined similarly as a counting of pseudo-holomorphic cylinders, but only those inside $\nu\left(L_{W, \delta}/G_W\right)$. Its cohomology, called \textit{local Floer cohomology}, is denoted by $HF^\bullet_{loc}\left(L_{W, \delta}/G_W, H_{S^1} \right)$. 
  \end{defn}
  \begin{thm} As a $\Z/2$-graded vector space, 
    \[HF^{\bullet}_{loc}\left(L_{W, \delta}/G_W, H_{S^1}\right) \simeq H^{\bullet}_{Morse}(L_{W, \delta}/G_W, h; \Z)\]
  \end{thm}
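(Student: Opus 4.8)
The plan is to identify the local Floer complex with a Morse--Bott--Floer model concentrated near the principal orbit set $\Sigma_{J^{-1},1}/G_W \simeq L_{W,\delta}/G_W$, and then run the standard Morse--Bott argument of Cieliebak--Floer--Hofer--Wysocki \cite{CFHW96} and Kwon--van Koert \cite{KvK16} in the equivariant setting. First I would recall that $H = \tfrac12\sum |x_i|^2$ is, up to the rescaling of the Liouville form, the function whose time-$1$ flow is the monodromy $\Phi_W$, so that near $L_{W,\delta}/G_W$ the only orbits of $H$ itself form the Morse--Bott family $\Sigma_{J^{-1},1}/G_W$, all of the same action. Adding the small time-dependent term $\overline h(p,v,t) = h(Fl^{\mathcal R}_t(p))\rho(v)$ breaks this family: the nondegenerate $1$-periodic orbits of $H_{S^1} = H + \overline h$ inside $\nu(L_{W,\delta}/G_W)$ are in bijection with $\mathrm{crit}(h)$, because $\rho$ pins the orbit to the slice $v=0$ and, along that slice, the usual computation shows $1$-periodic orbits of the $S^1$-invariantly perturbed Hamiltonian correspond to critical points of $h$ (this is exactly the mechanism of \cite{KvK16}, Proposition/Lemma on Morse--Bott perturbations). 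Here one uses that the linearized Reeb flow is complex linear so that the normal Conley--Zehnder contribution is even and the local system is trivial; the grading of each generator is $\mu_{RS}(\Sigma_{J^{-1},1}) \pm$ (Morse index of the critical point of $h$) in the appropriate convention, which is why the isomorphism is only claimed $\Z/2$-graded.

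Next I would compare differentials. On the Floer side, the differential counts pseudo-holomorphic cylinders contained in $\nu(L_{W,\delta}/G_W)$ asymptotic to orbits near $\mathrm{crit}(h)$; on the Morse side, the differential of $(L_{W,\delta}/G_W, h)$ counts gradient trajectories of $h$ --- and by Lemma \ref{equivariant Morse function} these trajectories respect the orbifold strata and the Morse--Witten complex is well-defined and computes $H^\bullet(L_{W,\delta}/G_W)$. The key point is that for $\rho$ and $\overline h$ sufficiently small the only Floer cylinders with the relevant energy are small perturbations of (reparametrized) $S^1$-families of gradient flow lines of $h$, an exponential-decay/implicit-function-theorem argument as in \cite{CFHW96}; this gives a bijection of moduli spaces and hence a chain isomorphism onto the Morse--Smale--Witten complex of $h$. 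I would carry this out $G_W$-equivariantly by working $K$-stratum by $K$-stratum: the crucial inputs, guaranteed by Lemma \ref{equivariant Morse function}, are that unstable manifolds of critical points in $\mathrm{Fix}(K)$ stay in $\mathrm{Fix}(K)$ and that the perturbations can be chosen compatibly, so no orbifold-isotropy phenomenon obstructs the count. Then passing to cohomology yields
\[
HF^\bullet_{loc}\bigl(L_{W,\delta}/G_W, H_{S^1}\bigr) \simeq H^\bullet_{Morse}\bigl(L_{W,\delta}/G_W, h;\Z\bigr)
\]
as $\Z/2$-graded vector spaces.

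The main obstacle I expect is the confinement/compactness step: proving that for small enough perturbation data there are \emph{no} extra Floer cylinders inside $\nu(L_{W,\delta}/G_W)$ beyond the ones coming from $h$-gradient flow. In the smooth Liouville case this is the heart of \cite{CFHW96}, \cite{KvK16}, relying on the $S^1$-invariance of $H$, the smallness of $\overline h$, and a monotonicity/energy estimate forcing low-energy cylinders into an arbitrarily small neighborhood of a single orbit. In our orbifold situation one must additionally check that such cylinders do not slip into deeper strata $\mathrm{Fix}(K)$ in an unexpected way; this is controlled precisely by the stratum-respecting property of $h$ from Lemma \ref{equivariant Morse function} together with the fact that the orbifold strata are coordinate planes (so $J$ and $\nu$ are strata-preserving). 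Everything else --- the bijection on orbits, the grading bookkeeping via $\mu_{RS}$, and the identification of $0$-dimensional moduli spaces with gradient segments --- is routine given the cited results and Lemma \ref{equivariant Morse function}, so I would present those briefly and concentrate the write-up on the confinement estimate and its equivariant refinement.
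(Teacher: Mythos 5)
Your proposal is correct and takes essentially the same route as the paper: the paper's proof simply invokes \cite{KvK16}, Proposition 8.4 (the Morse--Bott perturbation and local Floer comparison you sketch, going back to \cite{CFHW96}), noting that $L_{W,\delta}/G_W$ satisfies all hypotheses except the first Chern class condition --- which is the actual reason the isomorphism is only $\Z/2$-graded --- and that the flows and Hamiltonians in that argument can be chosen $G_W$-equivariantly. The confinement estimate and stratum-by-stratum equivariance checks you plan to write out are precisely the content the paper delegates to that citation together with Lemma \ref{equivariant Morse function}.
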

  \begin{proof} See \cite{KvK16}, Proposition 8.4. One can check that $L_{W, \delta}/G_W$ satisfies all the conditions in the proposition except the first Chern class condition, which makes the isomorphism only $\Z/2$-graded. We can also check that flows and Hamiltonians in the proof are equivariant. 
  \end{proof}

  \begin{defn}\label{defn:monoW}
 The Hamiltonian orbit $\Gamma_W \in CF^\bullet_{loc}(L_{W, \delta}/G_W, H)$ is defined to be a cochain which corresponds to a fundamental class
    \[\Gamma_W \leftrightarrow \left[ L_{W, \delta}/G_W \right]\in H^\bullet \left(L_{W, \delta}/G_W;\Z\right).\]
 It will be shown that $\Gamma_W$ is indeed a cocycle. We call $\Gamma_W$ the {\em monodromy orbit}.
  \end{defn}

%
%
  
In Lemma \ref{gamma is closed}, we prove  that there are no rigid Floer cylinder emanating from $\Gamma_W$ other then the ones in local Floer complex of $\Sigma_{J^{-1}, 1}/G_W$. Although we do not define full-fledged orbifold symplectic cohomology complex, it is legitimate to say $\Gamma_W$ is closed is a Floer theoretic sense.

We also apply perturbations of $\Sigma_{g, l}$ using Morse functions $h_{g, l}$ on each Morse-Bott components. Such perturbation is well-defined since $h_{g,l}$ are $G_W$-equivariant.  Also, there are only finitely many diffeomorphism types of $\Sigma_{g, l}$. Therefore, we can assume that there is an $\epsilon>0$ such that
	\begin{enumerate}
	\item $\mathrm{max} h_{g,l} - \mathrm{min}h_{g, l} \leq \epsilon$ for $\forall g\in G_W, l\in \R_{\geq 0}$, 
	\item if $\gamma$ arises as a critical point of $h_{g, l}$, then $\mathcal A_{H_{S^1}}(\gamma) \in \left[-l^2 -\epsilon, -l^2+\epsilon\right]$, 
	\item if $\Sigma_{g_1, l_1}$ and $\Sigma_{g_2, l_2}$ are nonempty and $l_1\neq l_2$, then $[-l_1^2 -\epsilon, -l_1^2 +\epsilon] \cap [-l_2^2 -\epsilon, -l^2_2+\epsilon] = \emptyset$. 
	\end{enumerate}
Define a filtration $\cF^{-l}:= \left\{ \gamma : \mathcal A_{H_{S^1}} (\gamma) \geq -l^2-\epsilon \right\}$. Then similar to \ref{energy spectral sequence}, we get 

	\begin{lemma}\label{lem:filgw}
	An associated $\mathbb Q\times \mathbb Q$-graded of the filtration $\cF^{-\bullet}$ is 
	\[E^{pq}_0 = 
	\left \{ 
	\begin{array}{ll}
	\bigoplus_{g\in G_W} C^{q-2\mathrm{age}(g)} \left( (M_W)^g/G_W \right) & (p=0) \\
	\bigoplus_{g \in G_W} C^{p+q-\star_{g, -p}}(\Sigma_{g, -p}/G_W) & (p <0) \\
	0 & (p>0)
	\end{array}
	\right.\]	
	\end{lemma}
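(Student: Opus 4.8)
The plan is to follow the proof of Lemma \ref{energy spectral sequence} essentially verbatim, substituting for the smooth Morse--Bott families of Reeb orbits the twisted Reeb orbit components $\Sigma_{g,l}/G_W$ classified in Section \ref{sec: index computation}, and adding, in filtration degree $0$, the contributions of the twisted sectors $[(M_W)^g/G_W]$ coming from the nontrivial isotropy of the orbifold. First I would fix the $S^1$-symmetric Hamiltonian $H_{S^1}=H+\OL h$ as in Definition \ref{defn:monoW}, but now perturbing every Morse--Bott family $\Sigma_{g,l}/G_W$ (and the interior twisted sectors) simultaneously by the $G_W$-equivariant Morse functions $h_{g,l}$ of Lemma \ref{equivariant Morse function}, chosen with the $\epsilon$-smallness and $\epsilon$-separation properties $(1)$--$(3)$ stated just before the lemma. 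The $1$-periodic orbits of $H_{S^1}$ then split into the constant orbits, supported on $[W^{-1}(1)\cap B^{2n}_{\delta/3}/G_W]$ and organized by twisted sector, and the non-constant orbits, each lying over a unique $\Sigma_{g,l}/G_W$ with $l>0$. As in Lemma \ref{energy spectral sequence}, an orbit over $\Sigma_{g,l}/G_W$ has action in $[-l^2-\epsilon,-l^2+\epsilon]$ and a constant orbit has action in $(-\epsilon,\epsilon)$, so the separation hypothesis assigns every generator a well-defined filtration level: $p=-l$ for orbits over $\Sigma_{g,l}/G_W$ and $p=0$ for constant orbits. The $p$-th graded piece is thus spanned by the generators lying over the period-$(-p)$ components.

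Next I would identify the $E_0$-differential. On each graded piece, $d_0$ counts only those Floer cylinders whose energy is small enough to keep both asymptotics in the same action window; by an a priori energy estimate together with the $G_W$-equivariance of the Floer data --- the same confinement argument to be used in Lemma \ref{gamma is closed} --- such cylinders stay inside a fixed normal neighborhood of the corresponding Morse--Bott component. Hence $d_0$ on the period-$l$ piece is the local Floer differential of $\Sigma_{g,l}/G_W$, which by the local Floer theorem (\cite{KvK16}, Proposition~8.4) is the Morse--Smale--Witten differential of $h_{g,l}$; here the complex-linearity hypothesis on the linearized Reeb flow trivializes the relevant local system, and the equivariant Morse function of Lemma \ref{equivariant Morse function} guarantees that this Morse complex is well-defined and computes $H^\bullet(\Sigma_{g,l}/G_W)$. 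The same applies in filtration degree $0$, sector by sector, yielding the Morse complex of $(M_W)^g/G_W$.

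It then remains to pin down the internal grading of each summand, and this is where Section \ref{sec: index computation} enters. For a generator over a critical point $x$ of $h_{g,l}$, the standard Morse--Bott index formula expresses its fractional Robbin--Salamon grading (normalized by $\Omega_{M_W}^{\otimes|G_W|}$ and then divided by $|G_W|$) in terms of $\mathrm{ind}_{Morse}(x)$, $\mu_{RS}(\Sigma_{g,l})$ and $\dim_{\R}\Sigma_{g,l}$; substituting the value of $\mu_{RS}(\Sigma_{g,l})$ from Proposition \ref{prop:RS} and rearranging via Definition \ref{defn:star} shows precisely that a Morse cochain of degree $p+q-\star_{g,-p}$ on $\Sigma_{g,-p}/G_W$ lands in bidegree $(p,q)$. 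In filtration degree $0$ the analogous computation for a constant orbit at a $g$-fixed point --- whose linearized flow is rotation by the diagonal angles $\vec\theta_g$ of $g$ in the normal directions --- produces the age shift $2\,\mathrm{age}(g)$, so that a Morse cochain of degree $q-2\,\mathrm{age}(g)$ on $(M_W)^g/G_W$ sits in bidegree $(0,q)$. Finally $E_0^{pq}=0$ for $p>0$ because there are no orbits of positive action. Assembling these identifications gives the stated $E_0$-page.

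The main obstacle is the one flagged throughout the section: orbifold symplectic cohomology of $[M_W/G_W]$ is not yet defined, so the whole argument must be carried out at the level of the local Floer complexes of the individual Morse--Bott strata, with the $E_0$-differential recognized as a purely local object. Concretely, the two places needing genuine care are (a) the confinement of $d_0$-cylinders to a single Morse--Bott neighborhood, which should be proved by an energy-plus-equivariance estimate parallel to Lemma \ref{gamma is closed} rather than by invoking any global orbifold Floer package, and (b) the fractional index bookkeeping of the previous paragraph, which rests on Proposition \ref{prop:RS} and on the (well-known but convention-sensitive) identification of the Conley--Zehnder index of a constant orbit in the $g$-twisted sector with $2\,\mathrm{age}(g)$. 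Everything else is a direct transcription of the proof of Lemma \ref{energy spectral sequence}.
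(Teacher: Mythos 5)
Your proposal is correct and follows essentially the same route as the paper: the action filtration built from the $\epsilon$-separated perturbations $h_{g,l}$ sorts generators by period, the $p<0$ pieces are the Morse cochains of $\Sigma_{g,-p}/G_W$ shifted by the number $\star_{g,-p}$ of Definition \ref{defn:star}, and the $p=0$ piece consists of constant orbits organized by twisted sector with the shift $2\,\mathrm{age}(g)$ (the paper cites Gironella--Zhou for exactly this index--age identification). The extra material on identifying $d_0$ with the local Floer/Morse differential goes beyond what the lemma asserts (it only describes the associated graded groups), but it is consistent with the surrounding discussion in the paper.
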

\begin{proof}
The cases of $p=0$ correspond to the constant orbits.
In Example 3.7 of \cite{GZ}, Gironella and Zhou explained that the relation between the Conley-Zehnder indices of constant orbits and
age of the corresponding Chen-Ruan cohomology class, which explains the degree shifting for the case of $p=0$.
For $p <0$, $\star$ was defined in Definition \ref{defn:star} to denote the degree shifting number, and hence the lemma follows.
\end{proof}

  \begin{cor}\label{cor:gin}
  $\deg(\Gamma_W)= -\mu_{\mathrm{RS}}(\Gamma_W)= -\frac{2((\sum_{i=1}^n w_i) -h)}{h}$
  \end{cor}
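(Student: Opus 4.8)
The plan is to obtain both equalities directly from the index computations of Section \ref{sec: index computation}, with no further geometric input. First I would locate $\Gamma_W$ in the energy-filtered spectral sequence. By Definition \ref{defn:monoW}, $\Gamma_W$ is the generator of the local Floer complex of the Morse--Bott family $\Sigma_{J^{-1},1}/G_W$ (note $\Sigma_{J^{-1},1} = L_W$ since $\mathcal R_1 = J$) corresponding to the unit class $1 = [L_{W,\delta}/G_W] \in H^0\left(\Sigma_{J^{-1},1}/G_W\right)$. In the spectral sequence of Lemma \ref{lem:filgw} this family lives in the column $p = -1$, and a Morse cocycle of degree $d$ on $\Sigma_{J^{-1},1}/G_W$ contributes to symplectic-cochain degree $d + \star_{J^{-1},1}$; taking $d = 0$ gives $\deg \Gamma_W = \star_{J^{-1},1}$. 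This is the same degree normalization already used for $\Gamma_M = 1_{\partial M}$ in Lemma \ref{energy spectral sequence}.

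Next I would evaluate $\star_{J^{-1},1}$ via Definition \ref{defn:star}. Since $J^{-1}\circ\mathcal R_1 = \mathrm{id}$, we have $(\C^n)^{J^{-1},1} = \C^n$, hence $\dim_\C(\C^n)^{J^{-1},1} = n$ and
\[\star_{J^{-1},1} = n - n - \mu_{\mathrm{RS}}\left(\Sigma_{J^{-1},1}\right) = -\mu_{\mathrm{RS}}\left(\Sigma_{J^{-1},1}\right).\]
By Example \ref{ex: index}(1) with $l = 1$ (equivalently Proposition \ref{prop:RS} with $I = \emptyset$ and $l = 1$),
\[\mu_{\mathrm{RS}}\left(\Sigma_{J^{-1},1}\right) = 2\left(\sum_{i=1}^n \theta_{J,i} - 1\right) = \frac{2\left(\left(\sum_{i=1}^n w_i\right) - h\right)}{h},\]
and this value is precisely $\mu_{\mathrm{RS}}(\Gamma_W)$, since $\Gamma_W$ is the principal orbit of this family and the fractional index $\mu_{\mathrm{RS}}(\Gamma_W) = \frac{1}{|G_W|}\mu_{\mathrm{RS}}^{\Omega^{\otimes|G_W|}}$ is computed with the trivialization induced by $\Omega_{M_W}$, which is exactly the frame used in Proposition \ref{prop:RS}. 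Combining these identities yields $\deg(\Gamma_W) = -\mu_{\mathrm{RS}}(\Gamma_W) = -\frac{2\left(\left(\sum_{i=1}^n w_i\right) - h\right)}{h}$, as claimed.

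The only point I expect to require care is convention bookkeeping: one must confirm that the degree shift in Lemma \ref{lem:filgw} is applied with the cohomological convention placing the fundamental class in degree $0$, and that computing the Robbin--Salamon index upstairs with $\Omega_{M_W}$ (Proposition \ref{prop:RS}) agrees with the downstairs fractional convention using the descended form $\Omega_{M_W}^{\otimes|G_W|}$. Both are immediate from the constructions in Sections \ref{sec:LGFukaya}--\ref{sec: index computation}, so no new estimate is needed; the entire content of the corollary lies in the special value $(\C^n)^{J^{-1},1} = \C^n$, which collapses the degree-shift $\star_{J^{-1},1}$ to $-\mu_{\mathrm{RS}}(\Sigma_{J^{-1},1})$.
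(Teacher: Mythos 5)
Your proposal is correct and follows essentially the same route as the paper's own proof: both identify $\deg(\Gamma_W)$ with the degree-shifting number $\star_{J^{-1},1}$ of the principal Morse--Bott family, reduce $\star_{J^{-1},1}$ to $-\mu_{\mathrm{RS}}(\Sigma_{J^{-1},1})$ via Definition \ref{defn:star} (using $(\C^n)^{J^{-1},1}=\C^n$), and evaluate the index by Proposition \ref{prop:RS} as in Example \ref{ex: index}(1). Your spelling-out of the convention bookkeeping (spectral sequence placement and the fractional trivialization by $\Omega_{M_W}^{\otimes|G_W|}$) is just a more explicit version of what the paper leaves implicit.
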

  \begin{proof}
  Since $\Gamma_W$ corresponds to a fundamental class of a Morse-Bott family $\Sigma_{J^{-1}, 1}$, we
  get the second inequality from Proposition \ref{prop:RS} as computed in Example \ref{ex: index}.
  It can be used to compute the degree of $\Gamma_W$ as follows.
  Note that the degree shifting number   $\star_{J^{-1}, 1} \in \mathbb{Q}$ is given by 
$$(n-1)-\mu_{\mathrm{RS}}(\Sigma_{J^{-1},l})-\frac{1}{2}(\mathrm{dim}_\R \Sigma_{J^{-1},1} +1) = -\mu_{\mathrm{RS}}(\Sigma_{J^{-1},l})$$
As $\Gamma_W$ is from the fundamental class of the principal orbits, $ \deg(\Gamma_W)- \star_{J^{-1}, 1} =0$, which implies the claim.
  \end{proof}

The construction of the full orbifold symplectic cohomology class will face a transversality problem in the current perturbation scheme of pseudo-holomorphic curves, since some generators are contained in singular locus of the orbifold. Fortunately, the element $\Gamma_W$ will comes from $\Sigma_{J^{-1}, 1}/G_W \simeq L_W/G_W$, which is inside a smooth locus of $[M_W/G_W]$.  Pseudo-holomorphic curves with such input do not suffer transversality issue since it is not contained in a singular locus of $[M_W/G_W]$.

  \begin{lemma}\label{gamma is closed}
  Suppose $H_{S^1}$ is $G$-equivariant, $H_{S^1}>0$, and $C^2$-small Morse perturbation of $H$ inside a compact region. Then there is no pseudo-holomorphic cylinder of finite energy satisfying
    \[u: S^1\times \R \to [M_W/G_W] \hskip 0.3cm \lim_{s\to \infty} u(t, s)= \Gamma_W(t).\]
  whose output 
    \[\gamma_- (t) := \lim_{s\to -\infty}u(s,t)\]
  does not lie in $\nu\left(\Sigma_{J^{-1}, 1}/G_W\right) \times \{1\}$.  
  \end{lemma}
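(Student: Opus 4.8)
The plan is to rule out "escaping" Floer cylinders emanating from $\Gamma_W$ by a combination of an action/energy monotonicity argument and the index computations of the previous section. Recall that $\Gamma_W$ is supported, after the Morse--Bott perturbation, on a critical point of $h_{J^{-1},1}$ sitting in the principal component $\Sigma_{J^{-1},1}/G_W\simeq L_W/G_W$, which lies in the smooth locus of $[M_W/G_W]$; in particular transversality for curves with $\Gamma_W$ as an input is not an issue. A finite-energy cylinder $u$ asymptotic to $\Gamma_W$ at $+\infty$ and to some orbit $\gamma_-$ at $-\infty$ has energy $E(u)=\mathcal A_{H_{S^1}}(\Gamma_W)-\mathcal A_{H_{S^1}}(\gamma_-)\geq 0$. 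From the action estimates established before Lemma \ref{lem:filgw} (items (1)--(3) in the list of properties of the perturbation), $\mathcal A_{H_{S^1}}(\Gamma_W)\in[-1-\epsilon,-1+\epsilon]$, while $\gamma_-$, if it is a critical point of some $h_{g,l}$, satisfies $\mathcal A_{H_{S^1}}(\gamma_-)\in[-l^2-\epsilon,-l^2+\epsilon]$. Nonnegativity of energy then forces $l^2\leq 1+2\epsilon$, hence (for $\epsilon$ small) $l\leq 1$.

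First I would treat the case $l=1$. If $\gamma_-$ lies on the principal component $\Sigma_{J^{-1},1}/G_W$ itself, then the cylinder is a trajectory counted inside the local Floer complex $\nu(\Sigma_{J^{-1},1}/G_W)$, which is exactly the exception allowed in the statement; so we may assume $\gamma_-$ is on a different Morse--Bott component $\Sigma_{g,1}/G_W$ with $g\neq J^{-1}$. Here one uses the action window: two distinct components with the same period $l=1$ have action values in overlapping windows $[-1-\epsilon,-1+\epsilon]$, so a small nonnegative energy is possible, and the argument must instead be geometric/homological. The key point is that such a cylinder, by positivity of intersection with the (co)isotropic stratification and unique continuation, would have to stay in $\nu(L_W/G_W)$: since $\Gamma_W$ is supported near $L_W/G_W$ and the Hamiltonian is a $C^2$-small Morse perturbation of $H$ there, a cylinder leaving that neighborhood would have energy bounded below by a fixed $\hbar>0$ (a monotonicity/isoperimetric lemma for the Liouville collar), contradicting $E(u)<2\epsilon$ once $\epsilon<\hbar$. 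Inside $\nu(L_W/G_W)$ all orbits of period $1$ belong to $\Sigma_{J^{-1},1}/G_W$, so $\gamma_-$ cannot be on a different component — contradiction.

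Next I would treat $l<1$, i.e. the genuinely fractional periods classified in Section \ref{sec: index computation}. The same monotonicity argument confines $u$ to $\nu(L_W/G_W)$, and inside this collar the only Reeb orbits that occur are the full principal ones of integer period; a shorter period $l<1$ cannot be realized there because the Reeb flow on $L_W$ is the genuine $S^1$-action generated by $J$, whose orbits in the smooth locus all have period $1$ (shorter periods arise only on lower strata, i.e. outside $\nu(L_W/G_W)$). Hence no such $\gamma_-$ exists and the cylinder cannot occur. Alternatively, one can run a dimension count: by Proposition \ref{prop:RS} and the non-negativity in Proposition \ref{orbifold degree inequality 1} (valid under \eqref{eq:FC}), the moduli space of such cylinders has expected dimension $\mu_{RS}(\Gamma_W)-\mu_{RS}(\gamma_-)-1$, which for the relevant $\gamma_-$ is negative, so the generic count is empty.

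I expect the main obstacle to be making the confinement step rigorous in the orbifold setting: one needs an energy-quantization ("no escape") lemma for finite-energy cylinders in the Liouville collar $L_W/G_W\times(-\epsilon,\epsilon)\subset M_W/G_W$ with the specific perturbed Hamiltonian $H_{S^1}=H+\overline h$, analogous to Proposition 8.4 of \cite{KvK16} and the maximum-principle arguments of \cite{CFHW96}, but tracking the $G_W$-equivariance and the fact that the complement of the collar contains orbifold strata. Once this quantization lemma is in place, the action inequality $E(u)\geq 0$ together with the smallness of $\epsilon$ does all the remaining work, and the index estimate of Proposition \ref{orbifold degree inequality 1} provides a second, independent confirmation that no such rigid cylinder can contribute.
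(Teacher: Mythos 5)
Your first step (the action estimate ruling out outputs on the "wrong side" of level $1$) matches the paper, but after that the argument diverges and has genuine gaps. The paper's key idea, which you never use, is topological: a finite-energy cylinder in $[M_W/G_W]$ is a free homotopy of orbifold loops from $\Gamma_W$ to the twisted loop $(\gamma_-,g)$, so the twisting element is preserved and (since $G_W$ is abelian) $g=J^{-1}$; then for an output of period $0<l\leq 1$ the point $\gamma_-(0)$ would be fixed by $\mathcal R_{1-l}$, and because $w_i/h\leq \tfrac12$ the only such fixed point is the origin, which is not in $M_W$ — contradiction. Constant outputs are excluded the same way, as fixed points of the monodromy $\Phi_W$. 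Your replacement for this — a confinement/energy-quantization argument — does not work. For outputs of period $l<1$ the energy is of order $1-l^2$, not small, so there is no confinement to the collar at all, and your statement that "shorter periods arise only on lower strata, outside $\nu(L_W/G_W)$" conflates orbifold strata with radius levels. Worse, for $l=1$ the claim "inside $\nu(L_W/G_W)$ all orbits of period $1$ belong to $\Sigma_{J^{-1},1}/G_W$" is false: for any $g\neq J^{-1}$ with $\mathrm{Fix}(gJ)\neq\{0\}$, the component $\Sigma_{g,1}=\mathrm{Fix}(gJ)\cap L_W$ is a nonempty subset of $L_W$ itself (cf.\ Example \ref{ex1}), so those twisted orbits live in the same geometric region; what separates them from the principal orbits is the twisted sector, which only the homotopy argument detects.

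Two smaller issues: with your stated convention $E(u)=\mathcal A_{H_{S^1}}(\Gamma_W)-\mathcal A_{H_{S^1}}(\gamma_-)\geq 0$ and $\mathcal A(\gamma_-)\in[-l^2-\epsilon,-l^2+\epsilon]$ you would get $l^2\geq 1-2\epsilon$, not $l^2\leq 1+2\epsilon$; the inequality you wrote requires the opposite sign convention, and whichever convention one adopts, only one of the two ranges of $l$ is killed by action, so the other range (and the constant orbits, which the paper treats via fixed points of $\Phi_W$ and you do not treat at all) needs the sector-plus-fixed-point argument. Finally, the fallback "dimension count" is not a substitute: the lemma asserts nonexistence of \emph{all} finite-energy cylinders with output outside $\nu(\Sigma_{J^{-1},1}/G_W)$, not just rigid ones for generic data, and the claimed negativity of $\mu_{RS}(\Gamma_W)-\mu_{RS}(\gamma_-)-1$ is not established by Propositions \ref{prop:RS} and \ref{orbifold degree inequality 1}, which only give lower bounds on degree-shifting numbers.
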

  \begin{proof}
  At first, we can rule out the case when the output is on outside of a compact region using the idea of \cite{Se06} using action values:  for a non-trivial orbit $\gamma\in\mathcal O(H_{S^1})$ at the end, its action is given by 
    \begin{align*} A_{H_{S^1}}(\gamma) &:= -\int_{S^1}\gamma^*\lambda+\int_0^1H_{S^1}(\gamma(t))dt\\
    &=-2\int_0^1r^2dt+\int_0^1H(\gamma(t))dt+\int_0^1 F(\gamma(t))dt, \hskip 0.2cm (H_{S^1}=H(r)+F(r,t))\\
    &=-\int_0^1r^2+\epsilon \hskip 0.2cm (\epsilon<<1)
    \end{align*}  
  Nontrivial Hamiltonian orbits are appears as a small perturbation of $\Sigma_{g, l}$. An action value of such orbit is dominated by $-l^2$. Therefore, the output $\gamma_-$ cannot be an orbit of level $l>1$ by the positivity of the topological energy. 
  
  Suppose $\gamma_-$ is an orbit of period $0<l \leq1$. Since $u$ provides a homotopy between the loop $\Gamma_W$ and the orbifold loop $(\gamma_-, g)$, the (conjugacy class of) $g$ must be equal to $J^{-1}$. This means that $\gamma_-(0)$ is a fixed point of $\mathcal R_{1-l}$. Because $w_i/h\leq \frac{1}{2}$ for $\forall i$, Reeb flow $\mathcal R_{1-l}$ has a fixed point only at the origin (which is not in $M_W$). This gives the contradiction.
  
   Similarly, if $\gamma_-$ is Morse critical point in a compact region, we must have $\Phi(\gamma_-(0))=\gamma_-(1)$. It must be a fixed point of a monodromy $\Phi_W$. It is impossible because the only fixed point of $\Phi_W$ is the origin, which is not contained in $M_W$. 
  \end{proof}


 Also notice that an orbifold nodal degeneration does not affect a standard analysis of codimension $1$ boundary strata of moduli spaces because they are of codimension two. It follows from the fact that a local model of such degeneration is given by a family 
$z_1z_2 : \left[\C^2/(\Z/n)\right] \to \C$.
Here, a group $\Z/n$ acts on $\C^2$ by $(z_1, z_2) \to (\xi \cdot z_1, \xi^{-1}\cdot z_2)$ with $\xi$ an $n$-th root of unity. 

Therefore operations such as a closed-open map $\mathrm{CO}(\Gamma_W)$, quantum cap action $\cap \Gamma_W$ and popsicle operations $\bold P^{\Gamma_W}_{n , F, \phi}$ on $\mathcal {WF}([M_W/G_W])$ make sense, and we use them to define the $\AI$-operation $\{M_k\}$.

The final important piece, the vanishing of sphere operations can be shown as follows.

\begin{prop}\label{prop:vanishing2}
If $  \sum_{k=1}^n w_i-h \geq 0$, then popsicle spheres with two or more $\Gamma_M$ inputs vanish.
\end{prop}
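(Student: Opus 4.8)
The plan is to run the same argument as in Proposition \ref{prop:vanishing} (the case of Assumption \ref{as:main}), but feeding in the degree/index and action estimates we have now established for the orbifold $[M_W/G_W]$ in place of the free $S^1$-action picture. The key point is that all the algebraic inputs of the earlier proof have now been upgraded to the orbifold setting: the degree inequality of Corollary \ref{degree inequality 2} is purely linear-algebraic and holds verbatim (it only used Fredholm regularity and the alignment decomposition \eqref{eq:dfspb}, which are unchanged), the spectral sequence of Lemma \ref{energy spectral sequence} is replaced by Lemma \ref{lem:filgw}, and the non-negativity input ``$\mu_{RS}(\partial M_1)\ge 0$'' is replaced by Proposition \ref{orbifold degree inequality 1}, i.e.\ $\star_{g,l}\ge -\tfrac{2l(\sum w_i-h)}{h}$, which is $\ge 0$ exactly under the hypothesis $\sum w_i - h\ge 0$.

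\textbf{Step 1: reduce to a single aligned sphere component.} Exactly as in the proof of Proposition \ref{prop:vanishing}, the codimension-one analysis of Section~5.1 shows that a codimension-one sphere-bubble stratum of $\overline{\mathcal P}_{n,F,\phi}(\Gamma_W;a_1,\dots,a_n,a_0)$ consists of $\Sigma_{v_1}\cup\Sigma_{v_2}\cup\bigcup_i\Sigma^+_{w_i}$ with all spheres aligned to $\Sigma_{v_2}$; applying Corollary \ref{degree inequality 2} picks out one sphere component $u_{w_i}$ with output $\gamma_{w_i}=:\xi$ carrying $N:=k_{w_i}\ge 2$ insertions of $\Gamma_W$ and satisfying $\deg\xi \le N\deg\Gamma_W - N/2$. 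By Corollary \ref{cor:gin}, $\deg\Gamma_W = -\mu := -\mu_{RS}(\Gamma_W) = -\tfrac{2(\sum w_i - h)}{h}\le 0$ (so $\mu\ge 0$), hence $\deg\xi \le -N\mu - N/2$.

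\textbf{Step 2: combine the grading and action constraints.} The output $\xi$ is a twisted Reeb orbit of some type $(g,l)$, and it lives in the filtration level $\cF^{-l}$ with $\mathcal A_{H_{S^1}}(\xi)\in[-l^2-\epsilon,-l^2+\epsilon]$. Since $\xi$ is a perturbation of a critical point on $\Sigma_{g,l}/G_W$, Lemma \ref{lem:filgw} together with Proposition \ref{orbifold degree inequality 1} gives the lower bound $\deg\xi \ge -\star_{g,l}\ge -\tfrac{2l(\sum w_i - h)}{h} = -l\mu$ (using that the degree of a generator coming from $\Sigma_{g,l}/G_W$ is at least its filtration-shift, and the Morse grading contributes non-negatively). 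Combining with Step~1, $-l\mu \le -N\mu - N/2$, which forces $l \ge N\bigl(1+\tfrac{1}{2\mu}\bigr)$ when $\mu>0$ (and already a contradiction when $\mu=0$, since then $\deg\xi\le -N/2<0$ while $\cF^{-\bullet}$ is concentrated in non-negative degrees). On the other hand, the action inequality $\tfrac{-l^2+\epsilon}{N}\ge \mathcal A_{H_{S^1}}(\xi)/N \ge \sum_{i=1}^N \mathcal A_{H_{S^1}}(\Gamma_W)\ge -N(1+\epsilon)$ gives $l^2 \le N^2(1-\epsilon)-\epsilon$. The two bounds together read
\[
N^2\Bigl(1+\tfrac{1}{\mu}\Bigr)^2 \le l^2 \le N^2(1-\epsilon)-\epsilon,
\]
which fails for every integer $N\ge 2$ once $\epsilon$ is chosen small enough. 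Hence no such sphere bubble exists. The case $\mu = 0$ (log Calabi--Yau) is handled separately as above, directly from degree positivity of $\cF^{-\bullet}$.

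\textbf{Main obstacle.} The delicate point, and the place where the orbifold setting genuinely differs from Assumption \ref{as:main}, is justifying that the output orbit $\xi$ obeys the lower degree bound $\deg\xi\ge -l\mu$ coming from the filtration: in the non-orbifold case this was Lemma \ref{energy spectral sequence}, but here the Morse--Bott components $\Sigma_{g,l}/G_W$ are the more intricate twisted-Reeb-orbit spaces of Section~6, their periods $l$ may be fractional, and the degree shift is $\star_{g,l}$ rather than a simple multiple of $\mu$. One must check that $\xi$, being a perturbation of a critical point of $h_{g,l}$ on $\Sigma_{g,l}/G_W$, indeed has fractional degree $\ge p + q - \star_{g,l}$-type shift with the Morse part $q$ non-negative, and that Proposition \ref{orbifold degree inequality 1} then yields $\deg\xi\ge -l\mu$ uniformly in $(g,l)$. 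Granting this — which is precisely what the index computations of Section~6 were set up to provide — the rest is the same $\epsilon$-juggling as before, and the proposition follows, completing the construction of $\cF(W,G_W)$ and hence the proof of Theorem \ref{thm:fulg}.
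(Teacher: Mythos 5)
Your proposal is correct and follows essentially the same route as the paper: Corollary \ref{degree inequality 2} is reused verbatim, the role of Lemma \ref{energy spectral sequence} is taken over by Lemma \ref{lem:filgw} combined with Proposition \ref{orbifold degree inequality 1} to obtain $\deg\xi\ge -l\mu$, and the same action estimate then excludes every $N\ge 2$ exactly as in Proposition \ref{prop:vanishing}. One cosmetic slip: the intermediate bound should read $\deg\xi\ge \star_{g,l}\ge -l\mu$ (the shift itself, not $-\star_{g,l}$), as your own parenthetical justification indicates; the conclusion is unaffected.
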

\begin{proof}
Proof is similar to that of  Proposition \ref{prop:vanishing}.

Suppose such a sphere bubble from popsicle moduli space exist.
As Corollary \ref{degree inequality 2} still holds, we have a sphere component with $N$ insertions of $\Gamma_W$
 with an output $\xi$, satisfying the same inequality \eqref{degree inequality 3}.

From Lemma \ref{lem:filgw}, the degree of generators of symplectic cochains are either non-negative (in the constant orbit cases) or
satisfies the inequality $p+q - \star_{g,-p} \geq 0$ or $p+q \geq \star_{g,-p}$ (for other cases).
Let us denote $$\mu:= \mu_{\mathrm{RS}}(\Sigma_{J^{-1}, 1}) =\frac{2 (\sum_{k=1}^n w_i-h)}{h}$$ for simplicity. 
From Proposition \ref{orbifold degree inequality 1}, we have 
\[\star_{g, l} \geq -\frac{2l(\sum_{k=1}^n w_i -h)}{h} = - l \mu \]
Combining these, we have  
\begin{center}
if $\mu$ is non-negative and $\xi \in F^{-l}$, then $\deg \xi \geq -l\cdot\mu$.
\end{center}
Now, rest of the proof is exactly the same as that of Proposition \ref{prop:vanishing}.
\end{proof}

\begin{remark}
Although we only prove log-Fano/log Calabi-Yau cases, the readers can easily check that the same proof works for when $\mu_{\mathrm{RS}}(\Sigma_{J^{-1}, 1}) = \frac{2 (\sum_1^n w_i -h)}{h} >-\frac{1}{2}$. 
\end{remark}

\appendix

  \section{Moduli space of pseudo-holomorphic curves and perturbations}\label{basic Floer theory}
  We briefly describe a perturbation scheme for moduli spaces of pseudo-holomorphic curves that we use. We refer \cite{S08}, \cite{AS}, \cite{Ab12}, and especially \cite{A10} and \cite{Se18} from which most of the material has been borrowed.  
  
  \subsection{Setup}
   Let $(M^{2n}, \omega = d\lambda)$ be a Liouville manifold with cylindrical ends. By definition, $M$ can be decomposed into a union of a compact part and cylindrical ends 
 \[M=M_{cpt}\bigcup_{\partial M_{cpt}}\partial M_{cpt}\times[1, \infty),\]
 and Liouville flow $Z$ is of the form $Z=r\frac{\partial}{\partial r}$ at the cylindrical end where $r$ is a coordinate of $[1, \infty)$. We also assume $c_1(TM)=0$ to put $\Z$-grading everywhere.
   \begin{itemize}
    \item We will work with a function $H\in C^\infty(M, \R)$ such that $H>0$, $C^2$-small on $M_{cpt}$ and \textit{quadratic at infinity} ($H(x, r)=r^2$), and denote the class of such functions by $\mathcal H(M)$.
  \item Whenever we consider a time dependent perturbation $H_{S^1}=H+F :S^1\times M \to \R$, we assume $H_{S^1}>0$,  $C^2$-small on $M_{cpt}$ so that the time-$1$ periodic orbits of $H_{S^1}$ are non-degenerate. This is true for a generic perturbation. 
    \item a $\omega$-compatible almost complex structure $J$ is called \textit{contact type} if $\lambda\circ J=dr$
  at the end. We denote a class of such almost complex structure by $\mathcal J(M).$  
  \end{itemize}

$\mathcal W$ is  a collection   of  exact properly embedded Lagrangian submanifolds in $M$, such that
if non-compact, $L\cap \partial M_{cpt}$ is a Legendrian submanifold, and $L$ is \textbf{conical at the end}.
  Furthermore, all such $L$ is required to have vanishing relative first Chern class $2c_1(M, L)$. We attach a spin structure and a grading function on each $L$.  
   
Fix a small, time dependent perturbation $H_{S^1}:S^1\times M \to \R$ of $H$. Let $\mathcal O := \mathcal O(M, H_{S^1})$
 be a set of time-1 orbits $\gamma$ of $H_{S^1}$, and 
$\chi(L_0, L_1;H)$
 to be the set of time-1 Hamiltonian chords $a$ of $H$ from $L_0$ to $L_1$ for two Lagrangians $L_0,L_1 \in \mathcal W$.  We use a notation $o_{\gamma }$ (resp. $o_a$) to denote its orientation operator. Its degree $\deg o_\gamma$ (resp. $\deg o_a$) is given by its cohomological Conley-Zehnder index (resp. Maslov index). 
 
    \subsection{Moduli spaces}
  The moduli space of the Riemann sphere with $m+1$ marked points ($m$ for positive punctures and $1$ for the negative puncture) is denoted as $S_{m,1;0}$. The moduli space of a disc with $m$ interior marked points $\{z_1^+, \ldots z_m^+\}$ (all for positive punctures)
  and with $n+1$ cyclically ordered boundary marked points ($\{z_1,\ldots,z_n\}$ for positive punctures and $\{z_0\}$ for the negative puncture) is denoted  as $S_{m;n,1}$.  An oriented real blow-up of $\OL S$ at marked points is a surface $\vert S$ such that
boundary puncture $z_k$ (resp. interior puncture $z_l^+$) is replaced by $[0,1] \times \{z_k\}$ (resp. $S^1\times \{z_l^+\}$).
 Let us denote (with coordinate $(s,t)$)
 $$Z^+ = [0,\infty)\times[0,1], \;Z^- = (-\infty,0]\times[0,1], \;C^+ = [0, \infty)\times S^1, \;C^- = (-\infty,0]\times S^1.$$
  \begin{defn}
   A set of ends $\mathfrak S$ for $S\in S_{m;n,1}$ is a choice of 
   \begin{itemize}
     \item \textit{strip-like ends} $\epsilon_k^\pm: Z_\pm \to S, \; \lim_{s\to \pm\infty}\epsilon_k^\pm(s,t) =z_k, \; (\epsilon_k^\pm)^{-1}(\partial S) =\{t=0,1\}$ 
     \item \textit{cylindrical ends} $\delta_l^\pm: C_\pm \to S, \; \lim_{s \to \pm \infty}\delta_l^\pm(s,t) =z_l^\pm$ 
   \end{itemize}
   Such collection is said to be \textit{weighted} if each strip and cylinder is endowed with a positive real number 
   \begin{itemize}
     \item $w^\pm_{S, k}$ for each strip-like end $\epsilon^k_\pm$
     \item $v^\pm_{S, l}$ for each cylindrical end $\delta^l_\pm$
   \end{itemize}
  such that $\sum w^+_{S, k} +\sum v^+_{S, l}=\sum w^-_{S, k} +\sum v^-_{S, l}$.
  \end{defn}
  The choice of ends (without weights) is a choice complex coordinate $z$ of $S$ near punctures. It also provides an analytic coordinate $(\sigma,t)$ of $|S$ near puncture in the following way.
  \[\sigma = e^{\mp \pi s}= |z|, \hskip 0.2cm t=\mp \mathrm{arg}(z)/\pi\]
We denote Deligne-Mumford compactification of a family of surfaces $\vert S$ by $\OL {\vert S}_{m;n, 1}$. 

\begin{defn}  
  Let $(S,\mathfrak S)$ denote a holomorphic disc $S$ with ends $\{\kappa\}$ with weight $\{\nu_\kappa\}$.  
  \begin{enumerate}
    \item A basic, asymptotically compatible $1$-form $\alpha_S$ is a closed $1$-form on $S$ whose restriction on $\partial S$ vanishes, extends smoothly to $|S$ and $\alpha_S = \nu_\kappa dt$ at the interval / circle at infinity. It implies  
      \[\kappa^*\alpha_S = \nu_\kappa dt+d(e^{\mp\pi s}g_\kappa (e^{\mp\pi s}, t)), \hskip 0.3cm \forall \kappa\]
for sufficiently large $|s|$ and for some smooth function $g_\kappa$ that vanishes for $t=0,1$. 
    \item A secondary, asymptotically compatible $1$-form $\beta_S$ is a sub-closed $1$-form whose restriction on $\partial S$ vanishes, extends smoothly to $|S$, $\beta_S = \nu_\kappa dt$ at the circle at infinity, and vanishes at the intervals at infinity. It implies
    \[\kappa^* \beta_S=\left\{\begin{array}{cc} \nu_\kappa dt+d(e^{\mp\pi s}h_\kappa (e^{\mp\pi s}, t)) & \forall \textrm{cylindrical end $\kappa$}\\ d(e^{\mp\pi s}h_\kappa (e^{\mp\pi s}, t)) & \forall \textrm{strip-like end $\kappa$} \end{array}\right.\]
for sufficiently large $|s|$ and for some smooth function $h_\kappa$ that vanishes for $t=0,1$ whenever $\kappa$ is a strip-like ends.
    \item An $\mathfrak S$-adapted time-shifting map is a function $a_S:\partial \OL S\to [1, \infty)$ such that
      \[\kappa^*a_S=\nu_\kappa, \hskip 0.3cm \forall \textrm{strip-like end $\kappa$}.\]
      Here, $\kappa^*$ should be thought of the restriction along a small neighborhood of $x_k$.
  \item For a fixed Hamiltonian $H\in\mathcal H(M)$, an $S$-dependent Hamiltonian $H_S$ is said to be compatible with $(S, \mathfrak S, H)$ if 
  \[\kappa^*H_S = \frac{H \circ \psi^{\nu_\kappa}}{\nu_\kappa^2}, \hskip 0.3cm \forall \kappa\]
  \item For a fixed absolutely bounded $S^1$-dependent Hamiltonian perturbation $F$, an $S$-dependent  Hamiltonian $F_S$ is said to be compatible with $(S, \mathfrak S, F)$ if it is also absolutely bounded and 
  \[\kappa^* F_S=\left\{\begin{array}{cc} \frac{F\circ \psi^{\nu_\kappa}}{\nu_{\kappa}^2} & \forall \textrm{cylindrical end $\kappa$}\\ 0 & \forall \textrm{strip-like end $\kappa$} \end{array}\right. \]
  \item For a fixed $S^1$or $[0,1]$-dependent almost complex structure $J_t$, an $S$-dependent almost complex structure $J_S$ is called asymptotically compatible to $(S, \mathfrak S, a_S, J_t)$ if it extends smoothly to $|S$ and 
  \[J_p\in \mathcal J(M), \hskip 0.3cm \forall p\in S.\]
It implies 
  \[\kappa^*J_S = \left(\phi^{\nu_\kappa}\right)^*J_t+e^{\mp \pi s}O_{\kappa, (s,t)}, \hskip 0.3cm \forall \kappa\]
  for some error term $O_{\kappa, (s,t)}$. 
  \end{enumerate}
  \end{defn}
  \begin{remark}
  In \cite{A10}, (1), (2), and (6) is required to be strictly compatible to $\mathfrak S$ which means 
  \[\kappa^*\alpha_S = \nu_\kappa dt\]
  and similar for $\beta_S$ and $J_S$. In particular, it depends on the choice $\mathfrak S$ . An idea of asymptotic compatibility is due to \cite{Se18}. It is more flexible because it only depends smoothly on $\vert S$, not on the choice of the set of ends $\{\kappa_S\}$. It allows us to choose a special kind of ends to deal with popsicle structures later on. 
  \end{remark}
  Finally, we define
  \begin{defn}
  For a fixed surface $S$, Hamiltonian $H\in \mathcal H(M)$ and absolutely bounded time-dependent Hamiltonian $F$, a Floer data $\mathrm{Floer}_S$ consists of 
    \begin{enumerate}
      \item A collection of weighted strip and cylinder data $\mathfrak S$;
      \item A basic $1$-form $\alpha_S$ and secondary $1$-form $\beta_S$ asymptotically compatible to $(S,\mathfrak S)$;
      \item An $(S, \mathfrak S)-$adapted time-shifting map $a_S$;
      \item An $S$-dependent, $(S, \mathfrak S, H, F)$-compatible Hamiltonian $H_S$ and $F_S$; 
      \item An $S$-dependent, asymptotically compatible almost complex structure $J_S$. 
    \end{enumerate} 
  Also, we say $\mathrm{Floer}_S^1$ and $\mathrm{Floer}_S^2$ are conformally equivalent if $\mathrm{Floer}_S^2$ is a rescaling by Liouville flow of $\mathrm{Floer}_S^1$, up to constant ambiguity in the Hamiltonian terms.
  \end{defn}
      In the simplest case of a strip $S\in S_{0; 1,1}$ or a cylinder $S\in S_{1,1;0}$, we choose a canonical strip-like/cylindrical end with weights $1$ for all ends. A form $\mathrm{dt}$ is a compatible sub-closed one form.  
  A universal and consistent choice of Floer data is a choice of Floer data $\mathrm{Floer}_S$ for all $S\in S_{m;n,1}$ which varies smoothly over the moduli space. Since the space of Floer data is contractible, we can extend it to $\overline{\vert S}_{m;n,1}$.  
    
  \begin{defn}
    Let $\gamma_i\in \mathcal O$ be a time-1 Hamiltonian orbits and $a_j\in\chi(L_{j-1}, L_j), \hskip 0.2cm j=1, \ldots, n$ and $a_0\in\chi(L_n, L_0)$ be Hamiltonian chords. Define the moduli space 
$\overline{\mathcal M}_{m;n,1}(\gamma_1, \ldots, \gamma_m; a_1, \ldots, a_n, a_0)$
of maps 
$$\left\{u:S\to M \Big| S\in \overline S_{m; n,1}\right\}$$
    satisfying the inhomogeneous Cauchy-Riemann equation with respect to $J_S$ 
\begin{equation}\label{eq:cr}
\left(du-X_{H_S}\otimes \alpha_S-X_{F_S}\otimes \beta_S\right)^{0,1}=0
\end{equation}
    and the following asymptotic boundary conditions;  
    \begin{gather*}
    \lim_{s\to \infty} u\circ \epsilon^k_+(s, \cdot)=a_k \\
    \lim_{s\to -\infty} u\circ \epsilon^0_-(s, \cdot)=a_0 \\
    \lim_{s\to \infty} u\circ \delta^l_+(s, \cdot)=\gamma_l\\
    u(z) \in \psi^{a_S(z)}L_i, \;\; z\in \partial_iS, \;\; \textrm{an $i$-th boundary component of $S$.}
    \end{gather*}
    Additional perturbation terms $\alpha_S, \beta_S, H_S, F_S$ come from the universal and consistent choice of Floer data $\mathrm{Floer}_{S}$.
  \end{defn}
  The following compactness and transversality result is standard. 
  \begin{lemma} For a generic choice of universal and consistent Floer data, 
  \begin{enumerate}
    \item The moduli spaces $\overline{\mathcal M}(\gamma_1, \ldots, \gamma_m; a_1, \ldots, a_n, a_0)$ are compact. 
    \item For a given input $\gamma_i, \hskip 0.2cm i=1,\ldots ,m$ and $a_j, \hskip 0.2cm j=1, \ldots, n$, there are only finitely many $a_0$ for which $\overline{\mathcal M}_{m;n,1}(\gamma_1, \ldots, \gamma_m; a_1, \ldots, a_n, a_0)$ is non-empty .
    \item It is a manifold of dimension 
    \[\mathrm{dim_\R}\overline{\mathcal M}_{m;n,1}(\gamma_1, \ldots, \gamma_m; a_1, \ldots, a_n, a_0) = (2m+n-2)+ \deg o_{a_0}-\sum_{i=1}^m\deg o_{\gamma_i}-\sum_{j=1}^n\deg o_{a_j}\]
  \end{enumerate}
  \end{lemma}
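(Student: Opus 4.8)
The three assertions form the standard compactness--finiteness--transversality package for moduli of perturbed pseudo-holomorphic curves with strip-like and cylindrical ends, and the plan is to assemble them from the a priori estimates furnished by the geometry of $M$ (Liouville, exact, $H$ quadratic at infinity, $J$ of contact type) together with a Sard--Smale argument carried out consistently over the Deligne--Mumford strata, exactly in the spirit of \cite{A10}, \cite{AS}, \cite{Se18}. I will address the dimension formula first, then transversality, then compactness and finiteness.

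For (3), fix a domain $S \in \overline{\mathcal M}_{m;n,1}$ and consider the linearized operator $D_u$ of the inhomogeneous Cauchy--Riemann equation \eqref{eq:cr} with the prescribed totally real Lagrangian boundary condition and the prescribed asymptotics at the ends. By Riemann--Roch for Cauchy--Riemann operators with totally real boundary data, together with the cohomological grading conventions fixed in the appendix (Conley--Zehnder indices for the orbits $\gamma_i$ and Maslov indices for the chords $a_j$, $a_0$, so that the numbers $\deg o_{\gamma_i}$, $\deg o_{a_j}$, $\deg o_{a_0}$ are the ones entering the formula), one has $\mathrm{ind}(D_u) = \deg o_{a_0} - \sum_{i=1}^m \deg o_{\gamma_i} - \sum_{j=1}^n \deg o_{a_j}$; the Hamiltonian perturbation term $X_{H_S}\otimes\alpha_S + X_{F_S}\otimes\beta_S$ is a zeroth-order term and does not affect the index. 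Adding the dimension of the domain moduli space, $\dim_{\mathbb R}\overline{\mathcal M}_{m;n,1} = 2m+n-2$ (the disc with $n+1$ cyclically ordered boundary marked points contributes $n-2$, and each interior marked point contributes $2$), yields the asserted total dimension once transversality promotes the expected dimension to the actual one.

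For transversality, I would form the universal moduli space fibered over the Banach manifold of universal and consistent Floer data $\mathrm{Floer}_S$ and show the universal linearization is surjective. Since the perturbation data $(\alpha_S,\beta_S,H_S,F_S,J_S)$ are allowed to be domain-dependent, surjectivity can be achieved by the usual variational argument \emph{without} invoking somewhere-injectivity of $u$: at a point in the image where $du$ differs from the Hamiltonian vector field term one varies $J_S$, and near a limiting orbit or chord one varies $H_S$ (within the cone of data compatible with $H$, $F$ along the ends); unique continuation then propagates surjectivity over $S$. Sard--Smale produces a comeager set of regular data. The only point needing care is compatibility with the boundary strata of $\overline{\mathcal M}_{m;n,1}$: one fixes the Floer data inductively beginning on the lowest-dimensional (most broken) strata and extends over higher strata using contractibility of the space of Floer data, so that the universal and consistent choice is regular on every stratum simultaneously; this is precisely the inductive scheme recalled in the appendix, and (for the popsicle-type domains used in the main text) the mildly equivariant refinement causes no loss of transversality because the relevant symmetric strata are free.

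For (1), the decisive input is that no sequence of solutions escapes to infinity. Because $H$ is quadratic at infinity, $J$ is of contact type ($\lambda\circ J = dr$), $\alpha_S$ is closed and $\beta_S$ is sub-closed with both restricting to zero on $\partial S$, the integrated maximum principle of Abouzaid--Seidel applies and confines all solutions to a fixed compact subset of $M$ determined by the asymptotic data. The topological energy is then bounded a priori by $\mathcal A(a_0) + \sum_{i=1}^m \mathcal A(\gamma_i) - \sum_{j=1}^n \mathcal A(a_j)$ up to bounded corrections from $\alpha_S,\beta_S$, so Gromov--Floer compactness applies; exactness of $(M,\omega)$ and of the Lagrangians rules out holomorphic sphere and disc bubbles, leaving only broken strips and cylinders and nodal domain degenerations, all of which already appear in $\overline{\mathcal M}_{m;n,1}$. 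Assertion (2) follows from the same energy identity together with positivity of the topological energy: the action values of Hamiltonian chords from $L_n$ to $L_0$ (and of orbits) form a discrete set bounded above on each half-line, so $\mathcal A(a_0)$ is forced below a fixed threshold and only finitely many $a_0$ survive. The one genuinely non-formal point of the whole argument is checking that the hypotheses of the integrated maximum principle are met by the precise class of Floer data permitted here --- in particular that $\beta_S$ being merely sub-closed (rather than closed) still suffices for the no-escape estimate; once this is in place, everything else reduces to the cited references.
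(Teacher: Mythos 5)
Your proposal is correct and follows essentially the same route as the paper, which simply defers the a priori energy/confinement estimates to \cite{A10}, cites Sard--Smale for transversality, and the index theorem for the dimension count; your expansion (integrated maximum principle for quadratic $H$ and contact-type $J_S$ with closed $\alpha_S$ and sub-closed $\beta_S$, exactness excluding sphere and disc bubbles, action-spectrum discreteness for finiteness of $a_0$, and Riemann--Roch plus the $2m+n-2$ domain contribution) is exactly the standard package being invoked there. The point you flag about sub-closedness of $\beta_S$ is indeed the sign condition that makes the no-escape estimate go through, so no gap remains.
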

  \begin{proof}
    See \cite{A10}. For a compactness result, one need to assure that the energy of pseudo-holomorphic curves are a priori bounded in $M$. This estimate is carefully done therein. Transversality result is a standard application of Sard-Smale argument. The dimension formula is also a standard application of Atiyah-Singer index theorem on a linearized Fredholm operator. 
  \end{proof}
  When $\mathcal M_{m;n,1}(\gamma_1, \ldots, \gamma_m; a_1, \ldots, a_n, a_0)$ has dimension zero so that it is rigid, then a map $u:S\to M$ in that moduli space is isolated. An orientation of the moduli space provides a canonical isomorphism 
  \[Q_u : \bigotimes_{i=1}^m o_{\gamma_i} \otimes \bigotimes _{j=1}^n o_{a_j} \to o_{a_0}.\]
  We sum up $Q_u$ for all $u\in \mathcal M(\gamma_1, \ldots, \gamma_m; a_1, \ldots, a_n, a_0)$ and all $a_0$ and define
  \[\textbf{F}_{m;n,1}(\gamma_1, \ldots, \gamma_m; a_1, \ldots, a_n)=\sum_{\mathrm{dim_\R}\mathcal M(\gamma_1, \ldots, \gamma_m; a_1, \ldots, a_n, a_0)=0}\sum_{u\in\mathcal M(\gamma_1, \ldots, \gamma_m; a_1, \ldots, a_n, a_0)} Q_u\left(\bigotimes_{i=1}^m o_{\gamma_i} \otimes \bigotimes _{j=1}^n o_{a_j}\right)\]
  We define $\OL{\mathcal M} (\gamma_1, \ldots, \gamma_m, \gamma_0)$ and $\textbf{F}_{m,1;0}$ in a similar way.

 Let us first recall the our setup and explain the detailed construction.

\subsection{Wrapped Fukaya category and symplectic cohomology}

Symplectic cohomology $SH^\bullet(M)$ is a version of Hamiltonian Floer cohomology for Liouville domain introduced by Cieliebak, Floer and Hofer \cite{CFH95} and Viterbo \cite{Vi99}. For Lagrangian submanifolds in $M$ that are either compact or cylindrical at infinity, a wrapped Fukaya $\AI$-category $\WF(M)$ was defined by Abouzaid-Seidel \cite{AS}.
We use the version with quadratic Hamiltonian of Abouzaid \cite{Ab12}   which we recall briefly here.

For two Lagrangian submanifolds $L_0, L_1 \in \mathcal W$, a \textit{wrapped Floer cochain complex} is a vector space 
  \[CW^\bullet(L_0, L_1;H)=\bigoplus_{a\in \chi(L_0, L_1;H)} o_a\]
  It is graded by the degree $\deg o_a$. We will use the notation $a$ instead of $o_a$ for generators, and $CW^\bullet(L_0,L_1)$ instead of 
  $CW^\bullet(L_0,L_1;H)$  if it cause no confusion. 
  \begin{defn}
  A wrapped Fukaya category $\WF(M)$ consists of a set of objects $\mathcal W$ with the space of morphisms $CW^\bullet(L_0, L_1)$ for $L_i\in \mathcal W$ equipped with an $A_\infty$ structure 
        \begin{align*}
        m_k: CW^\bullet(L_0, L_1) \otimes &\cdots \otimes CW^\bullet(L_{k-1}, L_k) \to CW^\bullet(L_0, L_k)\\
        m_k(a_1, \ldots, a_k)&= (-1)^{\diamond_k} \textbf{F}_{0;k,1}(a_1, \ldots, a_k);\\
        \diamond_k&=\sum_1^k i \cdot \deg a_i
        \end{align*} 
  \end{defn}
The Liouville flow $\psi^\rho$ for time $\mathrm{log}(\rho)$ defines a canonical isomorphism 
  \[CW^\bullet(L_0, L_1; H, J_t) \simeq CW^\bullet\left(\psi^\rho L_0, \psi^\rho L_1; \frac{H\circ \psi^\rho}{\rho}, (\psi^\rho)^*J_t \right)\]
  Also, $H\circ \psi^\rho =\rho^2 r^2$ at the cylindrical end. Therefore $\frac{H\circ \psi^\rho}{\rho^2} \in \mathcal H(M)$. 
The proof of $A_\infty$ relation follows from the degeneration patterns of pseudo-holomorphic discs which correspond to a codimension $1$ boundary strata of Gromov bordification $\overline{\mathcal M}_{0;k,1}(a_1, \ldots, a_k,a_0)$. In particular, we have $m_1^2=0$.
Its $m_1$-cohomology $HW^\bullet(L_0, L_1)$ is called the wrapped Floer cohomology.

 We are also interested in the symplectic cohomology of Liouville manifold. 
  \begin{defn}
  A \textit{symplectic cochain complex} is a $\Z$-graded cochain complex
  \[CH^\bullet(M;H_{S^1})=\bigoplus_{\gamma \in \mathcal O(M;H_{S^1})} o_\gamma \]
  graded by the degree $\deg o_\gamma$. We will use the notation $\gamma$ instead of $o_\gamma$ for generators if it cause no confusion. A differential of this complex is 
  \[\der_{CH} (o_{\gamma_1}) = (-1)^{\deg o_{\gamma_1}}\textbf{F}_{1,1;0}(\gamma_1).\]
  Recall that $\textbf{F}_{1,1;0}(\gamma_1)$ is given by a counting of a zero-dimensional component of a moduli space of pseudo-holomorphic cylinders $\mathcal M_{1,1;0}(\gamma_1, \gamma_0)$. Its cohomology is denoted as $SH^\bullet(M)$.
    \end{defn}
 A ring structure on its cohomology is induced from 
  $$  CH^\bullet(M)^{\otimes 2}  \to CH^\bullet(M): 
    (\gamma_1, \gamma_2)  \mapsto (-1)^{\deg \gamma_1} \textbf{F}_{2,1;0}(\gamma_1, \gamma_2) $$
    Let us recall the definition of a closed-open map to Hochschild cochain complex of wrapped Fukaya category.
    \begin{defn}\label{closed-open map}
    A closed-open map is a map
    \begin{align*}
    \mathrm{CO}=\{\textrm{CO}_l\} _{l\geq0}: CH^\bullet(M) &\to CC^\bullet\left(\WF(M), \WF(M)\right)\\
    \textrm{CO}_l(\gamma)(a_1, \ldots, a_l)&:= (-1)^{\diamond_l} \textbf{F}_{1;l,1}(\gamma; a_1, \ldots, a_l, a_0)
     \end{align*}
  \end{defn}
  A degeneration pattern of a moduli space $\overline {\mathcal M}_{1;n,1}(\gamma;a_1, \ldots, a_n, a_0)$ proves that $\textrm{CO}$ is a cochain map.

\section{Popsicle moduli space and its charts}\label{compactification}
In this section, we prove the following theorem.
 \begin{thm}\label{thm:corner}
The compactified moduli space  of popsicles $\overline{P}_{n, \widetilde F, \phi}$ is a manifold with corners.
\end{thm}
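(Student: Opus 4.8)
The plan is to build explicit charts around every stratum of $\overline{P}_{n,\widetilde F,\phi}$ and check that the transition maps are smooth, so that the compactified moduli space is a manifold with corners whose codimension of a stratum of type $\star=(\widetilde T,\widetilde F,\widetilde\psi,\Phi,\Psi)$ equals $|\mathrm{Vertex}(T)|-1+\sum_{e}m_e$ as claimed in Section \ref{sec:qc1}. First I would set up the interior charts: for a fixed combinatorial type $\star$, the open stratum $P_{n,\widetilde F,\phi,\star}$ is cut out inside a product of (open) popsicle-disc moduli $\prod_v P_{\mathrm{val}(v),F_v,\phi_v}$ and popsicle-sphere moduli $\prod_w \mathbb P_{m_w,n_w,\phi_w}$ by the fiber-product (alignment) conditions $\Sigma_{\Phi(w)}\Join\Sigma^+_w$, together with the ordering constraints imposed by $\Psi$ on the rescaling parameters of spheres aligned to a common edge. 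Using the planar (upper half plane / $\C$) models described in Section \ref{sec:2}, each alignment condition identifies the $x$-coordinates of popsicle lines of a sphere with those of the aligned disc, so the alignment locus is a smooth submanifold of the product, and one computes its dimension exactly as in the examples (each aligned sphere with $k$ distinct popsicle lines contributes $k-1$ new parameters from the positions of sprinkles, after the ambient disc's conformal structure has been fixed). This gives smooth interior charts for each stratum and confirms the dimension/codimension count.

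Next I would construct the gluing charts near a boundary stratum, which is the technical heart. Following the planar-model gluings in Examples \ref{ex:pop21} and the surrounding discussion: to each finite edge $e$ of the ribbon tree $T$ one assigns a gluing parameter $r_e\in[0,\varepsilon)$, and to each sphere bubble $w$ with $\Phi(w)$ a vertex or edge one assigns a gluing parameter $\rho_w\in[0,\varepsilon)$; the alignment data dictates that when a sphere $w$ is aligned to a disc $\Sigma_{\Phi(w)}$ one must glue the sphere and disc with the \emph{same} parameter so that popsicle lines extend across the node, and when several spheres are aligned to an edge the ordering $\Psi_e$ forces the product structure $\rho_{w_1}=\rho_e,\ \rho_{w_2}=\rho_e s_2,\dots$ on the parameters, while the auxiliary (unstable) discs $\Sigma_{un}$ attached at nodal points are glued in with the composite parameter $r_{e_1}r_{e_2}\cdots$ so that forgetting them recovers the ordinary stable-map gluing. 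The key points to verify are: (i) these prescribed gluings are well-defined and yield genuine popsicles with a consistent popsicle structure (the vertical lines in the planar model extend to all of $H$), (ii) the resulting map from $\prod(\text{parameters})\times(\text{stratum moduli})$ into $\overline{P}_{n,\widetilde F,\phi}$ is a homeomorphism onto a neighborhood, and (iii) it is a diffeomorphism with the obvious corner structure — the boundary hypersurfaces being $\{r_e=0\}$ and $\{\rho_e\ \text{hits }0\}$ — so that $\partial$-strata of codimension $c$ correspond exactly to types with $|\mathrm{Vertex}(T)|-1+\sum m_e=c$.

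Finally I would check compatibility of the charts: given two combinatorial types related by further degeneration, the corresponding gluing charts overlap, and on the overlap the transition is smooth because all the gluing recipes are expressed through the standard conformal-gluing construction composed with polynomial reparametrizations of the gluing parameters (the $r_e,\rho_w$ and their products). Since the alignment conditions are closed conditions on $x$-coordinates that behave smoothly under gluing, and since the auxiliary-data bookkeeping only amounts to a smooth change of gluing parameter, the atlas is smooth; the corner structure is inherited from the non-negativity of the gluing parameters, exactly as for Deligne--Mumford spaces and for the Abouzaid--Seidel popsicle spaces. I expect the main obstacle to be step (i)--(iii) above — in particular, proving that the family of glued planar models depends smoothly on all parameters \emph{including} the cases where some gluing parameters are linked by the $\Psi$-ordering and where auxiliary unstable components appear and disappear, i.e.\ showing that the naive dimension count from the alignment fiber product really is the dimension of a smooth chart near codimension-one sphere-bubble strata such as the one in Figure \ref{fig:twobubble}. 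This is where the genuinely new feature of our compactification, absent from \cite{AS} and \cite{Se18}, must be handled carefully, and it is the reason the theorem is stated and proved separately in this appendix.
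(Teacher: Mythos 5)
Your plan is essentially the paper's proof: explicit planar-model gluing charts in which aligned disc/sphere components are glued with a common parameter, auxiliary semi-stable components inserted at nodes (and forgotten afterwards) to make the gluing consistent, and the corner structure and codimension formula $|\mathrm{Vertex}(T)|-1+\sum_e m_e$ read off from the non-negative gluing parameters. The paper implements the points you flag as "to verify" via the choice of rational, aligned strip-like/cylindrical ends (following Seidel) and the auxiliary model, which make the glued popsicle structure canonical and the parameter bookkeeping (Cases 1--3) explicit, so your approach is the same in substance.
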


For the compactification, we need to consider a mixture of two disjoint degenerations; one is when an underlying disc component breaks into several pieces, and  the other is when several sprinkles collide. 
 The first part can be covered by the result of \cite{AS}, and we will focus on the second type of degeneration.  To prove the theorem, we will describe a coordinate chart of a given stable popsicle with an alignment data $(\Phi, \Psi)$. Let us first recall the rational strip-like/cylindrical ends from \cite{Se18}. 

	\begin{defn}
	A choice of strip-like ends $(\epsilon_0^-; \epsilon_1^+, \ldots, \epsilon_k^+)$ is called rational if
	\begin{enumerate}
	\item $\epsilon_0^-$ extends to a map $\epsilon_0^-: Z \to \overline \Sigma$ such that $\epsilon_0^-(-\infty)=z_0$.
	\item $\epsilon_k^+$ extends to a map $\epsilon_k^+: Z \to \overline \Sigma$ such that $\epsilon_k^+(-\infty)=z_0$ and $\epsilon_k^+(+\infty)=z_k$. 
	\end{enumerate}
	Likewise, a choice of cylindrical ends $(\eta_0^-; \eta_1^+, \ldots, \eta_k^+)$ is called rational if 
	\begin{enumerate}
	\item $\eta_0^-$ extends to a map $\eta_0^-: \hat \C \to \overline {\Sigma}^+$ such that $\eta_0^-(0)=w_0$.
	\item $\eta_k^+$ extends to a map $\eta_k^+: \hat \C \to \overline {\Sigma}^+$ such that $\eta_k^+(0)=w_0$ and $\epsilon_k^+(\infty)=w_k$. 
	\end{enumerate}
	If $\Sigma\simeq Z$ is semi-stable with two boundary marking,  two choices of strip-like ends are equivalent if they are differ by a translation of $Z$. In a similar way, if $\Sigma^+ \simeq \C^*$ is semi-stable with two interior markings,  two choices of cylindrical ends are equivalent if they are differ by a dialation of $\C^*$.
	\end{defn}

A rational end  at the negative end  can be viewed as a choice of a particular planar models of popsicles:  $\epsilon_0^-$ determines an isomorphism $\overline \Sigma \simeq \mathbb H$ which sends $z_0$ to $\infty$, and the image of $\epsilon_0^-$ is a complement of a unit half disc $\{\vert z \vert \geq1\}$. The rest of its boundary markings can be identified with points $x_k \in \mathbb R$, and their rational ends $\epsilon_k^+$ determine semicircle centered at $x_k$ of radius $\rho_k$. Note that equivalent strip-like ends have the same image of  $\epsilon$-maps, hence radius is well-defined.
 
Similarly, a rational end $\eta_0^-$ determines an isomorphism $\overline \Sigma^+ \simeq \hat{\mathbb C}$ which sends $w_0$ to $\infty$, and the image of $\eta_0^-$ is a complement of a unit disc $\{\vert w\vert \geq1\}$. Then, the rest of the markings $w_k^+$ can be identified with a complex number $\{z_k\}$ and the rational end $\eta_k^+$ is determined by a disc with radius $\xi_k$ centered at each $z_k$.

Next, we discuss the alignment of ends.

\begin{defn}
\label{aligned ends}
Suppose a popsicle disc $\Sigma$ and a popsicle sphere $\Sigma^+$ are aligned to each other. A choice of strip-like/cylindrical ends for $\Sigma$ and $\Sigma^+$ is said to be \textbf{aligned} if the following condition holds:
	\begin{itemize}
	\item Strip-like ends of $\Sigma$ and cylindrical ends of $\Sigma^+$ are rational.
	\item If you compare two planar models of $\Sigma$ and $\Sigma^+$ associated to $\epsilon_0$ and $\eta_0$, the $x$-coordinates for corresponding popsicle lines are exactly the same.
	\item If $z^+\in \Sigma$ be a sprinkle on a disc which lies on a popsicle line coming out of $z_k \in \mathbb R$, then the image of a cylindrical end for $z^+$ is a disc of radius $\rho_k$. Here $\rho_k$ is a radius of a strip-like end for $z_k$. 
	\item If $w\in \Sigma^+$ is a sprinkle on a sphere which lies on a line corresponds to $z_k \in \mathbb R$, then the image of a rational cylindrical end for $w$  is disc of radius $\rho_k$. Here $\rho_k$ is a radius of a strip-like end for $z_k$. 
 \end{itemize}
 If every ends on a broken popsicle satisfy these conditions, we say that choices of ends are \textbf{aligned}. See Figure \ref{fig:aligned ends}.
 \end{defn}
 
 	\begin{figure}[h]
	\centering
	\def\svgwidth{10cm}
	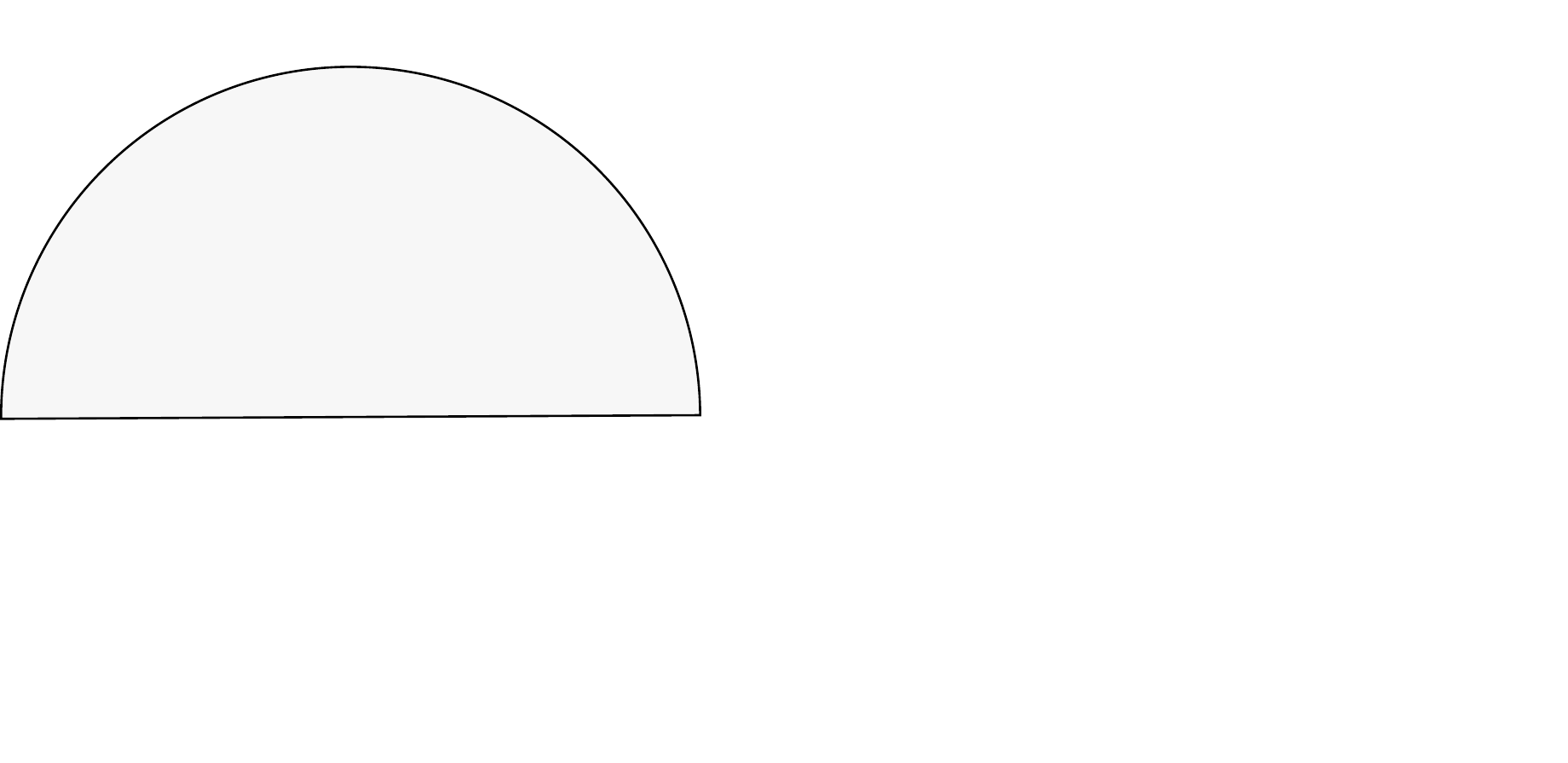
	\caption{Example of aligned pairs with the choice of aligned ends}
	\label{fig:aligned ends}
	\end{figure}

One can choose the ends of a popsicle smoothly and consistently along the moduli space while the choices are all aligned. From now on, we assume that all choice of ends are always aligned. 
 
Let us describe a corner chart for a point in $P_{n, \widetilde F, \phi, \star} \subset \overline P_{n, \widetilde F, \phi}$. 
As usual, a corner chart is described in terms of gluing parameters, but alignment of conformal structure forces us to align the gluing parameters as well.
We will describe how to align gluing parameters according to the alignment data.

For a broken popsicle, we will select a  collection of gluing parameters of outgoing marked points, one for each aligned disc/spheres pair. 

Describing a gluing procedure requires a little more work. We will first define, what we call, an \textbf{auxiliary model}, by adding suitable semi-stable discs and spheres in the following way.

	\begin{enumerate}
	\item (adding semi-stable discs) For a given data $\Psi_e: \Phi^{-1}(e) \to \{1, \ldots, m\}$, we add a string of $m$ many semi-stable discs at the nodal point corresponding to the edge $e$. If $\Psi_e(w) = k$,  the corresponding sphere component of $w$ is aligned with $k$-th semi-stable disc component.
	
	\item (adding semi-stable spheres) Let $w\in \mathrm{Vertex}(T_{v, i, j})$ be a vertex connected to $v \in \mathrm{Vertex}(T)$. If $\Phi(w)$ is adjacent to $v$ (even after adding semi-stable discs), then we do nothing. If not, consider a unique path from $v$ to $\Phi(w)$. Then we add semi-stable popsicle spheres (at the output of the component $w$) which are aligned to  disc components along this path. Here, each added semi-stable popsicle sphere have one input and one output, and aligned to the corresponding disc component. 
	
	Similarly, let $w_1<w_2$ are two adjacent vertices of $T_{v,i,j}$. If $\Phi(w_1)$ and $\Phi(w_2)$ are not adjacent, take a unique path from $\Phi(w_1)$ and $\Phi(w_2)$, and we add corresponding semi-stable popsicle spheres for disc components along this path. Here, each sphere has one input and one output, and aligned to the corresponding disc component.  
	
	We apply the same procedure to leaves. Let $l$ be a leaf of $T_{v,i,j}$ attached to a vertex $w$ with induced color $m$. Then we add a semi-stable sphere for each disc component along a unique path from $\Phi(w)$ to the leaf $l_m$. Again, each sphere component is aligned to a corresponding disc component. 
	\item (choose aligned ends) We extend the choice of ends to added discs and spheres so that the overall choice is still aligned. Since we only add semi-stable components, we can copy-and-paste original choice of ends without modification. 
	\end{enumerate}
\begin{remark}
We record a minimal amount of information in the alignment data. Auxiliary model recovers the full alignment from the prescribed ones by adding suitable semi-stable discs and spheres. They are just tracking devices for aligned components and compatible choices of ends so that the gluing process is well-defined. 
For example, in the gluing model of the broken popsicle described in Figure \ref{fig:popdeg2}, two disc neighborhoods of interior marked points as well as the upper-half of the annulus (between radius $r_1r_2$ and $r_2$) corresponds to the added semi-stable spheres and discs. The broken popsicle in Figure  \ref{fig:popdeg1} does not require additional semi-stable components.
\end{remark}
We call the resulting surface with decoration the auxiliary model for a broken popsicle of type $\star$. Every sphere component in the auxiliary model is aligned to a  disc component. Moreover, if several spheres are aligned to a single disc, at least one of the disc or spheres are stable. Therefore, we can move back and forth between stable broken popsicles and its auxiliary model by simply forgetting (and recovering) semi-stable components.

The gluing process takes place in two stages. At first, we replace a broken popsicle to its auxiliary model. Second, we turn on gluing parameters and obtain a new auxiliary model. Finally, we contract semi-stable components to get a stable broken popsicle.

The process is well-defined in the following sense. In the auxiliary model, suppose that we are gluing  $\Sigma_1$ and $\Sigma_2$
with a gluing parameter $\epsilon$.  If $\Sigma_1'$ is aligned to $\Sigma_1$ and $\Sigma_2'$ is aligned to $\Sigma_2$, then our  procedure glues two pairs $(\Sigma_1, \Sigma_2)$ and $(\Sigma_1', \Sigma_2')$ simultaneously. Since our choice of ends are compatible with respect to the alignment, they are still aligned after gluing.

  Rational ends help us to put a canonical popsicle structure on a gluing of two. Let $S_\gamma$ be a gluing of $S_1 \in P_{n_1, F_1}$ and $S_2 \in P_{n_2, F_2}$ along $z_{0,1} \in S_1$ and $z_{i,2}\in S_2$. Then positive boundary markings of $S_\gamma$ are $(n_1+n_2-1)$ points on $\partial \mathbb H$:
  \[z_k \leftrightarrow x_{k, \gamma} = \left \{ 
   \begin{array}{ll}
   x_{k,1} & k<i ,\\ 
   x_{i, 1}+ (\rho_{1, i}\times \gamma)x_{k-i+1,2} & i\leq k\leq i+n_2-1 ,\\ 
   x_{k-n_2+1,1} & i+n_2 \leq k.
   \end{array} 
   \right .\]
  Moreover, sprinkles are given as: 
  \[z_l^+ \leftrightarrow x_{\phi(f), \gamma}+y_{f, \gamma} = \left \{ 
  \begin{array}{ll} 
  x_{\phi_1(f),1}+i y_f & f \in F_1, \hskip 0.2cm \phi_1(f) \neq i ,\\ 
  \left( x_{i, 1}+ (\rho_{1, i}\times \gamma)x_{\phi_1(f)-i+1,2} \right) + i y_f & f\in F_1  \hskip 0.2cm \phi_1(f)=i, \\ 
  \left( x_{i, 1}+ (\rho_{1, i}\times \gamma)x_{\phi_2(f),2} \right) + i y_{\phi_2(f)}  & f\in F_2.
  \end{array} 
  \right .\]

Let us describe more details of the gluing, and how a combinatorial type of general popsicle changes afterward. Every finite edge $e\in \mathrm{Edge}(\widetilde T)$ there is the unique vertex $v$ such that $e=e_{v,0}$. We denote a gluing parameter associated to $e$ by $\epsilon_v$. Recall that if $v_1$ and $v_2$ are aligned to each other, then $\epsilon_{v_1}$ and $\epsilon_{v_2}$ must be turned on simultaneously. Therefore, we get two types of gluing parameters. 
	\begin{enumerate}
	\item for each $v \in \mathrm{Vertex}(T)$, we get a single parameter from a tuple $(\epsilon_v, \epsilon_{w_1}, \ldots, \epsilon_{w_{n_v}})$ where  $\{w_1, \ldots, w_{n_v}\}=\Phi^{-1}(v)$. We simply denote it by $\epsilon_v$. 
	\item for each $e\in \mathrm{Edge}(T)$ such that $\Phi^{-1}(e)\neq \emptyset$, we get $m_e$-many parameters, one for each $1\leq k\leq m_e$,  from each tuple $(\epsilon_{w_1}, \ldots, \epsilon_{w_{n_{e, k}}})$ where $\{w_1, \ldots, w_{n_{e,k}}\} = \Psi_e^{-1}(k)$. We simply denote them by $\epsilon_{e, k}$.
	\end{enumerate}
	
Now let us see what happens on a combinatorial type when we turn on a single gluing parameter $\epsilon$. 

	\begin{enumerate}
	\item[(Case 1)]:  $\epsilon=\epsilon_v$ for some $v\in \mathrm{Vertex}(T)$ and $\Phi^{-1}(e_{v,0})=\emptyset$. 
	Let us say the edge $e_{v,0}$ goes from $u$ to $v$. Because of the second condition, popsicle discs $\Sigma_u$ and $\Sigma_v$ are still adjacent in the auxillary model. 
	They are glued together if we turn on $\epsilon_v$. It means that the edge $e_{v,0}$ is contracted and two vertices $u$ and $v$ become a single vertex $v'$ after gluing. 	
	Now we have to see what happen to the vertices which are aligned to $v$. 
	\begin{enumerate}
	\item  Suppose you see two vertices $w_1<w_2 \in \mathrm{Vertex}(\widetilde T\setminus T)$ connected by an edge $e'$ such that $\Phi(w_1)=u$ and $\Phi(w_2) = v$. Two components $\Sigma^+_{w_1}$ and $\Sigma^+_{w_2}$ will be glued together with
	the same gluing parameter $\epsilon_v$.
	 For the trees, the edge $e'$ is also contracted and two vertices $w_i$ are replaced with a single vertex $w'$. Now, we align $w'$ to $v'$ for an obvious reason.
	\item Suppose $\Phi(w)=v$ , but there is no adjacent $w'$ such that $\Phi(w')=u$. There will be no change in combinatorial type except $\Phi(w)$ becomes $v'$ after gluing.  
	\item Suppose $\Phi(w)=u$, but there is no adjacent $w'$ such that $\Phi(w')=v$. There will be no change in combinatorial type except $\Phi(w)$ becomes $v'$ after gluing, similar as above. 
	\item Suppose you see $w \in \mathrm{Vertex}(T_{u, i, j})$ such that $\Phi(w)=v$ and the incoming edge $e_{0, w}$ is actually the root of $T_{u,i,j}$ connected to $u$. It means $\Sigma^+_w$ and $\Sigma_v$ are both adjacent to $\Sigma_u$, and they will be glued together to a single disc. Therefore the vertices $u, v, w$ and the edge $e$ and $e_{0, w}$ are contracted to a single vertex $v'$. Also, a tree $T_{u, i, j}$ becomes $T_{u, i, j}\setminus \{w, e_{0, w}\}$, which is a disjoint union of $(\mathrm{val}(w)-1)$-many trees. A decomposition of $F_{u, i, j}$ and its $F$-label changes accordingly.
	\end{enumerate}
	\item[(Case 2)]: $\epsilon = \epsilon_v$ for some $v \in \mathrm{Vertex}(T)$ and $\Phi^{-1}(e_{v,0})\neq \emptyset$. 
	Again, let us say that $e_{v,0}$ goes from $u$ to $v$. 
	The second condition implies that there will be a string of $m_e$-many semi-stable components in between $\Sigma_u$ and $\Sigma_v$. 
       Let $w_1$ be a stable sphere component that is aligned to the node $e_{v,0}$ and $\Psi_{e_{v,0}}(w_1) = m_e$. 
    	In the auxiliary model, we should have another sphere component (either stable or semi-stable) that is aligned with $v$, and we denote it by $w_2$.
	
	Turning on $\epsilon_v$ will glue $\Sigma_v$ and the $m_e$-th semi-stable  disc components (call the resulting component by $v'$), and the same parameter will also glue two sphere components $\Sigma^+_{w_1}$ and $\Sigma_{w_2}^+$ as well (call the resulting component by $w_1'$). For the resulting broken popsicle, the tree $T$ remains the same, and now $w_1'$ should be
	aligned to $v'$. Note that $m_e$ decrease by one.
%
%
	\item[(Case 3)]:  $\epsilon = \epsilon_{e,k}$ for some $e\in\mathrm{Edge}(T)$ and $1\leq k \leq m_e$. 
		 $\epsilon_{e,k}$ is a gluing parameter for the $k$-th and $(k+1)$-th semi-stable disc components (say $u_1$ and $u_2$) attached at the node $e$.
		 We can identify it with the corresponding gluing parameters for the aligned sphere components: 
		 Consider any two (semi-stable or stable) sphere components in the auxiliary model, say $w_1$ and $w_2$ that are aligned to $u_1$ and $u_2$ respectively. 
		 Turning on  $\epsilon_{e,k}$, we glue $u_1$ and $u_2$  (call the resulting component $u'$) and with the same parameter, we glue $w_1$ and $w_2$  (call the resulting component $w'$).
		 Then, $w'$ is now aligned to $u'$.
	 	 This process will reduce $m_e$ by one, and we need to reassign the function $\Psi_e$ accordingly: for all the vertices $w''$ such that $\Phi(w'')=e$ and $\Psi_e(w'')\geq k+1$, we lower the value $\Psi_e(w'')$ by one. This is because the function $\Psi_e$ measures the relative order of the sphere bubbles.	 
	     In this way, the rooted ribbon tree $T$ for the resulting glued popsicle  remains the same and only the associated alignment data is modified.	
	
	\end{enumerate}	
	
%

In any cases, a gluing procedure decreases either the number $\mathrm{Vertex}(T)$, or decreases a certain $m_e$ by one. In fact, the following lemma can be obtained from the above description.
\begin{lemma}
$\overline P_{n, \widetilde F, \phi}$ is a manifold with corners of dimension $n-2+\vert \widetilde F \vert$, and 
	\begin{equation}
	\label{codimension formula}
	\mathrm{codim}(P_{n, \widetilde F, \star})= \vert \mathrm{Vertex} (T) \vert -1 + \sum_e m_e 
	\end{equation}
\end{lemma}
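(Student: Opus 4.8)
The plan is to build explicit manifold-with-corners charts around each boundary stratum and to read off the dimension and codimension from the number of gluing parameters, using the combinatorial and analytic ingredients assembled above. First I would treat the top stratum: the smooth popsicles $P_{n,F,\phi}$ form a smooth manifold because the forgetful map $f\colon P_{n,F,\phi}\to\mathcal M_{0,n+1}$ is a submersion whose fibre records the position of each of the $|F|$ sprinkles on its geodesic, so $\dim P_{n,F,\phi}=(n-2)+|F|=n-2+|\widetilde F|$. Since this is the open dense stratum of $\overline P_{n,\widetilde F,\phi}$, it fixes the global dimension, and the whole content of the lemma becomes the statement that each $P_{n,\widetilde F,\phi,\star}$ sits in $\overline P_{n,\widetilde F,\phi}$ as a corner face of the predicted codimension.

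Next, for a fixed combinatorial type $\star=(\widetilde T,\widetilde F,\widetilde\psi,\Phi,\Psi)$ I would construct a chart identifying a neighbourhood of $P_{n,\widetilde F,\phi,\star}$ in $\overline P_{n,\widetilde F,\phi}$ with an open subset of $P_{n,\widetilde F,\phi,\star}\times[0,1)^{N}$, where $N$ is the number of gluing parameters. These parameters are exactly those listed before \eqref{codimension formula}: one parameter $\epsilon_v$ for every $v\in\mathrm{Vertex}(T)$ whose distinguished incoming edge $e_{v,0}$ is finite --- there are $|\mathrm{Vertex}(T)|-1$ of these, because $v\mapsto e_{v,0}$ is a bijection from the vertices other than the root-adjacent one onto the finite internal edges --- together with $m_e$ parameters $\epsilon_{e,k}$ for each $e\in\mathrm{Edge}(T)$ with $\Phi^{-1}(e)\neq\emptyset$; hence $N=|\mathrm{Vertex}(T)|-1+\sum_e m_e$. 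To define the chart I would pass to the auxiliary model by inserting the prescribed semi-stable discs and spheres, choose aligned rational ends on every component (Definition~\ref{aligned ends}) --- possible and smoothly varying over $P_{n,\widetilde F,\phi,\star}$ since only semi-stable pieces are added and the space of ends is contractible --- and then, for each parameter, glue the corresponding node of $T$ (or of a string of semi-stable discs) together with all aligned sphere nodes using the \emph{same} parameter. The explicit formulas for the boundary markings $x_{k,\gamma}$ and the sprinkles $x_{\phi(f),\gamma}+y_{f,\gamma}$ written out above show the output is again a popsicle whose popsicle lines extend consistently across the glued surface; contracting the semi-stable components then produces the nearby smooth (or less degenerate) popsicle.

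The corner structure and the arithmetic I would then extract from the Case~1--3 analysis: switching on a single gluing parameter either contracts one finite edge of $T$, lowering $|\mathrm{Vertex}(T)|$ by one, or glues two consecutive semi-stable discs at a node $e$, lowering $m_e$ by one, and in each case the combinatorial type of the result is uniquely determined and strictly less degenerate. Consequently switching on any subset of the $N$ parameters lands in a well-defined face stratum, the strata of $\overline P_{n,\widetilde F,\phi}$ fit together as the faces of an $N$-dimensional corner, and the gluing map is a homeomorphism onto a neighbourhood of $P_{n,\widetilde F,\phi,\star}$ (local injectivity and surjectivity being routine once the combinatorics of partial gluing is pinned down as above). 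Together with the dimension of the top stratum this gives that $\overline P_{n,\widetilde F,\phi}$ is a manifold with corners of dimension $n-2+|\widetilde F|$ and $\mathrm{codim}(P_{n,\widetilde F,\star})=N=|\mathrm{Vertex}(T)|-1+\sum_e m_e$. For compactness I would invoke Deligne--Mumford compactness of the underlying pointed disc together with the observation that sprinkle collisions produce sphere bubbles carrying the induced popsicle lines, and that the relative rates at which popsicle lines and sprinkles come together determine a unique compatible alignment datum $(\Phi,\Psi)$ --- precisely the data the notion of alignment was introduced to record (cf.\ Section~\ref{sec:2}) --- so that every limit lies in some $P_{n,\widetilde F,\phi,\star}$.

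The main obstacle I expect is the gluing analysis in the presence of alignment: proving that using one shared gluing parameter for an aligned disc--sphere (or sphere--sphere) pair yields a \emph{smooth} family of popsicles and that the resulting charts are compatible on overlaps, so the collection is a genuine smooth corner atlas and not merely a topological one. Equivalently, this is what guarantees that the naive count of $N$ gluing parameters is neither too large --- a priori the alignment could freeze some gluing directions --- nor too small: it forces the fibre product $P_{m,F,\phi}\times_{\mathcal M_{0,m+1}}\mathbb P_{m,n,\phi'}$ appearing in the definition of aligned popsicles to contribute exactly $\dim\mathbb P_{m,n,\phi'}-\dim\mathcal M_{0,m+1}$ to the stratum dimension for each aligned sphere, and the bookkeeping of Cases~1--3 is what makes this precise.
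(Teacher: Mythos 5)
Your proposal is correct and follows essentially the same route as the paper: charts built from the auxiliary model with aligned rational ends, one shared gluing parameter per non-root vertex of $T$ (i.e.\ per finite edge $e_{v,0}$) and $m_e$ parameters per edge with $\Phi^{-1}(e)\neq\emptyset$, with the Case~1--3 bookkeeping giving the corner structure and the codimension count $\vert\mathrm{Vertex}(T)\vert-1+\sum_e m_e$. The analytic point you flag (smoothness and compatibility of the aligned gluing charts) is likewise left at the same level of detail in the paper, which deduces the lemma directly from that gluing description.
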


\bibliographystyle{amsalpha}
\bibliography{FukayaSing}

\end{document}